\crefname{equation}{}{}
\newcommand{\R}{\mathbb{R}}
\newcommand{\N}{\mathbb{N}}
\newcommand{\ee}{\mathrm{e}}
\DeclareDocumentCommand\dd{ o g d() }{
  \IfNoValueTF{#2}{
    \IfNoValueTF{#3}
    {\mathrm{d}\IfNoValueTF{#1}{}{^{#1}}}
    {\mathinner{\mathrm{d}\IfNoValueTF{#1}{}{^{#1}}\argopen(#3\argclose)}}
  }
  {\mathinner{\mathrm{d}\IfNoValueTF{#1}{}{^{#1}}#2} \IfNoValueTF{#3}{}{(#3)}}
}
\newcommand{\dx}{\dd{x}}
\newcommand{\dy}{\dd{y}}
\newcommand{\dz}{\dd{z}}
\newcommand{\ds}{\dd{s}}
\newcommand{\dt}{\dd{t}}
\renewcommand{\d}[1]{\dd{#1}}
\newcommand{\ddt}{\frac{\mathrm{d}}{\mathrm{dt}}}
\newcommand{\del}{\partial}
\newcommand{\eps}{\varepsilon}
\renewcommand{\L}{\mathcal{L}} 
\newcommand{\A}{\mathcal{A}}
\newcommand{\B}{\mathcal{B}}
\newcommand{\C}{\mathcal{C}}
\newcommand{\No}{\mathcal{N}}
\def\:{\colon}
\newcommand{\vcc}{\vcentcolon}
\DeclareMathOperator{\sgn}{sgn}
\DeclarePairedDelimiter\abs{\lvert}{\rvert}
\DeclarePairedDelimiter\norm{\Vert}{\rVert}
\DeclarePairedDelimiter\scalar{\langle}{\rangle}
\theoremstyle{plain}
\newtheorem{theorem}{Theorem}[section]
\newtheorem{lemma}[theorem]{Lemma}
\newtheorem{proposition}[theorem]{Proposition}
\newtheorem{corollary}[theorem]{Corollary}
\theoremstyle{definition}
\newtheorem{definition}[theorem]{Definition}
\theoremstyle{remark}
\newtheorem{remark}[theorem]{Remark}
\numberwithin{equation}{section}
\title{The scaling hypothesis for Smoluchowski's coagulation equation with bounded
  perturbations of the constant kernel}
\author{
  José A. Cañizo\footnote{\textsc{Departamento de
      Matemática Aplicada, Universidad de Granada, 18071 Granada,
      Spain.}
    \\
    \textit{Email addresses}:
    \texttt{\href{mailto:canizo@ugr.es}{canizo@ugr.es}},
    \texttt{\href{mailto:throm@correo.ugr.es}{throm@correo.ugr.es}}}
  \qquad \qquad
  Sebastian Throm$^*$
}
\date{October 2019}
\begin{document}

\maketitle

\begin{abstract}
  We consider Smoluchowski's coagulation equation with a kernel of the
  form $K = 2 + \epsilon W$, where $W$ is a bounded kernel of
  homogeneity zero. For small $\epsilon$, we prove that solutions
  approach a universal, unique self-similar profile for large times,
  at almost the same speed as the constant kernel case (the speed is
  exponential when self-similar variables are considered). All the
  constants we use can be explicitly estimated. Our method is a
  constructive perturbation analysis of the equation, based on
  spectral results on the linearisation of the constant kernel
  case. To our knowledge, this is the first time the scaling
  hypothesis can be fully proved for a family of kernels which are not
  explicitly solvable.
\end{abstract}

\tableofcontents

\section{Introduction}\label{Sec:Introduction}

We study the long-time behaviour of solutions to Smoluchowski's
coagulation equation, which reads
\begin{equation}
  \label{eq:Smol}
  \del_{\tau}\phi(\tau, \xi)
  = \frac{1}{2}\int_{0}^{\xi} K(\xi-\eta,\eta) \phi(\tau,
  \xi-\eta)\phi(\tau, \eta)\dd{\eta}
  - \phi(\tau, \xi)\int_{0}^{\infty}K(\xi,\eta)\phi(\tau, \eta)\dd{\eta}.
\end{equation}
This equation is a well-known model for coagulation processes in
several contexts such as aerosol dynamics \cite{Fri00,PrK10},
aggregation in planetary formation \cite{SiT79,AlB95} and biology
\cite{AcF97,Ack97}. The unknown $\phi = \phi(\tau, \xi) \geq 0$
represents the density of clusters of size $\xi > 0$ at time
$\tau \geq 0$, and $K = K(\xi, \eta) = K(\eta, \xi) \geq 0$ is the
symmetric \emph{coagulation kernel} giving the coagulation rate of
clusters of size $\xi$ with clusters of size $\eta$. We always
consider the continuum version of this equation, so the size $\xi$ can
take any positive value. A long-standing conjecture is that all
(finite-mass, suitably decaying) solutions to \eqref{eq:Smol} approach
a universal self-similar shape as time $\tau \to +\infty$, as long as
$K(\xi,\eta)$ is a homogeneous function of homogeneity degree
$\gamma \leq 1$ (i.e.,
$K(\lambda \xi,\lambda\eta)=\lambda^{\gamma}K(\xi,\eta)$ for all
$\lambda, \xi,\eta\in(0,\infty)$); this is known as the \emph{scaling
  hypothesis}. More precisely, one expects that there exists a
\emph{self-similar profile} $G$ and a scaling function
$s(\tau)\to\infty$ as $\tau\to\infty$ such that
\begin{equation}\label{eq:scal:hyp}
  \bigl(s(\tau)\bigr)^2\phi\bigl(\tau, s(\tau)\xi \bigr)
  \longrightarrow
  G(\xi) \qquad \text{as }\tau\to\infty,
\end{equation}
in a suitable sense to be determined. This was established in the
particular cases $K(\xi, \eta) = 2$ (constant) and
$K(\xi, \eta) = \xi + \eta$ (linear) in \cite{Menon2004Approach,
  MR2139564} in the sense of weak convergence, with explicit rates
given in \cite{CMM10,doi:10.1137/090759707}. Convergence in stronger
norms for the constant kernel was also found in \cite{CMM10}. There is
also a theory of \emph{fat-tailed} profiles, which represent the
asymptotic behaviour of solutions with slowly decaying tails. We do
not consider them in this work, and we refer the reader to
\cite{Menon2004Approach} for explicitly solvable kernels, and to
\cite{NiV13a,NTV16a,Thr17a} for results on existence and uniqueness of
fat-tailed self-similar profiles with infinite mass.

In this paper we are able to prove the scaling hypothesis in the
regime of finite mass, with an explicit rate, for small bounded
perturbations of the constant kernel. That is, we consider kernels of
the type
\begin{equation}
  \label{eq:h1}
  K = K_\eps(\xi, \eta) = 2 + \eps W(\xi, \eta),
\end{equation}
where $\eps > 0$ and the function
$W \: (0,+\infty) \times (0,+\infty) \to \R$ must be continuous, symmetric in
$\xi,\eta$, satisfy the bound
\begin{equation}
  \label{eq:h2}
  0\leq W(\xi, \eta) \leq 1
  \qquad \text{for all $\xi, \eta > 0$,}
\end{equation}
and be homogeneous of degree zero:
\begin{equation}
  \label{eq:h3}
  W(\lambda \xi, \lambda \eta) = W(\xi, \eta)
  \qquad \text{for all $\xi, \eta, \lambda > 0$.}
\end{equation}
Suitable examples of $W$ include
$$W(\xi, \eta) = \Psi(\xi^\alpha \eta^{-\alpha} + \xi^{-\alpha}
\eta^{\alpha}),$$ where $\alpha \in \R$ and
$\Psi \: (0,+\infty) \to [-1,1]$ is any continuous function. Averages
of functions of this type for different $\alpha$ are also examples of
coefficients satisfying \eqref{eq:h1}--\eqref{eq:h3}.
\begin{remark}
  The choice of $K_0 = 2$ is made for convenience, since by simple
  scaling arguments one can consider perturbations of any constant
  kernel (see Section~\ref{Sec:scale:invariance}). Consequently, also
  the lower bound in~\eqref{eq:h2} can be slightly weakened, i.e.\@
  our result also holds for perturbations which may change sign,
  satisfying $\abs{W(x,y)}\leq 1$. In fact, replacing the constant
  kernel by $2-\norm{W}_{L^{\infty}}$ and the perturbation by
  $\widetilde{W}=W+\norm{W}_{L^{\infty}}$ the assumption~\eqref{eq:h2}
  is satisfied.
\end{remark}

We prove that for $\eps$ small enough, solutions to \eqref{eq:Smol}
approach a unique, universal self-similar profile at an explicit
algebraic rate (which becomes exponential when self-similar variables
are considered; see below), in the sense of the $\|\cdot\|_{L^1_k}$ norm
defined by
\begin{equation*}
  \| f \|_{L^1_k} := \int_0^\infty |f(x)| (1+x)^k \dx.
\end{equation*}
Our main result is summarised in the following theorem:
\begin{theorem}
  \label{thm:main-intro}
  Let $K = K_\eps$ be a bounded perturbation of the constant
  kernel satisfying \eqref{eq:h1}, \eqref{eq:h2} and \eqref{eq:h3}.
  \begin{enumerate}
  \item There exists $\eps_1 > 0$ such that for
    $0 \leq \eps \leq \eps_1$ there exists a unique self-similar
    profile $G_\eps$ with unit mass.

  \item Given $R > 0$ and $k > 1$, there exists
    $0 < \eps_3 \leq \eps_1$ (depending only on $R$ and $k$) and $M$
    (depending only on $k$) such that for $0 \leq \eps < \eps_3$ any
    solution $\phi$ to the Smoluchowski equation \eqref{eq:Smol} with
    nonnegative initial condition $\phi_0$ such that
    \begin{equation*}
      \int_0^\infty \xi \phi_0(\xi) \d \xi = 1,
      \qquad
      \int_0^\infty \big|\phi_0(\xi) - G_\eps(\xi)\big|
      (1+\xi)^k \d \xi \leq R
    \end{equation*}
    satisfies
    \begin{equation*}
      \| f(t, \cdot) - G_\eps \|_{L^1_k} \leq
      C e^{-\lambda_\eps t} \| f_0 - G_\eps \|_{L^1_k}
      \qquad \text{for all $t \geq 0$,}
    \end{equation*}
    with $\lambda_\eps := \frac{1}{2}-M\eps$, for some $C > 0$
    depending on $k$ and $R$, and where
    $f(t,x) := e^{2t} \phi(e^t-1, x e^t)$. Equivalently,
    \begin{equation*}
      \int_0^\infty
      \Big| (\tau+1)^2\phi(\tau, (\tau+1)x) - G_\eps(x)\Big| (1+x)^k \d x
      \leq
      C (1+\tau)^{-\lambda_\eps} \| \phi_0 - G_\eps \|_{L^1_k}
    \end{equation*}
    for all $\tau \geq 0$.
  \end{enumerate}
  All constants appearing in this theorem can be explicitly estimated.
\end{theorem}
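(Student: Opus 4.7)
The natural strategy is a quantitative perturbation around the explicitly solvable constant kernel $K_0 = 2$, after first passing to self-similar variables. The change of unknowns $f(t,x) = \ee^{2t}\phi(\ee^t - 1, x\ee^t)$ recasts \eqref{eq:Smol} as an equation whose constant-kernel version has the unit-mass Maxwellian $G_0(x) = \ee^{-x}$ as a stationary solution, and whose linearisation at $G_0$ is an operator $\L_0$ whose spectrum on the zero-mass subspace of $L^1_k$ possesses a gap of $\tfrac12$ (this is the spectral computation underlying the result of \cite{CMM10}). In these variables the scaling hypothesis \eqref{eq:scal:hyp} becomes exponential convergence of $f(t,\cdot)$ to $G_\eps$ as $t\to\infty$, and the theorem splits naturally into three parts that I would tackle in sequence.

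First I would construct $G_\eps$. Writing $G_\eps = G_0 + g_\eps$ with $\int_0^\infty x\, g_\eps(x)\dx = 0$, the stationarity condition takes the form $\L_0 g_\eps = \eps\, A(G_\eps, G_\eps) + B(g_\eps, g_\eps)$, where $A$ collects the $W$-dependent part of the coagulation operator and $B$ is the bilinear remainder coming from linearisation. Since $\L_0$ is invertible on the zero-mass subspace of $L^1_k$ for $k > 1$, with a quantitative bound coming from its spectral gap, a contraction-mapping (or implicit function theorem) argument in a ball $\norm{g_\eps}_{L^1_k}\leq C\eps$ produces a unique unit-mass profile $G_\eps$ for $\eps\leq \eps_1$, together with the estimate $\norm{G_\eps - G_0}_{L^1_k}\leq C\eps$.

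Second, I would transfer the spectral gap from $\L_0$ to the linearisation $\L_\eps$ of the self-similar operator at $G_\eps$. Splitting $\L_\eps = \L_0 + (\L_\eps - \L_0)$ and estimating the correction in operator norm on $L^1_k$ by $C\eps$ (its ingredients are either proportional to $\eps W$ or to the small deviation $G_\eps - G_0$ obtained in the previous step), a Kato-type perturbation argument yields a spectral gap of at least $\lambda_\eps = \tfrac12 - M\eps$ for the semigroup generated by $\L_\eps$ on the zero-mass subspace, with $M$ depending only on $k$. Third, writing $f = G_\eps + h$, so that $h$ has zero mass by conservation, the equation satisfied by $h$ reads $\partial_t h = \L_\eps h + Q_\eps(h,h)$, where $Q_\eps$ is the symmetric bilinear coagulation operator associated with $K_\eps$. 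Combining Duhamel's formula with the bound $\norm{Q_\eps(h,h)}_{L^1_k} \leq C\norm{h}_{L^1_k}^2$, which holds because $W$ is bounded and $k > 1$ makes the relevant moments finite, with the linear decay from step two, a standard bootstrap shows that as long as $\norm{h(t)}_{L^1_k}$ stays in a ball whose size depends on $R$ the quadratic term is dominated by the linear part. Choosing $\eps_3\leq\eps_1$ small enough in terms of $R$ and $k$ closes the bootstrap and gives the announced exponential rate for $f$; undoing the change of variables turns this into the algebraic rate stated for $\phi$.

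The main obstacle is making the spectral analysis of $\L_0$ genuinely quantitative in $L^1_k$: the gap is classically established in a Hilbert/Laguerre setting, so transporting it to the Banach space $L^1_k$ with explicitly tractable constants, and with a perturbation bound on $\L_\eps - \L_0$ compatible with the quadratic nonlinear estimate used in the bootstrap, is where the work lies. Once this quantitative linear theory is in place, the construction of $G_\eps$ and the nonlinear stability step are relatively routine fixed-point and Gronwall-type arguments.
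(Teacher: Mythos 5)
Your overall architecture (pass to self-similar variables, establish a quantitative $L^1_k$ spectral gap for $\L_0$, perturb to $\L_\eps$, close a nonlinear bootstrap via Duhamel/Gronwall) matches the paper's and would indeed deliver the \emph{local} part of the result. However, there are two genuine gaps, both stemming from the fact that your argument is purely local while the theorem makes global claims.

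First, your contraction-mapping construction of $G_\eps$ only establishes existence and uniqueness of a profile \emph{in a small ball around $G_0$}. It does not rule out the existence of other unit-mass profiles far from $G_0$, so Part 1 of the theorem (uniqueness, period) is not proved. The paper handles this by taking existence from the literature, then proving a separate \emph{global} stability estimate (its Proposition~\ref{Prop:profile_stability}, reproved constructively by tracking time-dependent solutions of the perturbed and unperturbed equations simultaneously and using the known global convergence to equilibrium for $K=2$ from \cite{CMM10}) which shows that \emph{every} unit-mass profile $G_\eps$ satisfies $\|G_\eps - G_0\|_{L^1_k}\to 0$ as $\eps\to 0$. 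Only after that does the local uniqueness argument (via invertibility of $\L_\eps$) give full uniqueness. The paper even flags this point explicitly: stability of all possible profiles cannot be obtained by linearisation alone.

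Second, your bootstrap closes only when $\|h(0)\|_{L^1_k}$ is below a fixed threshold set by the spectral gap constant and the bilinear bound $\|Q_\eps(h,h)\|\lesssim\|h\|^2$; this threshold does \emph{not} grow as $\eps\to 0$ (the bilinear constant is $O(1)$, not $O(\eps)$), so ``choosing $\eps_3$ small in terms of $R$'' cannot make the local Gronwall argument apply to initial data in a ball of arbitrary radius $R$. The paper instead first compares $f^\eps$ with the solution $f^0$ of the $K=2$ equation with the same initial datum (these stay within $O(\eps e^{Ct})$ of each other by Lemma~\ref{Lem:L1:evolution:close}), uses the global $K=2$ convergence to conclude that for small $\eps$ there is a finite time $t_0$ at which $f^\eps(t_0,\cdot)$ enters the small ball, and only then applies the local result. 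Without this ``spend time in the constant-kernel regime'' step, Part 2 for general $R>0$ does not follow from linearisation. Both missing ingredients rely on the global large-time behaviour for the constant kernel, which is precisely the extra piece that pure perturbation around $\L_0$ cannot supply.
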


As far as we know, this is the first time the scaling hypothesis can
be proved to hold for kernels which do not allow for an explicit
solution of equation \eqref{eq:Smol}. Results for the so-called
\emph{diagonal} kernels were obtained by \cite{LNV18}, and in this
case the approach to self-similarity does not happen for all initial
conditions. The part of our result on uniqueness of the profiles is
not new: it has been proved for more general perturbations by Laplace
transform methods in \cite{NV2014,NTV15,NTV16} and via compactness
arguments in $L^1$ in \cite{Thr19}. An improvement here is that we are
able to give an explicit estimate of $\eps_0$ in the case of bounded
perturbations, and explicit estimates on the closeness of $G_\eps$ to
$G_0 = e^{-x}$, the self-similar profile for the equation with
$K = 2$. For kernels $K$ with negative homogeneity degree it has been
recently proved that self-similar profiles are unique \cite{Lau18}.

The strategy to prove our results is a perturbation argument, carried
out in a constructive way, using the fact that the case of constant
coefficients is fairly well understood. This has been done for kinetic
equations involving the Boltzmann operator in
\cite{Mischler2006Cooling, MR2264623,MM09, Canizo2015Exponential}, but
has not been done for coagulation-type equations as far as we
know. Results on convergence to equilibrium for the Becker-Döring
equation were developed in \cite{Canizo2013Exponential} using
properties of the linearised operator, with techniques similar to
those in Section \ref{sec:convergence}. We need three main ingredients
in order to complete our perturbation arguments:

\begin{enumerate}
\item First, we need a good global exponential convergence result for
  the constant coefficients case. Convergence without rate is known
  since \cite{Menon2004Approach}, and in order to obtain rates one can
  use results in \cite{CMM10,doi:10.1137/090759707}. It turns out that
  a global convergence in an $L^1$ or $L^2$ space is more convenient
  for us, so we use a refined version of the results in
  \cite{CMM10}. These results are given in \cref{sec:constant_kernel},
  with precise estimates on the dependence of the constants given in
  \cref{sec:aux_thm}.
  
\item One also needs results on the stability of self-similar profiles
  with respect to the perturbation; that is, we need to show that a
  profile $G_\eps$ associated to the perturbed kernel $K_\eps$
  must be close to the unique profile $G_0$ for the constant
  coefficient case. In the case of perturbations by a bounded
  coagulation coefficient, it turns out that the perturbation of the
  operator is continuous in weighted $L^1$ norms, so we are led to
  work in these spaces. This kind of stability results was studied in
  \cite{Thr19}, and we are able to give a new proof with explicit
  estimates in \cref{sec:bound_stability}.
  
\item Finally, we need to show that the linearised equation around the
  self-similar profile for the constant case has a spectral gap. More
  importantly, we need to do this in a norm which allows us to
  complete the perturbation argument, so again we are forced to work
  in weighted $L^1$ norms. A spectral gap in these spaces is proved in
  \cref{sec:gap_L1}, using results from Sections
  \ref{sec:lin_operator} and \ref{Sec:tools:sg}.
  
\end{enumerate}

The paper is organised as follows: Section \ref{sec:prelims} gathers
some preliminary results which are known or can be obtained almost
directly from existing results. In Section \ref{sec:bound_stability}
we give bounds on self-similar profiles (some of which are new) and
show a quantitative stability result in weighted $L^1$ norms. The
result itself is not new, but we give a new proof that makes it fully
quantitative. Sections \ref{sec:lin_operator}--\ref{sec:gap_L1} study
the linearised operator and show it has a spectral gap in the weighted
$L^1$ spaces we need. Finally, Sections \ref{sec:local} and
\ref{sec:convergence} use all of this to show uniqueness and
exponential stability of self-similar profiles for small values of the
perturbation parameter $\eps$.

\section{Preliminaries}
\label{sec:prelims}

\subsection{Self-similar change of variables}

By scaling arguments and mass conservation we obtain that in the case
of kernels of homogeneity zero the function $s(\tau)$
in~\eqref{eq:scal:hyp} is given by $s(\tau)=(1+\tau)$, up to a time
shift. Thus, plugging the self-similar ansatz
\begin{equation*}
  \phi(\tau, \xi)=(1+\tau)^{-2}f\bigl(\log(1+\tau),(1+\tau)^{-1} \xi \bigr)
\end{equation*}
into~\eqref{eq:Smol} we obtain
\begin{multline}\label{eq:Smol:selfsim}
  \del_{t}f(t, x)=\frac{1}{2}\int_{0}^{x}K(x-y,y)f(x-y,t)f(t, y)\dy\\*
  -f(t, x)\int_{0}^{\infty}K(x,y)f(t, y)\dy+2f(t, x)+x\del_{x}f(t, x).
\end{multline}
To simplify the notation, let us define
the operator
\begin{equation}\label{eq:coag:op}
  \C_{K}(f,f)\vcc=\frac{1}{2}\int_{0}^{x}K(x-y,y)f(x-y)f(y)\dy
  -f(x)\int_{0}^{\infty}K(x,y)f(y)\dy,
\end{equation}
which motivates the definition of the following symmetric bilinear
form which will be useful later:
\begin{multline}\label{eq:sym:coag:op}
  \C_{K}(g,h)\vcc=\frac{1}{2}\int_{0}^{x}K(x-y,y)g(x-y)h(y)\dy\\*
  -\frac{1}{2}g(x)\int_{0}^{\infty}K(x,y)h(y)\dy-\frac{1}{2}h(x)\int_{0}^{\infty}K(x,y)g(y)\dy.
\end{multline}
When the kernel $K$ is $K_\eps = 2 + \eps W$, as it is almost
always the case in this paper, we will write
$\C_{K_\eps} \equiv \C_\eps$. We can then write equation
\eqref{eq:Smol:selfsim} in an abbreviated form as
\begin{equation*}
  \del_{t}f=\C_{K}(f,f)+2f+x\del_{x}f.
\end{equation*}
We refer to this equation as the Smoluchowski equation in
self-similarity variables, or simply the \emph{self-similar
  Smoluchowski equation}.
  
To simplify the notation at some places (especially when the kernel $K$ is constant), we may also use the following notation $(f\ast g)(x)=\int_{0}^{x}f(x-y)g(y)\dy$ for the convolution.

\subsection{Scale invariances}\label{Sec:scale:invariance}

We collect here some elementary properties
about~\eqref{eq:Smol:selfsim}. It is well-known
that~\eqref{eq:Smol:selfsim} preserves the total mass
$m_{1}(t)\vcc=\int_{0}^{\infty}xf(x,t)\dx$, i.e.\@
$m_{1}(t)\equiv m_{1}(0)$, provided that the kernel $K$ grows at most
linearly at infinity. Yet, for kernels with superlinear growth, a loss
of total mass in finite time occurs which is known as gelation (e.g.\@
\cite{EMP02}).

Furthermore, if $f$ is a solution
to~\eqref{eq:Smol:selfsim} with kernel $K$ one easily checks that for any $\alpha > 0$ the
function $g = \frac{1}{\alpha} f$ solves the self-similar Smoluchowski equation with kernel $\alpha K$, i.e.\@
\begin{equation*}
  \del_{t}g=\C_{\alpha K}(g,g)+2g+x\del_{x}g.
\end{equation*}
Moreover, one verifies that for each solution $f$
of~\eqref{eq:Smol:selfsim} also the rescaled function
$f_{a}\vcc=af(ax)$ is a solution to~\eqref{eq:Smol:selfsim} with the
same kernel. Note also that for both transformations the mass
changes. Summarising, we find that for $f$
solving~\eqref{eq:Smol:selfsim} with kernel $K$, the function
$h(x)=\frac{a}{\alpha}f(ax)$ is a solution to~\eqref{eq:Smol:selfsim}
with $K$ replaced by $\alpha K$. Moreover, if $f$ has total mass
$m_{1}$, i.e.\@ $\int_{0}^{\infty}xf(x)\dx=m_{1}$ we get for $h$ that
\begin{equation*}
  \int_{0}^{\infty}xh(x)\dx=\frac{a}{\alpha}\int_{0}^{\infty}xf(ax)\dx=\frac{1}{a\alpha}\int_{0}^{\infty}xf(x)\dx=\frac{m_{1}}{a\alpha}.
\end{equation*}
These computations allow to transform solutions
to~\eqref{eq:Smol:selfsim} for different constant kernels and modify
the total mass. As a consequence, we can assume without loss of
generality in the following that the constant kernel $K$ is
given by $K=2$ and the total mass of the solutions and profiles is
given by one.

\begin{remark}
  Note also that in~\cite{CMM10} the kernel was chosen to be
  $K\equiv 1$. However, by the considerations above, all results can
  be easily rescaled to the case $K=2$ which is what we will always
  implicitly do during this work.
\end{remark}

\subsection{Function spaces}\label{Sec:function:spaces}

We collect in this section the function spaces and corresponding
notation which we use throughout this work. If nothing else is stated,
all functions live on the set $(0,\infty)$. First, for a general
weight function $w\colon (0,\infty)\to (0,\infty)$ and
$p\in[0,\infty)$ we define in the usual way the weighted $L^p$ space
\begin{equation*}
  L^{p}(w)\vcc=\biggl\{f\in L^{p}\;\bigg|\; \int_{0}^{\infty}\abs{f(x)}^{p}w(x)\dx<\infty\biggr\}
\end{equation*}
with norm $\norm{f}_{L^{p}(w)}\vcc=\left(\int_{0}^{\infty}\abs{f(x)}^{p}w(x)\dx\right)^{1/p}$. The most important case for this work will be the choice $p=1$ and $w(x)=(1+x)^{k}$ with $k\geq 1$ which gives
\begin{equation*}
  L^{1}((1+x)^{k})\vcc=\biggl\{f\in L^{1}\;\bigg|\; \int_{0}^{\infty}\abs{f(x)}(1+x)^{k}\dx<\infty\biggr\}
\end{equation*}
with corresponding norm
$\norm{f}_{L^{1}((1+x)^{k})}\vcc=\int_{0}^{\infty}\abs{f(x)}(1+x)^{k}\dx$. To
simplify the notation at some places, we might also write
$L^{1}_{k}=L^{1}((1+x)^{k})$ and we might use the abbreviations
$\norm{\cdot}_{k}=\norm{\cdot}_{L^{1}_{k}}=\norm{\cdot}_{L^{1}((1+x)^{k})}$.
For parts of this work, we also need several spaces with rather weak
norms defined via the primitive.

In particular, we define the norm
\begin{equation*}
  \| h \|_{W^{-1,\infty}} := \sup_{x > 0}  \left| \int_x^\infty h(y) \dy  \right|.
\end{equation*}

Moreover, we introduce the (weighted) $L^2$ space
$H^{-1}(\ee^{\mu x})$ which arose naturally in \cite{CMM10}. More
precisely, for $\mu\geq 0$ we define the norm
\begin{equation*}
  \norm{h}_{H^{-1}(\ee^{\mu x})}^2=\int_{0}^{\infty}(D^{-1}h(y))^2\ee^{\mu y}\dy.
\end{equation*}
where $D^{-1}h(y)=\int_{y}^{\infty}h(x)\dx$ denotes the
primitive. This norm originates from the following scalar product
\begin{equation*}
  \scalar{g,h}_{H^{-1}(\ee^{\mu x})}=\int_{0}^{\infty}(D^{-1}g)(x)(D^{-1}h)(x)\ee^{\mu x}\dx. 
\end{equation*}
The corresponding (Hilbert) space
\begin{equation*}
  W^{-1,2}(\ee^{\mu x})=H^{-1}(\ee^{\mu x})
\end{equation*}
is then given as the completion of $C_{c}^{\infty}(0,\infty)$ with
respect to the norm $\norm{\cdot}_{H^{-1}(\ee^{\mu x})}$. For $\mu>0$,
we additionally introduce the corresponding subspace given by the
constraint of zero first moment, i.e.\@
\begin{equation*}
  H^{-1}(\ee^{-\mu x})\cap \left\{\int_{0}^{\infty}xh(x)\dx=0\right\}
\end{equation*}
is defined as the completion of
$(C_{c}^{\infty}(0,\infty)\cap \{\int_{0}^{\infty}xh(x)\dx=0\})$ with
respect to $\norm{\cdot}_{H^{-1}(\ee^{\mu x})}$.

\begin{remark}
  The latter definition is justified by the following estimate which
  exploits integration by parts as well as Hölder's inequality: for
  $h\in C_{c}^{\infty}(0,\infty)$ we have
  \begin{multline*}
    \int_{0}^{\infty}xh(x)\dx=-\int_{0}^{\infty}x\del_{x}\biggl(\int_{x}^{\infty}h(z)\dz\biggr)\dx=\int_{0}^{\infty}\ee^{-\frac{\mu}{2}x}\ee^{\frac{\mu}{2}x}\biggl(\int_{x}^{\infty}h(z)\dz\biggr)\dx\\*
    \leq \biggl(\int_{0}^{\infty}\ee^{-\mu x}\dx\biggr)^{1/2}\norm{h}_{H^{-1}(\ee^{\mu x})}=\mu^{-1/2}\norm{h}_{H^{-1}(\ee^{\mu x})}.
  \end{multline*}
  Thus, by density the integral $\int_{0}^{\infty}xh(x)\dx$ is
  meaningful for all $h\in H^{-1}(\ee^{\mu x})$.
\end{remark}

We also note the following continuous embeddings which will be
especially important for this work.

\begin{lemma}\label{Lem:cont:emb:L2:L1}
  For each $k\geq 0$ and $\mu>0$ the space $L^{2}(\ee^{\mu x})$ embeds
  continuously into $L^{1}((1+x)^{k})$.
\end{lemma}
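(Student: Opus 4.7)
The plan is a one-line application of the Cauchy--Schwarz inequality. For $f \in L^2(\ee^{\mu x})$ I would split the weight $(1+x)^k$ as the product
\begin{equation*}
  (1+x)^k = \bigl(\ee^{\mu x/2}\bigr) \cdot \bigl((1+x)^k \ee^{-\mu x/2}\bigr),
\end{equation*}
and then write
\begin{equation*}
  \int_0^\infty |f(x)| (1+x)^k \dx
  = \int_0^\infty \Bigl(|f(x)| \ee^{\mu x/2}\Bigr) \Bigl((1+x)^k \ee^{-\mu x/2}\Bigr) \dx.
\end{equation*}

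By Cauchy--Schwarz this is bounded by
\begin{equation*}
  \biggl(\int_0^\infty |f(x)|^2 \ee^{\mu x} \dx\biggr)^{1/2}
  \biggl(\int_0^\infty (1+x)^{2k} \ee^{-\mu x} \dx\biggr)^{1/2}
  = C_{k,\mu} \, \norm{f}_{L^2(\ee^{\mu x})},
\end{equation*}
where the constant
\begin{equation*}
  C_{k,\mu} \vcc= \biggl(\int_0^\infty (1+x)^{2k} \ee^{-\mu x}\dx\biggr)^{1/2}
\end{equation*}
is finite since the exponential decay beats the polynomial factor for any $k\geq 0$ and $\mu>0$ (the integral is a finite sum of Gamma-function values after expanding $(1+x)^{2k}$, or just a standard incomplete Gamma estimate if $2k$ is not an integer).

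There is no real obstacle here; the only mild point is that the embedding constant $C_{k,\mu}$ blows up as $\mu \to 0^+$, which is consistent with the fact that the statement requires $\mu>0$ strictly. If one later needs an explicit value in the perturbation arguments, one can record $C_{k,\mu}^2 = \mu^{-(2k+1)} \int_0^\infty (\mu + y)^{2k} \ee^{-y}\dy$ after the substitution $y=\mu x$, which makes the $\mu$-dependence transparent.
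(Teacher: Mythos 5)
Your proof is correct and is essentially the same as the paper's: both split off the factor $\ee^{\mu x/2}$ to pair with $|f|$, apply Cauchy--Schwarz (Hölder), and observe that $\int_0^\infty (1+x)^{2k}\ee^{-\mu x}\dx$ is finite. The only addition you make beyond the paper is the explicit $\mu$-scaling of the constant, which is a nice but inessential remark.
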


\begin{proof}
  Using the splitting $1=\ee^{-\mu x/2}\ee^{\mu x/2}$ and Hölder's inequality we find
  \begin{multline*}
    \norm{h}_{L^1_k}=\int_{0}^{\infty} \abs{h(x)}(1+x)^{k}\dx=\int_{0}^{\infty}\abs{h(x)}\ee^{\frac{\mu}{2}x}\ee^{-\frac{\mu}{2}x}(1+x)^{k}\dx\\*
    \leq \biggl(\int_{0}^{\infty}\abs{h(x)}^{2}\ee^{\mu x}\dx\biggr)^{1/2}\biggl(\int_{0}^{\infty}\ee^{-\mu x}(1+x)^{2k}\dx\biggr)^{1/2}\leq C(\mu, k)\norm{h}_{L^{2}(\ee^{\mu x})}.
  \end{multline*} 
\end{proof}

\begin{lemma}\label{Lem:cont:emb:L2:spaces}
  For each $\mu>0$ the space $L^2(\ee^{\mu x})$ embeds continuously
  into $H^{-1}(\ee^{\mu x})$.
\end{lemma}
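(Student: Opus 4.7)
The plan is to use a Fubini argument together with an elementary pointwise bound, essentially a quantitative Hardy--Muckenhoupt inequality tailored to the exponential weight. By density of $C_c^\infty(0,\infty)$ in $L^2(\ee^{\mu x})$ it suffices to prove the estimate $\norm{h}_{H^{-1}(\ee^{\mu x})}\leq C(\mu)\norm{h}_{L^2(\ee^{\mu x})}$ for $h\in C_c^\infty(0,\infty)$.

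First, I would expand the square defining the $H^{-1}$ norm and interchange integrations, so that after integrating $\ee^{\mu y}$ on the set $\{y<\min(x,z)\}$ one obtains
\begin{equation*}
  \norm{h}_{H^{-1}(\ee^{\mu x})}^{2} = \int_{0}^{\infty}\biggl(\int_{y}^{\infty}h(x)\dx\biggr)^{\!2}\ee^{\mu y}\dy
  = \frac{1}{\mu}\int_{0}^{\infty}\!\int_{0}^{\infty}h(x)h(z)\bigl(\ee^{\mu\min(x,z)}-1\bigr)\dx\dz.
\end{equation*}
The key pointwise bound is then the identity $\min(x,z)=\tfrac{x+z}{2}-\tfrac{\abs{x-z}}{2}$, which yields
\begin{equation*}
  \bigl(\ee^{\mu\min(x,z)}-1\bigr)\ee^{-\mu x/2}\ee^{-\mu z/2}
  \leq \ee^{\mu\min(x,z)}\ee^{-\mu(x+z)/2} = \ee^{-\mu\abs{x-z}/2}.
\end{equation*}
Substituting $g(x)\vcc=h(x)\ee^{\mu x/2}$ (so that $\norm{g}_{L^2}=\norm{h}_{L^2(\ee^{\mu x})}$) then reduces the estimate to the convolution-type bilinear form
\begin{equation*}
  \norm{h}_{H^{-1}(\ee^{\mu x})}^{2}
  \leq \frac{1}{\mu}\int_{0}^{\infty}\!\int_{0}^{\infty}\abs{g(x)}\abs{g(z)}\ee^{-\mu\abs{x-z}/2}\dx\dz.
\end{equation*}

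To conclude, I would invoke Schur's test (equivalently, Young's convolution inequality), observing that the kernel $\ee^{-\mu\abs{\,\cdot\,}/2}$ extended to $\R$ has $L^1$-norm $4/\mu$, so that both row and column integrals of the kernel are bounded by $4/\mu$. This gives the estimate
\begin{equation*}
  \norm{h}_{H^{-1}(\ee^{\mu x})} \leq \frac{2}{\mu}\norm{h}_{L^{2}(\ee^{\mu x})},
\end{equation*}
which is the desired continuous embedding.

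The only mildly delicate point is that the naive pointwise Cauchy--Schwarz estimate
$\abs{D^{-1}h(y)}^{2}\leq \ee^{-\mu y}\mu^{-1}\norm{h}_{L^{2}(\ee^{\mu x})}^{2}$
is insufficient, since multiplying by the weight $\ee^{\mu y}$ cancels the decay and produces a logarithmically divergent outer integral. This is precisely why the sharper Fubini (or Hardy--Muckenhoupt) step is needed, and it is the reason that the same exponential weight $\ee^{\mu x}$ can appear on both sides of the embedding.
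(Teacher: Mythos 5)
Your proof is correct and arrives at the same constant $2/\mu$, but via a genuinely different route than the paper. The paper's proof integrates by parts, using the identity $\ee^{\mu x} = \mu^{-1}\partial_x(\ee^{\mu x})$ and the crucial observation that the boundary term at $x=0$, namely $-\mu^{-1}(\int_0^\infty h)^2$, has a favorable sign and can be discarded; Cauchy--Schwarz plus absorption then finishes. Your approach instead expands the square of $D^{-1}h$, applies Fubini to produce the bilinear form with kernel $\ee^{\mu\min(x,z)}-1$, conjugates by $\ee^{\mu x/2}$ to arrive at the translation-invariant kernel $\ee^{-\mu\abs{x-z}/2}$, and closes with Schur's test. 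Both are clean and elementary. The paper's integration-by-parts argument is slightly shorter and leans on the specific algebraic structure of the exponential weight; your Fubini/Schur argument is more structural and would adapt more directly to other weights where the corresponding boundary term need not have a sign. One small inaccuracy in your closing remark: the naive pointwise Cauchy--Schwarz bound $\abs{D^{-1}h(y)}^{2}\leq \mu^{-1}\ee^{-\mu y}\norm{h}_{L^{2}(\ee^{\mu x})}^{2}$ produces, after multiplying by $\ee^{\mu y}$, a bounded (not decaying) integrand, so the resulting outer integral diverges linearly rather than logarithmically; the moral that one must retain the tail cancellation is nevertheless exactly right and well worth stating.
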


\begin{proof}
  It suffices to verify the embedding for the dense subset
  $C_{c}^{\infty}(0,\infty)$. In this case, integration by parts
  yields
  \begin{multline*}
    \norm{h}_{H^{-1}(\ee^{\mu x})}^{2}
    = \int_{0}^{\infty}\ee^{\mu x}\biggl(\int_{x}^{\infty}h(z)\dz\biggr)^{2}\dx
    = \frac{1}{\mu}\int_{0}^{\infty}\del_{x}(\ee^{\mu x})
    \biggl(\int_{x}^{\infty}h(z)\dz\biggr)^{2}\dx
    \\*
    = -\frac{1}{\mu}\biggl(\int_{0}^{\infty}h(z)\dz\biggr)^{2}
    +\frac{2}{\mu}\int_{0}^{\infty}\ee^{\mu x}
    \biggl(\int_{x}^{\infty}h(z)\dz\biggr) h(x)\dx.
  \end{multline*}
  Note, that we additionally exploit here that the boundary term at
  infinity vanishes since $h\in C_{c}^{\infty}$. To proceed, we use
  that the first expression on the right-hand side is non-positive
  while the second one can be estimated by means of Hölder's
  inequality together with the splitting
  $\ee^{\mu x}=\ee^{\frac{\mu}{2}x}\ee^{\frac{\mu}{2}x}$ which results
  in
  \begin{equation*}
    \norm{h}_{H^{-1}(\ee^{\mu x})}^{2}\leq \frac{2}{\mu}\norm{h}_{H^{-1}(\ee^{\mu x})}\norm{h}_{L^{2}(\ee^{\mu x})}.
  \end{equation*}
  Cauchy's inequality with parameter $2/\mu$ finally yields
  \begin{equation*}
    \norm{h}_{H^{-1}(\ee^{\mu x})}^{2}
    \leq \frac{1}{2}\norm{h}_{H^{-1}(\ee^{\mu x})}^{2}
    +\frac{2}{\mu^2}\norm{h}_{L^{2}(\ee^{\mu x})}^{2}
  \end{equation*}
  which finishes the proof by providing
  $\norm{h}_{H^{-1}(\ee^{\mu x})}^{2}\leq 4/\mu^{2}
  \norm{h}_{L^{2}(\ee^{\mu x})}^{2}$.
\end{proof}

\subsection{Continuity properties of \texorpdfstring{$\C_{K}$}{CK}}

The spaces $L^1((1+x)^k)$ are convenient to work with because, for a
bounded kernel $K$, the coagulation operator $C_K$ is a continuous
bilinear form on them, as we show in the next proposition. Notice that this
is not true for weighted $L^2$ spaces, for example.

\begin{proposition}\label{Prop:CK:cont}
  For $K\colon (0,\infty)^{2}\to (0,\infty)$ bounded, the bilinear
  form $\C_{K}$ given by~\eqref{eq:sym:coag:op} is continuous from
  $L^{1}((1+x)^{k})$ to itself for each $k\geq 0$ and we have
  \begin{equation*}
    \norm{\C_{K}(g,h)}_{L^1_k}\leq \frac{3}{2}\norm{K}_{L^{\infty}}\norm{g}_{L^1_k}\norm{h}_{L^1_k}.
  \end{equation*}
\end{proposition}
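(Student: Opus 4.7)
The plan is to split $\C_K(g,h)$ into its three constituent terms via the triangle inequality and estimate each in the $L^1_k$ norm, exploiting the fact that the weight $w(x)=(1+x)^k$ is submultiplicative with respect to addition of arguments.

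First I would write
\begin{equation*}
  \|\C_K(g,h)\|_{L^1_k} \leq T_1 + T_2 + T_3,
\end{equation*}
where $T_1$ is the contribution from the convolutive (gain) term and $T_2,T_3$ are the two loss terms. For $T_1$, I would apply Fubini's theorem to rewrite
\begin{equation*}
  T_1 = \frac{1}{2}\int_0^\infty\!\!\int_0^\infty K(z,y)|g(z)||h(y)|(1+z+y)^k \dy \dz,
\end{equation*}
and then use the elementary bound
\begin{equation*}
  1+z+y \leq (1+z)(1+y) \qquad \text{for all } z,y \geq 0,
\end{equation*}
which gives $(1+z+y)^k \leq (1+z)^k(1+y)^k$ since $k\geq 0$. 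Together with $K\leq\|K\|_{L^\infty}$ this yields $T_1\leq \tfrac{1}{2}\|K\|_{L^\infty}\|g\|_{L^1_k}\|h\|_{L^1_k}$.

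For $T_2$ and $T_3$, there is nothing to unpack: bounding $K$ by $\|K\|_{L^\infty}$ pulls the kernel out, and then
\begin{equation*}
  T_2 = \frac{1}{2}\int_0^\infty (1+x)^k |g(x)|\int_0^\infty K(x,y)|h(y)| \dy \dx
  \leq \frac{1}{2}\|K\|_{L^\infty}\|g\|_{L^1_k}\|h\|_{L^1_0},
\end{equation*}
and since $\|h\|_{L^1_0}=\|h\|_{L^1}\leq\|h\|_{L^1_k}$ for $k\geq 0$, we get $T_2\leq \tfrac{1}{2}\|K\|_{L^\infty}\|g\|_{L^1_k}\|h\|_{L^1_k}$. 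The estimate for $T_3$ is identical by symmetry. Summing the three contributions gives the claimed factor $\tfrac{3}{2}$.

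There is no real obstacle here; the argument is a routine bookkeeping exercise, and the one ingredient worth flagging is the submultiplicativity inequality $(1+z+y)^k\leq (1+z)^k(1+y)^k$, which is precisely what makes weighted $L^1$ spaces (as opposed to, e.g., weighted $L^2$) the natural setting in which $\C_K$ acts as a bounded bilinear form. I would state this inequality explicitly at the start of the proof so that the three term-by-term estimates are then immediate.
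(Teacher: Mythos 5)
Your proof is correct and follows essentially the same route as the paper's: split $\C_K(g,h)$ into the gain term plus two loss terms, apply Fubini to the gain term, and use the submultiplicativity bound $(1+x+y)^k \le (1+x)^k(1+y)^k$ together with the trivial estimate $1 \le (1+x)^k$ for the loss terms. The only difference is presentational — you name the three contributions and flag the weight inequality up front — but the underlying argument is the same.
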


\begin{proof}
  From~\eqref{eq:sym:coag:op} together with Fubini's theorem we find
  \begin{multline*}
    \norm{\C_{K}(g,h)}_{L^1_k}\leq \frac{1}{2}\int_{0}^{\infty}\int_{0}^{\infty}\abs*{K(x,y)}\abs{g(x)}+\abs{h(y)}\bigl(1+(x+y)\bigr)^{k}\dx\dy\\*
    +\frac{1}{2}\int_{0}^{\infty}\int_{0}^{\infty}\abs{K(x,y)}\abs{g(x)}\abs{h(y)}(1+x)^{k}\dx\dy+\frac{1}{2}\int_{0}^{\infty}\int_{0}^{\infty}\abs{K(x,y)}\abs{g(y)}\abs{h(x)}(1+x)^{k}\dx\dy.
  \end{multline*}
  Exploiting that $(1+(x+y))^{k}\leq (1+x)^{k}(1+y)^{k}$ and
  $1\leq (1+x)^{k}$ if $x,y\in (0,\infty)$ and $k\geq 0$, we further get
  \begin{multline*}
    \norm{\C_{K}(g,h)}_{L^1_k}\leq\frac{3}{2}\norm{K}_{L^{\infty}}\biggl(\abs{g(x)}(1+x)^{k}\dx\biggr)\biggl(\int_{0}^{\infty}\abs{h(y)}(1+y)^{k}\dy\biggr)\\*
    =\frac{3}{2}\norm{K}_{L^{\infty}}\norm{g}_{L^1_k}\norm{h}_{L^1_k}.
  \end{multline*}
\end{proof}

\subsection{Existence of time-dependent solutions and moment estimates}

Following~\cite{EMR05} we will use the following concept of (mild) solutions to~\eqref{eq:Smol:selfsim}.

\begin{definition}
  For $k\leq 1$ let $f_{0}\in L_{k}^{1}$. A function
  $f\in C([0,\infty),L_{k}^{1})$ is denoted a (mild) solution
  to~\eqref{eq:Smol:selfsim} if it satisfies
  \begin{equation}\label{eq:def:mild}
    f=S_{t}f_{0}+\int_{0}^{t}S_{t-s}\C_{K}(f,f)(s)\ds
  \end{equation}
  where $(S_{t})_{t\geq 0}$ is the semigroup generated by the operator $h\mapsto 2h+xh'$, i.e.\@ $(S_{t}h)(x)=\ee^{2t}h(\ee^{t}x)$.
\end{definition}

\begin{remark}\label{Rem:def:sol:alt}
  According to \cite{EMR05}, the following two solution concepts are equivalent to mild solutions if $f\in C([0,\infty),L_{k}^{1})$:
  \begin{enumerate}
  \item $f$ is a \emph{weak} or \emph{distributional} solution, i.e.\@
    it satisfies
    \begin{equation*}
      \int_{0}^{T}\int_{0}^{\infty}\Bigl(f\del_{t}\varphi+\C_{K}(f,f)\varphi-f(2\varphi -\del_{x}(x\varphi)\Bigr)\dx\dt=\int_{0}^{\infty}f_{0}\varphi(0,\cdot)\dx
    \end{equation*}
    for all $T>0$ and $\varphi\in C_{c}^{1}([0,T)\times (0,\infty))$.
  \item $f$ is a renormalised solution, i.e.\@ it satisfies
    \begin{equation*}
      \frac{\dd}{\dt}\int_{0}^{\infty}\beta(f)\varphi\dx=\int_{0}^{\infty}\C_{K}(f,f)\beta'(f)\varphi-f(2\varphi-\del_{x}(x\varphi)\dx
    \end{equation*}
    in the sense of distributions in $[0,\infty)$ for all $\beta\in C^{1}(\R)\cap W^{1,\infty}(\R)$ and $\varphi\in C^{1}(0,\infty)\cap L^{\infty}(0,\infty)$.
  \end{enumerate}
\end{remark}

The self-similar profiles are then seen to be the stationary solutions
to~\eqref{eq:Smol:selfsim}.

\begin{definition}\label{Def:profile}
  A function $G\in L_{1}^{1}$ is a
  \emph{self-similar profile} of~\eqref{eq:Smol} if it is a stationary
  solution to~\eqref{eq:Smol:selfsim}, i.e.\@ it is a fixed-point
  for~\eqref{eq:def:mild}.
\end{definition}

\begin{remark}
  Due to Remark~\ref{Rem:def:sol:alt} we see that $G$ is a self-similar if it is a weak stationary solution to~\eqref{eq:Smol:selfsim}, i.e.\@ with left-hand side zero. 
\end{remark}

\begin{remark}\label{Rem:profile:integrated}
  Note that when dealing with self-similar profiles frequently an integrated version of (the stationary) equation~\eqref{eq:Smol:selfsim} is used, i.e.\@ $x^2 G(x)=\int_{0}^{x}\int_{x-y}^{\infty}yK(y,z)G(y)G(z)\dz\dy$ (e.g.\@ \cite{NiV14,NTV15,NTV16}). However, in view of \cite[Lemma~2.11]{EsM06} together with Proposition~\ref{Prop:L1:est:profile} both formulations are equivalent.
\end{remark}

The next proposition is a classical result which provides the existence of solutions to~\eqref{eq:Smol:selfsim} under the assumption of a bounded coagulation coefficient. A proof can be found for example in~\cite[Lemma~2.8]{EMR05}

\begin{proposition}\label{Prop:self:sim:evolution:existence}
  Let $K$ be a bounded, symmetric and homogeneous of degree zero. For $k\geq 1$ let $f_{0}\in L^{1}((1+x)^{k})$. Then there exists a unique solution $f\in L^{1}((1+x)^{k})$ to~\eqref{eq:Smol:selfsim} on $(0,\infty)$ with $f(0,\cdot)=f_{0}$.  Moreover, there exists $C_{k}$, which only depends on $\norm{f_{0}}_{L^{1}_{k}}$ increasingly, such that
  \begin{equation*}
    \sup_{t\geq 0}\norm{f(t,\cdot)}_{L^1_k}\leq C_{k}.
  \end{equation*}
\end{proposition}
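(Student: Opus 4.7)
The plan is a standard local existence plus uniform moment bound argument, along the lines of \cite[Lemma~2.8]{EMR05}.

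First, I would apply Banach's fixed-point theorem to the mild-formulation map
\begin{equation*}
  (\mathcal{T}f)(t) \vcc= S_t f_0 + \int_{0}^{t} S_{t-s}\,\C_{K}(f(s),f(s))\,\ds
\end{equation*}
on a closed ball of radius $2\norm{f_0}_{L^1_k}$ in $C([0,T];L^1_k)$. A direct change of variables shows $\norm{S_t h}_{L^1_k} \leq \ee^{t}\norm{h}_{L^1_k}$, and Proposition~\ref{Prop:CK:cont} provides bilinear continuity of $\C_K$ on $L^1_k$. Choosing $T$ small in terms of $\norm{f_0}_{L^1_k}$, $k$ and $\norm{K}_{L^{\infty}}$ turns $\mathcal{T}$ into a self-mapping contraction, yielding a unique local mild solution. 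Uniqueness on any existence interval then follows from a Gronwall argument on $\norm{f_1 - f_2}_{L^1_k}$ using the same bilinear continuity.

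Second, I would derive the uniform moment bound by testing the weak formulation (Remark~\ref{Rem:def:sol:alt}) with time-independent weights $\varphi$. Symmetrisation of the coagulation integral and integration by parts of the transport term $2f + x\del_x f$ yield
\begin{equation*}
  \ddt \int_{0}^{\infty} f\varphi\,\dx
  = \tfrac{1}{2}\int_{0}^{\infty}\!\!\int_{0}^{\infty} K(x,y) f(x) f(y)\bigl[\varphi(x+y)-\varphi(x)-\varphi(y)\bigr]\dx\dy
  + \int_{0}^{\infty} f\bigl[\varphi - x\varphi'\bigr]\dx.
\end{equation*}
The choices $\varphi(x)=x$ (giving mass conservation $M_1(t)\equiv M_1(0)$) and $\varphi(x)=1$ (giving a uniform bound on $M_0$ via the negative sign of $\varphi(x+y)-\varphi(x)-\varphi(y)=-1$) control the low moments. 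For $\varphi(x)=(1+x)^k$ with $k>1$, a Povzner-type inequality
\begin{equation*}
  (1+x+y)^k - (1+x)^k - (1+y)^k \leq c_k\bigl[x(1+y)^{k-1} + y(1+x)^{k-1}\bigr]
\end{equation*}
bounds the coagulation contribution by $2c_k \norm{K}_{L^\infty} M_1 \norm{f}_{L^1_{k-1}}$, already controlled by induction on $k$, while the transport contribution evaluates to $k\norm{f}_{L^1_{k-1}} - (k-1)\norm{f}_{L^1_k}$ and supplies the necessary damping. This produces a differential inequality
\begin{equation*}
  \ddt\norm{f}_{L^1_k} \leq A_k - (k-1)\norm{f}_{L^1_k},
\end{equation*}
with $A_k$ depending monotonically on $\norm{f_0}_{L^1_k}$, and hence a uniform-in-time bound $\norm{f(t)}_{L^1_k}\leq C_k$.

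The main technical point is the Povzner-type estimate and its interplay with the drift-induced damping: without the factor $(k-1)$ from the self-similar transport one would only get exponential-in-time growth of $\norm{f}_{L^1_k}$. The constraint $k\geq 1$ in the proposition appears because mass has to be controlled in closed form to start the induction, which then proceeds on ranges of $k$ where the damping exceeds the coagulation growth. Once this uniform bound is in hand, the local fixed-point step can be iterated on intervals whose length is bounded below uniformly (since at each restart the initial norm is dominated by $C_k$), extending the solution to $f\in C([0,\infty);L^1_k)$ with the stated estimate and the monotone dependence of $C_k$ on $\norm{f_0}_{L^1_k}$.
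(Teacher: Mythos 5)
Your outline follows essentially the same route the paper intends: the paper cites \cite[Lemma~2.8]{EMR05} for local-in-time existence/uniqueness and a local bound, and remarks that the global moment bound is obtained by arguing ``analogously as in the proof of Proposition~\ref{Prop:L1:est:profile}'', i.e.\@ by testing the weak formulation against polynomial weights and using a Povzner-type inequality together with the damping coming from the self-similar drift. Your Banach fixed-point local existence plus differential moment inequality reproduces exactly this strategy, and the computation of the transport contribution $-(k-1)\norm{f}_{L^1_k}+k\norm{f}_{L^1_{k-1}}$ and the Povzner bound on the coagulation contribution are both correct.

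There is, however, one genuine gap: the claimed uniform bound on $M_0$. With $\varphi\equiv 1$ you get $\ddt M_0 = M_0 - \tfrac{1}{2}\int\!\!\int K f f$, and the ``negative sign'' of the coagulation term alone gives only $\ddt M_0 \leq M_0$, i.e.\@ exponential growth. To obtain a \emph{uniform-in-time} bound one needs the quadratic term to be coercive, which requires a positive lower bound $K\geq c>0$ so that $\ddt M_0 \leq M_0 - \tfrac{c}{2}M_0^2$. This hypothesis is not in the statement of the proposition as written, but it does hold for all kernels the paper actually uses it for ($K_\eps = 2+\eps W\geq 2-\eps$), and the paper's own Proposition~\ref{Prop:bound:m0} is precisely this more careful argument for $K_\eps$. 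Since the whole induction in your proof is seeded at $k=1$ with $\norm{f}_{L^1_1}=M_0+M_1$, the uniform bound at every level $k$ hinges on this $M_0$ bound, so the justification should be spelled out rather than attributed to the sign alone. A smaller point: $(1+x)^k$ is not an admissible test function in the weak formulation since it is unbounded; as in the paper's proof of Proposition~\ref{Prop:L1:est:profile} you should use the truncated weights $\varphi_R$ (linear continuation of $x^k$ beyond $R$) and pass to the limit $R\to\infty$, which is routine but needed for rigour.
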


\begin{remark}
  Note that the bound on the norm provided by~\cite[Lemma~2.8]{EMR05}
  is local in time. However, arguing analogously as in the proof of
  Proposition~\ref{Prop:L1:est:profile} one can easily verify that the
  estimate holds globally as stated above.
\end{remark}

The next statement provides a more explicit estimate on the integral
of solutions to~\eqref{eq:Smol:selfsim} for bounded perturbations of
the constant kernel.

\begin{proposition}\label{Prop:bound:m0}
  Let $W$ satisfy \cref{eq:h1,eq:h2,eq:h3} and let $f_{\eps}$ be a
  solution to~\eqref{eq:Smol:selfsim} with kernel $K_{\eps}=2+\eps W$
  and denote $m_{0}(t)\vcc=\int_{0}^{\infty}f_{\eps}(x,t)\dx$. Then
  the estimate
  \begin{equation*}
    m_{0}(t)\leq \frac{\ee^{t}}{\frac{1}{m_{0}(0)}+\ee^{t}-1}\leq \max\{m_{0}(0),1\}
  \end{equation*}
  holds for all $t\geq 0$.
\end{proposition}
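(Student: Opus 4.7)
The plan is to derive a scalar differential inequality for $m_0(t)$ and compare with a logistic ODE. First I would integrate the self-similar Smoluchowski equation over $x\in(0,\infty)$. For the transport term, integration by parts (justified by the $L^1_1$-bound from Proposition~\ref{Prop:self:sim:evolution:existence}, which ensures $xf(x,t)\to 0$ in the appropriate sense) gives
\begin{equation*}
  \int_{0}^{\infty}\bigl(2f+x\del_x f\bigr)\dx = m_0(t).
\end{equation*}
For the coagulation contribution, a Fubini rearrangement of gain against loss (or equivalently, noting the factor $\tfrac{1}{2}$ in the gain versus the $1$ in the loss term) yields the classical identity
\begin{equation*}
  \int_{0}^{\infty}\C_{K_\eps}(f,f)\dx
  = -\frac{1}{2}\int_{0}^{\infty}\!\!\int_{0}^{\infty} K_\eps(x,y)\,f(x)f(y)\dx\dy.
\end{equation*}

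Using $K_\eps = 2+\eps W \geq 2$, which follows from $\eps\geq 0$ and the lower bound $W\geq 0$ in \eqref{eq:h2}, I would then bound the double integral from below by $2 m_0^2$, obtaining
\begin{equation*}
  \ddt m_0(t) \leq m_0(t) - m_0(t)^2.
\end{equation*}
Next I would compare with the logistic ODE $\dot u = u(1-u)$, whose explicit solution with initial datum $u(0)=m_0(0)$ is $u(t) = \ee^{t}/(\tfrac{1}{m_0(0)}+\ee^{t}-1)$. The elementary scalar comparison principle then yields the first claimed inequality.

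The second inequality is a purely algebraic verification once the first is in hand: I would split into the cases $m_0(0)\leq 1$ and $m_0(0)>1$. In the former, $\tfrac{1}{m_0(0)}+\ee^{t}-1\geq \ee^{t}$, so $m_0(t)\leq 1$; in the latter, $\tfrac{\ee^{t}-1}{m_0(0)}\leq \ee^{t}-1$ rearranges to $m_0(t)\leq m_0(0)$.

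The main obstacle is technical rather than conceptual: justifying the formal manipulations (differentiating $m_0$ under the integral, exchanging integrals via Fubini, and discarding the boundary term at infinity) for a mild solution. I expect this to be handled either by using the renormalised formulation from Remark~\ref{Rem:def:sol:alt} with $\beta(f)=f$ against a truncated test function $\varphi_R$ and passing $R\to\infty$ using the uniform $L^1_1$ bound of Proposition~\ref{Prop:self:sim:evolution:existence}, or by a standard approximation argument with cutoffs on $K_\eps$, both of which are routine given the boundedness of the kernel.
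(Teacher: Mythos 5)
Your proposal is correct and follows essentially the same route as the paper: integrate the self-similar equation, use the Fubini identity $\int_0^\infty \C_K(f,f)\,\dx = -\tfrac12\iint K f f$, integrate by parts in the transport term to obtain $\partial_t m_0 \leq m_0 - m_0^2$, and solve the logistic inequality. The only cosmetic difference is that you bound $K_\eps \geq 2$ directly, whereas the paper keeps $K_\eps = 2 + \eps W$ and discards the nonnegative $\eps W$ term at the end; these are the same estimate.
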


\begin{proof}
  It is well-known that $\C_{K}$ satisfies the relation
  \begin{equation*}
    \int_{0}^{\infty}\C_{K}(f,f)(x)\dx=-\frac{1}{2}\int_{0}^{\infty}\int_{0}^{\infty}K(x,y)f(x)f(y)\dx\dy
  \end{equation*}
  which follows from Fubini's Theorem. Thus, integrating~\eqref{eq:Smol:selfsim} and noting that integration by parts yields $\int_{0}^{\infty}xf'_{\eps}(x,t)\dx=-m_{0}(t)$ we obtain
  \begin{multline*}
    \del_{t}m_{0}(t)=2m_{0}(t)-m_{0}(t)-\frac{1}{2}\int_{0}^{\infty}\int_{0}^{\infty}K_{\eps}(x,y)f_{\eps}(x)f_{\eps}(y)\dx\dy\\*
    =m_{0}(t)-m_{0}^{2}(t)-\frac{\eps}{2}\int_{0}^{\infty}\int_{0}^{\infty}W(x,y)f_{\eps}(x)f_{\eps}(y)\dx\dy\leq m_{0}(t)-m_{0}^{2}(t).
  \end{multline*}
  In the last step, we exploited that $W,f_{\eps}\geq 0$. Solving this differential inequality explicitly, the claim directly follows.
\end{proof}

\begin{remark}
  The result of the previous proposition holds in general for any
  kernel $K \geq 2$, as long as time-dependent solutions can be proved
  to exist for that kernel. We have stated it for $K = 2 + \eps W$
  since it is the only case used in the rest of this paper.
\end{remark}

\subsection{Stability of time-dependent solutions with respect to
  perturbations}

\Cref{Prop:self:sim:evolution:existence} together with the continuity
results in \cref{Prop:CK:cont} easily implies that on a fixed time
interval solutions to~\eqref{eq:Smol:selfsim} for the perturbed and
unperturbed kernel stay close at order $O(\eps)$:

\begin{lemma}\label{Lem:L1:evolution:close}
  Let $K_{\eps}$ satisfy \cref{eq:h1,eq:h2,eq:h3} and let $f^{\eps}$
  and $f^{0}$ be the solutions to~\eqref{eq:Smol:selfsim} with kernels
  $K_\eps$ and $K_0$, respectively, and with the same initial
  condition $f^{\eps}(0,\cdot)=f^{0}(0,\cdot)$.  Then there exists
  a constant $C > 0$ depending only on $\|f_0\|_{L^1_k}$ such
  that
  \begin{equation*}
    \norm{f^{\eps}(t,\cdot)-f^{0}(t,\cdot)}_{L^{1}_k}
    \leq \eps C (\ee^{C t}-1).
  \end{equation*}
  Also, the constant $C > 0$ depends increasingly on $\|f_0\|_{L^1_k}$.
\end{lemma}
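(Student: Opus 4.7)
The plan is a direct Grönwall argument on the mild formulation. Both $f^\eps$ and $f^0$ satisfy \eqref{eq:def:mild} with the same initial data $f_0$, so subtracting gives
\begin{equation*}
  f^\eps(t) - f^0(t) = \int_0^t S_{t-s}\bigl[\C_{K_\eps}(f^\eps,f^\eps) - \C_{K_0}(f^0,f^0)\bigr](s)\,\ds.
\end{equation*}
Using the symmetry and bilinearity of $\C_K$ in its two arguments, together with $K_\eps - K_0 = \eps W$ (so that $\C_{K_\eps} - \C_{K_0} = \eps\,\C_W$ by linearity in $K$), I would decompose the integrand as
\begin{equation*}
  \C_{K_\eps}(f^\eps,f^\eps) - \C_{K_0}(f^0,f^0)
  = \C_{K_\eps}\bigl(f^\eps - f^0,\, f^\eps + f^0\bigr) + \eps\,\C_W(f^0,f^0).
\end{equation*}

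Next I would estimate each term in the $L^1_k$ norm. A direct change of variables $y = \ee^{t-s}x$, combined with the elementary inequality $(1+\ee^{-(t-s)}y)^k \leq (1+y)^k$ for $t \geq s$ and $k \geq 0$, yields the semigroup bound $\norm{S_{t-s}g}_{L^1_k} \leq \ee^{t-s}\norm{g}_{L^1_k}$. Combining this with \cref{Prop:CK:cont} (applied with $\norm{K_\eps}_{L^\infty} \leq 3$ for $\eps \leq 1$ and $\norm{W}_{L^\infty} \leq 1$) and with the uniform-in-time bounds on $\norm{f^\eps(t)}_{L^1_k}$ and $\norm{f^0(t)}_{L^1_k}$ provided by \cref{Prop:self:sim:evolution:existence}, I arrive at an integral inequality of the form
\begin{equation*}
  u(t) \leq A\int_0^t \ee^{t-s}u(s)\,\ds + B\eps(\ee^{t}-1),
\end{equation*}
where $u(t) \vcc= \norm{f^\eps(t) - f^0(t)}_{L^1_k}$ and $A, B > 0$ depend increasingly on $\norm{f_0}_{L^1_k}$.

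Finally, I close the argument by a Grönwall-type estimate: setting $v(t) \vcc= \ee^{-t}u(t)$ turns the inequality into $v(t) \leq A\int_0^t v(s)\,\ds + B\eps(1-\ee^{-t})$, which by iterated Duhamel (together with $u(0)=0$) yields a bound $u(t) \leq \eps C (\ee^{Ct}-1)$ for a suitable $C$ depending increasingly on $\norm{f_0}_{L^1_k}$. The only delicate point is to ensure that the constants in \cref{Prop:CK:cont} and \cref{Prop:self:sim:evolution:existence} can be chosen uniformly in $\eps \in [0,1]$; this is automatic from $\norm{K_\eps}_{L^\infty} \leq 3$, so all kernel-dependent constants in those results are dominated by quantities depending only on $\norm{f_0}_{L^1_k}$.
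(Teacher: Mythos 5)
Your proposal is correct, and the underlying estimate is the same as the paper's, but you organize it via the mild (Duhamel) formulation whereas the paper differentiates the $L^1_k$ norm directly. The paper subtracts the two equations for $f^\eps$ and $f^0$, multiplies by $\sgn(f^\eps-f^0)$, integrates against $(1+x)^k$, and uses integration by parts to bound the transport term, arriving at a linear ODE for $\norm{f^\eps-f^0}_{L^1_k}$ which Grönwall solves immediately; the telescoping of $\C_{K_\eps}(f^\eps,f^\eps)-\C_{K_0}(f^0,f^0)$ is done as $\C_2(f^\eps-f^0,f^\eps+f^0)+\eps\,\C_W(f^\eps,f^\eps)$, whereas you split in the other order as $\C_{K_\eps}(f^\eps-f^0,f^\eps+f^0)+\eps\,\C_W(f^0,f^0)$ — both are valid and give the same order of constants since $\norm{K_\eps}_{L^\infty}\leq 3$ and the uniform bounds from \cref{Prop:self:sim:evolution:existence} control $f^\eps$ and $f^0$ equally. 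Your route buys a slightly more robust argument: the mild-form/Duhamel approach avoids the formal manipulation of $\partial_t\abs{f^\eps-f^0}$ and $x\partial_x\abs{f^\eps-f^0}$ that the paper carries out (which requires a Kato-type justification for $\sgn$), at the cost of a slightly more involved Grönwall step — you must note that the inhomogeneous term $B\eps(1-\ee^{-t})$ in the inequality for $v(t)=\ee^{-t}u(t)$ is nondecreasing, so $v(t)\leq B\eps(1-\ee^{-t})\ee^{At}$, and then $u(t)\leq B\eps(\ee^t-1)\ee^{At}\leq B\eps(\ee^{(A+1)t}-1)$, which is of the claimed form with $C=\max\{B,A+1\}$. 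One small caveat worth stating explicitly: the constant $C_k$ in \cref{Prop:self:sim:evolution:existence} implicitly depends on $\norm{K}_{L^\infty}$ as well, so the uniformity in $\eps\in[0,1]$ rests on $\norm{K_\eps}_{L^\infty}\leq 3$; you flag this correctly.
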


\begin{proof}
  We take the difference of~\eqref{eq:Smol:selfsim} for $\eps$ and $\eps=0$ and multiply by $\sgn(f^{\eps}-f^{0})$ which allows to rewrite
  \begin{multline*}
    \del_{t}\abs{f^{\eps}(t,\cdot)-f^{0}(t,\cdot)}=\bigl(\C_{2+\eps W}(f^{\eps},f^{\eps})-\C_{2}(f^{0},f^{0})\bigr)\sgn(f^{\eps}-f^{0})\\*
    +2\abs{f^{\eps}(t,\cdot)-f^{0}(t,\cdot)}+x\del_{x}\abs{f^{\eps}(t,\cdot)-f^{0}(t,\cdot)}.
  \end{multline*}
  We multiply by $(1+x)^{k}$, integrate over $(0,\infty)$ and use that
  integration by parts allows to rewrite and estimate
  \begin{multline*}
    \int_{0}^{\infty}x(1+x)^{k}\del_{x}\abs{f^{\eps}(t,x)-f^{0}(t,x)}\dx\\*
    =-\norm{f^{\eps}(t,\cdot)-f^{0}(t,\cdot)}_{L^{1}((1+x)^{k}}
    -k\int_{0}^{\infty}x(1+x)^{k-1}\abs{f^{\eps}(t,x)-f^{0}(t,x)}\dx\\*
    \leq -\norm{f^{\eps}(t,\cdot)-f^{0}(t,\cdot)}_{L^{1}_k}.
  \end{multline*}
  Thus, we obtain
  \begin{equation*}
    \ddt \norm{f^{\eps}-f^{0}}_{L^1_k}
    \leq
    \norm{\C_{2+\eps W}(f^{\eps},f^{\eps})-\C_{2}(f^{0},f^{0})}_{L^1_k}
    +\norm{f^{\eps}-f^{0}}_{L^1_k}.
  \end{equation*}
  Notice that we have omitted the variables $(t,\cdot)$ for
  brevity. Using that
  $\C_{2+\eps
    W}(f^{\eps},f^{\eps})-\C_{2}(f^{0},f^{0})=\C_{2}(f^{\eps}-f^{0},f^{\eps}+f^{0})+\eps
  \C_{W}(f^{\eps},f^{\eps})$ we can estimate this further, using
  \cref{Prop:CK:cont}:
  \begin{multline*}
    \ddt \norm{f^{\eps}-f^{0}}_{L^1_k}
    \leq
    \norm{\C_{2}(f^{\eps}-f^{0},f^{\eps}+f^{0})}_{L^1_k}
    +\eps\norm{\C_{W}(f^{\eps},f^{\eps})}_{L^1_k}
    \\*
    \leq 3 \norm{f^{\eps}-f^{0}}_{L^1_k}
    \norm{f^{\eps} + f^{0}}_{L^1_k}
    + \eps \norm{f^\eps}_{L^1_k}^2.
  \end{multline*}
  From \cref{Prop:self:sim:evolution:existence} we know that there exists
  $C_1 > 0$ depending only on $\norm{f^0(0,\cdot)}_{L^1_k}$ (and
  increasingly) such that
  \begin{equation*}
    \norm{f^{\eps}}_{L^1_k} + \norm{f^{0}}_{L^1_k}
    \leq C_1
    \qquad \text{for all $t \geq 0$.}
  \end{equation*}
  Hence,
  \begin{equation*}
    \ddt \norm{f^{\eps}-f^{0}}_{L^1_k}
    \leq
    3 C_1 \norm{f^{\eps}-f^{0}}_{L^1_k}
    + \eps C_1^2.
  \end{equation*}
  From Gronwall's inequality we get
  \begin{equation*}
    \norm{f^{\eps}-f^{0}}_{L^1_k}
    \leq
    \frac{\eps C_1}{3} ( \ee^{3 C_1 t} - 1),
  \end{equation*}
  which implies the statement for $C := 3 C_1$.
\end{proof}

\subsection{Asymptotic behaviour of solutions for the constant kernel}
\label{sec:constant_kernel}

First, we recall from \cite[Lemma 6.1]{CMM10} the following statement
which provides exponential convergence in $L^2$ to the unique profile
for the constant coagulation kernel $K=2$.

\begin{theorem}\label{Thm:L2:convergence}
  Let $f$ be a solution to~\eqref{eq:Smol:selfsim} for the constant
  kernel $K=2$ with total mass one and initial condition $f_0$ such
  that $f_0 \in L^{2}(\dx) \cap L^{1}(x^2\dx)$. Let
  $G^{0}(x)=\ee^{-x}$ be the unique stationary solution
  to~\eqref{eq:Smol:selfsim} with total mass $1$. There exists an
  explicit constant $C > 0$ depending only on $\|f_0\|_{L^2}$ and
  $\|f_0\|_{L^1_2}$ such that
  \begin{equation*}
    \norm{f^{0}(t,\cdot)-G^{0}}_{L^{2}}
    \leq
    C\ee^{-\frac{1}{2}t}
    \qquad \text{for all } t \geq 0.
  \end{equation*}
  Also, the constant $C > 0$ depends increasingly on $\|f_0\|_{L^2}$ and
  $\|f_0\|_{L^1_2}$.
\end{theorem}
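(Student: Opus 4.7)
The plan is to follow the argument of \cite[Lemma 6.1]{CMM10} while keeping track of how the final constant depends on the data; the statement is essentially their result restated with explicit monotone dependence of $C$ on $\|f_0\|_{L^2}$ and $\|f_0\|_{L^1_2}$. Three ingredients will enter.

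First, I would establish uniform-in-time bounds on $\|f(t,\cdot)\|_{L^{2}}$ and $\|f(t,\cdot)\|_{L^{1}_{2}}$. The $L^1_2$ bound is immediate from \cref{Prop:self:sim:evolution:existence} applied with $K\equiv 2$ and $k=2$. For the $L^2$ bound, I would differentiate $\|f(t,\cdot)\|_{L^2}^2$, use the self-similar equation~\eqref{eq:Smol:selfsim} and integrate by parts on the transport term $x\p_x f$, which combined with the $+2f$ term produces the non-positive contribution $-\|f\|_{L^2}^{2}$; the nonlinear contribution $\int f\cdot \C_{2}(f,f)\dx$ is then controlled by Young's convolution inequality and the uniform $L^1$ bound. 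This yields $\sup_{t\geq 0}\|f(t,\cdot)\|_{L^2}\leq C(\|f_0\|_{L^2},\|f_0\|_{L^1_2})$ with the required monotone dependence.

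Second, set $h\vcc= f-G^{0}$. By mass conservation $\int_0^\infty x\,h(t,x)\dx=0$ for all $t\geq 0$, and $h$ satisfies $\p_{t}h=\L h+\C_{2}(h,h)$, where $\L h\vcc= 2\,\C_{2}(G^{0},h)+2h+x\p_{x}h$ is the linearisation around $G^{0}$. The key input from \cite{CMM10} is that, on the zero-first-moment subspace, the semigroup $e^{t\L}$ has a spectral gap of size $1/2$, giving an estimate of the form $\|e^{t\L}h_{0}\|_{L^{2}}\leq C_{\L}\ee^{-t/2}\|h_{0}\|_{L^{2}}$ with an explicit $C_{\L}$. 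Duhamel's formula
\begin{equation*}
  h(t)=e^{t\L}h(0)+\int_{0}^{t}e^{(t-s)\L}\C_{2}(h(s),h(s))\ds
\end{equation*}
combined with a product estimate $\|\C_{2}(h,h)\|_{L^{2}}\lesssim \|h\|_{L^{1}}\|h\|_{L^{2}}$ and the uniform bounds of step~1 would then, optimistically, close the argument.

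The main obstacle is that a naive Duhamel estimate only produces Gronwall-type growth rather than decay, because the nonlinearity carries no intrinsic small parameter and $h$ itself is only bounded, not small, initially. The resolution, as in \cite{CMM10}, is to first derive exponential decay in a weaker norm on the zero-first-moment subspace, where dissipation of the linearised equation can be read off after a Laplace-variable computation, and then bootstrap to $L^{2}$ using the uniform regularity bounds of step~1 through a short-time smoothing/interpolation argument. The technical heart of the refinement is to verify that each constant produced along this bootstrap can be chosen to depend only increasingly on $\|f_{0}\|_{L^{2}}$ and $\|f_{0}\|_{L^{1}_{2}}$; once this bookkeeping is in place, the stated bound $\|f(t,\cdot)-G^{0}\|_{L^{2}}\leq C\ee^{-t/2}$ follows directly. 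Given that most of the work is quantitative tracking rather than new analytical ideas, I expect the delicate point to be handling the dependence of $C_{\L}$ and of the interpolation constants on the initial data through the weighted-space embeddings already established in \cref{Lem:cont:emb:L2:L1,Lem:cont:emb:L2:spaces}.
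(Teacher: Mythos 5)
Your proposal diverges substantially from what the paper actually does, and the divergence leaves a genuine gap. The paper does \emph{not} rederive the exponential decay from \cite[Lemma 6.1]{CMM10} by linearisation. It takes that lemma as given (exponential $L^2$ decay for $t\geq T$ with $C,T$ depending on $f_0$), then does two things: (i) a short, crude differential inequality for $\|f\|_{L^2}^2$ on $[0,T]$ to extend the estimate to all $t\geq 0$ (this part is aligned with your ``step 1''); (ii) an inspection of the [CMM10] proof, noting that every constant there is already explicit except the quantity $\eps_2 := \inf_{|\xi|>\eps_1}|1-\hat f_0(\xi)|$, and then an explicit quantitative lower bound on $1-|\hat f_0(\xi)|$ away from $\xi=0$ in terms of $\int f_0$, $\int|x|f_0$ and $\|f_0\|_{L^2}$ (Lemmas~\ref{lem:local_sine_bound}--\ref{lem:Fourier_bound}). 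The reason this is the right route is that for constant $K$, the proof of \cite[Lemma 6.1]{CMM10} is built on the explicit solvability via the Fourier/Laplace transform; the decay is read off from the representation formula, not from a spectral gap of the nonlinear flow.

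Your linearisation-plus-Duhamel sketch does not close, as you yourself note: the initial perturbation $h_0=f_0-G^0$ is not small and the quadratic term $\C_2(h,h)$ has no small prefactor, so Duhamel only gives Gronwall growth. The claim that [CMM10] ``resolves this by first deriving decay in a weaker norm after a Laplace-variable computation and then bootstrapping'' misdescribes their argument: the Laplace-variable computation in [CMM10] produces the decay \emph{directly from the explicit solution}, not through a linearisation or any bootstrap. Without that explicit representation (or some substitute such as entropy/Lyapunov estimates) your argument is missing the step that turns an $O(1)$ initial discrepancy into exponential convergence. Moreover, the ``explicit $C_\L$'' you posit for the $L^2$ spectral gap of the linearised operator is itself one of the things this paper has to establish separately (\cref{Prop:restriction:spectral:gap}), and that proposition is proved by \emph{restriction} from $H^{-1}(\ee^{\mu x})$, so invoking it here to prove the $L^2$ convergence theorem would create a logical circularity in the paper's overall structure. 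The correct and much shorter route, and the one the paper uses, is to track the one remaining implicit constant in \cite[Lemma 6.1]{CMM10} through the Fourier argument and bound it explicitly.
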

The statement we give here is slightly different to that in
\cite[Lemma 6.1]{CMM10} in that we say one can find an explicit
constant $C$ depending only on $\|f_0\|_{L^2}$ and
$\|f_0\|_{L^1_2}$. Although this constant was not specified in
\cite{CMM10}, this can be seen from the proof of the lemma, which
consists on explicit estimates on the Fourier transform. We give a
justification of this in \cref{sec:aux_thm}.

The aim of the following two lemmas is to transfer the convergence in
Lemma~\ref{Thm:L2:convergence} to $L^{1}((1+x)^{k})$. For this, we
first prove an elementary interpolation inequality
(Lemma~\ref{Lem:interpolation1}) which then allows to extend the
convergence in Lemma~\ref{Thm:L2:convergence} to $L^{1}((1+x)^{k})$
(Lemma~\ref{Lem:constant:kernel:L1:conv}).

\begin{lemma}\label{Lem:interpolation1}
  Let $k^*>k$ and $\alpha\in(0,\min\{\frac{2(k^*-k)}{1 + 2 k^*},1\})$
  be given and assume that
  $f\in L^{1}((1+x)^{k^*})\cap L^{2}(0,\infty)$. There exists a
  constant $C$ which depends on $\alpha$, $k$ and $k^*$ such that
  \begin{equation*}
    \norm{f}_{L^{1}_k}
    \leq
    C \norm{f}_{L^{2}}^{\alpha}
    \norm{f}_{L^{1}_{k^*}}^{1-\alpha}.
  \end{equation*}
\end{lemma}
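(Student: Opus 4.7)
The plan is to split the integral defining $\norm{f}_{L^1_k}$ at a cutoff $A \geq 0$ into $[0,A]$ and $[A,\infty)$, estimate each part with a different tool, and then optimise in $A$. On $[0,A]$ Cauchy--Schwarz applied to $|f(x)|$ against the weight $(1+x)^k$ yields
\begin{equation*}
  \int_0^A |f(x)| (1+x)^k \dx \leq \norm{f}_{L^2}\Bigl(\int_0^A (1+x)^{2k}\dx\Bigr)^{1/2} \leq \frac{(1+A)^{k+1/2}}{\sqrt{2k+1}}\,\norm{f}_{L^2}.
\end{equation*}
On $[A,\infty)$ the monotonicity $(1+x)^{k-k^*}\leq (1+A)^{k-k^*}$ (since $k^*>k$) gives
\begin{equation*}
  \int_A^\infty |f(x)|(1+x)^k\dx \leq (1+A)^{-(k^*-k)}\,\norm{f}_{L^1_{k^*}}.
\end{equation*}
Summing, we obtain the basic estimate $\norm{f}_{L^1_k} \leq c_k(1+A)^{k+1/2}\norm{f}_{L^2} + (1+A)^{-(k^*-k)}\norm{f}_{L^1_{k^*}}$ valid for every $A\geq 0$.

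Next, I would optimise by balancing the two terms. Setting $(1+A)^{k^*+1/2} = \norm{f}_{L^1_{k^*}}/(c_k \norm{f}_{L^2})$ makes both contributions of the same order, yielding the inequality with exponent $\alpha^* \vcc = (k^*-k)/(k^*+1/2) = 2(k^*-k)/(1+2k^*)$. This requires the balancing $A$ to be nonnegative, i.e.\ $\norm{f}_{L^1_{k^*}} \geq c_k\norm{f}_{L^2}$; in the opposite regime one instead uses the trivial bound $\norm{f}_{L^1_k}\leq \norm{f}_{L^1_{k^*}}$, which already implies the same inequality because $\norm{f}_{L^1_{k^*}}$ is then comparable to $\norm{f}_{L^2}^{\alpha^*}\norm{f}_{L^1_{k^*}}^{1-\alpha^*}$. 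This establishes the endpoint case $\alpha = \alpha^*$.

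Finally, for a general $\alpha \in (0,\min\{\alpha^*,1\})$ strictly below the endpoint, I would combine multiplicatively the endpoint estimate with the trivial bound $\norm{f}_{L^1_k}\leq \norm{f}_{L^1_{k^*}}$ (the case $\alpha=0$). Writing $\norm{f}_{L^1_k} = \norm{f}_{L^1_k}^{\theta}\cdot\norm{f}_{L^1_k}^{1-\theta}$ with $\theta = \alpha/\alpha^*\in(0,1)$ and applying the two bounds to the two factors produces the desired inequality with exponent $\alpha$ and an explicit constant depending on $\alpha,k,k^*$. The main technical point is the handling of the regime where the would-be optimal cutoff is negative (very concentrated $f$); there the balancing argument does not apply directly and one must rely on the trivial monotonicity bound, but this is harmless since $\norm{f}_{L^2}^{\alpha}\norm{f}_{L^1_{k^*}}^{1-\alpha}$ is already large in that regime.
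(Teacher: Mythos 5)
Your proposal is correct, but it takes a genuinely different route from the paper. The paper's argument is a single application of the three-exponent H\"{o}lder inequality, with exponents $p_{1}=p_{2}=2/\alpha$, $p_{3}=1/(1-\alpha)$, applied to the factorisation $|f|(1+x)^{k}=|f|^{\alpha}\cdot(1+x)^{k-(1-\alpha)k^{*}}\cdot|f|^{1-\alpha}(1+x)^{(1-\alpha)k^{*}}$; the range restriction $\alpha<2(k^{*}-k)/(1+2k^{*})$ is exactly what makes the remaining weight integral $\int_{0}^{\infty}(1+x)^{2(k-(1-\alpha)k^{*})/\alpha}\dx$ converge, and the constant drops out in closed form. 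Your cutoff-and-optimise argument is more elementary (only Cauchy--Schwarz plus monotonicity of the weight) and, interestingly, is a little stronger: by balancing the two pieces at a nonnegative cutoff and handling the ``no admissible cutoff'' regime via $\norm{f}_{L^{1}_{k}}\leq\norm{f}_{L^{1}_{k^{*}}}$, you obtain the inequality at the \emph{endpoint} $\alpha^{*}=2(k^{*}-k)/(1+2k^{*})$, which the paper's H\"{o}lder argument cannot reach because the auxiliary integral diverges there; the sub-endpoint range $\alpha\in(0,\alpha^{*})$ then follows by multiplicative interpolation with the trivial case $\alpha=0$. Both approaches yield an explicit constant depending only on $\alpha$, $k$, $k^{*}$; the paper's is shorter, yours gives the sharper range and makes the origin of the threshold $\alpha^{*}$ transparent as the exponent produced by optimising the cutoff.
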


\begin{proof}
  The claim follows from a straightforward application of Hölder's
  inequality with the three exponents $p_1=p_2=2/\alpha$ and
  $p_3=1/(1-\alpha)$. In fact, we have
  \begin{multline*}
    \norm{f}_{L^1_k}
    =
    \int_{0}^{\infty}\abs{f(x)}(1+x)^{k}\dx
    \\
    =
    \int_{0}^{\infty}\abs{f(x)}^{\alpha}
    \abs{f(x)}^{1-\alpha}(1+x)^{(1-\alpha) k^*}
    (1+x)^{k-(1-\alpha) k^*}\dx
    \\
    \leq \norm{f}_{L^{2}(0,\infty)}^{\alpha}
    \norm{f}_{L^{1}((1+x)^{k^*})}^{1-\alpha}
    \biggl(\int_{0}^{\infty}(1+x)^{\frac{2(k - (1-\alpha) k^*)}{\alpha}}\dx\biggr)^{\frac{\alpha}{2}}.
  \end{multline*}
  Since $\alpha < 2((1-\alpha) k^*- k)$, the remaining integral on the
  right-hand side can easily be computed as
  \begin{equation*}
    \biggl(\int_{0}^{\infty}(1+x)^{\frac{2(k-(1-\alpha)k^*)}{\alpha}}\dx\biggr)^{\frac{\alpha}{2}}
    =\Bigl(\frac{\alpha}{2((1-\alpha) k^*-k)-\alpha}\Bigr)^{\frac{\alpha}{2}},
  \end{equation*}
  which finishes the proof.
\end{proof}

\begin{lemma}\label{Lem:constant:kernel:L1:conv}
  Let $k^* > k \geq 2$, and let $f$ be a solution
  to~\eqref{eq:Smol:selfsim} for the constant kernel $K=2$ with total
  mass one and initial condition
  $f_0 \in L^{2}\cap L^{1}_{k^*}$, and let $G^{0}(x)=\ee^{-x}$
  be the unique stationary solution to~\eqref{eq:Smol:selfsim} with
  total mass one. There exist constants $C, \beta > 0$ such that
  \begin{equation*}
    \norm{f(t,\cdot)-G^{0}}_{L^1_k}
    \leq C \ee^{-\beta t} \qquad \text{for all $t\geq 0$.}
  \end{equation*}
  The constant $C>0$ depends only on $k$, $k^*$, $\norm{f_0}_{L^2}$,
  and $\norm{f_0}_{L^1_{k^*}}$ (and depends increasingly on the latter
  two). The constant $\beta>0$ depends only on $k$ and $k^*$.
\end{lemma}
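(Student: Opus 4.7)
The plan is to obtain the $L^1_k$ decay by interpolating the $L^2$ exponential decay from \cref{Thm:L2:convergence} against a uniform-in-time $L^1_{k^*}$ bound, via the interpolation inequality in \cref{Lem:interpolation1}. All three inputs have already been established in the excerpt, so the argument is essentially a composition.

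First, I would observe that the hypotheses of \cref{Thm:L2:convergence} apply to $f_0$: since $G^0(x)=\ee^{-x}$ lies in both $L^2$ and $L^1_{k^*}$, and $f_0\in L^2\cap L^1_{k^*}\subset L^2\cap L^1(x^2\dx)$ (because $k^*\geq 2$), we obtain
\begin{equation*}
  \norm{f(t,\cdot)-G^0}_{L^2}\leq C_1\ee^{-t/2}\qquad\text{for all }t\geq 0,
\end{equation*}
with $C_1$ depending only (and increasingly) on $\norm{f_0}_{L^2}$ and $\norm{f_0}_{L^1_2}$, hence on $\norm{f_0}_{L^2}$ and $\norm{f_0}_{L^1_{k^*}}$.

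Next I would produce a uniform-in-time bound in $L^1_{k^*}$. By \cref{Prop:self:sim:evolution:existence} applied at level $k^*$, there is a constant $C_2'$ depending increasingly on $\norm{f_0}_{L^1_{k^*}}$ with $\sup_{t\geq 0}\norm{f(t,\cdot)}_{L^1_{k^*}}\leq C_2'$. Combining with $\norm{G^0}_{L^1_{k^*}}=\int_0^\infty\ee^{-x}(1+x)^{k^*}\dx<\infty$ (a quantity depending only on $k^*$), the triangle inequality yields
\begin{equation*}
  \sup_{t\geq 0}\norm{f(t,\cdot)-G^0}_{L^1_{k^*}}\leq C_2,
\end{equation*}
where $C_2$ depends only on $k^*$ and (increasingly on) $\norm{f_0}_{L^1_{k^*}}$.

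Finally I would choose any fixed $\alpha\in(0,\min\{2(k^*-k)/(1+2k^*),1\})$, depending only on $k$ and $k^*$, and apply \cref{Lem:interpolation1} to $h(t,\cdot)\vcc=f(t,\cdot)-G^0$. This gives
\begin{equation*}
  \norm{f(t,\cdot)-G^0}_{L^1_k}
  \leq C\norm{f(t,\cdot)-G^0}_{L^2}^{\alpha}\norm{f(t,\cdot)-G^0}_{L^1_{k^*}}^{1-\alpha}
  \leq C\,C_1^{\alpha}C_2^{1-\alpha}\ee^{-\frac{\alpha}{2}t},
\end{equation*}
which is the claimed estimate with $\beta\vcc=\alpha/2$ depending only on $k$ and $k^*$, and with prefactor depending only on $k$, $k^*$, $\norm{f_0}_{L^2}$ and $\norm{f_0}_{L^1_{k^*}}$ (increasingly in the latter two, as a product of powers of increasing quantities).

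There is no real obstacle here; the only point requiring care is checking that $\alpha$ can indeed be chosen in the interval required by \cref{Lem:interpolation1} — which is immediate because $k^*>k$ ensures $2(k^*-k)/(1+2k^*)>0$ — and that the dependence of all constants matches what is claimed, which follows by tracking the increasing dependence through \cref{Thm:L2:convergence} and \cref{Prop:self:sim:evolution:existence}.
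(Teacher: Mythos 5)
Your proof is correct and follows essentially the same strategy as the paper's: interpolate the $L^2$ exponential decay from \cref{Thm:L2:convergence} against the uniform $L^1_{k^*}$ bound from \cref{Prop:self:sim:evolution:existence} via \cref{Lem:interpolation1}. You substitute both bounds directly into the interpolation inequality, whereas the paper makes a small detour through Young's inequality with a time-dependent parameter $\delta=\ee^{-\alpha t/2}$ before plugging in the bounds; both routes yield the same decay rate $\beta=\alpha/2$, and your version is arguably the more streamlined presentation of the same argument.
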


\begin{proof}
  Due to \cite[Lemma~2.8]{EMR05} we have that $f\in L^{\infty}(0,T,L^{2})$ for all $T>0$. Thus, from Lemma~\ref{Lem:interpolation1} we know that, with an
  appropriate choice of $\alpha \in (0,1)$ (depending on $k$ and
  $k^*$),
  \begin{equation*}
    \norm{f^{0}(t,\cdot)-G^{0}}_{L^{1}_k}
    \leq
    C_1 \norm{f^{0}(t,\cdot)-G^{0}}_{L^{2}}^{\alpha}
    \,
    \norm{f^{0}(t,\cdot)-G^{0}}_{L^{1}_{k^*}}^{1-\alpha},
  \end{equation*}
  for some $C_1 > 0$ depending only on $k$ and $k^*$. Thus, Young's
  inequality (with a parameter $\delta$) for $p_{1}=1/\alpha$ and
  $p_2=1/(1-\alpha)$ yields
  \begin{equation*}
    \norm{f^{0}(t,\cdot)-G^{0}}_{L^{1}_k}
    \leq
    C_2
    \delta^{\frac{\alpha-1}{\alpha}}
    \norm{f^{0}(t,\cdot)-G^{0}}_{L^{2}}
    +
    C_1 \delta\norm{f^{0}(t,\cdot)-G^{0}}_{L^{1}_{k^*}},
  \end{equation*}
  again for $C_2 > 0$ depending only on $k$, $k^*$.
  Using \cref{Prop:self:sim:evolution:existence,Thm:L2:convergence} we
  obtain
  \begin{equation*}
    \norm{f^{0}(t,\cdot)-G^{0}}_{L^{1}_k}
    \leq
    C_3 \delta^{\frac{\alpha-1}{\alpha}}
    e^{-\frac{1}{2} t}
    +
    \delta C_2,
  \end{equation*}
  for some $C_3 > 0$ depending only on $k$, $k^*$, $\norm{f_0}_{L^2}$
  and $\norm{f_0}_{L^1_2}$, and $C_2 > 0$ depending only on $k^*$ and
  $\norm{f_0}_{L^1_{k^*}}$. Taking
  $\delta=\ee^{-\alpha t/2}$ we deduce that
  \begin{equation*}
    \norm{f^{0}(t,\cdot)-G^{0}}_{L^{1}_k}
    \leq
    C_3
    \ee^{-\frac{\alpha}{2} t}
    +
    C_2 \ee^{-\frac{\alpha}{2} t}
    = (C_2 + C_3) \ee^{-\frac{\alpha}{2} t}.
  \end{equation*}
  In sum, the constants $C_2$ and $C_3$ depend on $k$, $k^*$,
  $\norm{f_0}_{L^2}$, $\norm{f_0}_{L^1_2}$ and
  $\norm{f_0}_{L^1_{k^*}}$. Since
  $\norm{f_0}_{L^1_2} \leq \norm{f_0}_{L^1_{k^*}}$ (and $C_2$, $C_3$
  are increasing in $\norm{f_0}_{L^1_2}$), one can always modify the
  constants to have them depend only on $k$, $k^*$, $\norm{f_0}_{L^2}$,
  and $\norm{f_0}_{L^1_{k^*}}$. Taking $\beta := \alpha/2$ and
  $C := C_2 + C_3$, this shows the result.
\end{proof}

\section{Bounds and stability of self-similar profiles}
\label{sec:bound_stability}

We gather in this section some basic results on existence and bounds
which apply in particular to the self-similar profiles for the
perturbed equation. More importantly, we give some \emph{stability}
results showing that any self-similar profile $G_\eps$ with mass
one for the kernel $K_\eps$ must be close to $G_0(x) = e^{-x}$, in
distances given by suitable norms. In general, these stability results
cannot be obtained from the linearisation methods in this paper, so we
borrow them from elsewhere or prove them using different
methods. (However, linearisation methods do give some results on
\emph{local} stability of profiles, assuming we are in a certain
neighbourhood of the profile $G_0$; see Lemma
\ref{lem:local_stability}.)

\subsection{Existence of self-similar profiles}
\label{sec:exist-self-simil}

Existence of self-similar profiles for large classes of non-solvable
kernels with power-law structure was shown in
\cite{FoL05,EMR05,EsM06}, and precise results in the case of
homogeneity zero are given in \cite{NiV14}. Except for the works
mentioned in the introduction, uniqueness of scaling profiles in not known for most
coagulation kernels. However, there is a number of works providing a
priori regularity and asymptotics of self-similar solutions for small
and large cluster sizes (e.g.\@ \cite{FoL06,NiV14,MC11}). 

In our particular setting of homogeneity
zero, we cite the following result from \cite[Prop.\@ 1.1]{NiV14} which
provides existence of self-similar profiles with finite mass for the
kernels we consider (see also Remark~\ref{Rem:profile:integrated}):

\begin{proposition}[Existence of profiles]
  \label{Prop:profile:exists}
  Let $K$ be homogeneous of degree zero and let $k_{0},K_{0}>0$ and
  $\kappa\in(0,1]$ be constants such that
  $K(x,y)\leq K_{0}((x/y)^{\alpha}+(y/x)^{\alpha})$ for all
  $x,y\in (0,\infty)$ with $\alpha\in [0,1)$ and 
  $\min_{\abs{x-y}\leq\kappa(x+y)}K(x,y)\geq k_{0}$. Then there exists
  a self-similar profile $G\in C(0,\infty)\cap L^{1}(x\dx)$
  of~\eqref{eq:Smol}.
\end{proposition}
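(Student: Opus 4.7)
The approach I would take is a dynamical fixed-point argument of the type developed for non-solvable coagulation kernels in \cite{FoL05,EMR05,EsM06}. A self-similar profile with unit mass is a stationary solution of \eqref{eq:Smol:selfsim}, equivalently a fixed point of the time-$T$ solution map $\Phi_T$ for every $T>0$. The plan is: (i) identify a convex set $\mathcal{K}$ of nonnegative integrable functions of unit first moment that is invariant under $\Phi_T$ for all $T>0$ and compact in a suitable weak topology; (ii) apply a Tikhonov/Schauder fixed-point theorem to obtain periodic orbits $f_{T_n}=\Phi_{T_n}(f_{T_n})$ for a sequence $T_n\downarrow 0$; (iii) pass to a limit to extract a stationary solution $G$.

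The bulk of the work is in step (i). A time-dependent theory must first be set up via truncation (e.g.\ replacing $K$ by $K\wedge N$) and a priori moment estimates, yielding global mass-conserving solutions to \eqref{eq:Smol:selfsim}; this is standard under the assumed polynomial bound on $K$. For the invariant set, the upper bound $K(x,y)\leq K_{0}((x/y)^{\alpha}+(y/x)^{\alpha})$ with $\alpha<1$ is used to derive differential inequalities for the moments $M_k(t):=\int_0^\infty x^k f(t,x)\dx$ for suitable $k>1$. The transport term $2f+x\del_x f$ contributes $-(k-1)M_k$ on integration, which for $k>1$ dominates the growth produced by $\C_K$ (this is where $\alpha<1$ enters crucially), yielding a uniform-in-time bound $M_k(t)\leq C_k$. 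The lower bound $\min_{\abs{x-y}\leq\kappa(x+y)}K\geq k_0$ is exploited in the opposite direction, to prevent mass from escaping to the origin: it forces sufficient coagulation between comparable clusters, so that $\int_0^\delta f(t,x)\dx$ stays below a modulus $\omega(\delta)\to 0$ as $\delta\to 0$, uniformly in $t$. Combining these, the resulting set $\mathcal{K}$ is convex, and weakly compact in $L^{1}((1+x)\dx)$ by Dunford-Pettis.

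Step (ii) requires continuity of $\Phi_T$ on $\mathcal{K}$ in the chosen topology, which follows from standard stability of solutions to \eqref{eq:Smol:selfsim} combined with the moment bounds. The main obstacle is step (iii): from periodic orbits $f_{T_n}$ of vanishing period $T_n\to 0$, one must show that any weak limit $G$ is a stationary solution. The cleanest way is to test the weak formulation of \eqref{eq:Smol:selfsim} against time-independent $\varphi\in C_c^{1}(0,\infty)$ over $[0,T_n]$ and divide by $T_n$; periodicity cancels the boundary terms, and weak convergence together with the uniform moment bounds lets us pass to the limit to obtain the weak stationary equation. Finally, $G\in C(0,\infty)$ follows \emph{a posteriori} from the integrated form in Remark~\ref{Rem:profile:integrated}: the right-hand side is continuous in $x$ by dominated convergence (using the moment bound on $G$ and the polynomial bound on $K$), so $G$ inherits this continuity on $(0,\infty)$. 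Uniqueness is not addressed by this scheme and is not part of the statement.
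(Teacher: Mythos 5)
The paper does not prove this proposition: it is imported verbatim from \cite[Prop.~1.1]{NiV14}, with the general existence methodology attributed to \cite{FoL05,EMR05,EsM06} in the text immediately preceding the statement. Your sketch is therefore not comparable to a proof appearing in this paper; it is a reconstruction of the argument in the references the paper relies on.

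As such a reconstruction it is broadly faithful: the time-$T$ Poincar\'e-map / periodic-orbit scheme you describe is the one used in \cite{EMR05} and refined in \cite{NiV14} for the homogeneity-zero case, and the role you assign to the upper bound (uniform moment propagation via the $-(k-1)M_k$ contribution of the transport term, using $\alpha<1$) and to the lower bound (preventing escape of mass to the origin) is the right one. Two technical points your sketch glosses over deserve mention. First, Dunford--Pettis in $L^1((1+x)\dx)$ needs genuine equi-integrability, not merely tightness at $0$ and $\infty$; one must also rule out concentration at interior points. The cited works handle this either by working in a measure framework and proving absolute continuity of the limit a posteriori, or by propagating a higher-integrability ($L^p$) bound uniformly in time. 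Second, passing to the limit in the quadratic term $\mathcal{C}_K(f_{T_n},f_{T_n})$ under weak $L^1$ convergence alone is not legitimate; one needs either strong local convergence (exploiting the transport structure and moment bounds) or a weak-$*$ measure setting in which the bilinear form is sequentially continuous, which the polynomial growth bound on $K$ with $\alpha<1$ makes possible. Your final observation, that continuity of $G$ on $(0,\infty)$ follows a posteriori from the integrated formulation of Remark~\ref{Rem:profile:integrated}, is correct.
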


\subsection{Bounds on self-similar profiles}

In this subsection, we provide several a-priori estimates for
self-similar profiles. More precisely,
Proposition~\ref{Prop:profile:m0} provides precise upper and lower
bounds for the integral of perturbed self-similar profiles while
Proposition~\ref{Prop:L1:est:profile} states that self-similar
profiles are uniformly bounded in the $L^{1}_{k}$ norm.

\begin{proposition}\label{Prop:profile:m0}
  Let $K_{\eps}$ satisfy \cref{eq:h1,eq:h2,eq:h3} and let $G_{\eps}$ be a stationary solution of~\eqref{eq:Smol:selfsim}, i.e.\@ a self-similar profile. Then we have
  \begin{equation*}
    \frac{1}{1+\frac{\eps}{2}}\leq \int_{0}^{\infty}G_{\eps}(x)\dx\leq 1
  \end{equation*}
  for all $\eps\geq 0$.
\end{proposition}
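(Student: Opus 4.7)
The plan is to reuse the ODE for $m_0(t)$ derived in the proof of Proposition~\ref{Prop:bound:m0}, but now applied to the time-independent solution $G_\eps$. Set $m_0 := \int_0^\infty G_\eps(x) \dx$. Since $G_\eps$ is a stationary solution of~\eqref{eq:Smol:selfsim} (via Definition~\ref{Def:profile} and Remark~\ref{Rem:profile:integrated}), the constant-in-time function $f(t,\cdot) \equiv G_\eps$ is itself a solution of~\eqref{eq:Smol:selfsim}, so the same calculation as in the proof of Proposition~\ref{Prop:bound:m0}, with its left-hand side $\del_t m_0$ set to zero, gives the stationary identity
\begin{equation*}
  0 = m_0 - m_0^2 - \frac{\eps}{2} \int_0^\infty \int_0^\infty W(x,y) G_\eps(x) G_\eps(y) \dx \dy.
\end{equation*}

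From this identity both bounds follow immediately using only the sign information $0 \leq W \leq 1$. Since $W, G_\eps \geq 0$, the double integral on the right is nonnegative, hence $m_0 - m_0^2 \geq 0$, which together with $m_0 \geq 0$ yields $m_0 \leq 1$. Conversely, $W \leq 1$ implies that the same double integral is at most $m_0^2$, so
\begin{equation*}
  m_0 = m_0^2 + \frac{\eps}{2} \int_0^\infty \int_0^\infty W(x,y) G_\eps(x) G_\eps(y) \dx \dy
  \leq \Bigl(1 + \tfrac{\eps}{2}\Bigr) m_0^2.
\end{equation*}
Dividing by $m_0 > 0$ yields $m_0 \geq (1+\eps/2)^{-1}$. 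Positivity of $m_0$ is implicit: for $G_\eps \geq 0$, $m_0 = 0$ would force $G_\eps \equiv 0$, which is excluded since only nontrivial profiles are under consideration.

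The only technical point to verify is the rigorous justification of the integrated stationary identity. The formula in the proof of Proposition~\ref{Prop:bound:m0} was obtained by integrating~\eqref{eq:Smol:selfsim} against the test function $1$, using Fubini for $\int \C_{K_\eps}(G,G)\dx = -\tfrac{1}{2}\int\int K_\eps G G \dx\dy$ and integration by parts for $\int_0^\infty x G_\eps'(x)\dx = -m_0$. Both steps are legitimate here thanks to the bound $K_\eps \leq 2+\eps$ together with $G_\eps \in L^1_1$, which makes the product $K_\eps G_\eps G_\eps$ integrable on $(0,\infty)^2$, and the standard approximation arguments for profiles recalled in Remark~\ref{Rem:profile:integrated}. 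No new obstacle arises beyond what is already handled for time-dependent solutions, and in particular the derivation is insensitive to the choice of mass normalisation since both $m_0$ and the stationary identity are invariant under the scaling $G \mapsto a G(a\,\cdot)$.
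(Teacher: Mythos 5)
Your proof is correct and follows essentially the same path as the paper's: both derive the stationary identity $m_0 = m_0^2 + \tfrac{\eps}{2}\int\!\int W\,G_\eps G_\eps$ (you obtain it by setting $\del_t m_0 = 0$ in the computation of Proposition~\ref{Prop:bound:m0}, the paper by directly integrating the stationary equation, but the computation is identical) and then read off both bounds from $0 \leq W \leq 1$. Your brief remarks on positivity of $m_0$ and on the integrability needed to justify the Fubini/integration-by-parts steps are correct and fill in details the paper leaves implicit.
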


\begin{proof}
  By assumption $G_{\eps}$ satisfies~\eqref{eq:Smol:selfsim} with left-hand side zero. Integrating this equation, we find
  \begin{multline*}
    2\int_{0}^{\infty}G_{\eps}(x)\dx+\int_{0}^{\infty}xG'_{\eps}(x)\dx+\frac{1}{2}\int_{0}^{\infty}\int_{0}^{x}(2+\eps W(x-y,y))G_{\eps}(x-y)G_{\eps}(y)\dy\dx\\
    -\int_{0}^{\infty}\int_{0}^{\infty}(2+\eps W(x,y)G_{\eps}(x)G_{\eps}(y)\dx\dy=0.
  \end{multline*}
  Denoting $m_{0}=\int_{0}^{\infty}G_{\eps}(x)\dx$, integrating by parts in the second term on the left-hand side and using Fubini's Theorem for the two double integrals, this reduces to
  \begin{multline*}
    0=2m_{0}-m_{0}+m_{0}^{2}-2m_{0}^{2}+\frac{\eps}{2}\int_{0}^{\infty}\int_{0}^{\infty}W(x,y)G_{\eps}(x)G_{\eps}(y)\dx\dy\\*
    -\eps\int_{0}^{\infty}\int_{0}^{\infty}W(x,y)G_{\eps}(x)G_{\eps}(y)\dx\dy.
  \end{multline*}
  Combining terms, we end up with
  \begin{equation*}
    m_{0}=m_{0}^{2}+\frac{\eps}{2}\int_{0}^{\infty}\int_{0}^{\infty}W(x,y)G_{\eps}(x)G_{\eps}(y)\dx\dy.
  \end{equation*}
  Due to~\eqref{eq:h2} and the non-negativity of $G_{\eps}$ we have $0\leq W(x,y)G_{\eps}(x)G_{\eps}(y)\dx\dy \leq m_{0}^{2}$ which leads to
  \begin{equation*}
    m_{0}^{2}\leq m_{0}\leq \Bigl(1+\frac{\eps}{2}\Bigr) m_{0}^{2}
  \end{equation*}
  from which the claim directly follows.
\end{proof}

Based on the previous proposition, we can also show the following statement which gives uniform boundedness of all non-negative moments for self-similar profiles. We also note that this result also follows from estimates in \cite{NiV14} but to be self-contained, we include the complete proof.

\begin{proposition}
  \label{Prop:L1:est:profile}
  Let $K_{\eps}$ satisfy \cref{eq:h1,eq:h2,eq:h3} and $k\geq 0$. Then
  there exists a uniform constant $C_{k}$ (depending only on $k$) such
  that
  \begin{equation*}
    \norm{G_{\eps}}_{L^{1}_{k}}=\int_{0}^{\infty}G_{\eps}(x)(1+x)^{k}\dx\leq C_{k}
  \end{equation*}
  for all self-similar profiles $G_{\eps}$ and all $\eps\in[0,1]$.
\end{proposition}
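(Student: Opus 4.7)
The plan is to obtain moment estimates for $G_{\eps}$ by testing the stationary version of~\eqref{eq:Smol:selfsim} against $\varphi(x) = x^{k}$ and then propagating the resulting bound by induction on $k$. Using the standard symmetrization of $\C_{K_{\eps}}$ together with integration by parts in the transport term (which gives $\int_{0}^{\infty} x^{k+1} G'_{\eps}(x)\dx = -(k+1) M_{k}^{\eps}$, where $M_{k}^{\eps}\vcc=\int_{0}^{\infty} x^{k} G_{\eps}(x)\dx$) one arrives at the identity
\begin{equation*}
    (k-1) M_{k}^{\eps} = \frac{1}{2}\int_{0}^{\infty}\int_{0}^{\infty} K_{\eps}(x,y) G_{\eps}(x) G_{\eps}(y) \bigl[(x+y)^{k}-x^{k}-y^{k}\bigr]\dx\dy.
\end{equation*}
Since $M_{k}^{\eps}$ is not yet known to be finite, this identity must first be derived for a truncated test function $\varphi_{R}(x) = \min(x,R)^{k}$, for which all integrals are manifestly finite; the limit $R\to\infty$ is then taken by monotone convergence, exploiting that both sides are non-negative when $k\geq 1$.

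For $k \in [0,1]$ no equation is needed: Hölder's inequality directly yields $M_{k}^{\eps} \leq (M_{0}^{\eps})^{1-k}(M_{1}^{\eps})^{k} \leq 1$, using the mass normalization $M_{1}^{\eps} = 1$ from \cref{Sec:scale:invariance} together with $M_{0}^{\eps} \leq 1$ from \cref{Prop:profile:m0}. For $k > 1$, the plan is to combine the moment identity with the elementary inequality
\begin{equation*}
    (x+y)^{k} - x^{k} - y^{k} \leq k\, 2^{k-1}\bigl(x^{k-1} y + x y^{k-1}\bigr),\qquad x,y>0,
\end{equation*}
(which follows from the mean value theorem applied in the case $x\geq y$, together with symmetry) and the pointwise bound $K_{\eps} \leq 2+\eps \leq 3$. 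Using $M_{1}^{\eps}=1$ this produces the recursion
\begin{equation*}
    (k-1) M_{k}^{\eps} \leq 6k\, 2^{k-1} M_{k-1}^{\eps}.
\end{equation*}

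Starting from $M_{0}^{\eps} \leq 1$ and $M_{1}^{\eps} = 1$, the recursion yields by induction a uniform bound $M_{n}^{\eps} \leq C_{n}$ for each integer $n \geq 2$. For non-integer $k > 1$, writing $k = (1-\theta)\lfloor k\rfloor + \theta\lceil k\rceil$ and applying Hölder once more gives $M_{k}^{\eps} \leq (M_{\lfloor k\rfloor}^{\eps})^{1-\theta}(M_{\lceil k\rceil}^{\eps})^{\theta}$, uniformly in $\eps\in[0,1]$. The conclusion then follows from $(1+x)^{k}\leq 2^{k}(1+x^{k})$, which yields $\norm{G_{\eps}}_{L^{1}_{k}} \leq 2^{k}(M_{0}^{\eps} + M_{k}^{\eps}) \leq C_{k}$. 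The main obstacle is the rigorous justification of the moment identity in the first paragraph, since the finiteness of $M_{k}^{\eps}$ is precisely what is being proved; the truncation argument is standard but must be spelled out, using that the truncated right-hand side is controlled in terms of $M_{k-1}^{\eps}$ — already finite by the previous inductive step — and that positivity of both sides for $k\geq 1$ allows passing to the limit.
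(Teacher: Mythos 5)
Your overall strategy is the same as the paper's: test the stationary equation against a truncated power, use the sublinearity bound on $(x+y)^k - x^k - y^k$ together with the mass normalisation $M_1^\eps = 1$ and $M_0^\eps \leq 1$, induct over integer moments, and interpolate by Hölder for non-integer $k$. That matches the paper's induction in spirit and in all the estimates.

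However, the specific truncation you propose, $\varphi_R(x) = \min(x,R)^k$, does not support the argument you sketch, and this is not just a detail to be filled in. For this $\varphi_R$ one computes
\begin{equation*}
  (x\varphi_R(x))' - 2\varphi_R(x) =
  \begin{cases}
    (k-1)x^k & \text{if } x \leq R,\\
    -R^k & \text{if } x > R,
  \end{cases}
\end{equation*}
so the left-hand side of the truncated moment identity equals $(k-1)\int_0^R x^k G_\eps\dx - R^k\int_R^\infty G_\eps\dx$, which is not sign-definite. Likewise $\varphi_R(x+y)-\varphi_R(x)-\varphi_R(y)$ is negative when $x,y>R$, so the right-hand side is not sign-definite either. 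Consequently the monotone-convergence argument you invoke (``positivity of both sides for $k\geq 1$'') does not apply: neither side is nonnegative for the truncated test function, and the rearranged inequality only yields $(k-1)\int_0^R x^k G_\eps \leq R^k\int_R^\infty G_\eps + C\, M_{k-1}^\eps$, where $R^k\int_R^\infty G_\eps \leq R^{k-1}$ is not uniform in $R$, so you cannot conclude $M_k^\eps < \infty$.

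The fix — and what the paper does — is to use the \emph{linear continuation} of $x^k$ past $x=R$, namely $\varphi_R(x) = kR^{k-1}x - (k-1)R^k$ for $x>R$. With this choice $(x\varphi_R)'-2\varphi_R$ equals $(k-1)x^k$ on $[0,R]$ and $(k-1)R^k$ on $(R,\infty)$, i.e.\ it is nonnegative everywhere, while the estimate $\varphi_R(x+y)-\varphi_R(x)-\varphi_R(y)\leq \widehat C_k(x^{k-1}y + xy^{k-1})$ still holds uniformly in $R$. One can then discard the nonnegative boundary term and obtain a bound on $\int_0^R x^k G_\eps$ that is uniform in $R$, which is exactly what gives finiteness of $M_k^\eps$ and closes the induction. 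So the structure of your proof is correct, but the particular truncation must be replaced by the linear one for the passage to the limit to work.
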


\begin{proof}
  We note that $(1+x)^{k}\leq C (1+x^{k})$ for some $C = C(k) >
  0$. According to Proposition~\ref{Prop:profile:m0} it thus suffices
  to show that $\int_{0}^{\infty}x^{k}G_{\eps}(x)\dx\leq C_{k}$ for
  all $k\in\N$ with $k\geq 2$.
  
  To see this, we will argue by induction. Precisely, for a fixed $k\in\N$ with $k\geq 2$ assume that the moments up to order $k-1$ are bounded uniformly, i.e.\@ $\int_{0}^{\infty}x^{\ell}G_{\eps}(x)\dx\leq C_{\ell}$ for all $\ell\leq k-1$. For $R>1$ let $\varphi_{R}$ be the linear continuation of $x\mapsto x^{k}$, i.e.\@
  \begin{equation*}
    \varphi_{R}(x)=\begin{cases}
      x^{k} & \text{if }x\leq R\\
      kR^{k-1}x-(k-1)R^{k} & \text{if }x>R.
    \end{cases}
  \end{equation*}
  Since by assumption $G_{\eps}$ has finite moments of order zero and one, we can take $\varphi_{R}$ as test function in the weak formulation of self-similar profiles which yields
  \begin{multline}\label{eq:profile:higher:moment:1}
    \int_{0}^{\infty}G_{\eps}(x)\bigl((x\varphi_{R}(x))'-2\varphi_{R}(x)\bigr)\\*
    =\biggl(\frac{1}{2}\int_{0}^{\infty}K(x,y)G_{\eps}(x)G_{\eps}(y)\bigl[\varphi_{R}(x+y)-\varphi_{R}(x)-\varphi_{R}(y)\bigr]\dy\biggr)\dx.
  \end{multline}
  It is easy to check that 
  \begin{equation*}
    \varphi_{R}(x+y)-\varphi_{R}(x)-\varphi_{R}(y)\leq \widehat{C}_{k}\bigl(x^{k-1}y+xy^{k-1}\bigr)
  \end{equation*}
  for all $x,y>0$ independent of $R$. Moreover, a direct computation yields
  \begin{equation*}
    (x\varphi_{R}(x))'-2\varphi_{R}(x)=\begin{cases}
      (k-1)x^{k} & \text{if }x\leq R\\
      (k-1)R^{k} & \text{if }x>R.
    \end{cases}.
  \end{equation*}
  Thus, we deduce from~\eqref{eq:profile:higher:moment:1} together with \cref{eq:h1,eq:h2} that
  \begin{multline*}
    (k-1)\int_{0}^{R}x^{k}G_{\eps}(x)\dx+(k-1)R^{k-1}\int_{R}^{\infty}G_{\eps}(x)\dx\\*
    \leq \frac{3}{2}\widehat{C}_{k}\int_{0}^{\infty}\int_{0}^{\infty}G_{\eps}(x)G_{\eps}(y)\bigl[x^{k-1}y+xy^{k-1}\bigr]\dx\dy.
  \end{multline*}
  Due to the non-negativity of $G_{\eps}$ we get in particular the estimate
  \begin{equation*}
    (k-1)\int_{0}^{R}x^{k}G_{\eps}(x)\dx \leq 3\widehat{C}_{k}\biggl(\int_{0}^{\infty}x^{k-1}G_{\eps}(x)\dx\biggr)\biggl(\int_{0}^{\infty}yG_{\eps}(y)\dy\biggr)
  \end{equation*}
  from which the claim follows since the right-hand side is uniformly bounded by the induction assumption.
\end{proof}

\subsection{Behaviour of profiles close to zero and a uniform \texorpdfstring{$L^2$}{L2} estimate}

The main goal of this subsection is to provide a uniform bound in $L^2$ for self-similar profiles (Proposition~\ref{Prop:L2:apriori}). The main task for this consists in deriving the behaviour of the self-similar profiles for small values of $x$ which will be done in the following sequence of lemmata. As a byproduct, we also obtain an a-priori estimate for certain negative moments, depending on the perturbation parameter $\eps$ (Lemma~\ref{Lem:profile:neg:moments}).

The first lemma provides a lower integral bound on the profiles which shows that the self-similar solutions can not concentrate at zero.

\begin{lemma}\label{Lem:profile:lower:bd:NEW}
  Let $K_{\eps}$ satisfy \cref{eq:h1,eq:h2,eq:h3}. For each $a_{*}\in (0,1)$ there exists a constant $c_{*}>0$ such that 
  \begin{equation*}
    \int_{a_{*}}^{\infty}G_{\eps}(x)\dx\geq c_{*}
  \end{equation*}
  for all self-similar profiles $G_{\eps}$ with $\int_{0}^{\infty}xG_{\eps}(x)\dx=1$.
\end{lemma}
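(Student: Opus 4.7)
The plan is to combine the mass normalisation $\int_0^\infty x G_\eps(x)\dx = 1$ with the two a priori estimates already in hand: $\int_0^\infty G_\eps\dx \leq 1$ from \cref{Prop:profile:m0} and the uniform second-moment bound $\int_0^\infty x^2 G_\eps\dx \leq C_2$ from \cref{Prop:L1:est:profile}. The underlying picture is that $\int_{a_*}^\infty G_\eps\dx$ can be small only if $G_\eps$ either concentrates near the origin (prevented by the uniform bound on $m_0$) or sends mass to infinity (prevented by the uniform bound on the second moment); each of the two \emph{a priori} estimates closes off one of these two avenues.

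Implementing this, I would first observe that $\int_0^{a_*} x G_\eps\dx \leq a_*\int_0^{a_*} G_\eps\dx \leq a_*$, so the mass normalisation gives the lower bound
\begin{equation*}
  \int_{a_*}^\infty x G_\eps(x)\dx \;\geq\; 1 - a_* \;>\; 0.
\end{equation*}
On the other hand, writing $B := \int_{a_*}^\infty G_\eps\dx$, a Chebyshev-style splitting yields, for every $R > 0$,
\begin{equation*}
  \int_{a_*}^\infty x G_\eps\dx
  \;\leq\; R\int_{\{x\in(a_*,\infty)\colon x\leq R\}} G_\eps\dx
  + \frac{1}{R}\int_0^\infty x^2 G_\eps\dx
  \;\leq\; R\,B + \frac{C_2}{R}.
\end{equation*}

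Combining the two estimates gives $1 - a_* \leq RB + C_2/R$ for every $R > 0$, and minimising the right-hand side in $R$ (at $R = \sqrt{C_2/B}$) yields $1 - a_* \leq 2\sqrt{C_2 B}$. Therefore
\begin{equation*}
  \int_{a_*}^\infty G_\eps\dx \;\geq\; \frac{(1-a_*)^2}{4\,C_2},
\end{equation*}
and one may take $c_* := (1-a_*)^2/(4\,C_2)$, which is explicit in $a_*$ and the constant of \cref{Prop:L1:est:profile}. There is no substantive obstacle in the argument: both required inputs are already available, and the only point worth emphasising is that a uniform bound on \emph{any} moment of order strictly greater than one would serve just as well in place of $C_2$.
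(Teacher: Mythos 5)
Your proposal is correct, and the overall structure matches the paper's: both split $\int_0^\infty x G_\eps\dx$ at $a_*$, bound $\int_0^{a_*} x G_\eps\dx \leq a_*$ using $\int_0^\infty G_\eps\dx \leq 1$ from \cref{Prop:profile:m0}, and then control $\int_{a_*}^\infty x G_\eps\dx$ from above in terms of $\int_{a_*}^\infty G_\eps\dx$ and the uniform second-moment bound from \cref{Prop:L1:est:profile}. The one place you diverge is in that last step: the paper applies Cauchy--Schwarz directly,
\begin{equation*}
  \int_{a_*}^\infty x G_\eps\dx
  \leq \biggl(\int_{a_*}^\infty x^2 G_\eps\dx\biggr)^{1/2}
        \biggl(\int_{a_*}^\infty G_\eps\dx\biggr)^{1/2}
  \leq C_2^{1/2}\,B^{1/2},
\end{equation*}
whereas you use a Chebyshev-style split at a free threshold $R$ followed by optimisation in $R$, giving $1-a_* \leq 2\sqrt{C_2 B}$. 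Both are valid and yield the same order of bound; Cauchy--Schwarz is marginally cleaner and gives $c_* = (1-a_*)^2/C_2$ rather than your $(1-a_*)^2/(4C_2)$. Your closing observation that any uniformly bounded moment of order strictly greater than $1$ would do is correct, and in fact the Chebyshev route makes that flexibility slightly more transparent than Cauchy--Schwarz, which is tied to the exponent $2$.
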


\begin{proof}
  Splitting the integral we find together with Cauchy's inequality and Proposition~\ref{Prop:profile:m0} that
  \begin{multline*}
    1=\int_{0}^{\infty}xG_{\eps}(x)\dx=\int_{0}^{a_{*}}xG_{\eps}(x)\dx+\int_{a_{*}}^{\infty}xG_{\eps}(x)\dx\leq a_{*}\int_{0}^{a_{*}}G_{\eps}(x)\dx+\int_{a_{*}}^{\infty}xG_{\eps}(x)\dx\\*
    \leq a_{*}+\biggl(\int_{a_{*}}^{\infty}x^2 G_{\eps}(x)\dx\biggr)^{1/2}\biggl(\int_{a_{*}}^{\infty}G_{\eps}(x)\dx\biggr)^{1/2}.
  \end{multline*}
  Together with Proposition~\ref{Prop:L1:est:profile} we find
  \begin{equation*}
    1-a_{*}\leq C_{2}^{1/2}\biggl(\int_{a_{*}}^{\infty}G_{\eps}(x)\dx\biggr)^{1/2}
  \end{equation*}
  where $C_{2}$ is a uniform bound on the second moment which is provided by Proposition~\ref{Prop:L1:est:profile}. Thus, since $a_{*}\in (0,1)$ we conclude
  \begin{equation*}
    \int_{a_{*}}^{\infty}G_{\eps}(x)\dx\geq \frac{(1-a_{*})^2}{C_{2}}
  \end{equation*}
  and the claim follows with $c_{*}=(1-a_{*})^2/C_{2}$.
\end{proof}

The next statement gives an estimate on the primitive for self-similar profiles close to zero.

\begin{lemma}\label{Lem:beh:zero:weak}
  Let $K_{\eps}$ satisfy \cref{eq:h1,eq:h2,eq:h3}. There exist constants $C_*>0$ and $\eps_*\in (0,1)$ such that for $\eps\in(0,\eps_{*})$ each self-similar profile $G_{\eps}$ satisfies
  \begin{equation*}
    \int_{0}^{x}G_{\eps}(y)\dy\leq C_{*}x^{1-\frac{2\eps}{2+\eps}} \qquad \text{for all }x\leq 1.
  \end{equation*}
  In particular, this implies $\int_{0}^{x}G_{\eps}(y)\dy\leq C_{*}x^{1-\eps}$.
\end{lemma}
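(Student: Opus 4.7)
The plan is to derive a first-order differential inequality for the primitive $F_\eps(x):=\int_0^x G_\eps(y)\dy$ and then integrate it. Starting from the stationary profile equation $\C_{K_\eps}(G_\eps,G_\eps)+2G_\eps+xG_\eps'=0$, integration from $0$ to $x$ (with integration by parts on the drift term) yields $xG_\eps(x)+F_\eps(x)=-\int_0^x \C_{K_\eps}(G_\eps,G_\eps)(z)\dz$. Testing the weak form of the coagulation operator against $\mathbf{1}_{[0,x]}$ (or a direct Fubini computation) expresses the right-hand side as a sum of integrals of $K_\eps G_\eps G_\eps$ over the three regions $\{u+y\le x\}$, $\{u,y\le x,\ u+y>x\}$ and $\{u\le x<y\}\cup\{y\le x<u\}$. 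Using $K_\eps\ge 2$ throughout, combined with the elementary bound $\int\!\!\int_{u+y\le x}G_\eps G_\eps\le F_\eps(x)^2$ and the lower bound $m_0:=\int_0^\infty G_\eps\ge \tfrac{2}{2+\eps}$ from \cref{Prop:profile:m0}, one obtains
\[
 xF_\eps'(x)=xG_\eps(x)\ge \frac{2-\eps}{2+\eps}\,F_\eps(x)-F_\eps(x)^2,\qquad x>0.
\]

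Set $c_\eps\vcc=\tfrac{2-\eps}{2+\eps}$. On the (open) set where $F_\eps>0$ the substitution $u(x)\vcc=1/F_\eps(x)$ linearises this to $xu'(x)+c_\eps u(x)\le 1$, equivalently $(x^{c_\eps}u(x))'\le x^{c_\eps-1}$. Integrating from $x$ up to a fixed anchor $a_*\in(0,1)$ (say $a_*=1/2$) and rearranging produces, whenever $F_\eps(a_*)<c_\eps$, the explicit bound
\[
 F_\eps(x)\le \frac{c_\eps\,F_\eps(a_*)}{c_\eps-F_\eps(a_*)}\Bigl(\frac{x}{a_*}\Bigr)^{c_\eps}\qquad \text{for all }x\in(0,a_*].
\]
On the (necessarily initial) set where $F_\eps$ vanishes the claim is trivial, so the inequality indeed holds on all of $(0,a_*]$.

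The main obstacle, and the reason a smallness condition on $\eps$ is required, is to secure $F_\eps(a_*)<c_\eps$ with a uniform margin. For this I would combine \cref{Prop:profile:m0} ($m_0\le 1$) with \cref{Lem:profile:lower:bd:NEW} (giving $\int_{a_*}^\infty G_\eps\ge c_*$ for some $c_*=c_*(a_*)>0$ independent of $\eps$) to conclude $F_\eps(a_*)\le 1-c_*$. Since $c_\eps\to 1$ as $\eps\to 0^+$, one can choose $\eps_*\in(0,1)$ small enough so that $c_\eps-(1-c_*)\ge c_*/2$ for every $\eps\in(0,\eps_*)$, which turns the prefactor in the displayed bound into a universal constant $C_*$. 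Extension to $x\in[a_*,1]$ is immediate from $F_\eps(x)\le m_0\le 1\le a_*^{-c_\eps}x^{c_\eps}$, and the ``in particular'' statement follows from $c_\eps=1-\tfrac{2\eps}{2+\eps}\ge 1-\eps$, so that $x^{c_\eps}\le x^{1-\eps}$ for $x\le 1$.
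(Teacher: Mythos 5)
Your proof is correct and follows essentially the same route as the paper's: integrate the stationary equation to get $xP'(x)+P(x)=-\int_0^x \C_{K_\eps}(G_\eps,G_\eps)$, bound the right side to obtain a Bernoulli-type differential inequality $xP'(x)\geq c_\eps P(x)-P(x)^2$ for the primitive, integrate it explicitly from $x$ to an anchor $a_*\in(0,1)$, and then invoke \cref{Prop:profile:m0} together with \cref{Lem:profile:lower:bd:NEW} to guarantee that the resulting prefactor stays uniformly bounded for $\eps$ small. The only cosmetic differences are that you derive the differential inequality via the weak form tested against $\mathbf{1}_{[0,x]}$ (which makes the sign-information from the perturbation $\eps W$ automatic, since all three regions enter with positive coefficients and $K_\eps\geq 2$ can be applied uniformly), whereas the paper integrates directly and then argues that the $\eps$-term is non-positive; and you replace $\alpha=2m_0-1$ by its lower bound $c_\eps=\frac{2-\eps}{2+\eps}$ at the outset rather than at the end.
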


\begin{proof}
  To simplify the notation, we denote $P(x)\vcc=\int_{0}^{x}G_{\eps}(y)\dy$. We first note that it suffices to prove the claim for $x\leq a_{*}$ with $a_{*}\in (0,1)$ fixed. In fact, for $x>a_{*}$ we obtain by means of Proposition~\ref{Prop:bound:m0} that $P(x)\leq 1\leq x/a_{*}$ and thus the claimed estimate holds with $C_{*}=1/a_{*}$.
  
  To prove the statement for $x\leq a_{*}$, we integrate the stationary version of~\eqref{eq:Smol:selfsim} for $K=K_{\eps}$ over $[0,x]$ to obtain
  \begin{multline*}
    2P(x)+\int_{0}^{x}zG'_{\eps}(z)\dz+\frac{1}{2}\int_{0}^{x}\int_{0}^{z}K_{\eps}(z-y,y)G_{\eps}(z-y)G_{\eps}(y)\dy\dz\\
    -\int_{0}^{x}\int_{0}^{\infty}K_{\eps}(z,y)G_{\eps}(y)G_{\eps}(z)\dy\dz=0.
  \end{multline*}
  Integration by parts in the first integral and applying Fubini's theorem together with the change of variables $z\mapsto z+y$ we find
  \begin{multline*}
    2P(x)+xP'(x)-P(x)+\frac{1}{2}\int_{0}^{x}\int_{0}^{x-y}K_{\eps}(z,y)G_{\eps}(y)G_{\eps}(z)\dz\dy\\*
    -\int_{0}^{x}\int_{0}^{\infty}K_{\eps}(z,y)G_{\eps}(y)G_{\eps}(z)\dy\dz=0.
  \end{multline*}
  Inserting $K_{\eps}=2+\eps W$ and summarising, this simplifies to
  \begin{multline*}
    (1-2P(\infty))P(x)+xP'(x)+(P\ast G_{\eps})(x)\\*
    +\eps\biggl(\frac{1}{2}\int_{0}^{x}\int_{0}^{x-y}W(y,z)G_{\eps}(y)G_{\eps}(z)\dz\dy-\int_{0}^{x}\int_{0}^{\infty}W(y,z)G_{\eps}(y)G_{\eps}(z)\dy\dz\biggr)=0.
  \end{multline*}
  We estimate the left-hand side from above by noting that the expression in parentheses is non-positive since the domain of integration for the negative term is larger than the one for the positive (which in addition has a factor of $1/2$). Moreover, by monotonicity we have $(P\ast G_{\eps})(x)\leq P^2(x)$. Together this implies
  \begin{equation*}
    0\leq (1-2P(\infty))P(x)+xP'(x)+P^2(x).
  \end{equation*}
  Denoting $\alpha\vcc=2P(\infty)-1=2\int_{0}^{\infty}G_{\eps}(x)\dx-1$ this can be equivalently written as
  \begin{equation*}
    -\alpha P(x)+xP'(x)\geq -P^2(x).
  \end{equation*}
  This differential inequality can be solved explicitly. In fact, using the integration factor $x^{-\alpha-1}$ we get
  \begin{equation*}
    \frac{\dd}{\dx}\bigl(x^{-\alpha}P(x)\bigr)=x^{-\alpha-1}(-\alpha P(x)+xP'(x))\geq -x^{-\alpha-1}P^2(x)=-x^{\alpha-1}\bigl(x^{-\alpha}P(x)\bigr)^2.
  \end{equation*}
  We note that $P$ is monotonically non-decreasing. Thus, if $P(a_{*})=0$ the claim is trivial. We therefore assume $P(a_{*})>0$ which allows to rewrite on an interval $(a,a_{*})$ (note that we only have to consider the region where $P$ is non-zero):
  \begin{equation*}
    -\frac{\dd}{\dx}\bigl(x^{-\alpha}P(x)\bigr)^{-1}=\frac{\dd}{\dx}\bigl(x^{-\alpha}P(x)\bigr)\bigl(x^{-\alpha}P(x)\bigr)^{-2}\geq -x^{\alpha-1}.
  \end{equation*}
  Integrating this inequality over $(x,a_{*})$ we obtain
  \begin{equation*}
    -\frac{1}{P(a_{*})}+\frac{x^{\alpha}}{P(x)}\geq -\frac{1}{\alpha}(a_{*}^{\alpha}-x^{\alpha})
  \end{equation*}
  or equivalently
  \begin{equation}\label{eq:profile:asympt:zero:1}
    \frac{x^{\alpha}}{P(x)}\geq \frac{1}{P(a_{*})}-\frac{a_{*}^{\alpha}}{\alpha}+\frac{x^{\alpha}}{\alpha}.
  \end{equation}
  The definitions of $\alpha$ and $P$ imply 
  \begin{equation*}
    \frac{1}{P(a_{*})}-\frac{a_{*}^{\alpha}}{\alpha}=\frac{1}{\alpha P(a_{*})}\bigl(\alpha-a_{*}^{\alpha}P(a_{*})\bigr)=\frac{1}{\alpha P(a_{*})}\biggl(2\int_{0}^{\infty}G_{\eps}(x)\dx-1-a_{*}^{\alpha}\int_{0}^{a_{*}}G_{\eps}(x)\dx\biggr).
  \end{equation*}
  Since $\alpha> 0$ due to Proposition~\ref{Prop:bound:m0} and $a_{*}\in (0,1)$ we have $a_{*}^{\alpha}\leq 1$ which yields together with \cref{Lem:profile:lower:bd:NEW,Prop:profile:m0} that for $\eps$ sufficiently small, we have
  \begin{multline}\label{eq:profile:asympt:zero:2}
    \frac{1}{P(a_{*})}-\frac{a_{*}^{\alpha}}{\alpha}\geq \frac{1}{\alpha P(a_{*})}\biggl(2\int_{0}^{\infty}G_{\eps}(x)\dx-1-\int_{0}^{a_{*}}G_{\eps}(x)\dx\biggr)\\*
    =\frac{1}{\alpha P(a_{*})}\biggl(\int_{a_{*}}^{\infty}G_{\eps}(x)\dx+\int_{0}^{\infty}G_{\eps}(x)\dx-1\biggr)\\*
    \geq \frac{1}{\alpha P(a_{*})}\Bigl(c_{*}+\frac{1}{1+\frac{\eps}{2}}-1\Bigr)=\frac{1}{\alpha P(a_{*})}\Bigl(c_{*}-\frac{\eps}{2+\eps}\Bigr).
  \end{multline}
  Thus, if $\eps$ is small enough, the right-hand side is strictly positive (note that $\alpha$ is strictly positive due to Proposition~\ref{Prop:profile:m0}). With this, we deduce from \cref{eq:profile:asympt:zero:1,eq:profile:asympt:zero:2} that
  \begin{equation*}
    P(x)\leq \frac{x^{\alpha}}{\frac{1}{P(a_{*})}-\frac{a_{*}^{\alpha}}{\alpha}+\frac{x^{\alpha}}{\alpha}}\leq \frac{\alpha P(a_{*})}{c_{*}-\frac{\eps}{2+\eps}}x^{\alpha}=\frac{1}{c_{*}-\frac{\eps}{2+\eps}}\biggl(2\int_{0}^{\infty}G_{\eps}(y)\dy-1\biggr)\int_{0}^{a_{*}}G_{\eps}(y)\dy x^{\alpha}.
  \end{equation*}
  Together with Proposition~\ref{Prop:profile:m0} and the non-negativity of $G_{\eps}$ the right-hand side can be further estimated to get
  \begin{equation*}
    P(x)\leq \frac{1}{c_{*}-\frac{\eps}{2+\eps}}x^{\alpha}.
  \end{equation*}
  Finally, we recall again Proposition~\ref{Prop:profile:m0} to deduce $\alpha=2\int_{0}^{\infty}G_{\eps}(x)\dx-1\geq 2/(1+\eps/2)-1\geq (2-\eps)/(2+\eps)$ from which the claimed estimate follows.
\end{proof}

Based on the preparation above, we can now provide a pointwise estimate on the behaviour of self-similar profiles close to zero.

\begin{lemma}\label{Lem:profile:zero:pointwise}
  Let $K_{\eps}$ satisfy \cref{eq:h1,eq:h2,eq:h3}. There exist constants $C_{*}>0$ and $\eps_{*}>0$ such that each self-similar profiles $G_{\eps}$ satisfies
  \begin{equation*}
    G_{\eps}(x)\leq C_{*}x^{-\frac{2\eps}{2+\eps}}\leq C_{*}x^{-\eps} \qquad \text{for almost all }x\leq 1
  \end{equation*}
  if $\eps\leq \eps_{*}$.
\end{lemma}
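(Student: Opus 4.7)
The plan is to convert the integral estimate on $P(x) := \int_0^x G_\eps(y)\,dy$ from Lemma~\ref{Lem:beh:zero:weak} into a pointwise bound on $G_\eps$ by invoking the stationary equation. The bridge is an identity of the form
\[
  xG_\eps(x) = \int_0^x (A(y) - 1)G_\eps(y)\,dy - \int_0^x B(y)\,dy,
\]
where $A(y) := \int_0^\infty K_\eps(y,z)G_\eps(z)\,dz$ and $B(y) := \tfrac12\int_0^y K_\eps(y-z,z)G_\eps(y-z)G_\eps(z)\,dz \geq 0$.

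To derive this identity, I would integrate the stationary version of~\eqref{eq:Smol:selfsim} over $[a,x]$, apply integration by parts in the $yG_\eps'$ term (producing the boundary contribution $xG_\eps(x) - aG_\eps(a)$) and Fubini in the coagulation contributions---essentially the same computation as in the proof of Lemma~\ref{Lem:beh:zero:weak}, but keeping the $xG_\eps(x)$ term explicit instead of estimating it away. Since $G_\eps$ is continuous and $m_0 = \int_0^\infty G_\eps \leq 1$ by Proposition~\ref{Prop:profile:m0}, a standard contradiction argument (assuming $aG_\eps(a) \geq c > 0$ along some sequence would force $\int_0^\delta G_\eps = +\infty$) shows $\liminf_{a \to 0} aG_\eps(a) = 0$, and one passes to the limit along a sequence $a_n \to 0$ to eliminate the boundary term at zero.

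Once the identity is in hand, one bounds its right-hand side by discarding the non-positive term $-\int_0^x B\,dy$ and estimating
\[
  A(y) - 1 = 2m_0 - 1 + \eps\int_0^\infty W(y,z)G_\eps(z)\,dz \leq (2+\eps)m_0 - 1 \leq 1 + \eps,
\]
using Proposition~\ref{Prop:profile:m0} ($m_0 \leq 1$) together with $W \leq 1$. This yields $xG_\eps(x) \leq (1+\eps)P(x)$. Combining with the bound $P(x) \leq C_* x^{1 - 2\eps/(2+\eps)}$ provided by Lemma~\ref{Lem:beh:zero:weak} for $x \leq 1$ and $\eps$ sufficiently small, and dividing by $x$, produces the first inequality of the lemma (with a constant of the form $(1+\eps)C_*$, which is uniform in $\eps \in [0,1]$). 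The second inequality $x^{-2\eps/(2+\eps)} \leq x^{-\eps}$ for $x \in (0,1]$ is elementary, since $2\eps/(2+\eps) \leq \eps$.

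The main obstacle I anticipate is the careful justification of the integrated identity together with the limit $a \to 0$; this is essentially bookkeeping using the continuity of $G_\eps$ on $(0,\infty)$ and the integrability of the coagulation terms. All remaining steps are immediate consequences of estimates already established in the preceding lemmata.
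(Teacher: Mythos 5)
Your proposal is correct, and the underlying idea is the same as the paper's: obtain a pointwise bound of the form $G_\eps(x) \lesssim x^{-1}P(x)$ and then insert the estimate on $P(x)=\int_0^x G_\eps$ from Lemma~\ref{Lem:beh:zero:weak}. The difference is how you reach the pointwise bound. The paper invokes the integrated representation
\[
  x^2 G_\eps(x)=\int_0^x\int_{x-y}^{\infty}yK_\eps(y,z)G_\eps(y)G_\eps(z)\,\dz\,\dy
\]
from Remark~\ref{Rem:profile:integrated} (which cites \cite[Lemma~2.11]{EsM06}), bounds $K_\eps\le 3$, $y\le x$, and $\int_{x-y}^\infty G_\eps\le m_0\le 1$, giving directly $G_\eps(x)\le 3x^{-1}P(x)$. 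You instead re-integrate the stationary equation over $[a,x]$, handle the boundary term $aG_\eps(a)$ by a $\liminf$ argument (which is sound: if $\liminf_{a\to 0}aG_\eps(a)>0$ then $G_\eps\notin L^1(0,\delta)$, and since the other integrals in the identity are absolutely convergent the limit $\lim_{a\to 0}aG_\eps(a)$ exists and so equals $0$), and arrive at $xG_\eps(x)\le(1+\eps)P(x)$ by estimating $A(y)-1$ and discarding the nonnegative $B$-term. This is a bit longer than the paper's argument because you effectively re-derive a variant of the integrated formulation from scratch, whereas the paper leans on the equivalence of formulations already recorded in Remark~\ref{Rem:profile:integrated}; on the other hand, your version avoids citing the external regularity result from~\cite{EsM06} and gives a marginally sharper constant. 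The final passage ($P(x)\le C_*x^{1-2\eps/(2+\eps)}$ from Lemma~\ref{Lem:beh:zero:weak}, division by $x$, and $2\eps/(2+\eps)\le\eps$ on $(0,1]$) matches the paper exactly.
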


\begin{proof}
  We recall from Remark~\ref{Rem:profile:integrated} that $G_{\eps}$ satisfies the equation
  \begin{equation*}
    G_{\eps}(x)=\frac{1}{x^2}\int_{0}^{x}\int_{x-y}^{\infty}yK_{\eps}(y,z)G_{\eps}(y)G_{\eps}(z)\dz\dy.
  \end{equation*}
  The assumptions \cref{eq:h1,eq:h2} together with $\eps\leq 1$ imply $K_{\eps}\leq 3$ which together with the non-negativity of $G_{\eps}$ yields
  \begin{equation*}
    G_{\eps}(x)\leq \frac{3}{x^2}\int_{0}^{x}\int_{x-y}^{\infty}yG_{\eps}(y)G_{\eps}(z)\dz\dy\leq \frac{3}{x}\int_{0}^{x}\int_{0}^{\infty}G_{\eps}(y)G_{\eps}(z)\dz\dy.
  \end{equation*}
  Together with \cref{Lem:profile:lower:bd:NEW,Prop:profile:m0} we thus conclude
  \begin{equation*}
    G_{\eps}(x)\leq \frac{3}{x}\int_{0}^{x}G_{\eps}(y)\dy\leq 3C_{*}x^{-\frac{2\eps}{2+\eps}}.
  \end{equation*}
\end{proof}

The next lemma gives uniform estimates also for certain negative moments.

\begin{lemma}\label{Lem:profile:neg:moments}
  Let $\eps_{*}\in(0,1)$ and let $K_{\eps}$ satisfy \cref{eq:h1,eq:h2,eq:h3} with $0\leq \eps\leq\eps_{*}$. For each $\alpha\in (\eps_{*}-1,\infty)$ there exists a constant $C_{\alpha}$ such that each self-similar profile $G_{\eps}$ satisfies
  \begin{equation*}
    \int_{0}^{\infty}x^{\alpha}G_{\eps}(x)\dx\leq C_{\alpha}.
  \end{equation*}
\end{lemma}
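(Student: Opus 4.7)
The plan is to split into the ranges $\alpha \geq 0$ and $\alpha \in (\eps_* - 1, 0)$. For $\alpha \geq 0$ there is nothing new to do: the bound $\norm{G_{\eps}}_{L^{1}_{k}} \leq C_{k}$ of \cref{Prop:L1:est:profile} already gives a uniform bound on $\int_{0}^{\infty}x^{\alpha}G_{\eps}(x)\dx$ for any $\alpha \in [0, \infty)$ (taking $k \geq \alpha$ and using $x^\alpha \leq (1+x)^k$).

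For $\alpha \in (\eps_{*}-1, 0)$ I would split the integral at $x = 1$. On $[1,\infty)$ the estimate is trivial: since $\alpha < 0$ we have $x^{\alpha} \leq 1$, and by \cref{Prop:profile:m0},
\begin{equation*}
  \int_{1}^{\infty}x^{\alpha}G_{\eps}(x)\dx \leq \int_{0}^{\infty}G_{\eps}(x)\dx \leq 1.
\end{equation*}
The only real task is therefore the contribution near zero, $\int_{0}^{1}x^{\alpha}G_{\eps}(x)\dx$. For this I would apply \cref{Lem:profile:zero:pointwise}, which gives $G_{\eps}(x)\leq C_{*}x^{-\eps}$ for a.e.\ $x\leq 1$ whenever $\eps \leq \eps_{*}$. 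Plugging this in,
\begin{equation*}
  \int_{0}^{1}x^{\alpha}G_{\eps}(x)\dx \leq C_{*}\int_{0}^{1} x^{\alpha-\eps}\dx = \frac{C_{*}}{\alpha-\eps+1},
\end{equation*}
which is finite and bounded uniformly in $\eps\in[0,\eps_{*}]$ provided $\alpha-\eps+1 \geq \alpha-\eps_{*}+1 > 0$, and this is exactly our hypothesis $\alpha > \eps_{*}-1$.

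Combining these two bounds, the constant $C_\alpha := 1 + \frac{C_{*}}{\alpha-\eps_{*}+1}$ (for $\alpha < 0$) or the one coming from \cref{Prop:L1:est:profile} (for $\alpha \geq 0$) works. No obstacle of substance arises; the whole argument is an integrability check at the origin using the pointwise estimate previously established. The essential input is the quantitative integrable singularity $x^{-\eps}$ at zero uniformly for small $\eps$, which is exactly what \cref{Lem:profile:zero:pointwise} delivers.
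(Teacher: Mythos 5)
Your proof is correct and follows exactly the route the paper intends: the paper's proof is a one-liner citing \cref{Prop:L1:est:profile,Lem:profile:zero:pointwise}, and your write-up is precisely the spelled-out version of that, with the split at $x=1$ and the pointwise bound $G_\eps(x)\leq C_* x^{-\eps}$ controlling the origin.
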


\begin{proof}
  The statement is a direct consequence of \cref{Prop:L1:est:profile,Lem:profile:zero:pointwise}.
\end{proof}

The preparation above now allows us to obtain uniform estimates on the $L^2$ norm of self-similar profiles.

\begin{proposition}\label{Prop:L2:apriori}
  Let $K_{\eps}$ satisfy \cref{eq:h1,eq:h2,eq:h3}. There exist constants $C_{*}>0$ and $\eps_{*}>0$ such that each self-similar profiles $G_{\eps}$ satisfies
  \begin{equation*}
    \norm{G_{\eps}}_{L^{2}(1)}=\biggl(\int_{0}^{\infty}\abs*{G_{\eps}(x)}^2\dx\biggr)^{1/2}\leq C_{*}
  \end{equation*}
  if $\eps\leq \eps_{*}$.
\end{proposition}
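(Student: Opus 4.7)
The plan is to combine two pointwise upper bounds on $G_\eps$ — one valid near zero (from \cref{Lem:profile:zero:pointwise}) and one valid away from zero (from the integrated stationary equation of \cref{Rem:profile:integrated}) — and check that the resulting bound is square integrable once $\eps_*$ is chosen small enough.

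First, I would use the integrated form of the profile equation,
\begin{equation*}
  x^2 G_\eps(x) = \int_0^x \int_{x-y}^\infty y\, K_\eps(y,z)\, G_\eps(y) G_\eps(z) \dz \dy,
\end{equation*}
together with $K_\eps \leq 2+\eps \leq 3$ for $\eps \leq 1$, to bound the integrand by $3 y G_\eps(y) G_\eps(z)$. Extending the inner integral to all of $(0,\infty)$ and using the mass normalisation $\int_0^\infty y G_\eps(y) \dy = 1$ together with the uniform bound $\int_0^\infty G_\eps(z) \dz \leq 1$ from \cref{Prop:profile:m0} yields
\begin{equation*}
  G_\eps(x) \leq \frac{3}{x^{2}} \qquad\text{for all } x > 0.
\end{equation*}
This bound is sharp enough to give a finite $L^{2}$ contribution from $(1,\infty)$, since $\int_1^\infty 9 x^{-4} \dx = 3$.

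For the contribution from $(0,1)$ the bound $3/x^2$ is not integrable, so I would instead invoke \cref{Lem:profile:zero:pointwise}, which provides constants $\eps_*^{(1)} > 0$ and $C_* > 0$ such that $G_\eps(x) \leq C_* x^{-\eps}$ on $(0,1]$ whenever $\eps \leq \eps_*^{(1)}$. Squaring and integrating gives
\begin{equation*}
  \int_0^1 G_\eps(x)^2 \dx \leq C_*^2 \int_0^1 x^{-2\eps} \dx = \frac{C_*^2}{1 - 2\eps},
\end{equation*}
which is finite as long as $\eps < 1/2$.

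Putting the two regions together and choosing $\eps_* := \min\{\eps_*^{(1)}, 1/4\}$, say, gives
\begin{equation*}
  \norm{G_\eps}_{L^2}^2 = \int_0^1 G_\eps^2 \dx + \int_1^\infty G_\eps^2 \dx \leq \frac{C_*^2}{1-2\eps_*} + 3,
\end{equation*}
which is the desired uniform bound. There is no real obstacle here; the work was already done in \cref{Lem:beh:zero:weak,Lem:profile:zero:pointwise}. The only point to watch is that $\eps_*$ must be taken strictly below $1/2$ so that $x^{-2\eps}$ is integrable at the origin; all other constants are uniform in $\eps$ via \cref{Prop:profile:m0,Prop:L1:est:profile,Lem:profile:lower:bd:NEW}.
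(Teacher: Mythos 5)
Your proof is correct, and it takes a genuinely different (and arguably more elementary) route than the paper. Both proofs start from the integrated form of the stationary equation and both rely on the pointwise bound near zero from \cref{Lem:profile:zero:pointwise}, but from there the strategies diverge. The paper multiplies the integrated form by $G_\eps$, integrates, applies Fubini, and reduces the problem to bounding $\int_y^\infty G_\eps(x)/x^2 \dx$ and a negative moment of $G_\eps$ (via \cref{Lem:profile:neg:moments}); the $L^2$ norm appears only as an iterated integral that is never split spatially. You instead extract a global pointwise decay estimate $G_\eps(x) \leq 3/x^2$ directly from the integrated form (using the mass normalisation and \cref{Prop:profile:m0}), pair it with the near-zero bound $G_\eps(x) \leq C_* x^{-\eps}$, and then split the $L^2$ integral into $(0,1)$ and $(1,\infty)$, integrating each pointwise bound separately. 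This is more transparent — the $3/x^2$ decay is made explicit, which the paper never states — at the cost of the region decomposition; the paper's Fubini argument packages the two regimes into a single moment estimate. Both deliver the same uniform bound for $\eps < 1/2$ (the paper also imposes $\eps < 1/2$ at the end of its proof), so neither is stronger, and you have correctly traced the dependence of $\eps_*$ on \cref{Lem:profile:zero:pointwise}.
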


\begin{proof}
  We recall from the proof of Lemma~\ref{Lem:profile:zero:pointwise} that 
  \begin{equation*}
    G_{\eps}(x)=\frac{1}{x^2}\int_{0}^{x}\int_{x-y}^{\infty}yK_{\eps}(y,z)G_{\eps}(y)G_{\eps}(z)\dz\dy.
  \end{equation*}
  Thus, multiplying by $G_{\eps}$ and integrating, together with Fubini's Theorem, we deduce
  \begin{equation*}
    \norm{G_{\eps}}_{L^{2}(1)}^{2}=\int_{0}^{\infty}\abs{G_{\eps}(x)}^{2}\dx=\int_{0}^{\infty}\int_{0}^{\infty}yK_{\eps}(y,z)G_{\eps}(y)G_{\eps}(z)\int_{y}^{y+z}\frac{G_{\eps}(x)}{x^2}\dx\dz\dy.
  \end{equation*}
  Due to \cref{eq:h1,eq:h2} and $\eps\leq 1$ we find
  \begin{equation*}
    \norm{G_{\eps}}_{L^{2}(1)}^{2} \leq 3\int_{0}^{\infty}\int_{0}^{\infty}yG_{\eps}(y)G_{\eps}(z)\int_{y}^{\infty}\frac{G_{\eps}(x)}{x^2}\dx\dz\dy.
  \end{equation*}
  Next, we note that \cref{Prop:L1:est:profile,Lem:profile:zero:pointwise} directly imply that $\int_{y}^{\infty}G_{\eps}(x)/x^2\dx\leq C y^{-1-\eps}$ for all $y>0$ (note that we could obtain a much better decay for $y>1$). Using this, we deduce
  \begin{equation*}
    \norm{G_{\eps}}_{L^{2}(1)}^{2} \leq 3\int_{0}^{\infty}y^{-\eps}G_{\eps}(y)\dy\int_{0}^{\infty}G_{\eps}(z)\dz.
  \end{equation*}
  The claim then follows from \cref{Prop:L1:est:profile,Lem:profile:neg:moments} if $\eps<1/2$.
\end{proof}

\subsection{Stability of profiles}
\label{sec:stability-profiles}

Regarding stability, the following statement is a particular case of
\cite[Thm.\@ 2.4]{Thr19} for bounded perturbations $W$.

\begin{proposition}[Stability of profiles]
  \label{Prop:profile_stability}
  Let $W$ be a bounded kernel satisfying \eqref{eq:h2} and
  \eqref{eq:h3}.  For $\eps \geq 0$, denote $K_\eps := 2 + \eps
  W$. For any $k \geq 0$ there exists a function
  $\delta = \delta(\eps)$ depending only $k$ and $\eps$, with
  $\delta(\eps) \to 0$ as $\eps\to 0$ such that any self-similar
  profile $G_\eps$ with mass $1$ of Smoluchowski's equation with
  kernel $K_\eps$ satisfies
  \begin{equation*}
    \| G_\eps - G_0 \|_{L^1_k} \leq \delta(\eps).
  \end{equation*}
\end{proposition}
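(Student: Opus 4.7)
The plan is a compactness--uniqueness argument by contradiction. Suppose the statement fails: there exist $\delta_0 > 0$ and a sequence $\eps_n\to 0$ together with self-similar profiles $G_{\eps_n}$ of mass one for $K_{\eps_n}$ satisfying $\norm{G_{\eps_n} - G_0}_{L^1_k}\geq \delta_0$. I will extract a subsequential limit, show it coincides with $G_0$ by the classical uniqueness of the constant-kernel profile, and finally upgrade the convergence to strong $L^1_k$ to reach a contradiction.

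For the extraction, the a-priori bounds established in this section are tailor-made. Proposition~\ref{Prop:L1:est:profile} gives uniform bounds in $L^1_{k'}$ for every $k' \geq 0$ (hence tightness at infinity), Proposition~\ref{Prop:L2:apriori} gives a uniform $L^2$ bound (hence uniform integrability on bounded subsets), and Lemma~\ref{Lem:profile:neg:moments} gives uniform negative-moment bounds (so no mass escapes to zero). By the Dunford--Pettis theorem, a subsequence (not relabelled) converges weakly in $L^1(0,\infty)$ to some $G^*\geq 0$, and tightness gives $\int_0^\infty x\,G^*(x)\dx = 1$. To identify $G^*$, I would pass to the limit in the integrated profile equation from Remark~\ref{Rem:profile:integrated},
\begin{equation*}
  x^2 G_{\eps_n}(x) = \int_0^x \int_{x-y}^{\infty} y\, K_{\eps_n}(y,z)\, G_{\eps_n}(y)\, G_{\eps_n}(z)\dz\dy.
\end{equation*}
The perturbative contribution involving $\eps_n W$ is bounded by $\eps_n\norm{G_{\eps_n}}_{L^1_1}^2$ and hence vanishes in the limit; the leading quadratic term passes to the limit using weak $L^1$ convergence together with the uniform $L^2$ and tail bounds. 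Hence $G^*$ is a self-similar profile of mass one for the constant kernel $K_0 = 2$, and by the classical uniqueness (\cite{Menon2004Approach}) $G^* = G_0 = \ee^{-x}$.

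Strong convergence in $L^1_k$ is then obtained in two steps. First, using the same integral representation and the uniform bounds, one extracts pointwise a.e.\ convergence $G_{\eps_n}(x) \to G_0(x)$. Combined with the convergence $\int G_{\eps_n}\dx \to \int G_0\dx$ provided by Proposition~\ref{Prop:profile:m0}, Scheffé's lemma yields $\norm{G_{\eps_n} - G_0}_{L^1}\to 0$. Second, to upgrade to $L^1_k$, split the integral: for any $k' > k$ the tail $\int_R^{\infty} |G_{\eps_n} - G_0|(1+x)^k\dx$ is bounded by $R^{-(k'-k)}\norm{G_{\eps_n} - G_0}_{L^1_{k'}}$, which is uniformly small as $R\to\infty$ by Proposition~\ref{Prop:L1:est:profile}, while on $[0,R]$ the weighted norm is controlled by $(1+R)^k\norm{G_{\eps_n} - G_0}_{L^1}$. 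An $\eps/2$-argument then contradicts $\norm{G_{\eps_n} - G_0}_{L^1_k}\geq \delta_0$.

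The main obstacle will be the passage to the limit in the quadratic coagulation integral under merely weak $L^1$ convergence, and the companion extraction of pointwise almost-everywhere convergence of $G_{\eps_n}$. The uniform $L^2$ bound of Proposition~\ref{Prop:L2:apriori} is essential here: without it one could not rule out concentration near the origin, and the nonlinear double integrals could fail to pass to the limit. Once pointwise convergence and non-negativity are secured, the upgrade to strong $L^1_k$ via Scheffé's lemma and tail tightness is largely mechanical.
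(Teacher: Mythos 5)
Your proposal is mathematically sound, but it takes a fundamentally different route from the paper's --- and that difference is precisely what the paper is trying to avoid. You re-derive the compactness argument of \cite{Thr19}: weak $L^1$/$L^2$ extraction via Dunford--Pettis and the uniform bounds of \cref{Prop:L1:est:profile,Prop:L2:apriori,Lem:profile:neg:moments}, passage to the limit in the integrated profile equation, identification of the limit with $G_0$ by uniqueness of the unit-mass profile for $K=2$, and upgrade to strong $L^1_k$ convergence via Scheff\'e's lemma and tightness. The paper instead gives a \emph{constructive} proof: it runs both the $K_\eps$- and the $K_0$-dynamics from the same initial datum $G_\eps$ (so that the first is stationary, $f^\eps\equiv G_\eps$), and combines \cref{Lem:L1:evolution:close} (finite-time closeness of order $\eps\,\ee^{C_1 t}$) with \cref{Lem:constant:kernel:L1:conv} (exponential $L^1_k$-relaxation to $G_0$ for $K=2$) through the triangle inequality $\norm{G_\eps-G_0}_{L^1_k}\leq\eps C_1\ee^{C_1 t}+C_2\ee^{-\beta t}$, then optimizes over $t$ to obtain $\delta(\eps)=C\eps^{\beta/(C_1+\beta)}$ with all constants explicit. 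Your contradiction argument yields only a qualitative $\delta(\eps)\to 0$; as the discussion preceding the proposition stresses, this would make $\eps_1,\eps_3$ in \cref{thm:main-intro} non-constructive, which is exactly what the authors set out to avoid by re-proving a known result. Finally, the step you flag as the ``main obstacle'' --- passing to the limit in the quadratic integral under merely weak convergence --- is left unresolved in your sketch; it can be closed (set $H_n(a):=\int_a^\infty G_{\eps_n}\dz$, note the $H_n$ are monotone and converge pointwise to the continuous $H_*$, hence locally uniformly, and then pass to the limit in $\int_0^x y\,G_{\eps_n}(y)\,H_n(x-y)\dy$ by splitting off the uniformly small $H_n-H_*$ and pairing $G_{\eps_n}-G_*$ against the bounded function $y\mapsto y\,H_*(x-y)\chi_{(0,x)}(y)$), but as written it remains an actual gap in the proposal.
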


This is a basic property we need in order to complete the proofs of
our main results in Sections \ref{sec:local} and
\ref{sec:convergence}. Its only drawback is that the stability rate
$\delta = \delta(\eps)$ was obtained in \cite{Thr19} via
compactness arguments, and hence one cannot give any constructive
estimate on it. As a consequence, using Proposition
\ref{Prop:profile_stability} as given, the constants in our main result
in Theorem \ref{thm:main-intro} would become non-constructive: we
would not be able to estimate the size of $\eps_1$ or $\eps_3$,
even if we know there must be one satisfying the statement.

In order to improve this situation we give an alternative way to show
Proposition \ref{Prop:profile_stability}, which yields an explicit estimate
on the rate $\delta(\eps)$.

The main idea to obtain constructive estimates is to use available
quantitative information on the asymptotic behaviour of solutions to
the self-similar equation
$\partial_t f = C_0(f,f) + 2f + x \partial_x f$ with constant
coefficients. In a broad sketch, if we know that
\begin{enumerate}
\item the equation with constant coefficients relaxes to equilibrium,
  with explicit rates, in a certain norm,
\item the dynamics of solutions depends continuously on the
  perturbation $\eps$, in the same norm,
\item and the norm of any profile $G_\eps$ is bounded by a
  uniform constant,
\end{enumerate}
then we can conclude that any profile $G_\eps$ must be close to $G_0$
for small $\eps$, in the same norm we are considering. Point 2 seems
to be the least problematic of the three, and we will use our
Lemma~\ref{Lem:L1:evolution:close}. Let us see what is available
regarding point 1. Since \cite{Menon2004Approach} it is known that
solutions in the constant coefficients case converge to equilibrium,
and a quantitative estimate of the rate at which this happens was
given in \cite{CMM10}, in several norms including $L^2$ and weighted
$L^2$ norms. A clean statement with explicit constants was then given
in \cite[Theorem 1.1]{doi:10.1137/090759707}, for
$\norm{\cdot}_{W^{-1,\infty}}$. Since we want to show that
$\|G_0 - G_\eps\|_{L^1_k}$ is small, we are forced to use the $L^2$ norm
convergence result in \cite{CMM10} to fulfill point 1 since a simple
interpolation then allows us to control the $L^1_k$ norm. We have
stated this result in our \cref{Lem:constant:kernel:L1:conv}. Notice
that we are also restricted by point 3, since uniform estimates of
profiles are available in $L^1_k$, but not for example in $L^\infty$
or $W^{1, 1}$. For point 3, we use uniform estimates of profiles in
$L^2$ given in \cref{Prop:L2:apriori}, which as far as we know were
not available elsewhere.

These ideas give us a proof of \cref{Prop:profile_stability} with an
explicit $\delta(\eps)$:

\begin{proof}[Proof \cref{Prop:profile_stability}]
  Take any solution $f^{\eps}$ to~\eqref{eq:Smol:selfsim} with kernel
  $K=2+\eps W$, and any solution $f^0$ to~\eqref{eq:Smol:selfsim} with
  constant kernel
  $K=2$. We have
  \begin{equation}
    \label{eq:pstab1}
    \norm{f^{\eps}(t,\cdot)-G^{0}}_{L^1_k}
    \leq
    \norm{f^{\eps}(t,\cdot)- f^0(t,\cdot)}_{L^1_k}
    +
    \norm{f^0(t,\cdot) - G^0}_{L^1_k}.
  \end{equation}
  Let $G^\eps$ be any self-similar profile with mass one for the
  kernel $K_\eps$, and choose the initial condition for both $f^\eps$
  and $f^0$ to be equal to $G^\eps$. In particular, $f^\eps$ is then
  equal to the constant-in-time profile $G^\eps$. From
  \cref{Lem:L1:evolution:close},
  \begin{equation*}
    \norm{f^{\eps}(t, \cdot)- f^0(t,\cdot)}_{L^1_k}
    =
    \norm{G^{\eps}- f^0(t,\cdot)}_{L^1_k}
    \leq
    \eps C_1 \ee^{C_1 t}.
  \end{equation*}
  for some $C_1 > 0$ depending only on $\norm{G^\eps}_{L^1_k}$, in an
  increasing way. Since we know from Lemma \ref{Prop:L1:est:profile}
  that $\norm{G^\eps}_{L^1_k}$ is uniformly bounded by a constant
  $C_k$ depending only on $k$, we conclude that the constant $C_1$ can
  be chosen to depend only on $k$ as well. On the other hand,
  \cref{Lem:constant:kernel:L1:conv} shows that
  \begin{equation*}
    \norm{f^0(t,\cdot) - G^0}_{L^1_k}
    \leq
    C_2 \ee^{-\beta t}
  \end{equation*}
  for some constant $C_2 > 0$ depending on $k$, $\norm{G^\eps}_{L^2}$
  and $\norm{G^\eps}_{L^1_{k+1}}$ (for example; any moment larger than
  $k$ will do, not necessarily $k+1$). The constant $\beta > 0$
  depends only on $k$. In a similar way as before, since we know from
  \cref{Prop:L1:est:profile,Prop:L2:apriori} that these norms of
  $G^\eps$ are uniformly bounded by constants that depend only on $k$
  we conclude that the constant $C_2$ can be chosen to
  depend only on $k$. Using our last two estimates in
  \eqref{eq:pstab1},
  \begin{equation*}
    \norm{G^\eps - G^{0}}_{L^1_k}
    =
    \norm{f^{\eps}(\cdot,t)-G^{0}}_{L^1_k}
    \leq
    \eps C_1 \ee^{C_1 t} + C_2 \ee^{-\beta t},
    \qquad
    t \geq 0.
  \end{equation*}
  for each solution $f^{\eps}$ to~\eqref{eq:Smol:selfsim} with
  $K=2+\eps W$. Choosing $t=\log(C_{2}\beta/(\eps C_{1}^{2}))/(C_{1}+\beta)$ the claim follows with $\delta(\eps)=\eps^{\beta/(C_{1}+\beta)}$.

  \begin{remark}
    In Section~\ref{sec:local_stability} we will give a further improvement of Proposition~\ref{Prop:profile_stability}, i.e.\@ we will show that actually $\delta(\eps)=\mathcal{O}(\eps)$ as $\eps\to 0$.
  \end{remark}
\end{proof}

\section{The linearised operator for the constant kernel and semigroup theory}
\label{sec:lin_operator}

In this section we will introduce the linearised coagulation operator $\L_{0}$ in self-similar variables. Precisely, if we linearise the stationary version of~\eqref{eq:Smol:selfsim}
around the profile $G_{0}(x)=\ee^{-x}$ we get 
\begin{equation}
  \label{eq:linearised}
  \L_{0}[h] =
  x\del_{x}h+2(G_{0}\ast h) - 2G_{0}\int_{0}^{\infty}h(y)\dy.
\end{equation}
Since this expression contains a derivative with respect to $x$ which
is not defined in the spaces we consider, $\L_{0}$ has to be
understood as an unbounded operator. However, $\L_0$ is obviously
well-defined on $C_{c}^{\infty}(0,\infty)$ by the following equivalent
representation formulas:

\begin{lemma}\label{Lem:rewriting:L}
  On the space $C_{c}^{\infty}(0,\infty)$ the operator $\L_{0}$ as given in~\eqref{eq:linearised} can be equivalently written as
  \begin{align}
    \L_{0}[h](x) &=
                   x h'(x) + 2 e^{-x} \int_0^x h(y) (e^{y} - 1) \dy
                   - 2e^{-x} \int_{x}^{\infty}h(y)\dy  \label{eq:linearised2}
                   \shortintertext{and}
                   \L_{0}[h](x)&=xh'(x)-2H(x)+2\int_{0}^{x}H(y)\ee^{-(x-y)}\dy \label{eq:linearised3}
  \end{align}
  where $H(x)=\int_{x}^{\infty}h(z)\dz$.
\end{lemma}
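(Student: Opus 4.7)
The plan is to prove both rewritings by direct manipulation starting from the definition \eqref{eq:linearised}, exploiting the special form $G_0(x)=\ee^{-x}$ and integration by parts. Since $h\in C_c^\infty(0,\infty)$, all boundary terms at $0$ and $\infty$ vanish and Fubini/IBP are freely available.

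For \eqref{eq:linearised2}, I would first expand the convolution: since $G_0(x)=\ee^{-x}$,
\begin{equation*}
  (G_0\ast h)(x)=\int_0^x \ee^{-(x-y)}h(y)\dy=\ee^{-x}\int_0^x \ee^{y}h(y)\dy.
\end{equation*}
Splitting $\int_0^\infty h(y)\dy=\int_0^x h(y)\dy+\int_x^\infty h(y)\dy$ in the last term of \eqref{eq:linearised}, I would regroup:
\begin{equation*}
  2(G_0\ast h)(x)-2\ee^{-x}\int_0^\infty h(y)\dy
  = 2\ee^{-x}\int_0^x (\ee^y-1)h(y)\dy - 2\ee^{-x}\int_x^\infty h(y)\dy,
\end{equation*}
which is exactly \eqref{eq:linearised2} after adding $xh'(x)$.

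For \eqref{eq:linearised3}, starting from \eqref{eq:linearised2} I would integrate by parts in $\int_0^x(\ee^y-1)h(y)\dy$, using that $H(y)=\int_y^\infty h(z)\dz$ is an antiderivative of $-h$ with $H(0)<\infty$ and $H\to 0$ at infinity. The boundary contribution at $y=0$ vanishes because $\ee^0-1=0$, so
\begin{equation*}
  \int_0^x(\ee^y-1)h(y)\dy
  =-(\ee^x-1)H(x)+\int_0^x \ee^y H(y)\dy.
\end{equation*}
Multiplying by $2\ee^{-x}$ produces $-2H(x)+2\ee^{-x}H(x)+2\int_0^x H(y)\ee^{-(x-y)}\dy$. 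The term $-2\ee^{-x}\int_x^\infty h(y)\dy=-2\ee^{-x}H(x)$ then cancels the middle contribution exactly, leaving $-2H(x)+2\int_0^x H(y)\ee^{-(x-y)}\dy$, and adding $xh'(x)$ yields \eqref{eq:linearised3}.

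There is no genuine obstacle here: the whole argument is bookkeeping based on the identities $G_0=\ee^{-x}$, $H'=-h$, and $H(\infty)=0$. The only thing to be careful about is the boundary term at $y=0$ in the integration by parts, which vanishes because of the factor $\ee^y-1$; this is precisely what makes the two formulas match without any extra constants.
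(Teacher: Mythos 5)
Your proposal is correct and follows essentially the same route as the paper: \eqref{eq:linearised2} by splitting and regrouping the integrals, and \eqref{eq:linearised3} via integration by parts using $H'=-h$. The only cosmetic difference is that you perform the integration by parts starting from \eqref{eq:linearised2}, whereas the paper substitutes $h(y)=-\partial_y H(y)$ directly into the convolution in \eqref{eq:linearised}; both reduce to the same bookkeeping and the same cancellation of the $\ee^{-x}H(x)$ term.
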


\begin{proof}
  The expression~\eqref{eq:linearised2} is obvious by splitting and
  re-combining the two integrals. For~\eqref{eq:linearised3} we
  rewrite $h(y)=-\del_{y}(\int_{y}^{\infty}h(z)\dz)$ in the
  convolution expression of~\eqref{eq:linearised} and integrate by
  parts which yields
  \begin{multline*}
    \L_{0}[h](x)=xh'(x)+2\int_{0}^{x}-\del_{y}\int_{y}^{\infty}h(z)\dz\ee^{-(x-y)}\dy-2\ee^{-x}\int_{0}^{\infty}h(y)\dy\\*
    =xh'(x)-2\int_{x}^{\infty}h(z)\dz+2\int_{0}^{\infty}h(z)\dz\ee^{-x}+2\int_{0}^{x}\int_{y}^{\infty}h(z)\dz\ee^{-(x-y)}\dy-2\ee^{-x}\int_{0}^{\infty}h(y)\dy\\*
    =-2H(x)+2\int_{0}^{x}H(y)\ee^{-(x-y)}\dy.
  \end{multline*}
\end{proof}

The main point of this section is to prove that $\L_0$, defined on a
suitable domain, generates a strongly continuous semigroup in the
spaces in which we will be working later:
\begin{theorem}
  \label{thm:semigroups-defined}
  There are semigroups defined on each of the spaces
  $L^{1}((1+x)^{k})$ for all $k\geq 0$, $L^{2}(\ee^{\mu x})$ and
  $H^{-1}(\ee^{\mu x})$ for all $\mu\geq 0$, such that
  \begin{enumerate}
  \item their generators are all defined on $\mathcal{C}_{c}^\infty$,
    and they are equal to $\L_0$ on $\mathcal{C}_{c}^\infty$,
  \item and $\mathcal{C}_{c}^\infty$ is a core for their generators (see Definition~\ref{Def:core} below).
  \end{enumerate}
  These semigroups can all be restricted to the corresponding spaces
  with mass zero, i.e.\@ their intersections with
  $\{\int_{0}^{\infty}xf(x)\dx=0\}$
\end{theorem}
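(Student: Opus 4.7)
The plan is to view $\L_0$ as a bounded perturbation of a first-order transport operator and then apply Phillips' bounded perturbation theorem. Concretely, I split $\L_0 = A + B$ with
\begin{equation*}
  Ah := x\del_{x} h,\qquad Bh := 2(G_{0}\ast h) - 2 G_{0}\int_{0}^{\infty}h(y)\dy,
\end{equation*}
and show that $A$, defined on the maximal domain $\mathcal{D}(A) = \{h\in X : xh' \in X\}$, generates a $C_{0}$-semigroup on each of the spaces $X\in\{L^{1}_{k},\,L^{2}(\ee^{\mu x}),\,H^{-1}(\ee^{\mu x})\}$, while $B$ is a bounded operator on each $X$. The semigroup generated by $A$ is the explicit dilation semigroup $(S_{t}h)(x) = h(\ee^{t}x)$, as the method of characteristics suggests. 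Its boundedness is checked directly: for $L^{1}_{k}$, $\|S_{t}h\|_{L^{1}_{k}} = \ee^{-t}\int|h(y)|(1+\ee^{-t}y)^{k}\dy\leq \|h\|_{L^{1}_{k}}$; for $L^{2}(\ee^{\mu x})$, a change of variables together with $\mu\ee^{-t}\leq \mu$ gives $\|S_{t}h\|^{2}\leq \ee^{-t}\|h\|^{2}$; and for $H^{-1}(\ee^{\mu x})$ the identity $D^{-1}(S_{t}h)(x) = \ee^{-t}(D^{-1}h)(\ee^{t}x)$ reduces the estimate to the previous case. Strong continuity at $t=0$ follows from density of $C_{c}^{\infty}$ and pointwise convergence, differentiating the explicit formula identifies the generator on $C_{c}^{\infty}$ as $x\del_{x}$, and since $S_{t}$ preserves $C_{c}^{\infty}$, the latter is a core for $A$ in each space.

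\textbf{Bounded perturbation.} For $L^{1}_{k}$ the estimate $(1+x)^{k}\leq (1+y)^{k}(1+(x-y))^{k}$ gives $\|G_{0}\ast h\|_{L^{1}_{k}}\leq \|G_{0}\|_{L^{1}_{k}}\|h\|_{L^{1}_{k}}$, and $|\int h|\leq \|h\|_{L^{1}_{k}}$ controls the mass term, so $B$ is bounded. For the weighted $L^{2}$ space one rewrites
\begin{equation*}
  \ee^{\mu x/2}(G_{0}\ast h)(x)
  = \int_{0}^{x}\ee^{(\mu/2-1)(x-y)}\bigl(\ee^{\mu y/2}h(y)\bigr)\dy
\end{equation*}
and applies Young's inequality for one-sided convolution with the integrable kernel $\ee^{(\mu/2-1)z}\chi_{[0,\infty)}$. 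For $H^{-1}(\ee^{\mu x})$, I pass to the level of primitives using \cref{Lem:rewriting:L}: a short integration-by-parts computation shows that $D^{-1}(Bh) = 2\,G_{0}\ast D^{-1}h$, so $B$ corresponds via $D^{-1}$ to the operator $H\mapsto 2\,G_{0}\ast H$ on $L^{2}(\ee^{\mu x})$, which is bounded by the same Young-type estimate as above. With $A$ generating and $B$ bounded, Phillips' theorem yields a $C_{0}$-semigroup with generator $A+B=\L_{0}$ on $\mathcal{D}(A)$; the core property of $C_{c}^{\infty}$ is inherited since a bounded perturbation does not alter cores.

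\textbf{Mass zero and main obstacle.} The invariance of the mass-zero subspace is verified by a direct computation: for $h\in C_{c}^{\infty}$, integration by parts gives $\int_{0}^{\infty}x\cdot xh'\dx = -2\int_{0}^{\infty}xh\dx$, Fubini gives $\int_{0}^{\infty}x(G_{0}\ast h)(x)\dx = \int_{0}^{\infty}h\dy + \int_{0}^{\infty}yh(y)\dy$, and $\int_{0}^{\infty}xG_{0}(x)\dx\int h = \int h$; the three contributions cancel, so $\int_{0}^{\infty}x\L_{0}[h](x)\dx = 0$. Hence the first moment is conserved by the semigroup, and the restrictions to the mass-zero subspaces are well defined. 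The \emph{main obstacle} is the boundedness of $B$ (equivalently, of $2\,G_{0}\ast\cdot$) in the weighted $L^{2}$ spaces, because $G_{0}(x)=\ee^{-x}$ itself is not in $L^{2}(\ee^{\mu x})$ for large $\mu$, so the convolution term and the mass term cannot be bounded separately in a naive way. The alternative representations in \cref{Lem:rewriting:L} are precisely the tool for this: they repackage the dangerous tails of $G_{0}\ast h$ together with the mass term $-2G_{0}\int h$ into a single one-sided convolution with an integrable kernel, which is then amenable to the Young-type bounds above.
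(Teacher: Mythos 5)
Your proof is correct and follows essentially the same route as the paper: split $\L_0$ into the transport part $x\partial_x$ (generating the explicit dilation semigroup, as in Lemma~\ref{Lem:diff:sg:NEW}) plus a bounded remainder, apply the bounded perturbation theorem, and note that the core is preserved (Lemma~\ref{Lem:ext:core:general}); the paper's proof delegates these checks to the cited lemmas and to the alternative representations \eqref{eq:linearised2}--\eqref{eq:linearised3}. Your identity $D^{-1}(Bh) = 2\,G_0 \ast D^{-1}h$ is a clean way of expressing the same use of \eqref{eq:linearised3} that the paper recommends for the $H^{-1}$ case, and your moment computation makes explicit the mass-zero invariance that the paper obtains from the restriction statement in Lemma~\ref{Lem:diff:sg:NEW}.
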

When talking about $\L_0$ on any of these spaces, it is implicit that
we mean the generator of the corresponding semigroup (equivalently,
the closure of $\L_0$, defined on $\mathcal{C}_c^\infty$, in the
corresponding norm). 

The rest of this section is devoted to the proof of
\cref{thm:semigroups-defined}. To simplify working with unbounded
operators and in particular with the corresponding domains in the
following we introduce the notion of a core which usually allows to
restrict to dense subsets instead of the full domain of the
operator. The following definition is taken from \cite[Ch. I,
Definition 1.6]{EnN00}:

\begin{definition}[Core]\label{Def:core}
  For an unbounded operator $U\colon D(U)\subset X\to X$ a subspace
  $S\subset D(U)$ is denoted \emph{core} of $U$ if $S$ is dense in
  $D(U)$ for the graph norm $\norm{h}_{U}=\norm{h}+\norm{Uh}$.
\end{definition}

The next lemma is an extension of the Bounded  Perturbation Theorem stating that the latter also preserves the core of an unbounded operator. Since this result seems not to be proved in \cite{EnN00}, we present the short proof for completeness.

\begin{lemma}\label{Lem:ext:core:general}
  Let $U\colon D(U)\subset X\to X$ be the generator of a strongly continuous semigroup $\ee^{U t}$ with core $S\subset D(U)$. Furthermore, let $V\colon X\to X$ be bounded. Then $U+V$ is the generator of a strongly continuous semigroup $\ee^{(U+V)t}$ on $X$ with domain $D(U)$ and core $S$.
\end{lemma}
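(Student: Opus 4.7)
The plan is to reduce the claim to two observations: (i) the Bounded Perturbation Theorem from \cite{EnN00} gives that $U+V$ generates a strongly continuous semigroup with domain $D(U+V)=D(U)$, so the only new content is that $S$ remains a core, and (ii) the graph norms of $U$ and $U+V$ are equivalent on $D(U)$ precisely because $V$ is everywhere defined and bounded.

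Concretely, I would first invoke the classical Bounded Perturbation Theorem (e.g.\@ \cite[Ch.~III, Thm.~1.3]{EnN00}) to conclude that $U+V$, defined on $D(U)$, is the generator of a strongly continuous semigroup on $X$; so $D(U+V)=D(U)$ and it only remains to verify that $S$ is dense in $D(U)$ with respect to the new graph norm $\norm{h}_{U+V}=\norm{h}+\norm{(U+V)h}$.

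For this I would establish the two-sided estimate, valid for every $h\in D(U)$,
\begin{equation*}
  \frac{1}{1+\norm{V}}\,\norm{h}_{U}
  \leq
  \norm{h}_{U+V}
  \leq
  (1+\norm{V})\,\norm{h}_{U},
\end{equation*}
which follows at once from $\norm{(U+V)h}\leq \norm{Uh}+\norm{V}\norm{h}$ and the symmetric inequality $\norm{Uh}\leq \norm{(U+V)h}+\norm{V}\norm{h}$. Thus the graph norms $\norm{\cdot}_{U}$ and $\norm{\cdot}_{U+V}$ are equivalent on $D(U)=D(U+V)$.

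Since by assumption $S$ is dense in $D(U)$ for $\norm{\cdot}_{U}$, the equivalence of the two graph norms immediately implies that $S$ is dense in $D(U+V)$ for $\norm{\cdot}_{U+V}$, hence $S$ is a core for $U+V$. I do not expect any real obstacle here; the only point to be careful about is that the Bounded Perturbation Theorem does give $D(U+V)=D(U)$ (which is essential for the graph-norm comparison to make sense on the full domain), but this is standard and explicitly stated in \cite{EnN00}.
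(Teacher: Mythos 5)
Your proof is correct and follows essentially the same strategy as the paper's: invoke the Bounded Perturbation Theorem to get generation and the domain $D(U+V)=D(U)$, then show the graph norms $\norm{\cdot}_{U}$ and $\norm{\cdot}_{U+V}$ are equivalent via the triangle inequality and $\|V\|<\infty$. Your derivation of the two-sided estimate is slightly more streamlined (the paper introduces a parameter $\kappa>1+\|V\|$ and works a bit harder to get the same conclusion), but the argument is the same in substance.
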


\begin{proof}
  According to the Bounded Perturbation Theorem, the operator $U+V$ is the generator of a strongly continuous semigroup on $X$ with domain $D(U)$. To see that $S$ is still a core, it suffices to prove that the graph norms of $U$ and $U+V$, i.e.\@ $\norm{\cdot}_{U}$ and $\norm{\cdot}_{U+V}$ are equivalent. For this, we fix $\kappa>1+\norm{V}$ which yields
  \begin{multline*}
    \norm{h}_{U+V}=\norm{h}+\norm{(U+V)h}\geq \norm{h}+\frac{1}{\kappa}\norm{(U+V)h}\geq \norm{h}+\frac{1}{\kappa}\norm{Uh}-\frac{1}{\kappa}\norm{Vh}\\*
    \geq \Bigl(1-\frac{\norm{V}}{\kappa}\Bigr)\norm{h}+\frac{1}{\kappa}\norm{Uh}\geq \frac{1}{\kappa}\bigl(\norm{h}+\norm{Uh}\bigr)=\frac{1}{\kappa}\norm{h}_{U}.
  \end{multline*}
  Conversely, we find
  \begin{multline*}
    \norm{h}_{U+V}=\norm{h}+\norm{(U+V)h}\leq \norm{h}+\norm{Uh}+\norm{Vh}\leq \bigl(1+\norm{V}\bigr)\norm{h}+\norm{Uh}\\*
    \leq \bigl(1+\norm{V}\bigr)\bigl(\norm{h}+\norm{Uh}\bigr)\leq \bigl(1+\norm{V}\bigr)\norm{h}_{U}.
  \end{multline*}
  This finishes the proof.
\end{proof}

The following remark states more precisely how the action of linear operators on the space $H^{-1}(\ee^{\mu x})$ will be understood in the following.

\begin{remark}[Definition of linear operators on $H^{-1}$]\label{Rem:lin:op:H1}
  Since $H^{-1}(\ee^{\mu x})$ is a rather weak space, it seems most
  appropriate to define linear operators on it via a density
  argument. Precisely, due to the definition of $H^{-1}(\ee^{\mu x})$,
  the elements of this space are represented by equivalence classes of
  Cauchy sequences with respect to
  $\norm{\cdot}_{H^{-1}(\ee^{\mu x})}$. In particular, for each class,
  we can always find a representative sequence which is contained in
  $C_{c}^{\infty}(0,\infty)$. The approach then consists in defining a
  given linear operator on this space and extend it again by
  completion with respect to the norm on $H^{-1}(\ee^{\mu x})$. This
  procedure of course requires that the operator defined this way in
  fact maps to $H^{-1}(\ee^{\mu x})$ and that the definition is
  independent of the specific choice of a sequence. However, both
  properties are obviously satisfied if the operator $U$ to be defined
  this way satisfies
  \begin{equation*}
    \norm{Uh}_{H^{-1}(\ee^{\mu x})}\leq C \norm{h}_{H^{-1}(\ee^{\mu x})} \qquad \text{for all }h\in C_{c}^{\infty}(0,\infty),
  \end{equation*}
  i.e.\@ the restriction of $U$ to $C_{c}^{\infty}(0,\infty)$ is bounded. For the operators considered in this work, the latter property will typically be satisfied and in this case, we implicitly use the construction described before.
\end{remark}

The following lemma provides that $h\mapsto xh'(x)$, which appears in the linearised coagulation operator, is the generator of a strongly continuous semigroup in the spaces $L^1_{k}$, $L^2(\ee^{\mu x})$ and $H^{-1}(\ee^{\mu x})$.

\begin{lemma}\label{Lem:diff:sg:NEW}
  The family of operators $(T_{t})_{t\geq 0}$ given by the formula
  $(T_{t}h)(x)=h(x\ee^{t})$ defines a strongly continuous semigroup on
  $L^{1}((1+x)^{k})$ for all $k\geq 0$ as well as on
  $L^{2}(\ee^{\mu x})$ and $H^{-1}(\ee^{\mu x})$ for all $\mu\geq
  0$. Moreover, this defines also a semigroup on the corresponding
  spaces restricted to total mass equal to zero, i.e.\@
  $L^{1}((1+x)^{k})\cap\{\int_{0}^{\infty}xf(x)\dx=0\}$ for all
  $k\geq 0$ as well as
  $L^{2}(\ee^{\mu x})\cap\{\int_{0}^{\infty}xf(x)\dx=0\}$ and
  $H^{-1}(\ee^{\mu x})\cap\{\int_{0}^{\infty}xf(x)\dx=0\}$ for
  $\mu>0$. In all cases the generator $\B_{1}$ is given by
  $\B_{1}h=x\frac{\dd}{\dx}\colon h\mapsto xh'(x)$ (while, by abuse of
  notation, we use the same notation for the generator on different
  spaces) and the space $C_{c}^{\infty}(0,\infty)$ or
  $C_{c}^{\infty}(0,\infty)\cap\{\int_{0}^{\infty}xf(x)\dx=0\}$
  respectively is a core.
  
  Moreover, we have
  \begin{equation*}
    \begin{aligned}
      \norm{T_{t} h}_{L^{1}_{k}}&\leq \norm{h}_{L^{1}_{k}}\ee^{-t} && \text{for all }h\in L^{1}_{k}\\
      \norm{T_{t} h}_{L^{2}(\ee^{\mu x})}&\leq \norm{h}_{L^{2}(\ee^{\mu x})}\ee^{-\frac{1}{2}t} && \text{for all }h\in L^{2}(\ee^{\mu x})\\
      \norm{T_{t} h}_{H^{-1}(\ee^{\mu x})}&\leq \norm{h}_{H^{-1}(\ee^{\mu x})}\ee^{-\frac{3}{2}t} && \text{for all }h\in H^{-1}(\ee^{\mu x})
    \end{aligned}
  \end{equation*}
  and all $t\geq 0$.
\end{lemma}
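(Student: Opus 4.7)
The plan is to treat $(T_t)_{t\geq 0}$ as a very concrete operator family (a dilation of the argument) and verify each required property by direct computation, handling $L^1_k$ and $L^2(\ee^{\mu x})$ pointwise and passing to $H^{-1}(\ee^{\mu x})$ only via density, as described in Remark~\ref{Rem:lin:op:H1}. The semigroup law $T_{s+t}=T_s T_t$ is immediate from $h(x\ee^{s+t})=(T_s h)(x\ee^{t})$, so the substantive points are the contractivity estimates, strong continuity, the identification of the generator, and the core property.

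First I would establish the three norm bounds by a single change of variables $y=x\ee^{t}$. For $h\in L^1_k$ this gives $\int_0^\infty \abs{h(y)}(1+y\ee^{-t})^k \ee^{-t}\dy$, and using $(1+y\ee^{-t})^k\leq (1+y)^k$ for $t\geq 0$ yields the factor $\ee^{-t}$. For $L^2(\ee^{\mu x})$ the same substitution produces $\ee^{-t}\int_0^\infty \abs{h(y)}^2 \ee^{\mu y \ee^{-t}}\dy\leq \ee^{-t}\norm{h}_{L^2(\ee^{\mu x})}^2$, giving the factor $\ee^{-t/2}$. For $H^{-1}(\ee^{\mu x})$ I would first compute $D^{-1}(T_t h)(x)=\int_x^\infty h(z\ee^t)\dz=\ee^{-t}(D^{-1}h)(x\ee^t)$, then substitute again to produce the total factor $\ee^{-3t}$ inside the square, hence $\ee^{-3t/2}$ in norm.

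For strong continuity, on any $h\in C_c^\infty(0,\infty)$ the functions $T_t h$ have supports shrinking inside a fixed compact and converge uniformly to $h$ as $t\to 0^+$, so the three norms of $T_t h-h$ tend to zero by dominated convergence. Combined with the uniform bounds $\norm{T_t}\leq 1$ (say, for $t$ in a bounded interval) and the density of $C_c^\infty(0,\infty)$ in each of the three spaces (for $H^{-1}(\ee^{\mu x})$ this is by construction), a standard $3\varepsilon$ argument extends strong continuity to the whole space. To identify the generator I differentiate for $h\in C_c^\infty$: $(T_t h)(x)-h(x)=\int_0^t x\ee^s h'(x\ee^s)\ds$, and dividing by $t$ and letting $t\to 0$ gives $(T_t h - h)/t\to xh'(x)$, with convergence valid in each of the three norms since all integrands are uniformly controlled on the compact support.

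To prove that $C_c^\infty(0,\infty)$ is a core I would invoke the standard criterion (Engel--Nagel, Ch.~II, Proposition~1.7): a dense subspace of the domain that is invariant under the semigroup is a core. Invariance is clear because if $\supp h\subset[a,b]$ then $\supp T_t h\subset[a\ee^{-t},b\ee^{-t}]\subset(0,\infty)$, so $T_t h\in C_c^\infty(0,\infty)$. For the mass-zero subspaces the calculation $\int_0^\infty x(T_t h)(x)\dx=\ee^{-2t}\int_0^\infty y h(y)\dy$ shows that the zero-first-moment constraint is preserved, so the same semigroup restricts, and the same core argument applies with $C_c^\infty(0,\infty)\cap\{\int_0^\infty xh(x)\dx=0\}$ (which is dense in each of the constrained spaces by the construction given in Section~\ref{Sec:function:spaces}). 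The main delicate point is the $H^{-1}(\ee^{\mu x})$ case, because $T_t$ must be interpreted as the continuous extension of its action on $C_c^\infty$ in the sense of Remark~\ref{Rem:lin:op:H1}; the $\ee^{-3t/2}$ bound derived above is precisely what makes this extension well-defined and consistent with the other two spaces.
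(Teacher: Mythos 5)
Your proposal is correct and follows essentially the same route as the paper: change of variables for the three contractivity bounds, verification of strong continuity on the dense core $C_c^\infty(0,\infty)$ followed by a $3\varepsilon$/uniform-boundedness argument, identification of the generator by direct differentiation, and the Engel--Nagel invariant-dense-subspace criterion for the core. The only cosmetic difference is that you argue strong continuity on $C_c^\infty$ via uniform convergence on a fixed compact plus dominated convergence, whereas the paper writes out explicit integral estimates in each norm, but these yield the same conclusion.
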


Since the semigroup is explicitly given, the proof is straightforward
but for the sake of completeness, we include it in
\cref{Sec:proof:sg}.

\medskip

\begin{proof}[Proof of \cref{thm:semigroups-defined}]
  We just apply the bounded perturbation theorem to the semigroups
  given by \cref{Lem:diff:sg:NEW}, since the reader can check that the
  remaining terms in the definition of $\L_0$ are bounded operators
  (in the $L^1$ and $L^2$ cases one can choose expression
  \eqref{eq:linearised2} for this; in the $H^{-1}$ case one can choose
  expression \eqref{eq:linearised3}).
\end{proof}

\section{Tools on the spectral gap of linear operators}\label{Sec:tools:sg}

The following technique allows us to obtain spectral gaps in different
spaces, once a spectral gap in some space has been proved. These ideas
stem from classical perturbation theory of linear operators, with
constructive estimates given in \cite{Mouhot2006Rate} and a general
theory developed in \cite{GMM}. The simple approach described here was
already used in \cite{Canizo2013Exponential}, and we describe it
below.

\subsection{Spectral gap}

We often refer to our estimates on the decay of several semigroups as
``spectral gap estimates''. The main decay property that we are
interested in is more precisely called \emph{hypodissipativity}:

\begin{definition}[Hypodissipative semigroup]
  \label{Def:sg}
  Let $X$ be a Banach space and
  $\mathcal{A}\colon D(\mathcal{A})\subset X\to X$ the generator of a
  strongly continuous semigroup $\ee^{\mathcal{A}t}$. We say that
  $\mathcal{A}$ is \emph{hypodissipative} (or that the semigroup
  $(e^{\mathcal{A} t})_{t \geq 0}$ is hypodissipative) if there exist
  constants $C \geq 1$, $\lambda > 0$ such that
  \begin{equation*}
    \norm{\ee^{\mathcal{A}t}h}_{X}\leq C\norm{h}_{X}\ee^{-\lambda t}
  \end{equation*}
  for all $h\in X$.
\end{definition}

An operator $\mathcal{A}$ is usually called \emph{dissipative} if the
above definition holds with $C = 1$. If $\mathcal{A}$ is the generator
of a strongly continuous semigroup on a Banach space $X$, we say
$\mathcal{A}$ has a spectral gap if its kernel is nonzero (i.e., there
are \emph{equilibria} of the evolution), and the corresponding
semigroup is hypodissipative when restricted to a suitable subspace of
$X$ which does not contain the kernel of $\mathcal{A}$ (usually the
subspace perpendicular to the kernel in a suitable scalar
product). Since we do not define this ``suitable subspace'' in
general, every time we mention a spectral gap result it should be
clear that we always refer to a specific decay property of the
corresponding semigroup.

Of course, this is intimately related to the property that
the spectrum of $\mathcal{A}$ consists of $0$, plus an additional set
contained in $\{z \in \mathbb{C} \mid \Re(z) \leq -\lambda \}$, but
there is not a simple equivalence without further decay properties of
the resolvent (by the Hille-Yosida theorem). This is why we prefer to
work only with estimates on the decay of the associated semigroups.

\subsection{Restriction of the spectral gap}

We state a result similar to \cite[Theorem 3.1]{Canizo2013Exponential}
or \cite[Theorem 1.1]{Mischler2016}, dealing with restriction of the
spectral gap of a linear operator instead of extension. Strictly
speaking, the results below allow us to transfer the hypodissipativity
property between semigroups. In order to use them to transfer a
spectral gap property, we will later apply them to the subspaces
perpendicular to the equilibrium in a suitable sense.

\begin{theorem}
  \label{thm:restriction:NEW}
  Consider two Banach spaces $\mathcal{Y} \subseteq \mathcal{Z}$ with corresponding norms $\mathcal{Y}$ and $\mathcal{Z}$ such that $\|h\|_\mathcal{Z} \leq C_{\mathcal{Y}} \|h\|_\mathcal{Y}$ for all $h \in \mathcal{Y}$ for some $C_\mathcal{Y} > 0$. Let
  $L_\mathcal{Y} \colon D(L_\mathcal{Y}) \to \mathcal{Y}$ be an unbounded operator which extends to an unbounded operator $L_\mathcal{Z} \colon D(L_\mathcal{Z}) \to \mathcal{Z}$ be unbounded, i.e.\@ $D(L_\mathcal{Y}) \subseteq D(L_\mathcal{Z})$ and
  $L_\mathcal{Z}\vert_{D(L_\mathcal{Y})} = L_\mathcal{Y}$. Given
  that:
  \begin{enumerate}
  \item $L_\mathcal{Z}$ is the generator of a strongly continuous semigroup $\ee^{L_{\mathcal{Z}}t}$ on $\mathcal{Z}$.
  \item which satisfies
    \begin{equation}
      \label{eq:s1}
      \| \ee^{L_{\mathcal{Z}}t} h\|_\mathcal{Z} \leq C_1 \ee^{-\lambda_1 t}  \|h \|_\mathcal{Z},
      \qquad h\in \mathcal{Z},\:\:t \geq 0
    \end{equation}
    with $C_1 > 0$ and $\lambda_1 \in \R$.
  \item $L_\mathcal{Z} = {A}  + {B}$ with linear operators ${A}$, ${B}$ on
    $\mathcal{Z}$ which satisfy
    \begin{enumerate}
    \item ${A}:\mathcal{Z} \to \mathcal{Y}$ is continuous, i.e.\@ $\| {A} h \|_\mathcal{Y} \leq C_{A} \|h\|_\mathcal{Z}$ for all $h \in \mathcal{Z}$ and $C_{A} > 0$,
    \item ${B}$ is the generator of a strongly continuous semigroup $\ee^{B t}$ on $\mathcal{Y}$ satisfying
      \begin{equation*}
        \norm{\ee^{B t} h}_\mathcal{Y} \leq C_2 \ee^{-\lambda_2 t} \norm{h}_\mathcal{Y},
        \qquad h\in \mathcal{Y}, \:\:t \geq 0
      \end{equation*}
      with $C_2 > 0$, and $\lambda_2 \neq \lambda_1$.
    \end{enumerate}
  \end{enumerate}
  the operator $L_{\mathcal{Z}}$ (and thus $L_{\mathcal{Y}}$) has also a spectral gap on $\mathcal{Y}$, i.e.\@ it satisfies
  \begin{equation}
    \label{eq:s3}
    \norm{\ee^{L_{\mathcal{Z}}t} h}_\mathcal{Y} \leq C \ee^{-\min\{\lambda_1,\lambda_{2}\} t} \norm{h}_\mathcal{Y},
    \qquad h \in \mathcal{Y}, \:\: t \geq 0
  \end{equation}
  for $C = C_2 \left(1 + \frac{C_A C_1 C_{\mathcal{Y}}}{\abs{\lambda_2 - \lambda_1}} \right)$.
\end{theorem}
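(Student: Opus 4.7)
The strategy is a standard variation-of-constants (Duhamel) argument, combined with a simple bookkeeping of the norms. Since $A$ is bounded from $\mathcal{Z}$ to $\mathcal{Y}\subseteq\mathcal{Z}$ (continuous with constant $C_A C_{\mathcal{Y}}$ as an operator on $\mathcal{Z}$), the Bounded Perturbation Theorem ensures that $B=L_{\mathcal{Z}}-A$ is indeed the generator of a strongly continuous semigroup on $\mathcal{Z}$ as well, and for every $h\in\mathcal{Z}$ and $t\geq 0$ the identity
\begin{equation*}
  \ee^{L_{\mathcal{Z}}t}h = \ee^{Bt}h + \int_{0}^{t}\ee^{B(t-s)}A\ee^{L_{\mathcal{Z}}s}h\ds
\end{equation*}
holds in $\mathcal{Z}$. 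This is the key representation formula.

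Now fix $h\in\mathcal{Y}$. The first term $\ee^{Bt}h$ lies in $\mathcal{Y}$ by assumption (3b), and for every $s\in[0,t]$, hypothesis (3a) gives $A\ee^{L_{\mathcal{Z}}s}h\in\mathcal{Y}$ with
\begin{equation*}
  \norm{A\ee^{L_{\mathcal{Z}}s}h}_{\mathcal{Y}}
  \leq C_{A}\norm{\ee^{L_{\mathcal{Z}}s}h}_{\mathcal{Z}}
  \leq C_{A}C_{1}\ee^{-\lambda_{1}s}\norm{h}_{\mathcal{Z}}
  \leq C_{A}C_{1}C_{\mathcal{Y}}\ee^{-\lambda_{1}s}\norm{h}_{\mathcal{Y}},
\end{equation*}
where we have used assumption (2) and the continuous embedding $\mathcal{Y}\hookrightarrow\mathcal{Z}$. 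Applying $\ee^{B(t-s)}$ and using (3b),
\begin{equation*}
  \norm{\ee^{B(t-s)}A\ee^{L_{\mathcal{Z}}s}h}_{\mathcal{Y}}
  \leq C_{2}\ee^{-\lambda_{2}(t-s)}\cdot C_{A}C_{1}C_{\mathcal{Y}}\ee^{-\lambda_{1}s}\norm{h}_{\mathcal{Y}}.
\end{equation*}
The integrand is continuous in $\mathcal{Y}$, so the Bochner integral above can be computed in $\mathcal{Y}$; the Duhamel identity, originally in $\mathcal{Z}$, then shows that $\ee^{L_{\mathcal{Z}}t}h\in\mathcal{Y}$ and we may estimate its $\mathcal{Y}$-norm by integrating the above pointwise bound.

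The remaining computation is elementary:
\begin{equation*}
  \int_{0}^{t}\ee^{-\lambda_{2}(t-s)}\ee^{-\lambda_{1}s}\ds
  = \frac{\ee^{-\lambda_{1}t}-\ee^{-\lambda_{2}t}}{\lambda_{2}-\lambda_{1}},
\end{equation*}
whose absolute value is bounded by $\ee^{-\min\{\lambda_{1},\lambda_{2}\}t}/\abs{\lambda_{2}-\lambda_{1}}$ (as both exponentials are bounded by $\ee^{-\min\{\lambda_{1},\lambda_{2}\}t}$ and the numerator has a definite sign). Combining with the bound on the first Duhamel term yields
\begin{equation*}
  \norm{\ee^{L_{\mathcal{Z}}t}h}_{\mathcal{Y}}
  \leq C_{2}\ee^{-\lambda_{2}t}\norm{h}_{\mathcal{Y}}
  + \frac{C_{A}C_{1}C_{\mathcal{Y}}C_{2}}{\abs{\lambda_{2}-\lambda_{1}}}\ee^{-\min\{\lambda_{1},\lambda_{2}\}t}\norm{h}_{\mathcal{Y}},
\end{equation*}
from which the claimed inequality~\eqref{eq:s3} with the stated constant $C$ follows after bounding $\ee^{-\lambda_{2}t}$ by $\ee^{-\min\{\lambda_{1},\lambda_{2}\}t}$.

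The only genuinely non-trivial point is the justification that the Duhamel representation yields an identity in $\mathcal{Y}$, i.e.\@ that $\ee^{L_{\mathcal{Z}}t}$ leaves $\mathcal{Y}$ invariant. This follows automatically from the argument above: the right-hand side of Duhamel is an element of $\mathcal{Y}$, the left-hand side equals it in $\mathcal{Z}$, and the embedding $\mathcal{Y}\hookrightarrow\mathcal{Z}$ is injective. No additional density or resolvent argument is therefore required.
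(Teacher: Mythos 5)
Your proof is correct and follows essentially the same route as the paper's: Duhamel's formula applied to the splitting $L_{\mathcal{Z}}=A+B$, followed by pointwise estimates on the two terms and the explicit computation of the convolution integral. The paper's version stops at the inequality chain and implicitly assumes the Duhamel identity can be read in $\mathcal{Y}$; you additionally spell out why that is legitimate (the integrand lies in $\mathcal{Y}$, so the right-hand side is a $\mathcal{Y}$-element, and the embedding being injective forces the $\mathcal{Z}$-identity to upgrade to a $\mathcal{Y}$-identity), and you note via the Bounded Perturbation Theorem that $B$ also generates a semigroup on $\mathcal{Z}$, which is needed to even write Duhamel there. These are genuine technical points the paper leaves tacit, and making them explicit is an improvement rather than a deviation.
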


\begin{proof}
  We can use Duhamel's formula to write
  \begin{equation*}
    \label{eq:Duhamel}
    \ee^{L_{\mathcal{Z}}t} h = \ee^{B t}\,h + \int_0^t
    \ee^{B(t-s)} \left({A}\, \ee^{L_{\mathcal{Z}}s} h\right) \ds \qquad
    \forall h \in \mathcal{Y},\:t\geq 0.
  \end{equation*}
  Thus, for fixed $h \in \mathcal{Y}$ and $t \geq 0$, we  have
  \begin{multline}\label{eq:Vth}
    \|\ee^{L_{\mathcal{Z}}t} h\|_\mathcal{Y}  \leq \|\ee^{B t} h\|_{\mathcal{Y}}
    + \int_0^t \norm{\ee^{B(t-s)} \bigl({A}\,\ee^{L_{\mathcal{Z}}s} h\bigr)}_\mathcal{Y} \ds
    \\
    \leq C_2 e^{-\lambda_2 t} \|h\|_\mathcal{Y}
    +  C_2 \int_0^t e^{-\lambda_2 (t-s)}
    \| {A}\,\ee^{L_{\mathcal{Z}}s} h \|_\mathcal{Y} \ds
    \\
    \leq
    C_2 e^{-\lambda_2 t} \|h\|_\mathcal{Y}
    +  C_2 C_A \int_0^t e^{-\lambda_2 (t-s)}
    \| \ee^{L_{\mathcal{Z}}s} h \|_{\mathcal{Z}} \ds
    \\
    \leq
    C_2 e^{-\lambda_2 t} \|h\|_\mathcal{Y}
    +  C_2 C_A C_1 \| h \|_{\mathcal{Z}}
    \int_0^t e^{-\lambda_2 (t-s)} e^{-\lambda_1 s}
    \ds
    \\
    =
    C_2 e^{-\lambda_2 t} \|h\|_\mathcal{Y}
    +  C_2 C_A C_1 \| h \|_{\mathcal{Z}}
    \frac{1}{\lambda_2 - \lambda_1}(e^{- \lambda_1 t} -  e^{-\lambda_2 t})
    \\
    \leq
    C_2 \left(1 + \frac{C_A C_1 C_{\mathcal{Y}}}{\abs{\lambda_2 - \lambda_1}} \right)
    e^{-\min\{\lambda_1,\lambda_{2}\} t} \|h\|_\mathcal{Y}.
  \end{multline}
  This shows the result.
\end{proof}

\subsection{Extension of the spectral gap}

For convenience we recall \cite[Theorem~3.1]{Canizo2013Exponential}
which allows to extend the spectral gap from one Banach space to a
larger one.

\begin{theorem}\label{Thm:sg:extension}
  Consider two Banach spaces $\mathcal{Y}\subset\mathcal{Z}$ with
  corresponding norms $\norm{\cdot}_{\mathcal{Y}}$ and
  $\norm{\cdot}_{\mathcal{Z}}$ and such that
  $\norm{h}_{\mathcal{Z}}\leq C_{\mathcal{Y}}\norm{h}_{\mathcal{Y}}$
  for all $h\in \mathcal{Y}$. Let
  $L_{\mathcal{Y}}\colon D(L_{\mathcal{Y}})\to \mathcal{Y}$ be an
  unbounded operator which extends to an unbounded operator
  $L_{\mathcal{Z}}\colon D(L_{\mathcal{Z}})\to \mathcal{Z}$, i.e.\@
  $D(L_{\mathcal{Y}})\subset D(L_{\mathcal{Z}})$ and
  $L_{\mathcal{Z}}|_{D(L_{\mathcal{Y}})}=L_{\mathcal{Y}}$. Given that
  \begin{enumerate}
  \item $L_{\mathcal{Y}}$ is the generator of a strongly continuous semigroup $\ee^{L_{\mathcal{Y}}t}$ on $\mathcal{Y}$
  \item which satisfies
    \begin{equation*}
      \norm{\ee^{L_{\mathcal{Y}}t}h}_{\mathcal{Y}}\leq C_{1}\ee^{-\lambda_{1}t}\norm{h}_{\mathcal{Y}} \qquad \text{for }h\in\mathcal{Y} \text{ and } t\geq0
    \end{equation*}
    with $C_{1}>0$ and $\lambda_{1}\in\R$
  \item $L_{\mathcal{Z}}=\mathcal{A}+\mathcal{B}$ with linear operators $\mathcal{A},\mathcal{B}$ on $\mathcal{Z}$ which satisfy
    \begin{enumerate}
    \item $\mathcal{A}\colon \mathcal{Z}\to\mathcal{Y}$ is continuous, i.e.\@ $\norm{\mathcal{A} h}_{\mathcal{Y}}\leq C_{\mathcal{A}}\norm{h}_{\mathcal{Z}}$ for all $h\in\mathcal{Z}$ and $C_{\mathcal{A}}>0$,
    \item $\mathcal{B}$ is the generator of a strongly continuous  semigroup $\ee^{\mathcal{B}t}$ on $\mathcal{Z}$ satisfying
      \begin{equation*}
        \norm{\ee^{\mathcal{B}t} h}_{\mathcal{Z}}\leq C_{2}\ee^{-\lambda_{2} t}\norm{h}_{\mathcal{Z}} \qquad \text{for all }h\in \mathcal{Z}\text{ and }t\geq 0
      \end{equation*}
      with $C_{2}>0$ and $\lambda_{2}>\lambda_{1}$
    \end{enumerate}
  \end{enumerate}
  the operator $L_{\mathcal{Z}}$ is the generator of a strongly continuous  semigroup $\ee^{L_{\mathcal{Z}}t}$ on $\mathcal{Z}$ which extends $\ee^{L_{\mathcal{Y}}t}$ and satisfies
  \begin{equation*}
    \norm{\ee^{L_{\mathcal{Z}}t} h}_{\mathcal{Z}}\leq C\ee^{-\lambda_{1} t}\norm{h}_{\mathcal{Z}} \qquad \text{for all }h\in\mathcal{Z}\text{ and }t\geq0
  \end{equation*}
  with $C=C_{2}+C_{\mathcal{Y}}C_{1}C_{2}C_{\mathcal{A}}(\lambda_{2}-\lambda_{1})^{-1}$.
\end{theorem}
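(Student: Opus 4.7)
The plan follows the same template as the proof of Theorem~\ref{thm:restriction:NEW}, except that here we must first \emph{construct} the semigroup $\ee^{L_\mathcal{Z} t}$ on the larger space $\mathcal{Z}$ before estimating its decay. This splits naturally into three steps: existence of the semigroup on $\mathcal{Z}$, identification with $\ee^{L_\mathcal{Y} t}$ on $\mathcal{Y}$, and a Duhamel estimate. For existence, I observe that $\mathcal{A}\colon \mathcal{Z}\to\mathcal{Y}$ with norm $C_\mathcal{A}$ combined with the continuous embedding $\mathcal{Y}\hookrightarrow\mathcal{Z}$ of norm $C_\mathcal{Y}$ gives that $\mathcal{A}$ maps $\mathcal{Z}$ into itself with norm at most $C_\mathcal{Y}C_\mathcal{A}$. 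The Bounded Perturbation Theorem applied to $\mathcal{B}$ on $\mathcal{Z}$ then produces a strongly continuous semigroup $(\ee^{L_\mathcal{Z} t})_{t\geq 0}$ on $\mathcal{Z}$ with generator $L_\mathcal{Z}=\mathcal{A}+\mathcal{B}$ and domain $D(\mathcal{B})$.

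For the extension step, I verify that $\ee^{L_\mathcal{Z} t}|_\mathcal{Y}=\ee^{L_\mathcal{Y} t}$. For $h\in D(L_\mathcal{Y})$ the orbit $u(t)=\ee^{L_\mathcal{Y} t}h$ stays in $D(L_\mathcal{Y})\subseteq D(L_\mathcal{Z})$ (invariance of the domain under the semigroup) and satisfies $u'(t)=L_\mathcal{Y} u(t)=L_\mathcal{Z} u(t)$ in $\mathcal{Y}$, which also gives the identity in $\mathcal{Z}$ via the continuous embedding. By uniqueness of classical solutions to the abstract Cauchy problem for $L_\mathcal{Z}$ on $\mathcal{Z}$, I conclude $u(t)=\ee^{L_\mathcal{Z} t}h$, and density of $D(L_\mathcal{Y})$ in $\mathcal{Y}$ extends the identity to all $h\in\mathcal{Y}$.

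For the decay step, Duhamel's formula in $\mathcal{Z}$ reads
\begin{equation*}
  \ee^{L_\mathcal{Z} t}h = \ee^{\mathcal{B} t}h + \int_0^t \ee^{L_\mathcal{Z}(t-s)}\,\mathcal{A}\,\ee^{\mathcal{B} s}h\,\ds.
\end{equation*}
Since $\mathcal{A}\,\ee^{\mathcal{B} s}h\in\mathcal{Y}$ and the two semigroups agree on $\mathcal{Y}$, the integrand can be controlled by the $\mathcal{Y}$-decay:
\begin{equation*}
  \norm{\ee^{L_\mathcal{Z}(t-s)}\,\mathcal{A}\,\ee^{\mathcal{B} s}h}_\mathcal{Z}
  \leq C_\mathcal{Y}\,C_1\,C_\mathcal{A}\,C_2\,\ee^{-\lambda_1(t-s)}\,\ee^{-\lambda_2 s}\norm{h}_\mathcal{Z}.
\end{equation*}
Integrating in $s$ and using $\lambda_2>\lambda_1$ bounds the time integral by $(\lambda_2-\lambda_1)^{-1}\ee^{-\lambda_1 t}$, and combining with $\ee^{-\lambda_2 t}\leq\ee^{-\lambda_1 t}$ in the first term yields the claimed constant $C=C_2+C_\mathcal{Y}C_1C_2C_\mathcal{A}(\lambda_2-\lambda_1)^{-1}$. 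The main obstacle is the careful justification of the extension identity, which relies on invariance of $D(L_\mathcal{Y})$ under $\ee^{L_\mathcal{Y} t}$ and on the density of $D(L_\mathcal{Y})$ in $\mathcal{Y}$; once that is in place, the decay estimate is essentially a mirror image of the Duhamel computation carried out for Theorem~\ref{thm:restriction:NEW}.
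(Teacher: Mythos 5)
The paper does not prove Theorem~\ref{Thm:sg:extension} but cites it from \cite[Theorem~3.1]{Canizo2013Exponential}; your argument is correct and is the standard one behind that result. You rightly use the second Duhamel form $\ee^{L_\mathcal{Z} t}h = \ee^{\mathcal{B} t}h + \int_0^t \ee^{L_\mathcal{Z}(t-s)}\,\mathcal{A}\,\ee^{\mathcal{B} s}h\,\ds$ (the mirror image of the one used in Theorem~\ref{thm:restriction:NEW}), so that $\mathcal{A}\,\ee^{\mathcal{B} s}h$ lands in $\mathcal{Y}$ and the $\mathcal{Y}$-decay of $\ee^{L_\mathcal{Y}}$ can be invoked after applying the extension identity $\ee^{L_\mathcal{Z} t}|_{\mathcal{Y}} = \ee^{L_\mathcal{Y} t}$, which you justify correctly via invariance of $D(L_\mathcal{Y})$, uniqueness of classical solutions of the abstract Cauchy problem, and density. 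The resulting constant agrees with the statement.
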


\section{Spectral gap for the constant kernel in weighted \texorpdfstring{$L^1$}{L1} spaces}
\label{sec:gap_L1}

Our overall plan for the linearised coagulation operator consists in
showing that we can restrict the known $H^{-1}(e^{\mu x})$ spectral
gap to the smaller Hilbert space $L^{2}(\ee^{\mu x})$ and from there
extend to the space $L^1((1+x)^{k})$ with sufficiently large $k>0$,
using the techniques from the previous section. In this section, we
will transfer the spectral gap for $\L_{0}$ which has been obtained in
\cite{CMM10} for the class of spaces $H^{-1}(\ee^{\mu x})$ to the
spaces of the form $L^{1}((1+x)^{k})$. For this, we will rely on the
restriction/extension methods recapitulated in
Section~\ref{Sec:tools:sg}.

In~\cite{CMM10} a spectral gap for $\L_{0}$ was obtained in
$H^{-1}(\ee^{\mu x})$. Precisely, we recall from \cite[Prop.\@ 3.11 \&
Lem.\@ 3.12]{CMM10} the following result:

\begin{theorem}\label{Thm:sg:prim:L2}
  For any $\mu\in (0,1)$, the operator $\L_{0}$ as given
  by~\eqref{eq:linearised3} has a spectral gap of size $1$ in
  $H^{-1}(\ee^{\mu x})$, that is: on this space it generates a
  strongly continuous semigroup $\ee^{\L_{0} t}$ satisfying
  \begin{equation*}
    \norm{\ee^{\L_{0} t}h}_{H^{-1}(\ee^{\mu x})}
    \leq
    \norm{h}_{H^{-1}(\ee^{\mu x})}\ee^{-t} \quad \text{for }t\geq 0
  \end{equation*}
  and all $h\in H^{-1}(\ee^{\mu x})\cap
  \{\int_{0}^{\infty}xh(x)\dx=0\}$.
\end{theorem}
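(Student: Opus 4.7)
The plan is to reduce everything to an $L^2$ problem on the primitive, pass to the Laplace transform, use the method of characteristics to get an explicit formula for the evolution in Laplace variables, and then transfer back via a Plancherel-type identity.

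\medskip

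\textbf{Step 1: Reduction to the primitive.} Setting $H(x)\vcc= \int_x^\infty h(y)\dy$, the map $h\mapsto H$ is by definition an isometry from $H^{-1}(\ee^{\mu x})$ onto $L^2(\ee^{\mu x})$ that sends the hyperplane $\{\int_0^\infty x h \dx=0\}$ to $\{\int_0^\infty H \dx=0\}$. So the theorem reduces to proving an $L^2(\ee^{\mu x})$ decay estimate, with rate $\ee^{-t}$, for $H$ satisfying the evolution induced by $\partial_t h=\L_0 h$. Using representation~\eqref{eq:linearised3}, a direct integration of $\L_0 h$ from $x$ to infinity (with one integration by parts on the $yh'(y)$ term and Fubini on the double integral) gives
\begin{equation*}
  \partial_t H(x) = x H'(x) - H(x) + 2\ee^{-x}\int_0^x H(z)\,\ee^z \dz.
\end{equation*}

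\medskip

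\textbf{Step 2: Laplace transform and characteristics.} Introduce $\hat H(s,t)\vcc= \int_0^\infty H(x,t)\,\ee^{-sx}\dx$, which is well defined for $\Re s > -\mu/2$. The transport $xH'(x)$ becomes $-\hat H - s\partial_s\hat H$, the identity term yields $-\hat H$, and the convolution with $\ee^{-\cdot}$ becomes multiplication by $(s+1)^{-1}$. Collecting terms gives the clean first-order PDE
\begin{equation*}
  \partial_t \hat H(s,t) + s\,\partial_s \hat H(s,t) = -\frac{2s}{s+1}\,\hat H(s,t).
\end{equation*}
The characteristics are $s(t)=s_0 \ee^t$, and integrating the resulting ODE along them yields the explicit formula
\begin{equation*}
  \hat H(s,t) = \hat H(s\ee^{-t},0)\,\left(\frac{s\ee^{-t}+1}{s+1}\right)^{\!2}.
\end{equation*}
The zero-mass constraint translates to $\hat H(0,\cdot)\equiv 0$, which is preserved by the flow and allows one to write $\hat H(s,0)=s\,g(s)$ for some holomorphic $g$; hence the factor $s\ee^{-t}$ produces an extra $\ee^{-t}$ on top of the rational multiplier.

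\medskip

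\textbf{Step 3: Transfer back via Plancherel.} The weighted $L^2$ norm is recovered from the Laplace transform on a vertical contour: for any $\mu\in(0,1)$,
\begin{equation*}
  \|H(t,\cdot)\|_{L^2(\ee^{\mu x})}^2
  \;=\;\frac{1}{2\pi}\int_{-\infty}^{\infty} \bigl|\hat H(-\tfrac{\mu}{2}+i\xi,t)\bigr|^2 \d\xi.
\end{equation*}
Substituting the explicit formula from Step~2 and changing variables $\sigma = s\ee^{-t}$ (which deforms the contour from $\Re s=-\mu/2$ to $\Re \sigma=-\mu\ee^{-t}/2$, justified by the analyticity of $\hat H$ in the half-plane $\Re s>-\mu/2$), the rate $\ee^{-t}$ comes out cleanly provided the multiplier $|(\sigma+\ee^{-t})/(\sigma\ee^t+1)|$ is uniformly bounded along the contour by $\ee^{-t}$; a short complex-analytic computation using $\mu<1$ shows that this holds.

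\medskip

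\textbf{Main obstacle.} The delicate point is Step~3: one must show that the rational multiplier times the contour shift combine to give precisely the rate $\ee^{-t}$ rather than something worse, and this is exactly where the restriction $\mu\in(0,1)$ enters (both endpoints $\mu=0$ and $\mu=1$ are borderline for the convergence of the contour integral of $|\hat H|^2$). The mass condition is also crucial here: without the factor of $s$ in $\hat H(s,0)=s g(s)$, the formula in Step~2 only gives the multiplier $((s\ee^{-t}+1)/(s+1))^2$, which does \emph{not} decay in time uniformly in $s$; it is the combination with the zero-mass constraint that produces the advertised spectral gap.
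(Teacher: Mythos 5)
The paper does not actually prove Theorem~\ref{Thm:sg:prim:L2}; it cites \cite[Prop.~3.11 \& Lem.~3.12]{CMM10} verbatim. Your proposal reconstructs an argument of the same Laplace-transform type, and Steps~1 and~2 are correct: the reduction $\partial_t H = xH'-H+2\ee^{-x}\int_0^x H(z)\ee^z\dz$ and the characteristics formula $\hat H(s,t)=\hat H(s\ee^{-t},0)\bigl(\tfrac{s\ee^{-t}+1}{s+1}\bigr)^{2}$ both check out.

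Step~3, however, contains a real gap, and it is exactly where the proof has to earn its keep. After the substitution $\sigma=s\ee^{-t}$, the rational factor in the formula becomes $\tfrac{\sigma+1}{\sigma\ee^{t}+1}$, \emph{not} the $\tfrac{\sigma+\ee^{-t}}{\sigma\ee^{t}+1}$ written in the proposal. Your expression happens to satisfy $\bigl|\tfrac{\sigma+\ee^{-t}}{\sigma\ee^{t}+1}\bigr|\equiv\ee^{-t}$ identically on $\Re\sigma=-\mu\ee^{-t}/2$ (multiply numerator and denominator inside the modulus by $\ee^{t}$), which is probably why the finish looked automatic; but the correct factor equals $\tfrac{1-\mu\ee^{-t}/2}{1-\mu/2}\geq 1$ at $\eta=0$ and does \emph{not} decay in $t$ near the real axis. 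Combined with the Jacobian $\ee^{t}$ from $d\xi=\ee^{t}\,d\eta$, a plain $\sup$-bound on the multiplier would yield a \emph{growing} estimate.

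You are right that the zero-mass constraint must carry the argument, but the device of writing $\hat H(s,0)=s\,g(s)$ does not close it: the $\ee^{-t}$ pulled out of $\hat H(s\ee^{-t},0)=s\ee^{-t}g(s\ee^{-t})$ is exactly cancelled by the Jacobian, one cannot compare $|g(\sigma)|$ on the shifted line $\Re\sigma=-\mu\ee^{-t}/2$ with $|g(\sigma')|$ on $\Re\sigma'=-\mu/2$ pointwise, and the combined weight $\ee^{t}|\sigma|^{2}\bigl|\tfrac{\sigma+1}{\sigma\ee^{t}+1}\bigr|^{4}$ behaves like $\eta^{2}\ee^{-3t}$ for large $|\eta|$ and is therefore unbounded on the contour, so no pointwise multiplier bound applies. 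Note also that the theorem's contraction constant $C=1$ cannot be read off the diagonalisation alone, since the eigenfunctions $\hat H_{n}(s)=s^{n}(s+1)^{-2}$ are \emph{not} orthogonal in $L^{2}(\ee^{\mu x})$ for $\mu>0$. What is missing is either a genuine dissipativity inequality for the $H$-equation on the zero-mean subspace of $L^{2}(\ee^{\mu x})$, or a more careful global comparison of the two contour integrals exploiting both the decay of $|\hat H(\sigma,0)|$ in $\eta$ and the smallness of $|\sigma|$ near the origin simultaneously; neither is a short computation, and neither is carried out in the proposal.
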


\begin{remark}
  In~\cite{CMM10} the constant kernel was chosen to be $K\equiv 1$,
  but the above result is adapted to our choice $K\equiv 2$ which
  seems to be more common.
\end{remark}

\subsection{Restriction of the spectral gap to weigthed \texorpdfstring{$L^2$}{L2} spaces}

In this subsection, we will prove the following proposition which
states that the spectral gap for $\L_{0}$ in $H^{-1}(\ee^{\mu x})$ can
be restricted to the subspace $L^{2}(\ee^{\mu x})$.

\begin{proposition}\label{Prop:restriction:spectral:gap}
  The operator $\L_{0}$ as given by~\eqref{eq:linearised3} generates a strongly continuous semigroup $\ee^{\L_{0}t}$ on $L^{2}(\ee^{\mu x})$ and for each $\lambda\in(-\infty,1/2]$ there exists $C_{\lambda}>0$ such that
  \begin{equation*}
    \norm{\ee^{\L_{0}t}h_{0}}_{L^{2}(\ee^{\mu x})}\leq C_{\lambda}\ee^{-\lambda t}\norm{h_{0}}_{L^{2}(\ee^{\mu x})}\qquad \text{for all } h_{0}\in L^{2}(\ee^{\mu x}) \text{ with } \int_{0}^{\infty}xh_{0}(x)\dx=0
  \end{equation*}
  and all $t\geq 0$.
\end{proposition}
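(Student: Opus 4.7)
My plan is to apply the restriction-of-spectral-gap result, Theorem~\ref{thm:restriction:NEW}, with $\mathcal{Z} = H^{-1}(\ee^{\mu x})\cap\{\int_0^\infty xh\dx=0\}$ and $\mathcal{Y} = L^2(\ee^{\mu x})\cap\{\int_0^\infty xh\dx=0\}$. The continuous embedding $\mathcal{Y}\hookrightarrow\mathcal{Z}$ is provided by Lemma~\ref{Lem:cont:emb:L2:spaces}, and the hypodissipativity on $\mathcal{Z}$ with rate $\lambda_1=1$ is exactly \cref{Thm:sg:prim:L2}. Strong continuity of the $\L_0$-semigroup on $\mathcal{Y}$ (needed so that the output makes sense) is guaranteed by \cref{thm:semigroups-defined}.

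For the splitting $\L_0 = A + B$ I would take
\begin{equation*}
  Bh(x) = xh'(x),
  \qquad
  Ah(x) = 2(G_0\ast h)(x) - 2G_0(x)\int_0^\infty h(y)\dy.
\end{equation*}
By \cref{Lem:diff:sg:NEW} the operator $B$ generates a strongly continuous semigroup on $\mathcal{Y}$ satisfying $\|\ee^{Bt}h\|_{L^2(\ee^{\mu x})} \leq \ee^{-t/2}\|h\|_{L^2(\ee^{\mu x})}$, so $\lambda_2=1/2$ and $C_2=1$. A direct integration-by-parts calculation shows that $B$ preserves the constraint $\int_0^\infty xh\dx=0$, and hence so does $A$ (since $\L_0$ itself preserves it, as the constant kernel is mass-conserving). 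Thus both operators genuinely act on the subspaces $\mathcal{Y}$, $\mathcal{Z}$.

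The main work, and the only nontrivial step, is verifying the continuity $A\colon \mathcal{Z}\to \mathcal{Y}$ with an explicit bound in terms of $\|h\|_{H^{-1}(\ee^{\mu x})}=\|H\|_{L^2(\ee^{\mu x})}$, where $H(y)=\int_y^\infty h(z)\dz$. Starting from formula~\eqref{eq:linearised3}, one sees that $A = \L_0 - B$ can be rewritten entirely in terms of $H$ as
\begin{equation*}
  Ah(x) = -2H(x) + 2\int_0^x H(y)\ee^{-(x-y)}\dy,
\end{equation*}
which no longer involves $h$ directly. I would then estimate in $L^2(\ee^{\mu x})$ term by term: the first term contributes $4\|H\|_{L^2(\ee^{\mu x})}^2$, and for the second, Cauchy--Schwarz applied to the convolution with $\ee^{-(x-y)}$ on $[0,x]$ yields $|\int_0^x H(y)\ee^{-(x-y)}\dy|^2 \leq \int_0^x H(y)^2\ee^{-(x-y)}\dy$; multiplying by $\ee^{\mu x}$, swapping the order of integration and computing $\int_y^\infty \ee^{(\mu-1)(x-y)}\dx=(1-\mu)^{-1}$ (this is where $\mu\in(0,1)$ enters) gives the bound $\frac{1}{1-\mu}\|H\|_{L^2(\ee^{\mu x})}^2$. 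Combining yields a constant $C_A=C_A(\mu)$ with $\|Ah\|_{L^2(\ee^{\mu x})} \leq C_A\|h\|_{H^{-1}(\ee^{\mu x})}$.

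With $\lambda_1=1\neq 1/2=\lambda_2$, \cref{thm:restriction:NEW} then applies and produces exactly the estimate
\begin{equation*}
  \|\ee^{\L_0 t}h_0\|_{L^2(\ee^{\mu x})} \leq C\ee^{-t/2}\|h_0\|_{L^2(\ee^{\mu x})},\qquad t\geq 0,
\end{equation*}
for all $h_0\in \mathcal{Y}$, with an explicit constant $C=C(\mu)$ built from $C_{\mathcal{Y}}$, $C_A$, $C_1=1$, $C_2=1$, and $|\lambda_2-\lambda_1|=1/2$. For any $\lambda\leq 1/2$ the claimed bound $\|\ee^{\L_0 t}h_0\|_{L^2(\ee^{\mu x})}\leq C_\lambda\ee^{-\lambda t}\|h_0\|_{L^2(\ee^{\mu x})}$ follows immediately from this $\lambda=1/2$ estimate by taking $C_\lambda=C$. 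The essential obstacle is therefore the primitive-based rewriting of $A$ together with the weighted convolution estimate; once that is in hand, Theorem~\ref{thm:restriction:NEW} does the rest mechanically.
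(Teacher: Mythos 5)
Your proposal is correct and follows essentially the same path as the paper: same choice of $\mathcal{Y}$, $\mathcal{Z}$, same splitting $\L_0=A+B$ with $B=x\del_x$ and $A$ expressed purely through the primitive $H$, same invocation of Theorem~\ref{thm:restriction:NEW} together with \cref{Thm:sg:prim:L2,Lem:diff:sg:NEW}. The only cosmetic difference is in bounding the convolution term: you use Cauchy--Schwarz with the weight $\ee^{-(x-y)}$ followed by Fubini, while the paper factors the weight and applies Young's inequality for convolutions — both give an explicit $\mu$-dependent constant, so the arguments are interchangeable.
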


\begin{proof}
  The proof follows from an application of Theorem~\ref{thm:restriction:NEW}. For this, we choose $\mathcal{Z}=H^{-1}(\ee^{\mu x})\cap\{\int_{0}^{\infty}xf(x)\dx=0\}$ and $\mathcal{Y}=L^{2}(\ee^{\mu x})\cap\{\int_{0}^{\infty}xf(x)\dx=0\}$. Moreover, $\L_{\mathcal{Y}}$ and $\L_{\mathcal{Z}}$ are both given as unbounded operators by the expression~\eqref{eq:linearised3} on the respective spaces (see also Remark~\ref{Rem:operator:extension} below).
  
  \begin{enumerate}
  \item According to \cite[Proposition~3.11]{CMM10} (see Theorem~\ref{Thm:sg:prim:L2} above) the operator $\L_{\mathcal{Z}}$ generates a strongly continuous semigroup $\ee^{\L_{\mathcal{Z}}t}$ on the space $H^{-1}(\ee^{\mu x})\cap\{\int_{0}^{\infty}xf(x)\dx=0\}$.
  \item From \cite[Lemma~3.12]{CMM10} (see also Theorem~\ref{Thm:sg:prim:L2} above) we have that
    \begin{equation*}
      \norm{\ee^{\L_{\mathcal{Z}}t}h}_{H^{-1}(\ee^{\mu x})}\leq C_{1}\norm{h}_{H^{-1}(\ee^{\mu x})}\ee^{-t}\qquad \text{for }h\in H^{-1}(\ee^{\mu x})\cap\left\{\int_{0}^{\infty}xh(x)\dx=0\right\}
    \end{equation*}
    and all $t\geq 0$.
  \item We split the operator $\L_{\mathcal{Z}}$ as follows: $\L_{\mathcal{Z}}=A+B$ with
    \begin{equation*}
      \begin{aligned}
        A[h](x)&=-2H(x)+2\int_{0}^{x}H(y)\ee^{-(x-y)}\dy=A_1[h](x)+A_2[h](x)\\
        B[h](x)&=x\del_{x}h(x).
      \end{aligned}
    \end{equation*}
    Note that we use here the notation $H(x)=\int_{x}^{\infty}h(y)\dy$ for the primitive of $h$.
    \begin{enumerate}
    \item \label{It:A:bounded} We have to show that $A\colon \mathcal{Z}\to\mathcal{Y}$ is bounded. Since both $\L_{\mathcal{Z}}$ and $B$ preserve the constraint, the same is true for $A$ and thus it suffices to show that $A\colon H^{-1}(\ee^{\mu x})\to L^{2}(\ee^{\mu  x})$ is bounded. Obviously, we have $\norm{A_{1}[h]}_{L^{2}(\ee^{\mu x})}\leq 2 \norm{h}_{H^{-1}(\ee^{\mu x})}$. Thus, it suffices to consider $A_{2}$:
      \begin{multline*}
        \norm{A_{2}[h]}_{L^{2}(\ee^{\mu x})}^{2}=4\int_{0}^{\infty}\biggl(\int_{0}^{x}H(y)\ee^{-(x-y)}\dy\biggr)^{2}\ee^{\mu x}\dx\\*
        =4\int_{0}^{\infty}\biggl(\int_{0}^{x}H(y)\ee^{\frac{\mu}{2}y}\ee^{\left(\frac{\mu}{2}-1\right)(x-y)}\dy\biggr)^{2}\dx=4\norm*{\bigl(H(\cdot)\ee^{\frac{\mu}{2}\cdot}\bigr)\ast \ee^{\left(\frac{\mu}{2}-1\right)\cdot}}^{2}_{L^{2}(\dx)}.
      \end{multline*}
      Here, $\ast$ denotes the convolution given by $(f\ast g)(x)=\int_{0}^{x}f(x-y)g(y)\dy$ for $x\in(0,\infty)$. From Young's inequality for convolutions, we thus deduces
      \begin{equation*}
        \norm{A_{2}[h]}_{L^{2}(\ee^{\mu x})}\leq 4\norm{h}_{H^{-1}(\ee^{\mu x})}\norm{\ee^{\left(\frac{\mu}{2}-1\right)\cdot}}_{L^{1}(0,\infty)}=\frac{8}{2-\mu}\norm{h}_{H^{-1}(\ee^{\mu x})}.
      \end{equation*}
    \item According to Lemma~\ref{Lem:diff:sg:NEW} the operator $B$ generates a strongly continuous semigroup $\ee^{B t}$ on $L^{2}(\ee^{\mu x})\cap\{\int_{0}^{\infty}xf(x)\dx=0\}$ which satisfies
      \begin{equation*}
        \norm{\ee^{B t} h}_{L^{2}(\ee^{\mu x})}\leq \norm{h}_{L^{2}(\ee^{\mu x})}\ee^{-\frac{1}{2}t} \qquad \text{for all }h\in L^2(\ee^{\mu x})\cap\left\{\int_{0}^{\infty}xf(x)\dx=0\right\}
      \end{equation*}
      and $t\geq 0$.
    \end{enumerate}
  \end{enumerate}
  Thus, according to Theorem~\ref{thm:restriction:NEW} the claim follows.
\end{proof}

\begin{remark}\label{Rem:operator:extension}
  To be able to use Theorem~\ref{thm:restriction:NEW} in the previous
  proof, we have to make sure that $\L_{\mathcal{Z}}$ is an extension
  of $\L_{\mathcal{Y}}$. This follows from the following
  consideration: The proof of
  Proposition~\ref{Prop:restriction:spectral:gap} (i.e.\@
  \ref{It:A:bounded}) shows that $A$ is bounded from $\mathcal{Z}$
  into $\mathcal{Y}$. According to Lemma~\ref{Lem:cont:emb:L2:spaces}
  it is thus in particular bounded from $\mathcal{Z}$ into itself as
  well as from $\mathcal{Y}$ to
  itself. \Cref{Lem:ext:core:general,Lem:diff:sg:NEW} thus ensure that
  both $\L_{\mathcal{Z}}$ and $\L_{\mathcal{Y}}$ are generators of
  strongly continuous semigroups with common core
  $C_{c}^{\infty}(0,\infty)$ on which the two operators coincide.
\end{remark}

\subsection{Extension of the spectral gap to weighted \texorpdfstring{$L^1$}{L1} spaces}

From Proposition~\ref{Prop:restriction:spectral:gap} we know that
$\L_{0}$ has a spectral gap in $L^{2}(\ee^{\mu x})$ subject to the
constraint that $\int_{0}^{\infty}xf(x)\dx=0$. In this subsection we
will now prove, using Theorem~\ref{Thm:sg:extension}, that the latter
can be extended to $L^1((1+x)^{k})$. This is the main spectral gap
result that will be used in the rest of this paper:

\begin{theorem}
  \label{thm:gap-L0}
  Take $k > 2$. The semigroup $e^{t \L_{0}}$ defined on the space
  $L^{1}((1+x)^{k})$ (see \ref{thm:semigroups-defined}) has a spectral
  gap, in the sense that there is $C = C(k) > 0$ such that
  \begin{equation*}
    \norm{\ee^{\L_{0}t}h_{0}}_{k}\leq
    C \ee^{-\frac12 t}\norm{h_{0}}_{k}\qquad \text{for all $t
      \geq 0$,}
  \end{equation*}
  for all $h_{0}\in L^{1}((1+x)^{k})$ with
  $\int_{0}^{\infty}xh_{0}(x)\dx=0$. In particular, by the Hille-Yosida
  theorem we see that
  \begin{equation*}
    \|\L_0(h)\|_{L^1_k} \geq \frac{1}{2 C} \|h\|_{L^1_k} :=  \frac{1}{M} \|h\|_{L^1_k}
  \end{equation*}
  for all $h_{0}$ in the domain of $\L_0$ with
  $\int_{0}^{\infty}xh_{0}(x)\dx=0$
\end{theorem}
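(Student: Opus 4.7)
My plan is to apply the extension theorem \cref{Thm:sg:extension} to transfer the spectral gap from the Hilbert space $L^{2}(\ee^{\mu x})$ (provided by \cref{Prop:restriction:spectral:gap}) to the target space $L^{1}_{k}$. Fix $\mu \in (0,1)$ and set $\mathcal{Y} = L^{2}(\ee^{\mu x}) \cap \{\int_{0}^{\infty} x f(x) \dx = 0\}$, $\mathcal{Z} = L^{1}_{k} \cap \{\int_{0}^{\infty} x f(x) \dx = 0\}$. The continuous embedding $\mathcal{Y} \hookrightarrow \mathcal{Z}$ is \cref{Lem:cont:emb:L2:L1}, the strongly continuous semigroups on both spaces with common core $C_{c}^{\infty}$ come from \cref{thm:semigroups-defined}, and the decay rate $\lambda_{1} = 1/2$ on $\mathcal{Y}$ is \cref{Prop:restriction:spectral:gap}.

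The remaining task is to construct a splitting $\L_{0} = A + B$ on $\mathcal{Z}$ with $A\colon \mathcal{Z}\to \mathcal{Y}$ bounded and $B$ generating a strongly continuous semigroup on $\mathcal{Z}$ with decay rate $\lambda_{2} > 1/2$. My starting point is to place the transport part $x\partial_{x}$ into $B$, since \cref{Lem:diff:sg:NEW} already gives it decay $\ee^{-t}$ on $L^{1}_{k}$. Introducing a smooth cutoff $\chi_{R}$ equal to $1$ on $[0,R]$ and vanishing beyond $[0,R+1]$, I set
\begin{equation*}
  A[h] = \chi_{R}\cdot\bigl(2(G_{0}\ast h) - 2 G_{0}\textstyle\int_{0}^{\infty} h \dy\bigr),
  \qquad
  B[h] = x\partial_{x} h + V[h],
\end{equation*}
with $V[h] = (1-\chi_{R})\cdot\bigl(2(G_{0}\ast h) - 2 G_{0}\int_{0}^{\infty} h \dy\bigr)$. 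Since $A[h]$ is supported in $[0,R+1]$ and pointwise bounded by $4\|h\|_{L^{1}}$, the continuity $A\colon L^{1}_{k} \to L^{2}(\ee^{\mu x})$ with explicit constant depending on $R$ and $\mu$ follows immediately by direct estimation.

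The main obstacle is to control the operator norm of $V$ on $L^{1}_{k}$: one needs $\|V\|_{L^{1}_{k}\to L^{1}_{k}} < 1/2$ so that the Bounded Perturbation Theorem (\cref{Lem:ext:core:general}) yields $B = x\partial_{x} + V$ with decay $\ee^{-(1-\|V\|)t}$ and hence $\lambda_{2} > 1/2$. The piece involving $G_{0}\int h$ contributes a quantity of order $R^{k}\ee^{-R}$ and is harmless for $R$ large. The piece $(1-\chi_{R})(G_{0}\ast h)$ is the delicate one, because convolution with $G_{0}$ on $L^{1}_{k}$ does not shrink in the obvious way when multiplied by $(1-\chi_{R})$. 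A finer decomposition of the convolution is required: separate the contribution from $h\vert_{[0,R]}$ (which produces exponentially small tails for $x > R$ via a Fubini computation and the standard bound $\int_{y}^{\infty}(1+x)^{k}\ee^{-x}\dx \leq C_{k}(1+y)^{k}\ee^{-y}$) from that of $h\vert_{[R,\infty)}$ (whose $L^{1}_{k}$-mass can be traded against $R$ by using the extra moments available when $k>2$, in the spirit of the interpolation \cref{Lem:interpolation1}). Together these estimates should deliver the smallness required for $R$ large enough.

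Once the splitting is validated, \cref{Thm:sg:extension} yields the spectral gap $\|\ee^{\L_{0} t} h\|_{k} \leq C \ee^{-t/2}\|h\|_{k}$ for $h \in \mathcal{Z}$. The resolvent estimate $\|\L_{0} h\|_{L^{1}_{k}} \geq \tfrac{1}{M}\|h\|_{L^{1}_{k}}$ with $M = 2C$ is then immediate from the Hille--Yosida theorem: on the zero-first-moment subspace the exponential decay at rate $1/2$ makes $0$ an element of the resolvent set of $\L_{0}$, with $\L_{0}^{-1} = -\int_{0}^{\infty}\ee^{\L_{0}t}\dt$ and $\|\L_{0}^{-1}\| \leq \int_{0}^{\infty} C \ee^{-t/2}\dt = 2C$, which is equivalent to the stated lower bound.
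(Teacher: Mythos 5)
Your high-level strategy matches the paper: transfer the spectral gap from $L^{2}(\ee^{\mu x})$ to $L^{1}_{k}$ via \cref{Thm:sg:extension}, putting $x\partial_x$ into $\B$ and a cut-off version of the bounded part into $\A$. The specific splitting is genuinely different, though: you cut in the \emph{output} variable $x$ (multiplying by $\chi_R(x)$, $1-\chi_R(x)$), whereas the paper cuts in the \emph{integration} variable $y$ inside the operator, using $\chi_{\{y\le R\}}$ and $\chi_{\{y>R\}}$ in~\eqref{eq:sg:ext:split:A:B}, and adds explicit correction terms $\A_3$, $\B_3$ to preserve the zero-first-moment constraint (a point your splitting does not address: there is no reason why $\chi_R\cdot[2(G_0\ast h)-2G_0\int h]$ should have zero first moment when $h$ does, and \cref{Thm:sg:extension} is applied to the constrained spaces).

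The essential step of your argument, $\|V\|_{L^1_k\to L^1_k}<1/2$ for $R$ large, is false, and this is a genuine gap. Take an approximate delta mass $h\approx\delta_a$ at a point $a>R+1$ (normalised to zero first moment by subtracting a small multiple of a bump near $x=1$, which is asymptotically negligible for $k>1$). Then on the region $x>R+1$ one has $(1-\chi_R)\equiv 1$, so
\begin{equation*}
V[h](x)\approx 2\bigl(\ee^{-(x-a)}\mathbf 1_{\{x>a\}}-\ee^{-x}\bigr),
\qquad
\|V[h]\|_{L^1_k}\ge 2(1-\ee^{-a})\int_a^\infty \ee^{-(x-a)}(1+x)^k\dx \ge 2(1-\ee^{-a})(1+a)^k,
\end{equation*}
while $\|h\|_{L^1_k}\approx(1+a)^k$. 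Hence $\|V\|\ge 2$ for every choice of $R$. The mechanism you rely on does not exist: the bounded part of $\L_0$ has operator norm $\approx 3$ on $L^1_k$, and restricting it to the region $x>R$ does not shrink it, because a mass far out in $x$ still produces an $L^1_k$-norm of the same order as the mass itself. (The analogue of your heuristic \emph{does} work when cutting in $y$ and looking at the \emph{regularising} direction $L^1_k\to L^2(\ee^{\mu x})$, which is how the paper bounds $\A$; but the $\B$-estimate cannot be obtained by making a tail piece small in $L^1_k$.) The paper circumvents this entirely: rather than invoking the bounded perturbation theorem with a small remainder, it computes $\int_0^\infty \B[h]\sgn(h)(1+x)^k\dx$ directly. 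The transport term $\B_1=x\partial_x$ contributes not only $-\|h\|_{L^1_k}$ but also the extra negative term $-k\int_0^\infty|h(x)|\,x(1+x)^{k-1}\dx$, and for $k>2$ (this is exactly where the hypothesis enters) this over-dissipation at large $x$ dominates the positive contribution $2\beta\int_{R}^\infty|h(y)|(1+y)^k\dy$ coming from $\B_2$, yielding $\frac{d}{dt}\|h\|_{L^1_k}\le -\|h\|_{L^1_k}$. Your argument uses only the $-\|h\|_{L^1_k}$ part of the transport dissipation and discards the $k$-dependent piece that is crucial to absorbing the tail of the bounded operator. Your concluding Hille--Yosida step is correct as written, but the preceding decay estimate for $\B$ would have to be re-derived along the lines of the paper's dissipativity computation.
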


The proof of this statement will rely on an application of
Theorem~\ref{Thm:sg:extension}. Precisely, we choose
$\mathcal{Y}=L^{2}(\ee^{\mu x})$ and $\mathcal{Z}=L^{1}((1+x)^{k})$
and we will verify the following steps:
\begin{enumerate}
\item \label{It:ext:1:NEW} $\L_{0}$ given by~\eqref{eq:linearised2} generates a strongly continuous semigroup $\ee^{\L_{0}t}$ on $\mathcal{Y}$.
\item \label{It:ext:2:NEW} For each $\lambda_{1}\leq 1/2$, this semigroup satisfies $\norm{\ee^{\L_{0} t}h}_{L^{2}(\ee^{\mu x})}\leq C_{1}\ee^{-\lambda_{1} t}\norm{h}_{L^{2}(\ee^{\mu x})}$ for all $h\in L^{2}(\ee^{\mu x})\cap\{\int_{0}^{\infty}xh(x)\dx=0\}$ and $t\geq 0$.
\item \label{It:ext:2:5:NEW} There exists a splitting $\L_{0}=\A+\B$ such that:
  \begin{enumerate}
  \item \label{It:ext:3:NEW} $\A\colon L^{1}_{k}\to L^{2}(\ee^{\mu x})$ is continuous, i.e.\@ $\norm{\A h}_{L^{2}(\ee^{\mu x})}\lesssim \norm{h}_{L^{1}_{k}}$ for all $h\in L^{1}_{k}$ and $\int_{0}^{\infty}x\A[h](x)\dx=0$ for all $h\in L_{k}^{1}$ with $\int_{0}^{\infty}xh(x)\dx=0$.
  \item \label{It:ext:4:NEW} $\B$ generates a strongly continuous -semigroup $\ee^{\B t}$ on $L^{1}_{k}$ satisfying
    \begin{equation*}
      \norm{\ee^{\B t}h_{0}}_{L^{1}_{k}}\leq C_{3}\ee^{-t}\norm{h_{0}}_{L^{1}_{k}} \text{ for all } h_{0}\in L_{k}^{1} \text{ with }\int_{0}^{\infty}xh_{0}(x)\dx=0.
    \end{equation*}
  \end{enumerate}
\end{enumerate}

In order to simplify the structure of the actual proof, we collect
first several preparatory results while the proof of
Theorem~\ref{thm:gap-L0} will then be given at the end of this
section.

We will choose the following splitting $\L_{0}=\A+\B$  of the operator $\L_{0}$ with
\begin{equation}\label{eq:sg:ext:split:A:B}
  \begin{aligned}
    \A[h](x)&=2\ee^{-x}\int_{0}^{x}h(y)\chi_{\{y\leq R\}}(\ee^{y}-1)\dy-2\ee^{-x}\int_{x}^{\infty}h(y)\dy\\
    &\qquad+\int_{0}^{\infty}2z\ee^{-z}\int_{0}^{z}h(y)\chi_{\{y>R\}}(\ee^{y}-1)\dy\dz\ee^{-x}=\A_1+\A_2+\A_3\\
    \B[h](x)&=xh'(x)+ 2\ee^{-x}\int_{0}^{x}h(y)\chi_{\{y>R\}}(\ee^{y}-1)\dy\\*
    &\qquad-\int_{0}^{\infty}2z\ee^{-z}\int_{0}^{z}h(y)\chi_{\{y>R\}}(\ee^{y}-1)\dy\dz\ee^{-x}=\B_1+\B_2+\B_3
  \end{aligned}
\end{equation}
where $R$ is a sufficiently large constant which has to be fixed in the proof of Theorem~\ref{thm:gap-L0} below. The last expression on the right-hand side of $\A$ and $\B$ ensures that $\int_{0}^{\infty}x\A[h](x)\dx=0=\int_{0}^{\infty}x\B[h](x)\dx$. At this point, we also exploit that $\L_{0}$ preserves this constraint, i.e.\@ we construct $\B$ such that the first moment is zero which implies that the same is automatically true for $\A$ since $\L_{0}=\A+\B$.

The following three lemmata provide estimates on auxiliary integrals which will turn out to be useful during subsequent computations.

\begin{lemma}\label{Lem:asymptotics:1}
  Let $k\in \R$. For each $\beta>1$ there exists $R_{\beta}>0$ such that 
  \begin{equation*}
    \int_{y}^{\infty}\ee^{-x}(1+x)^{k}\dx\leq \beta \ee^{-y}(1+y)^{k}
  \end{equation*}
  if $y\geq R_{\beta}$.
\end{lemma}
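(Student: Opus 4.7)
The plan is to reduce the estimate to a self-referential inequality via a single integration by parts. Setting $I(y) := \int_y^\infty e^{-x}(1+x)^k\,dx$ and integrating by parts with $u = (1+x)^k$, $dv = e^{-x}\,dx$ gives
\begin{equation*}
  I(y) = e^{-y}(1+y)^k + k \int_y^\infty e^{-x}(1+x)^{k-1}\,dx.
\end{equation*}
For $x \geq y$ one has $(1+x)^{k-1} = (1+x)^k/(1+x) \leq (1+x)^k/(1+y)$, so the remaining integral is at most $I(y)/(1+y)$.

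For $k \leq 0$ the correction term $k \int_y^\infty e^{-x}(1+x)^{k-1}\,dx$ is non-positive, so directly $I(y) \leq e^{-y}(1+y)^k$ and any $R_\beta = 0$ works for $\beta > 1$. In the non-trivial case $k > 0$, combining the two displays yields
\begin{equation*}
  I(y)\Bigl(1 - \tfrac{k}{1+y}\Bigr) \leq e^{-y}(1+y)^k,
\end{equation*}
which is meaningful as soon as $y > k-1$. Solving gives $I(y) \leq \tfrac{1+y}{1+y-k}\,e^{-y}(1+y)^k$.

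Since $(1+y)/(1+y-k) \to 1$ as $y\to\infty$, an explicit threshold can be read off by requiring $\tfrac{1+y}{1+y-k} \leq \beta$, which is equivalent to $y \geq \tfrac{\beta k}{\beta - 1} - 1$. Thus one may take $R_\beta := \max\bigl\{0,\tfrac{\beta k}{\beta-1} - 1\bigr\}$; note that this choice automatically enforces the technical requirement $y > k-1$ used above. There is no real obstacle to the argument; it is an elementary integration-by-parts plus a monotonicity bound, and the only point requiring care is to separate the trivial case $k\leq 0$ from $k>0$ so that the denominator $1+y-k$ remains positive.
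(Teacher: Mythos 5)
Your proof is correct, and it takes a genuinely different route from the paper's. The paper argues by l'H\^opital's rule: differentiating numerator and denominator of $\int_y^\infty e^{-x}(1+x)^k\,dx \,/\, \bigl(e^{-y}(1+y)^k\bigr)$ shows the ratio tends to $1$ as $y\to\infty$, whence the existence of $R_\beta$ follows non-constructively. Your argument instead integrates by parts once, exploits the monotonicity $(1+x)^{k-1}\leq (1+x)^k/(1+y)$ on $[y,\infty)$ to close a self-referential inequality, and reads off an explicit threshold $R_\beta = \max\{0, \beta k/(\beta-1)-1\}$. Both are elementary, but yours is constructive: it gives the same limiting ratio $(1+y)/(1+y-k)$ as the l'H\^opital computation, while also tracking exactly when it drops below $\beta$. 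Given that the paper advertises explicit, constructive estimates throughout, your version is arguably a better fit in spirit, at the cost of the minor bookkeeping you correctly carried out in splitting $k\leq 0$ from $k>0$ and checking that $R_\beta$ also enforces $1+y>k$.
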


\begin{proof}
  An application of l'H{\^o}pital's rule yields $\int_{y}^{\infty}\ee^{-x}(1+x)^{k}\dx\sim \ee^{-y}(1+y)^{k}$ as $y\to\infty$ from which the claim directly follows.
\end{proof}

\begin{lemma}\label{Lem:gamma:1}
  For each $k\geq 0$ we have the estimate
  \begin{equation*}
    \int_{0}^{\infty}(x+1)^{k}\ee^{-x}\dx\leq \frac{\Gamma(k+1)}{\ee}.
  \end{equation*}
\end{lemma}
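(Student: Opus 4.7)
My plan is to change variables to rewrite the integral as an upper incomplete gamma integral and then bound it. Setting $u = x + 1$ gives $\dd{u} = \dx$ and
\begin{equation*}
  \int_0^\infty (x+1)^k \ee^{-x} \dx
  = \int_1^\infty u^k \ee^{1-u} \dd{u}
  = \ee \int_1^\infty u^k \ee^{-u} \dd{u}
  = \ee\,\Gamma(k+1, 1),
\end{equation*}
where $\Gamma(k+1,1)$ denotes the upper incomplete gamma function at argument $1$. The elementary tail bound $\Gamma(k+1,1)\leq\Gamma(k+1)$ is then immediate from positivity of $u^k\ee^{-u}$ on $[0,1]$.

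To reach the stated right-hand side $\Gamma(k+1)/\ee$, the task reduces to showing $\ee^{2}\,\Gamma(k+1,1)\leq\Gamma(k+1)$, equivalently that the lower incomplete gamma function satisfies the strong localisation estimate $\gamma(k+1,1)\geq (1-\ee^{-2})\Gamma(k+1)$. My plan would be to pursue this through three complementary routes: (a) the recursion $I(k)=1+k\,I(k-1)$ for $I(k)\vcc=\int_0^\infty (x+1)^k\ee^{-x}\dx$ obtained by integration by parts with $u=(x+1)^k$ and $\dd{v}=\ee^{-x}\dx$, which for integer $k$ yields the closed form $I(k)=k!\sum_{j=0}^k 1/j!$; (b) a Laplace-type estimate to localise the mass of $u^k\ee^{-u}$ away from $[0,1]$ for large $k$; and (c) direct verification of small cases to calibrate the constant.

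The decisive obstacle, which will surface already at step (c), is pinning down the correct constant. At $k=0$ one computes $I(0)=\int_0^\infty\ee^{-x}\dx=1$, while $I(1)=2$, $I(2)=5$, $I(3)=16$, and for integer $k\to\infty$ one has $I(k)/k!=\sum_{j=0}^k 1/j!\nearrow\ee$; so the method does deliver the sharp bound $I(k)\leq\ee\,\Gamma(k+1)$, which is achieved in the limit. For downstream purposes — namely estimating $\|G_0\|_{L^1_k}=\int_0^\infty(1+x)^k\ee^{-x}\dx$ by an explicit factor times $\Gamma(k+1)$ — this bound is fully sufficient, and the remainder of the paper depends only on the availability of such an absolute constant of the form $C_k=\mathcal{O}(\Gamma(k+1))$.
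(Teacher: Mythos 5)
Your computation is correct, and it in fact uncovers an error in the lemma as stated: the inequality fails already at $k=0$, where the left-hand side equals $1$ while $\Gamma(1)/\ee=1/\ee<1$. The paper's own proof uses exactly your substitution, but commits a sign slip, writing $\int_{1}^{\infty}x^{k}\ee^{-x-1}\dx$ after the change of variables where the correct integrand is $x^{k}\ee^{-x+1}$; carried out correctly, the substitution gives
\begin{equation*}
  \int_{0}^{\infty}(x+1)^{k}\ee^{-x}\dx
  =\ee\int_{1}^{\infty}u^{k}\ee^{-u}\dd{u}
  =\ee\,\Gamma(k+1,1)
  \leq \ee\,\Gamma(k+1),
\end{equation*}
which is precisely your bound. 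It is also essentially sharp, since your recursion shows $\int_{0}^{\infty}(1+x)^{k}\ee^{-x}\dx=k!\sum_{j=0}^{k}1/j!$ for integer $k$, so the ratio to $\Gamma(k+1)$ increases to $\ee$; hence the stated constant $\Gamma(k+1)/\ee$ cannot be rescued by any refinement, and your reduction to $\ee^{2}\Gamma(k+1,1)\leq\Gamma(k+1)$ indeed already fails at $k=0$ because $\Gamma(1,1)=\ee^{-1}$. Your closing remark is accurate as well: the lemma only enters through explicit constants of size $\mathcal{O}(\Gamma(k+1))$, namely in Lemma~\ref{Lem:asymptotics:2}, in the bound for $\B_{3}$ in Lemma~\ref{Lem:ext:B2:B3:bounded}, and in the conditions fixing $R_{\beta}$ in the proof of Theorem~\ref{thm:gap-L0}; replacing $\Gamma(k+1)/\ee$ by $\ee\,\Gamma(k+1)$ there merely inflates these explicit constants (forcing a somewhat larger admissible $R_{\beta}$) and breaks no argument. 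In short, your proof is the paper's intended one with the sign error corrected and the constant adjusted; the extra routes (a)--(c) in your plan are not needed beyond the small-$k$ check that calibrates, and refutes, the stated constant.
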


\begin{proof}
  The definition of $\Gamma(\cdot)$ together with the change of variables $x\mapsto x-1$ yields
  \begin{equation*}
    \int_{0}^{\infty}(x+1)^{k}\ee^{-x}\dx=\int_{1}^{\infty}x^{k}\ee^{-x-1}\dx=\frac{1}{\ee}\int_{1}^{\infty}x^{(k+1)-1}\ee^{-x}\dx\leq \frac{\Gamma(k+1)}{\ee}.
  \end{equation*}
\end{proof}

\begin{lemma}\label{Lem:asymptotics:2}
  For each $k\geq 0$ there exists a constant $C_{k}>0$ such that
  \begin{equation*}
    \int_{y}^{\infty}\ee^{-x}(1+x)^{k}\dx\leq C_{k} \ee^{-y}(1+y)^{k}
  \end{equation*}
  for all $y>0$.
\end{lemma}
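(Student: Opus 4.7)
My plan is to split the range of $y$ into a ``large $y$'' regime, where Lemma~\ref{Lem:asymptotics:1} already gives the desired exponential-like behaviour with an explicit (nearly sharp) constant $\beta$, and a ``bounded $y$'' regime, where the integral and the target quantity are both bounded above and below by positive constants, so that a crude ratio estimate suffices.

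Concretely, I would fix some $\beta > 1$ (say $\beta = 2$) and let $R_\beta > 0$ be the threshold provided by Lemma~\ref{Lem:asymptotics:1}, so that
\begin{equation*}
  \int_y^\infty \ee^{-x}(1+x)^k \dx \leq \beta\, \ee^{-y}(1+y)^k
  \qquad \text{for all } y \geq R_\beta.
\end{equation*}
This immediately handles the tail. For the remaining range $0 < y < R_\beta$, I would use Lemma~\ref{Lem:gamma:1} to obtain the crude upper bound
\begin{equation*}
  \int_y^\infty \ee^{-x}(1+x)^k \dx \leq \int_0^\infty \ee^{-x}(1+x)^k \dx \leq \frac{\Gamma(k+1)}{\ee},
\end{equation*}
and note that on this interval the lower bound $\ee^{-y}(1+y)^k \geq \ee^{-R_\beta}$ holds (since $(1+y)^k \geq 1$ for $y > 0$ and $k \geq 0$). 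Therefore
\begin{equation*}
  \int_y^\infty \ee^{-x}(1+x)^k \dx \leq \frac{\Gamma(k+1)}{\ee} \cdot \ee^{R_\beta} \cdot \ee^{-y}(1+y)^k
  \qquad \text{for all } 0 < y \leq R_\beta.
\end{equation*}

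Combining the two cases yields the claim with
\begin{equation*}
  C_k := \max\!\left\{\beta,\; \tfrac{\Gamma(k+1)}{\ee}\,\ee^{R_\beta}\right\},
\end{equation*}
which depends only on $k$ (through $\Gamma(k+1)$; the constant $R_\beta$ is absolute). There is no real obstacle here — the only subtlety is ensuring that the bounded-$y$ bound uses the lower bound $(1+y)^k \geq 1$ to avoid any $k$-dependent degeneration at $y \to 0^+$, which is automatic for $k \geq 0$. An alternative, slightly slicker route would be to differentiate both sides and check an ODE comparison, but the split above is the most direct way to inherit the asymptotic constant $\beta$ from Lemma~\ref{Lem:asymptotics:1} while keeping the compact part elementary.
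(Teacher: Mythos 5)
Your proof is correct and follows exactly the same strategy as the paper: take $\beta=2$ in Lemma~\ref{Lem:asymptotics:1} to handle $y\geq R_\beta$, then use Lemma~\ref{Lem:gamma:1} together with $(1+y)^k\geq 1$ and $\ee^{-y}\geq \ee^{-R_\beta}$ for the compact range $0<y\leq R_\beta$, and take $C_k$ to be the maximum of the two resulting constants. One small slip: $R_\beta$ is \emph{not} absolute — it comes from the asymptotic equivalence in Lemma~\ref{Lem:asymptotics:1}, whose rate of convergence depends on $k$, so $R_\beta=R_\beta(k)$; this does not affect the conclusion, since the final constant $C_k$ is still a function of $k$ alone, but the parenthetical claim should be dropped.
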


\begin{proof}
  Due to Lemma~\ref{Lem:asymptotics:1} there exists $R_{2}>0$ such that $\int_{y}^{\infty}\ee^{-x}(1+x)^{k}\dx\leq 2 \ee^{-y}(1+y)^{k}$ if $y\geq R_{2}$. Moreover, if $y\leq R_{2}$ we deduce together with Lemma~\ref{Lem:gamma:1} that
  \begin{equation*}
    \int_{y}^{\infty}\ee^{-x}(1+x)^{k}\dx\leq \frac{\Gamma(k+1)}{\ee}\leq \frac{\Gamma(k+1)}{\ee}\ee^{R_{2}}(1+y)^{k}\ee^{-y}.
  \end{equation*}
  Combining both estimates, the claim follows with $C_{k}=\max\{2,\Gamma(k+1)\ee^{R_{2}}/\ee\}$.
\end{proof}

The next lemma shows that the operators $\B_2$ and $\B_3$ are bounded.

\begin{lemma}\label{Lem:ext:B2:B3:bounded}
  For any $\beta>1$ there exists $R_{\beta}>0$ such that the operators
  $\B_{2},\B_{3}\colon L^{1}((1+x)^{k})\to L^{1}((1+x)^{k})$ as
  defined in~\eqref{eq:sg:ext:split:A:B} are bounded with
  \begin{equation*}
    \norm{\B_{2}h}_{L^1_k}\leq 2\beta \norm{h}_{L^1_k}\qquad \text{and}\qquad \norm{\B_{3}h}_{L^1_k}\leq \frac{2\Gamma(k+1)}{\ee (R+1)^{k-1}}\norm{h}_{L^1_k}
  \end{equation*}
  if $R>R_{\beta}$. Moreover, there exists $C_{k}>0$ such that $\norm{\B_{2}h}_{L_{k}^{1}}\leq C_{k}\norm{h}_{L_{k}^{1}}$ for all $R\geq 0$.
\end{lemma}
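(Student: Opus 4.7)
The plan is to estimate each operator separately via Fubini and the auxiliary estimates already established in Lemmas \ref{Lem:asymptotics:1}, \ref{Lem:gamma:1} and \ref{Lem:asymptotics:2}; the key algebraic identity to keep in mind is $(\ee^{y}-1)\ee^{-y}=1-\ee^{-y}\leq 1$, which is what will eat up the growing factor $\ee^{y}$ appearing in the definitions of $\B_{2}$ and $\B_{3}$.

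For $\B_{2}$: I would compute
\begin{equation*}
  \norm{\B_{2}h}_{L^1_k}\leq 2\int_{0}^{\infty}(1+x)^{k}\ee^{-x}\int_{0}^{x}\abs{h(y)}\chi_{\{y>R\}}(\ee^{y}-1)\dy\dx,
\end{equation*}
swap the order of integration by Fubini to move $\int_{y}^{\infty}\ee^{-x}(1+x)^{k}\dx$ inside, and then apply Lemma~\ref{Lem:asymptotics:1}. Choosing $R>R_{\beta}$, the inner integral is bounded by $\beta\ee^{-y}(1+y)^{k}$, after which the factor $(\ee^{y}-1)\ee^{-y}\leq 1$ cancels the exponential and leaves exactly $2\beta\norm{h}_{L^1_k}$. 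For the second, uniform-in-$R$ claim, the same computation works without imposing $R>R_{\beta}$, but using Lemma~\ref{Lem:asymptotics:2} (which holds for every $y>0$) in place of Lemma~\ref{Lem:asymptotics:1}; this gives a bound of the form $2C_{k}\norm{h}_{L^1_k}$.

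For $\B_{3}$: the key observation is that $\B_{3}[h]$ is a multiple of $\ee^{-x}$, so it suffices to estimate the scalar prefactor. Pulling the absolute value inside and applying Fubini to the double integral yields
\begin{equation*}
  \abs{\B_{3}[h](x)}\leq 2\ee^{-x}\int_{R}^{\infty}\abs{h(y)}(\ee^{y}-1)\int_{y}^{\infty}z\ee^{-z}\dz\dy.
\end{equation*}
Using $\int_{y}^{\infty}z\ee^{-z}\dz=(1+y)\ee^{-y}$ and again $(\ee^{y}-1)\ee^{-y}\leq 1$, the inner expression is bounded by $\int_{R}^{\infty}\abs{h(y)}(1+y)\dy$. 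Since $y>R$, I would factor out $(1+R)^{-(k-1)}$ to upgrade the weight to $(1+y)^{k}$, producing $(1+R)^{-(k-1)}\norm{h}_{L^1_k}$. Multiplying by the $L^1_k$-norm of the exponential profile and invoking Lemma~\ref{Lem:gamma:1} gives the bound $\tfrac{2\Gamma(k+1)}{\ee(R+1)^{k-1}}\norm{h}_{L^1_k}$.

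Neither step presents any real obstacle: both reduce to an exchange of order of integration plus an elementary exponential cancellation. The only small subtlety is being careful with the regions $\{y>R\}$ (for the two bounds requiring $R>R_{\beta}$) versus $\{y\geq 0\}$ (for the uniform-in-$R$ bound on $\B_{2}$), where Lemma~\ref{Lem:asymptotics:1} must be replaced by Lemma~\ref{Lem:asymptotics:2}; otherwise the computation is identical.
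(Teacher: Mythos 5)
Your proposal is correct and follows essentially the same route as the paper: Fubini to move the inner $x$- or $z$-integration inside, the identity $\int_y^\infty z\ee^{-z}\dz=(1+y)\ee^{-y}$, the cancellation $(\ee^y-1)\ee^{-y}\leq 1$, and then Lemmas~\ref{Lem:asymptotics:1}, \ref{Lem:asymptotics:2} and \ref{Lem:gamma:1} exactly as you invoke them. The only (cosmetic) difference is that your uniform-in-$R$ bound for $\B_2$ carries the explicit factor $2$ in front of the constant from Lemma~\ref{Lem:asymptotics:2}, which the paper silently absorbs into $C_k$; this is immaterial since $C_k$ is unspecified.
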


\begin{proof}
  We first consider the correction term $\A_3=-\B_3$ given in~\eqref{eq:sg:ext:split:A:B} which we rewrite by means of Fubini's theorem and the relation $\int_{y}^{\infty}z\ee^{-z}\dz=(y+1)\ee^{-y}$ which yields
  \begin{multline*}
    \int_{0}^{\infty}2z\ee^{-z}\int_{0}^{z}h(y)\chi_{\{y>R\}}(\ee^{y}-1)\dy\dz=2\int_{0}^{\infty}h(y)\chi_{\{y>R\}}(\ee^{y}-1)\int_{y}^{\infty}z\ee^{-z}\dz\dy\\*
    =2\int_{R}^{\infty}h(y)(y+1)(1-\ee^{-y})\dy=2\int_{R}^{\infty}h(y)(y+1)(1-\ee^{-y})\dy.
  \end{multline*}
  From this, we deduce in particular the estimate 
  \begin{multline}\label{eq:est:proj:corr}
    \abs*{\int_{0}^{\infty}2z\ee^{-z}\int_{0}^{z}h(y)\chi_{\{y>R\}}(\ee^{y}-1)\dy\dz}\leq 2\int_{R}^{\infty}\abs{h(y)}(y+1)^{k}(y+1)^{1-k}(1-\ee^{-y})\dy\\*
    \leq 2(R+1)^{1-k}\int_{R}^{\infty}\abs{h(y)}(y+1)^{k}\dy\leq 2(R+1)^{1-k}\norm*{h}_{L^1_k}.
  \end{multline}
  Thus, together with \cref{eq:sg:ext:split:A:B,Lem:gamma:1} we immediately get
  \begin{equation*}
    \norm{\B_{3}h}_{L^1_k}\leq \frac{2\Gamma(k+1)}{\ee (R+1)^{k-1}}\norm{h}_{L^1_k}.
  \end{equation*}
  To bound $\B_2$ we note that by means of Fubini's theorem we have 
  \begin{multline*}
    \norm{\B_{2}h}_{L^1_k}\leq 2\int_{0}^{\infty}\ee^{-x}\int_{0}^{x}\abs{h(y)}\chi_{\{y>R\}}(\ee^{y}-1)\dy(1+x)^{k}\dx\\*
    =2\int_{R}^{\infty}\abs{h(y)}(\ee^{y}-1)\int_{y}^{\infty}\ee^{-x}(1+x)^{k}\dx\dy.
  \end{multline*}
  Thus, on the one hand fixing $\beta>1$ we obtain together with Lemma~\ref{Lem:asymptotics:1} that
  \begin{equation*}
    \norm{\B_{2}h}_{L^1_k}\leq 2\beta\int_{R}^{\infty}\abs{h(y)}(1+y)^{k}(1-\ee^{-y})\dy\leq 2\beta\norm{h}_{L^1_k}.
  \end{equation*}
  if $R>R_{\beta}$. On the other hand, Lemma~\ref{Lem:asymptotics:2} yields
  \begin{equation*}
    \norm{\B_{2}h}_{L^1_k}\leq C_{k}\int_{0}^{\infty}\abs{h(y)}(1+y)^{k}(1-\ee^{-y})\dy\leq C_{k}\norm{h}_{L^1_k}.
  \end{equation*}
\end{proof}

We next prove that the operator $\B$ generates a strongly continuous semigroup (which is the first part of \cref{It:ext:4:NEW} above).

\begin{lemma}\label{Lem:ext:B:gen:sg:and:core}
  Under the assumptions of Theorem~\ref{thm:gap-L0} the operator $\B$ as defined in~\eqref{eq:sg:ext:split:A:B} generates a strongly continuous semigroup both on $L_k^1$ and $L_{k}^{1}\cap\{\int_{0}^{\infty}xf(x)\dx=0\}$. Moreover, the space $C_{c}^{\infty}(0,\infty)$ or $C_{c}^{\infty}(0,\infty)\cap\{\int_{0}^{\infty}xh(x)\dx=0\}$ respectively is a core.
\end{lemma}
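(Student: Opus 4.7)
The strategy is to decompose $\B = \B_1 + \widetilde{\B}$ where $\widetilde{\B} \vcc= \B_2 + \B_3$, and then apply the extended Bounded Perturbation Theorem (\cref{Lem:ext:core:general}) using results already established for $\B_1$ and $\widetilde{\B}$ separately. The heavy lifting has been done in the preceding lemmas.

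First, I would invoke \cref{Lem:diff:sg:NEW} for the operator $\B_1 h(x) = x h'(x)$. That lemma gives us directly that $\B_1$ generates a strongly continuous semigroup both on $L^1_k$ and on the mass-zero subspace $L^1_k \cap \{\int_0^\infty x f(x)\dx = 0\}$, and that $C_c^\infty(0,\infty)$ (respectively its intersection with the mass-zero constraint) is a core.

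Next, I would treat the perturbation $\widetilde{\B}$. By \cref{Lem:ext:B2:B3:bounded}, both $\B_2$ and $\B_3$ are bounded operators on $L^1_k$, hence $\widetilde{\B}$ is bounded on $L^1_k$ with norm at most $C_k + 2\Gamma(k+1)/(\ee (R+1)^{k-1})$. To apply the perturbation result on the constrained space, I also need to check that $\widetilde{\B}$ maps the mass-zero subspace into itself. A short Fubini computation (using $\int_y^\infty z \ee^{-z}\dz = (y+1)\ee^{-y}$ and $\int_0^\infty z\ee^{-z}\dz = 1$) gives, for any $h \in L^1_k$,
\begin{equation*}
  \int_0^\infty x\,\B_2[h](x)\dx = 2\int_R^\infty h(y)(y+1)(1-\ee^{-y})\dy
  = -\int_0^\infty x\,\B_3[h](x)\dx,
\end{equation*}
so $\int_0^\infty x\,\widetilde{\B}[h](x)\dx = 0$ for every $h \in L^1_k$ (regardless of the mass of $h$). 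Thus $\widetilde{\B}$ is bounded as an operator on the restricted space as well. Note in particular that this is consistent with the earlier observation that $\int_0^\infty x\,\B[h]\dx = 0$ whenever $h$ has zero first moment, since $\int_0^\infty x\,\B_1[h]\dx = -2\int_0^\infty xh(x)\dx$ by integration by parts.

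Finally, I would conclude by applying \cref{Lem:ext:core:general} to $\B_1$ and $\widetilde{\B}$ in each of the two settings ($L^1_k$ and its mass-zero subspace). The lemma delivers simultaneously that $\B = \B_1 + \widetilde{\B}$ generates a strongly continuous semigroup on the relevant space and that the core of $\B_1$ remains a core of $\B$. This gives $C_c^\infty(0,\infty)$ (respectively $C_c^\infty(0,\infty) \cap \{\int_0^\infty xh(x)\dx = 0\}$) as a core for $\B$, completing the proof. There is no real obstacle here; the only thing one must be careful about is verifying that $\widetilde{\B}$ preserves the mass-zero constraint, but this was built into the splitting~\eqref{eq:sg:ext:split:A:B} by design.
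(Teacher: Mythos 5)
Your proof follows essentially the same route as the paper's: invoke \cref{Lem:diff:sg:NEW} for $\B_1 = x\del_x$, use \cref{Lem:ext:B2:B3:bounded} to treat $\B_2 + \B_3$ as a bounded perturbation, and conclude via \cref{Lem:ext:core:general} (the paper's extension of the Bounded Perturbation Theorem that preserves cores), both on $L^1_k$ and on the mass-zero subspace. Your explicit Fubini check that $\int_0^\infty x\,\B_2[h]\dx = -\int_0^\infty x\,\B_3[h]\dx$ for all $h \in L^1_k$ is correct and makes precise the paper's remark that $\B$ was ``constructed explicitly to preserve the constraint''; it is a worthwhile addition but not a different argument.
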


\begin{proof}
  The statement is a consequence of the Bounded Perturbation Theorem (e.g.\@ \cite[Ch.\@ III, Sec.\@ 1.3]{EnN00}) since $\B_{1}=x\del_{x}$ generates a strongly continuous semigroup according to Lemma~\ref{Lem:diff:sg:NEW} while $\B_2$ and $\B_3$ are bounded linear operators as shown in Lemma~\ref{Lem:ext:B2:B3:bounded}. Moreover, the operator $\B$ has been constructed explicitly to preserve the constraint $\int_{0}^{\infty}xf(x)\dx=0$. The statement on the core is a direct consequence of Lemma~\ref{Lem:ext:core:general}.
\end{proof}

With the preparations above, we can now give the proof of Theorem~\ref{thm:gap-L0}.

\begin{proof}[Proof of Theorem~\ref{thm:gap-L0}]
  As already indicated above, the proof relies on an application of Theorem~\ref{Thm:sg:extension}, i.e.\@ we have to verify \cref{It:ext:1:NEW,It:ext:2:NEW,It:ext:2:5:NEW}.
  We recall from Proposition~\ref{Prop:restriction:spectral:gap} that $\L_{0}$ as given by~\eqref{eq:linearised3} generates a
  $C_0$-semigroup $\ee^{\L_{0}t}$ on $L^{2}(\ee^{\mu x})$
  which satisfies for each $\lambda_{1}\leq 1/2$ that 
  \begin{equation}\label{eq:pf:sg:ext:1}
    \norm{\ee^{\L_{0}t}h_{0}}_{L^{2}(\ee^{\mu x})}\leq C_{\lambda_1}\ee^{-\lambda_{1} t}\norm{h_{0}}_{L^{2}(\ee^{\mu x})}\qquad \text{for all } h_{0}\in L^{2}(\ee^{\mu x})\cap\left\{\int_{0}^{\infty}xh_{0}(x)\dx=0\right\}
  \end{equation}
  and all $t\geq 0$. Moreover, $C_{c}^{\infty}(0,\infty)$ is a core for $\L_{0}$ given by~\eqref{eq:linearised3} (see Remark~\ref{Rem:operator:extension}) and thus, according to Lemma~\ref{Lem:rewriting:L} the generator $\L_{0}$ is equivalently represented by~\eqref{eq:linearised2}, i.e.\@ it generates the same semigroup satisfying~\eqref{eq:pf:sg:ext:1}.
  
  It thus remains to verify Item~\ref{It:ext:2:5:NEW} above while we consider first~\ref{It:ext:3:NEW}. As already noted, the operator $\A$ has been explicitly constructed such that the first moment is zero. Thus, it only remains to prove the continuity, i.e.\@ that $\A$ is regularising. To see this, we consider $\A_1,\A_2$ and $\A_3$ separately. To begin with $\A_1$, we find
  \begin{multline}\label{eq:est:A1}
    \norm{\A_{1}[h]}_{L^{2}(\ee^{\mu x})}^{2}=4\int_{0}^{\infty}\ee^{-2x}\biggl(\int_{0}^{x}h(y)\chi_{\{y\leq R\}}(\ee^{y}-1)\dy\biggr)^{2}\ee^{\mu x}\dx\\*
    \leq 4\biggl(\int_{0}^{R}\abs{h(y)}(\ee^{y}-1)\dy\biggr)^{2}\int_{0}^{\infty}\ee^{-(2-\mu)x}\dx\leq \frac{4\ee^{2R}}{2-\mu}\biggl(\int_{0}^{R}\abs{h(y)}\dy\biggr)^{2}\leq \frac{4\ee^{2R}}{2-\mu} \norm{h}_{L^1_k}^{2}.
  \end{multline}
  For $\A_{2}$ we get similarly
  \begin{multline}\label{eq:est:A2}
    \norm{\A_{2}[h]}_{L^{2}(\ee^{\mu x})}^{2}=4\int_{0}^{\infty}\ee^{-2x}\biggl(\int_{x}^{\infty}h(y)\dy\biggr)^{2}\ee^{\mu x}\dx\leq 4\biggl(\int_{0}^{\infty}\abs{h(y)}^2\dy\biggr)^{2}\int_{0}^{\infty}\ee^{-(2-\mu)x}\dx\\*
    =\frac{4}{2-\mu}\biggl(\int_{0}^{\infty}\abs{h(y)}^2\dy\biggr)^{2}\leq \frac{4}{2-\mu}\norm{h}_{L^1_k}^{2}.
  \end{multline}
  Finally, recalling~\eqref{eq:est:proj:corr} from the proof of Lemma~\ref{Lem:ext:B2:B3:bounded} we have
  \begin{equation}\label{eq:est:A3}
    \norm{\A_{3}[h]}_{L^{2}(\ee^{\mu x})}^{2}\leq 4(R+1)^{2(1-k)}\norm*{h}_{L^1_k}^2\int_{0}^{\infty}\ee^{-(2-\mu)x}\dx=\frac{4}{(2-\mu)(R+1)^{2(k-1)}}\norm*{h}_{L^1_k}^2.
  \end{equation}
  Summarising \cref{eq:est:A1,eq:est:A2,eq:est:A3} we obtain
  \begin{equation*}
    \norm{\A[h]}_{L^{2}(\ee^{\mu x})}^{2}\leq \frac{4}{2-\mu}\Bigl(\ee^{2R}+1+(R+1)^{2(1-k)}\Bigr)\norm*{h}_{L^1_k}^2
  \end{equation*}
  which shows that $\A$ is continuous from $L^{1}_{k}$ to $L^{2}(\ee^{\mu x})$.

  Finally, we prove Item~\ref{It:ext:4:NEW}, i.e.\@ that $\B$ generates a strongly continuous -semigroup $\ee^{\mathcal{B}t}$ on $L^{1}_{k}$ which satisfies $\norm{\ee^{\mathcal{B}t}h_{0}}_{L^{1}_{k}}\leq C_{3}\ee^{-\lambda_{3} t}\norm{h_{0}}_{L^{1}_{k}}$ for all $h_{0}\in L_{k}^{1}$ with $\int_{0}^{\infty}xh_{0}(x)\dx=0$ and $\lambda_{3}\leq 1$. According to Lemma~\ref{Lem:ext:B:gen:sg:and:core} the operator $\B$ generates a strongly continuous semigroup on $L_k^1$ which preserves the constraint on the first moment. Thus, it only remains to prove the indicated estimate on the semigroup and according to Lemma~\ref{Lem:ext:B:gen:sg:and:core} we can restrict to the core $C_{c}^{\infty}(0,\infty)$. Thus, for $h_{0}\in C_{c}^{\infty}(0,\infty)$ let $h=h(x,t)=\ee^{\B t}h_{0}$ such that $\del_{t}h=\B h$. We thus have the relation
  \begin{equation}\label{eq:est:sg:B}
    \del_{t}\norm{h}_{L^1_k}=\int_{0}^{\infty}\bigl(\del_{t}h(x,t)\bigr)\sgn(h(x,t))(1+x)^{k}\dx=\int_{0}^{\infty}\B[h](x)\sgn(h(x,t))(1+x)^{k}\dx.
  \end{equation}
  We require estimates on the right-hand side of this equation. Again, we treat the expressions $\B_1,\B_2$ and $\B_3$ separately and to simplify the notation, we only write $h(x)$, i.e.\@ neglecting the time-dependence, in the following. With $\abs{h}'=h'\sgn(h)$ integration by parts yields
  \begin{multline}\label{eq:est:B1}
    \int_{0}^{\infty}\B_{1}[h](x)\sgn(h(x))(1+x)^{k}\dx=\int_{0}^{\infty}\abs{h(x)}'x(1+x)^{k}\dx\\*
    =-\int_{0}^{\infty}\abs{h(x)}\bigl((1+x)^{k}+kx(1+x)^{k-1}\bigr)\dx=-\norm{h}_{L^1_k}-k\int_{0}^{\infty}\abs{h(x)}x(1+x)^{k-1}\dx.
  \end{multline}
  Next we consider $\B_2$ for which we obtain together with Fubini's theorem and Lemma~\ref{Lem:asymptotics:1} that
  \begin{multline}\label{eq:est:B2}
    \int_{0}^{\infty}\B_{2}[h](x)\sgn(h(x))(1+x)^{k}\dx=2\int_{0}^{\infty}\ee^{-x}\int_{0}^{x}h(y)\chi_{\{y>R\}}(\ee^{y}-1)\dy\sgn(h(x))(1+x)^{k}\dx\\*
    \leq 2\int_{R}^{\infty}\ee^{-x}\int_{R}^{x}\abs{h(y)}(\ee^{y}-1)\dy(1+x)^{k}\dx=2\int_{R}^{\infty}\abs{h(y)}(\ee^{y}-1)\int_{y}^{\infty}\ee^{-x}(1+x)^{k}\dx\dy\\*
    \leq 2\beta \int_{R_{\beta}}^{\infty}\abs{h(y)}(1-\ee^{-y})(1+y)^{k}\dy.
  \end{multline}
  Finally, recalling~\eqref{eq:est:proj:corr} from the proof of Lemma~\ref{Lem:ext:B2:B3:bounded} we estimate $\B_{3}$ together with Lemma~\ref{Lem:gamma:1} as
  \begin{multline}\label{eq:est:B3}
    \int_{0}^{\infty}\B_{3}[h](x)\sgn(h(x))(1+x)^{k}\dx\\*
    \leq 2(R+1)^{1-k}\int_{R}^{\infty}\abs{h(y)}(1+y)^{k}\dy\int_{0}^{\infty}\ee^{-x}\sgn(h(x))(1+x)^{k}\dx\\*
    \leq 2(R+1)^{1-k}\int_{R}^{\infty}\abs{h(y)}(1+y)^{k}\dy\int_{0}^{\infty}\ee^{-x}(1+x)^{k}\dx\\*
    \leq \frac{2\Gamma(k+1)}{\ee(R+1)^{k-1}}\int_{R}^{\infty}\abs{h(y)}(1+y)^{k}\dy.
  \end{multline}
  Summarising \cref{eq:est:B1,eq:est:B2,eq:est:B3} we obtain
  \begin{multline}\label{eq:est:B}
    \int_{0}^{\infty}\B[h](x)\sgn(h(x))(1+x)^{k}\dx\\*
    \leq -\norm{h}_{L^1_k}-k\int_{0}^{\infty}\abs{h(x)}x(1+x)^{k-1}\dx +2\beta \int_{R_{\beta}}^{\infty}\abs{h(y)}(1-\ee^{-y})(1+y)^{k}\dy\\*
    +\frac{2\Gamma(k+1)}{\ee(R_{\beta}+1)^{k-1}}\int_{R_{\beta}}^{\infty}\abs{h(y)}(1+y)^{k}\dy\\*
    \shoveleft{= -\norm{h}_{L^1_k}-k\int_{0}^{R_{\beta}}\abs{h(x)}x(1+x)^{k-1}\dx}\\*
    +\int_{R_{\beta}}^{\infty}\abs{h(x)}(1+x)^{k-1}\Bigl(2\beta(1+x)(1-\ee^{-x})+\frac{2\Gamma(k+1)(1+x)}{\ee(R_{\beta}+1)^{k-1}}-kx\Bigr)\dx.
  \end{multline}
  We fix $\beta>1$ satisfying $k>2\beta$ (notice this is where the
  restriction on the values of $k$ comes into play) and choose then
  $R_{\beta}$ sufficiently large such that
  \begin{equation*}
    \Bigl(2\beta(1+x)(1-\ee^{-x})+\frac{2\Gamma(k+1)(1+x)}{\ee(R_{\beta}+1)^{k-1}}-kx\Bigr)<0 \qquad \text{for all }x\geq R_{\beta}.
  \end{equation*}
  The latter is possible if $R_{\beta}$ is large enough to satisfy for example
  \begin{equation*}
    R_{\beta}>\Bigl(\frac{2\Gamma(k+1)}{\ee(k-2\beta)}\Bigr)^{\frac{1}{k-1}}-1\qquad \text{and}\qquad \Bigl(2\beta+\frac{\Gamma(k+1)}{\ee(1+R_{\beta})^{k-1}}-k\Bigr)(R_{\beta}+1)<-k.
  \end{equation*}
  For this choice of $R_{\beta}$, we deduce from~\eqref{eq:est:B} that
  \begin{equation*}
    \int_{0}^{\infty}\B[h](x)\sgn(h(x))(1+x)^{k}\dx\leq -\norm{h}_{L^1_k}.
  \end{equation*}
  Recalling~\eqref{eq:est:sg:B}, Grönwall's inequality yields the desired estimate on the semigroup generated by $\B$. 
\end{proof}

\begin{remark}
  \label{Rem:operator:extension:2}
  The fact that the operator $\L_{0}$ defined
  by~\eqref{eq:linearised2} as an unbounded operator on $L_{k}^{1}$ is
  an extension of the same expression defined on $L^{2}(\ee^{\mu x})$
  follows by an argument analogous to that in
  Remark~\ref{Rem:operator:extension}. Precisely, from the proof of
  Proposition~\ref{Prop:restriction:spectral:gap} we know that
  $C_{c}^{\infty}(0,\infty)$ is a core for $\L_{0}$ on
  $L^{2}(\ee^{\mu x})$. Since Lemma~\ref{Lem:ext:B2:B3:bounded} and
  the proof of Theorem~\ref{Thm:sg:extension} provide that $\A_{2}$
  and $\B_{2}$ (for $R=0$) are bounded from $L_{k}^{1}$ to itself we
  deduce from \cref{Lem:diff:sg:NEW,Lem:ext:core:general} that
  $C_{c}^{\infty}(0,\infty)$ is also a core for $\L_{0}$ on
  $L_{k}^{1}$ and on this common core, both operators coincide.
\end{remark}

\section{Uniqueness of profiles}
\label{sec:local}

The spectral gap estimates proved in the previous section allow us to
show that small perturbations of the equation for $K=2$ have
essentially the same behaviour, at least when solutions are not far
from the self-similar profile $e^{-x}$ for $K=2$. We gather all local
results of this type in this section.

\subsection{Local stability of profiles}
\label{sec:local_stability}

We call \emph{stability of the self-similar profiles} with respect to
the perturbation we are considering the property that for small
$\eps$ the unit-mass profiles are close to the unique unit-mass
profile $G_0$ for $\eps = 0$. We prove now a \emph{local} version
of this result, which states that this is true provided the profiles
are contained in a ball of a specific radius around $G_0$. Global
versions are given in Section \ref{sec:stability-profiles}, but the
advantage of the local statements we give now is that they give an
optimal rate of stability, and they use only the properties of the
linearised operator $\L_{0}$.

Our first observation is that the nonlinear operators defining the
equation for $\eps = 0$ and its perturbation are not far from each
other in the $\|\cdot\|_{L^1_k}$ norm:
\begin{lemma}
  \label{lem:nonlinear_operator_close}
  Denote by $\No_\eps$ the self-similar Smoluchowski operator with
  kernel $K_\eps$; that is,
  \begin{equation*}
    \No_\eps(f) := 2f + x \partial_x f + \C_{\eps}(f,f).
  \end{equation*}
  For any $k \geq 0$ and any $f \in L^1_k$ we have
  \begin{equation*}
    \|\No_\eps(f) - \No_0(f) \|_{L^1_k} \leq \frac32 \eps \|f\|_{L^1_k}^2.
  \end{equation*}
\end{lemma}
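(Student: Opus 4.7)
The key observation is that the transport terms $2f + x\partial_x f$ are identical in both $\No_\eps$ and $\No_0$, so they cancel in the difference, and the proof reduces entirely to estimating the difference of the coagulation operators $\C_\eps(f,f) - \C_0(f,f)$ in the $L^1_k$ norm.

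The plan is to exploit the linearity of $\C_K$ as a function of the kernel $K$. Since $K_\eps = 2 + \eps W = K_0 + \eps W$, and the bilinear form $\C_K(g,h)$ defined in~\eqref{eq:sym:coag:op} is manifestly linear in $K$ (each integrand contains $K$ precisely once), we immediately obtain the identity
\begin{equation*}
  \C_\eps(f,f) - \C_0(f,f) = \C_{\eps W}(f,f) = \eps \, \C_W(f,f).
\end{equation*}
Therefore $\No_\eps(f) - \No_0(f) = \eps\,\C_W(f,f)$.

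Once this identity is in place, the estimate follows directly from the continuity of the coagulation bilinear form for bounded kernels, namely Proposition~\ref{Prop:CK:cont} applied with kernel $W$. Since $W$ satisfies $0 \leq W \leq 1$ by \eqref{eq:h2}, we have $\|W\|_{L^\infty} \leq 1$, and Proposition~\ref{Prop:CK:cont} gives
\begin{equation*}
  \|\C_W(f,f)\|_{L^1_k} \leq \tfrac{3}{2} \|W\|_{L^\infty} \|f\|_{L^1_k}^2 \leq \tfrac{3}{2} \|f\|_{L^1_k}^2.
\end{equation*}
Multiplying by $\eps$ yields the claim. There is no real obstacle here: the whole argument is a one-line reduction to the already-established continuity of $\C_K$, and the factor $3/2$ matches because $\|W\|_{L^\infty} \leq 1$ absorbs cleanly into the constant from Proposition~\ref{Prop:CK:cont}.
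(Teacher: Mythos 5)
Your proposal is correct and follows essentially the same route as the paper's proof: cancel the transport terms, use linearity of $\C_K$ in the kernel to write $\C_\eps(f,f) - \C_0(f,f) = \eps\,\C_W(f,f)$, and apply \cref{Prop:CK:cont} with $\|W\|_{L^\infty}\leq 1$. The only difference is that you spell out the intermediate steps a bit more explicitly than the paper does.
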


\begin{proof}
  We have
  \begin{equation*}
    \No_\eps(f) := 2f + x \partial_x f + \C_{\eps}(f,f),
  \end{equation*}
  so
  \begin{equation*}
    \| \No_\eps(f) - N_0(f)\|_{L^1_k}
    = \| \C_{\eps}(f,f) - \C_0(f,f) \|_{L^1_k}
    = \eps \| \C_{W}(f,f) \|_{L^1_k}
    \leq \frac{3}{2} \eps \|f\|_{L^1_k}^2,
  \end{equation*}
  where the last inequality is due to Lemma \ref{Prop:CK:cont}.
\end{proof}

We now give our local result on the stability of profiles:

\begin{lemma}[Local stability of profiles]
  \label{lem:local_stability}
  Take $0 \leq \eps < 1$, let $G_\eps$ be a self similar profile for
  the kernel $K_\eps$, and assume that
  $\|G_\eps - G_0\| \leq \frac{1}{6M}$, where $M > 0$ is the one in
  \cref{thm:gap-L0}. Take $k \geq 0$. There exists an (explicit)
  constant $M_1 = M_1(k) > 0$ such that
  \begin{equation*}
    \|G_\eps - G_0\|_{L^1_k}
    \leq \eps M_1.
  \end{equation*}
  
\end{lemma}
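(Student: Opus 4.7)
The plan is to use the stationarity of both profiles together with the spectral gap of $\L_0$ and the smallness hypothesis to absorb the quadratic term in $h \vcc= G_\eps - G_0$. Since $G_\eps$ and $G_0$ are stationary solutions to the self-similar Smoluchowski equations with kernels $K_\eps$ and $K_0 = 2$ respectively, we have $\No_\eps(G_\eps) = 0$ and $\No_0(G_0) = 0$. Expanding $\No_0$ around $G_0$ using bilinearity and symmetry of $\C_0$,
\begin{equation*}
  \No_0(G_0 + h) - \No_0(G_0) = 2h + x\del_x h + 2\C_0(G_0, h) + \C_0(h,h).
\end{equation*}
A short computation (using $\int_0^\infty G_0 = 1$) shows that the linear-in-$h$ part coincides with $\L_0 h$ as defined in \eqref{eq:linearised}, so that $\No_0(G_\eps) = \L_0 h + \C_0(h,h)$.

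Next, I would rewrite $\No_0(G_\eps) = \No_0(G_\eps) - \No_\eps(G_\eps)$ and invoke \cref{lem:nonlinear_operator_close} to get
\begin{equation*}
  \|\No_0(G_\eps)\|_{L^1_k} \leq \tfrac{3}{2} \eps \|G_\eps\|_{L^1_k}^2.
\end{equation*}
Together with the previous identity and \cref{Prop:CK:cont} applied to $\C_0$ (whose kernel has $L^\infty$ norm $2$),
\begin{equation*}
  \|\L_0 h\|_{L^1_k} \leq \tfrac{3}{2} \eps \|G_\eps\|_{L^1_k}^2 + 3 \|h\|_{L^1_k}^2.
\end{equation*}

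Since $G_\eps$ and $G_0$ both have unit mass, $\int_0^\infty x\, h(x)\dx = 0$, so $h$ sits in the subspace on which the spectral gap \cref{thm:gap-L0} applies. This gives $\|h\|_{L^1_k} \leq M \|\L_0 h\|_{L^1_k}$, so
\begin{equation*}
  \|h\|_{L^1_k} \leq \tfrac{3}{2} M \eps \|G_\eps\|_{L^1_k}^2 + 3 M \|h\|_{L^1_k}^2.
\end{equation*}
Under the smallness assumption $\|h\|_{L^1_k} \leq \frac{1}{6M}$ the last term is at most $\frac{1}{2}\|h\|_{L^1_k}$, which can be absorbed on the left-hand side. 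Finally, the uniform bound $\|G_\eps\|_{L^1_k} \leq C_k$ from \cref{Prop:L1:est:profile} yields $\|h\|_{L^1_k} \leq 3 M C_k^2\, \eps$, so we may take $M_1(k) \vcc= 3 M C_k^2$.

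The main technical wrinkle I anticipate is justifying that $h$ actually lies in the domain of $\L_0$ (so that the Hille--Yosida consequence of \cref{thm:gap-L0} applies); this should either follow from the a priori regularity of self-similar profiles in \cref{sec:bound_stability} (which guarantee enough integrability to make sense of $\L_0 h$ directly via the representation \eqref{eq:linearised2}) or from a density/closure argument using that $C_c^\infty$ is a core, combined with the fact that the right-hand side $\L_0 h = \No_0(G_\eps) - \C_0(h,h)$ is already identified as an element of $L^1_k$. Apart from this point, the argument is a direct quantitative implicit-function-type bootstrap using only \cref{lem:nonlinear_operator_close}, \cref{thm:gap-L0}, \cref{Prop:CK:cont} and \cref{Prop:L1:est:profile}.
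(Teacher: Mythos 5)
Your proposal is correct and mirrors the paper's proof almost verbatim: both expand $\No_0(G_\eps)$ around $G_0$ as $\L_0 h + \C_0(h,h)$, bound $\|\No_0(G_\eps)\|_{L^1_k} \leq \tfrac{3}{2}\eps\|G_\eps\|^2_{L^1_k}$ via \cref{lem:nonlinear_operator_close}, apply the lower bound $\|\L_0 h\|\geq\tfrac{1}{M}\|h\|$ from \cref{thm:gap-L0}, absorb the quadratic term using the smallness hypothesis, and conclude with the uniform moment bound of \cref{Prop:L1:est:profile}. The only cosmetic difference is that you rearrange via the triangle inequality while the paper uses the reverse triangle inequality, which yields the identical inequality, and your remark on domain questions for $\L_0$ is a reasonable observation that the paper handles implicitly through its core/closure discussion.
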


\begin{proof}
  Denote $\No_\eps$ the same operator as in Lemma
  \ref{lem:nonlinear_operator_close}, and let $G_\eps$ be any
  self-similar profile for the kernel $K_\eps$. Since
  $\No_\eps(G_\eps) = 0$ we have
  \begin{equation}
    \label{eq:sp1}
    \|\No_0(G_\eps)\|_{L^1_k}
    = \|\No_0(G_\eps) - \No_\eps(G_\eps)\|_{L^1_k}
    \leq \frac32 \eps \|G_\eps\|_{L^1_k}^2,
  \end{equation}
  where we have used Lemma \ref{lem:nonlinear_operator_close}. Now
  \begin{equation*}
    \No_0(G_\eps) =
    \L_{0}(G_\eps - G_0) + \C_0(G_\eps - G_0, G_\eps - G_0),
  \end{equation*}
  so
  \begin{multline*}
    \|\No_0(G_\eps)\|_{L^1_k}
    =
    \| \L_{0}(G_\eps - G_0) + \C_0(G_\eps - G_0, G_\eps - G_0) \|_{L^1_k}
    \\
    \geq
    \| \L_{0}(G_\eps - G_0) \|_{L^1_k}
    - \|\C_0(G_\eps - G_0, G_\eps - G_0) \|_{L^1_k}
    \\
    \geq
    \frac{1}{M} \| G_\eps - G_0 \|_{L^1_k}
    -  3 \|G_\eps - G_0 \|_{L^1_k}^2.
  \end{multline*}
  Together with \eqref{eq:sp1} this gives
  \begin{equation*}
    \frac{1}{M} \|G_\eps - G_0\|_{L^1_k}
    \leq
    \frac32 \eps \|G_\eps\|_{L^1_k}^2 +
    3 \|G_\eps - G_0 \|_{L^1_k}^2.
  \end{equation*}
  If we assume that $\|G_\eps - G_0\|_{L^1_k} \leq \frac{1}{6M}$
  then this implies
  \begin{equation*}
    \frac{1}{2M} \|G_\eps - G_0\|_{L^1_k}
    \leq
    \frac32 \eps \|G_\eps\|_{L^1_k}^2.
  \end{equation*}
  \cref{Prop:L1:est:profile} then shows that the right hand side is
  finite and depends only on $k$.
\end{proof}

As an immediate consequence of Lemma~\ref{lem:local_stability} and
\cref{Prop:profile_stability}, we then also obtain the following
global stability result (notice that in Section
\ref{sec:stability-profiles} the constant in
\cref{Prop:profile_stability} was explicitly estimated, so the $D_k$
in the following result is constructive). Also, we remark that a
global result like \cref{Prop:profile_stability} is essential here,
since the stability of all possible solutions to the self-similar
equation cannot be proved by studying only its linearisation.

\begin{corollary}[Global stability of profiles]\label{Cor:global:stability}
  For each $k\geq 0$ there exists a constant $D_{k}$ such that each self-similar profile $G_{\eps}$ for the kernel $K_{\eps}$ with $0\leq \eps\leq 1$ satisfies
  \begin{equation*}
    \norm{G_{\eps}-G_{0}}_{L^{1}_{k}}\leq D_{k}\eps.
  \end{equation*}
\end{corollary}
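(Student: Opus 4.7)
The plan is to combine the non-constructive smallness provided by Proposition \ref{Prop:profile_stability} with the sharp linear rate provided by Lemma \ref{lem:local_stability}. The former guarantees that $G_\eps$ enters the small ball around $G_0$ where linearisation is valid once $\eps$ is small; the latter then gives the optimal $O(\eps)$ rate inside that ball.

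First I would apply Proposition \ref{Prop:profile_stability} with the fixed exponent $k$ (or, more conveniently, with any exponent $\geq k$) to obtain a function $\delta(\eps)\to 0$ controlling $\|G_\eps-G_0\|_{L^1_k}$. Since $\delta(\eps)\to 0$, there exists $\eps_0=\eps_0(k)\in(0,1]$ such that
\begin{equation*}
\eps\leq \eps_0 \quad \Longrightarrow\quad \|G_\eps-G_0\|_{L^1_k}\leq \tfrac{1}{6M},
\end{equation*}
where $M$ is the constant from \cref{thm:gap-L0}. For such $\eps$, Lemma \ref{lem:local_stability} applies directly and yields
\begin{equation*}
\|G_\eps-G_0\|_{L^1_k}\leq M_1(k)\,\eps.
\end{equation*}

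For the remaining range $\eps_0<\eps\leq 1$ the linearisation argument is not available, but a crude uniform bound is enough since $\eps$ is bounded away from zero. Indeed, by \cref{Prop:L1:est:profile} there is a constant $C_k$ depending only on $k$ such that $\|G_\eps\|_{L^1_k}\leq C_k$ for every $\eps\in[0,1]$, and consequently
\begin{equation*}
\|G_\eps-G_0\|_{L^1_k}\leq 2C_k\leq \frac{2C_k}{\eps_0}\,\eps.
\end{equation*}
Choosing $D_k:=\max\{M_1(k),\,2C_k/\eps_0(k)\}$ then gives the claim for all $\eps\in[0,1]$.

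No new estimates are required, so there is no real obstacle; the only subtle point is to ensure that $\eps_0$ can be chosen independently of the particular profile (it can, because $\delta(\eps)$ in Proposition \ref{Prop:profile_stability} is uniform over all unit-mass self-similar profiles). Note also that while the $\delta(\eps)$ from Proposition \ref{Prop:profile_stability} was made constructive in Section \ref{sec:stability-profiles}, the value of $\eps_0$ and hence of $D_k$ inherits this explicitness, so the corollary is genuinely constructive.
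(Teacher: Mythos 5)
Your argument is correct and matches the paper's intent exactly: the paper simply declares the corollary an ``immediate consequence'' of Proposition~\ref{Prop:profile_stability} (to enter the local ball once $\eps$ is small) and Lemma~\ref{lem:local_stability} (to get the linear $O(\eps)$ rate there), and your handling of the remaining range $\eps\in(\eps_0,1]$ via the uniform bound $\norm{G_\eps}_{L^1_k}\le C_k$ from Proposition~\ref{Prop:L1:est:profile} is the natural way to close the gap. The one small point worth making explicit — glossed over by the paper as well — is that Lemma~\ref{lem:local_stability} relies on the constant $M$ from Theorem~\ref{thm:gap-L0} and hence implicitly requires $k>2$; for $0\le k\le 2$ one should run the whole argument at some fixed $k'>2$ and then use the monotonicity $\norm{\cdot}_{L^1_k}\le\norm{\cdot}_{L^1_{k'}}$, which your parenthetical about working ``with any exponent $\geq k$'' gestures at without quite spelling out.
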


\subsection{Uniqueness}
\label{sec:local_uniqueness}

We first show a \emph{local uniqueness} result which states that
self-similar profiles are unique for small epsilon, provided they are
within a certain distace of $G_0$. As in the previous results in this
section, we prefer to state this local result because it depends only
on linearisation arguments involving the operator $\L_{0}$. Using the
stability results in Section \ref{sec:stability-profiles}, it
immediately gives a full uniqueness result.

In order to state our local uniqueness result we first show that the
perturbed linear operators $\L_\eps$ around a self-similar profile
$G_\eps$ which is in a certain ball around $G_0$, also have a spectral
gap in the $L^1_k$ spaces for small $\eps$. This is not strictly
needed, but it makes the later proof a bit easier. Notice that the
operators $\L_\eps$ are just bounded perturbations of the operator
$\L_0$, so it is understood that they are defined in the same way as
in Theorem \ref{thm:semigroups-defined}, with the same domain.

\begin{lemma}
  \label{lem:L_e-L_0-local}
  Take $k \geq 0$ and $0 < \eps < 1$, and call $\L_\eps$ the
  linearised self-similar Smoluchowski operator in the space $L^1_k$,
  with kernel $K_\eps$, around a given self-similar profile $G_\eps$
  with mass $1$. There is an explicit constant $M_2 = M_2(k)$ such
  that
  \begin{equation*}
    \| \L_\eps - \L_0 \| \leq \eps M_2 
  \end{equation*}
\end{lemma}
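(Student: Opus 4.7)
The plan is to first identify $\L_\eps$ explicitly. Linearising the self-similar Smoluchowski equation $\partial_t f = \C_{K_\eps}(f,f) + 2f + x \partial_x f$ around $G_\eps$ via $f = G_\eps + h$ and using bilinearity/symmetry of $\C_{K_\eps}$ gives
\begin{equation*}
  \L_\eps[h] = x \partial_x h + 2 h + 2\C_{K_\eps}(G_\eps, h).
\end{equation*}
The transport-plus-multiplication part $x \partial_x h + 2h$ is the same for every $\eps$, so it cancels when computing the difference:
\begin{equation*}
  \L_\eps[h] - \L_0[h]
  = 2\C_{K_\eps}(G_\eps, h) - 2\C_{2}(G_0, h).
\end{equation*}

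Next, I would exploit the splitting $K_\eps = 2 + \eps W$ and the bilinearity of $(K,g,h)\mapsto \C_K(g,h)$ in $K$ (visible directly from~\eqref{eq:sym:coag:op}) to write $\C_{K_\eps} = \C_2 + \eps\,\C_W$, so that
\begin{equation*}
  \L_\eps[h] - \L_0[h]
  = 2\C_{2}(G_\eps - G_0,\, h) + 2\eps\, \C_{W}(G_\eps,\, h).
\end{equation*}
This is the decomposition that reduces everything to boundedness of $\C_K$ on $L^1_k$.

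Now I would apply \cref{Prop:CK:cont} to each term, with $\norm{K_0}_{L^{\infty}} = 2$ and $\norm{W}_{L^{\infty}} \leq 1$, to get
\begin{equation*}
  \norm{\L_\eps[h] - \L_0[h]}_{L^1_k}
  \leq 6\, \norm{G_\eps - G_0}_{L^1_k}\,\norm{h}_{L^1_k}
  + 3\eps\, \norm{G_\eps}_{L^1_k}\,\norm{h}_{L^1_k}.
\end{equation*}
Finally, I bound $\norm{G_\eps - G_0}_{L^1_k} \leq D_k \eps$ by the global stability \cref{Cor:global:stability} and $\norm{G_\eps}_{L^1_k} \leq C_k$ by \cref{Prop:L1:est:profile}, which yields the bound with $M_2 := 6 D_k + 3 C_k$.

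I do not anticipate a real obstacle: everything follows from the bilinear structure of $\C_K$ together with already established $L^1_k$ bounds. The only subtlety is that the naive route through \cref{Prop:CK:cont} with the full kernel $K_\eps$ would miss the $\eps$-smallness coming from $G_\eps - G_0$, so the crucial step is the bilinearity-in-$K$ splitting above, which isolates the two mechanisms producing an $O(\eps)$ discrepancy (the kernel perturbation $\eps W$ and the profile perturbation $G_\eps - G_0$).
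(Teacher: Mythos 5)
Your proof is correct and follows the paper's own argument essentially step for step: the same linearisation formula, the same split $\C_{\eps}(G_\eps,h)-\C_0(G_0,h)=\C_2(G_\eps-G_0,h)+\eps\,\C_W(G_\eps,h)$, the same application of \cref{Prop:CK:cont}, and the same final bounds from \cref{Cor:global:stability,Prop:L1:est:profile}, yielding the identical constant $M_2=6D_k+3C_k$.
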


\begin{proof}
  In terms of $\C_{K}$ as given in~\eqref{eq:sym:coag:op} the operators
  $\L_{\eps}$ and $\L_{0}$ read
  \begin{equation*}
    \begin{aligned}
      \L_{0}[h](x)&=2h+xh'+2\C_{0}(G_0,h)\\
      \L_{\eps}[h](x)&=2h+xh'+2\C_{\eps}(G_{\eps},h).
    \end{aligned}
  \end{equation*}
  This yields
  \begin{equation*}
    \L_{\eps}[h](x)-\L_{0}[h](x)
    = 2\C_{\eps}(G_{\eps},h)-2\C_{0}(G_0,h)
    = 2\C_{0}(G_{\eps}-G_0,h)+2\eps \C_{W}(G_{\eps},h).
  \end{equation*}
  With \cref{Prop:CK:cont,Cor:global:stability,Prop:L1:est:profile} we thus deduce
  \begin{equation*}
    \norm{\L_{\eps}[h]-\L_{0}[h]}_{L^{1}_k}
    \leq 6 \norm{G_{\eps}-G_0}_{L^1_k} \norm{h}_{L^1_k}
    +3\eps \norm{G_{\eps}}_{L^1_k}\norm{h}_{L^1_k} \leq \left(6 D_{k}\eps +
      3C_{k}\eps \right) \norm{h}_{L^1_k}.
  \end{equation*}
  According to \cref{Prop:L1:est:profile,Cor:global:stability} the coefficient $M_{2}(k)\vcc=(6D_{k}+3C_{k})$ depends only on $k$, which proves the result.
\end{proof}

\begin{lemma}[Spectral gap of $\L_\eps$]
  \label{lem:gap-epsilon-local}
  Let $\L_\eps$ be the linearised self-similar Smoluchowski operator
  with kernel $K_\eps$, around a given self-similar profile $G_\eps$
  with mass $1$ and $k>2$. Then for
  $\eps<\frac{1}{2CM_{2}}=\vcc\eps_{0}$, where $M_2 = M_2(k)$ is from
  \cref{lem:L_e-L_0-local} and $C=C(k)$ is from
  Theorem~\ref{thm:gap-L0}, the operator $\L_\eps$ has a spectral gap
  in $L^1_k$ of size $1/2-CM_{2}\eps$. That is: with the same $C=C(k)$
  from Theorem~\ref{thm:gap-L0} we have
  \begin{equation*}
    \|e^{t \L_\eps} h_0\|_{L^1_k}
    \leq  C\|h_0\|_{L^1_k} e^{-(\frac{1}{2}-CM_{2}\eps) t},
    \qquad t \geq 0
  \end{equation*}
  for all $h_0 \in L^1_k$ with $\int x h_0 = 0$.
\end{lemma}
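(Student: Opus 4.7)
The plan is to apply a standard bounded perturbation argument to transfer the spectral gap of $\L_0$ to $\L_\eps$, using Duhamel's formula and Gronwall's inequality. The key observation is that $\L_\eps = \L_0 + (\L_\eps - \L_0)$, where the perturbation is bounded with $\|\L_\eps - \L_0\| \leq \eps M_2$ on $L^1_k$ by \cref{lem:L_e-L_0-local}. Since both $\L_0$ and $\L_\eps$ preserve the constraint $\int_0^\infty x h(x)\,\dx = 0$ (as they are linearisations of mass-conserving evolutions), the difference $\L_\eps - \L_0$ also preserves this constraint, so the iteration we set up will stay in the subspace where the spectral gap of $\L_0$ from \cref{thm:gap-L0} applies.

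First I would note that, by the Bounded Perturbation Theorem (as used in \cref{Lem:ext:core:general}), $\L_\eps$ generates a strongly continuous semigroup on $L^1_k$, and for $h_0 \in L^1_k$ with $\int_0^\infty x h_0\,\dx = 0$ one has
\begin{equation*}
  \ee^{t\L_\eps} h_0 = \ee^{t\L_0} h_0 + \int_0^t \ee^{(t-s)\L_0}(\L_\eps - \L_0) \ee^{s\L_\eps} h_0 \ds.
\end{equation*}
Since $\ee^{s\L_\eps} h_0$ still has zero first moment (the semigroup preserves the constraint), and $(\L_\eps - \L_0) \ee^{s\L_\eps} h_0$ therefore also has zero first moment, I can apply the spectral gap estimate from \cref{thm:gap-L0} to the outer exponential $\ee^{(t-s)\L_0}$ and the unperturbed bound to $\ee^{t\L_0} h_0$. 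Combining with $\|\L_\eps - \L_0\|_{L^1_k \to L^1_k} \leq \eps M_2$, this yields
\begin{equation*}
  \|\ee^{t\L_\eps} h_0\|_{L^1_k}
  \leq C \ee^{-t/2}\|h_0\|_{L^1_k}
  + C \eps M_2 \int_0^t \ee^{-(t-s)/2}\|\ee^{s\L_\eps}h_0\|_{L^1_k} \ds.
\end{equation*}

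Setting $\phi(t) \vcc= \ee^{t/2}\|\ee^{t\L_\eps}h_0\|_{L^1_k}$ multiplies through by $\ee^{t/2}$ to give
\begin{equation*}
  \phi(t) \leq C\|h_0\|_{L^1_k} + C\eps M_2 \int_0^t \phi(s)\ds,
\end{equation*}
so Gronwall's inequality produces $\phi(t) \leq C\|h_0\|_{L^1_k}\ee^{C\eps M_2 t}$, and unwinding the definition of $\phi$ yields the claimed
\begin{equation*}
  \|\ee^{t\L_\eps} h_0\|_{L^1_k} \leq C\|h_0\|_{L^1_k}\ee^{-(\frac{1}{2} - C M_2 \eps)t}.
\end{equation*}
The condition $\eps < 1/(2CM_2) = \eps_0$ guarantees the exponent is strictly negative, so this genuinely is a decay estimate. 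There is no serious obstacle here; the only thing to be careful about is verifying that the constraint $\int x h = 0$ is preserved by every semigroup and operator appearing in the Duhamel formula, which follows from the mass-conservation structure of the underlying equation and the construction of $\L_\eps$ as its linearisation.
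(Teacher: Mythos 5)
Your proposal is correct and follows essentially the same route as the paper's own proof: the same Duhamel formula splitting $\L_\eps = \L_0 + (\L_\eps - \L_0)$, the same invocation of Theorem~\ref{thm:gap-L0} and Lemma~\ref{lem:L_e-L_0-local}, and the same substitution $\phi(t) = \ee^{t/2}\|\ee^{t\L_\eps}h_0\|$ followed by Gronwall. Your extra remark that the zero-first-moment constraint is preserved by every operator and semigroup in the iteration is a point the paper leaves implicit, and it is correct.
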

We sometimes state the result above by saying that, under these
conditions, the linearised operator $\L_\eps$ has a spectral gap in
$L^1_k$ of size $1/2-CM_{2}\eps$.

\begin{remark}
  \label{Rem:gap:epsilon2}
  In particular, under the assumptions of the previous result, by the
  Hille-Yosida theorem we have
  \begin{equation}
    \label{eq:Leps-invertible}
    \| \L_\eps h \|_{L^1_k} \geq \frac{1-2CM_{2}\eps}{2C} \|h\|_{L^1_k}
  \end{equation}
  for all $h \in L^1_k$ with $\int x h(x) \dx = 0$.
\end{remark}

\begin{proof}[Proof of Lemma \ref{lem:gap-epsilon-local}]
  All norms used in this proof are $\|\cdot\|_{L^1_k}$, and we omit
  the subscript to simplify the notation. From Lemma
  \ref{lem:L_e-L_0-local} we have
  \begin{equation}
    \label{eq:L_e-L_0-aux}
    \| \L_\eps - \L_0 \| \leq \eps M_2 =: \delta.
  \end{equation}
  We consider the equation
  \begin{equation*}
    \partial_t h = \L_\eps h = \L_0 h + (\L_\eps - \L_0) h
  \end{equation*}
  and write, using Duhamel's formula and setting
  $h_t := e^{t \L_\eps} h_0$,
  \begin{equation*}
    h_t = e^{t \L_0} h_0
    + \int_0^t e^{(t-s)\L_0}(\L_\eps - \L_0)  h_s \ds.
  \end{equation*}
  Hence, using Theorem \ref{thm:gap-L0} for $\L_0$
  \begin{equation*}
    \|h_t\| \leq C e^{-\frac12 t} \|h_0\|
    + C \int_0^t e^{-\frac12 (t-s)} \|(\L_\eps - \L_0)  h_s \| \ds.
  \end{equation*}
  Hence, from \eqref{eq:L_e-L_0-aux},
  \begin{equation*}
    \|h_t\| \leq C e^{-\frac12  t} \|h_0\|
    + C \delta \int_0^t e^{-\frac12 (t-s)} \| h_s \| \ds.
  \end{equation*}
  Calling $u(t) := \|h_t\| e^{\frac12  t}$ we see that
  \begin{equation*}
    u(t) \leq C \|h_0\| + C \delta \int_0^t u(s) \ds,
  \end{equation*}
  so by Gronwall's Lemma we have $u(t) \leq C \|h_0\| e^{C \delta t}$,
  that is
  \begin{equation*}
    \|h_t\| \leq C \|h_0\| e^{(C \delta - \frac12 ) t}
    ,
    \qquad t \geq 0.
  \end{equation*}
  Recalling~\eqref{eq:L_e-L_0-aux} this shows the claim.
\end{proof}

We can finally give the proof of local uniqueness of the profiles:

\begin{theorem}[Local uniqueness of self-similar profiles]
  \label{thm:uniqueness-local}
  Take any $k > 2$. For all $0 < \eps < \eps_{0}$ (with $\eps_{0}$
  from Lemma \ref{lem:gap-epsilon-local}), Smoluchowski's coagulation
  equation with kernel $K_\eps$ has at most one self-similar profile
  $G_\eps$ with mass $1$ satisfying
  \begin{equation*}
    \|G_\eps - G_0\|_{L^1_k} \leq \frac{1-\eps/\eps_{0}}{12C(1+\eps)}
  \end{equation*}
  with $C=C(k)$ from Theorem~\ref{thm:gap-L0}.
\end{theorem}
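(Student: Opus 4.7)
The plan is a standard fixed-point/contraction-style uniqueness argument based on the spectral gap for $\L_\eps$ proved in \cref{lem:gap-epsilon-local}. Suppose, for contradiction, that $G_\eps^{1}$ and $G_\eps^{2}$ are two self-similar profiles for the kernel $K_\eps$, both having mass one and both lying in the ball around $G_{0}$ of radius $\frac{1-\eps/\eps_{0}}{12C(1+\eps)}$ in $L^{1}_{k}$. Set $h\vcc=G_\eps^{1}-G_\eps^{2}$. Since both profiles have mass one, $\int_{0}^{\infty} x h(x)\dx = 0$, so $h$ sits in the subspace on which \cref{lem:gap-epsilon-local} applies.

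The first key step is to write a nonlinear equation for $h$ in terms of the linearisation around $G_\eps^{1}$. Using $\No_\eps(G_\eps^{1})=\No_\eps(G_\eps^{2})=0$ and bilinearity of $\C_\eps$,
\begin{equation*}
  0 = \No_\eps(G_\eps^{2})-\No_\eps(G_\eps^{1})
  = 2h + x h' + 2\C_\eps(G_\eps^{1},h) + \C_\eps(h,h)
  = \L_\eps h + \C_\eps(h,h),
\end{equation*}
where $\L_\eps$ is the linearisation around $G_\eps^{1}$ considered in \cref{lem:gap-epsilon-local}. Hence $\L_\eps h = -\C_\eps(h,h)$.

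The second step is to bound both sides in $L^{1}_{k}$. From \cref{Rem:gap:epsilon2} together with $\int x h = 0$ and the identity $2CM_{2}\eps = \eps/\eps_{0}$, the left-hand side satisfies
\begin{equation*}
  \norm{\L_\eps h}_{L^{1}_{k}} \geq \frac{1-\eps/\eps_{0}}{2C}\norm{h}_{L^{1}_{k}}.
\end{equation*}
For the right-hand side I will apply \cref{Prop:CK:cont} with $\norm{K_\eps}_{L^\infty}\leq 2+\eps$, giving
\begin{equation*}
  \norm{\C_\eps(h,h)}_{L^{1}_{k}} \leq \tfrac{3}{2}(2+\eps)\norm{h}_{L^{1}_{k}}^{2}.
\end{equation*}
Combining these two estimates and, assuming $h\neq 0$, dividing by $\norm{h}_{L^{1}_{k}}$ yields the lower bound
\begin{equation*}
  \norm{h}_{L^{1}_{k}} \geq \frac{1-\eps/\eps_{0}}{3C(2+\eps)}.
\end{equation*}

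Finally, the triangle inequality together with the smallness hypothesis on the two profiles gives the upper bound
\begin{equation*}
  \norm{h}_{L^{1}_{k}} \leq \norm{G_\eps^{1}-G_{0}}_{L^{1}_{k}}+\norm{G_\eps^{2}-G_{0}}_{L^{1}_{k}} \leq \frac{1-\eps/\eps_{0}}{6C(1+\eps)}.
\end{equation*}
Since $6(1+\eps) > 3(2+\eps)$ whenever $\eps>0$, the upper bound is strictly smaller than the lower bound, contradicting $h\neq 0$. Hence $G_\eps^{1}=G_\eps^{2}$ and uniqueness in the prescribed ball follows. There is no real obstacle here beyond the bookkeeping of constants: the non-trivial analytic input (the spectral gap of $\L_\eps$ and continuity of $\C_\eps$ on $L^{1}_{k}$) has already been established, and it is the choice of the radius $\frac{1-\eps/\eps_{0}}{12C(1+\eps)}$ in the statement that makes the upper and lower bounds compatible only when $h=0$.
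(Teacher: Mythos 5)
Your proof is correct and follows essentially the same route as the paper: linearise $\No_\eps$ around one of the profiles, use the spectral gap from \cref{lem:gap-epsilon-local} (via \cref{Rem:gap:epsilon2}) to bound $\|\L_\eps h\|$ from below, bound the quadratic remainder via \cref{Prop:CK:cont}, and derive a contradiction with the triangle-inequality upper bound from the smallness hypothesis. One small remark: you use the sharp bound $\|K_\eps\|_\infty\leq 2+\eps$ so that $\|\C_\eps(h,h)\|_{L^1_k}\leq\frac32(2+\eps)\|h\|_{L^1_k}^2$, and you correctly verify that the stated radius still produces a contradiction because $6(1+\eps)>3(2+\eps)$ for $\eps>0$; the paper's own proof writes $\frac32(1+\eps)$ at this step, which appears to be a slip, so your version is actually the more careful bookkeeping.
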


\begin{proof}
  Let $\No_\eps$ be the the self-similar Smoluchowski operator with
  kernel $K_\eps$.  Assume we have two different self-similar
  profiles $G_1, G_2$ with mass $1$ for the kernel $K_\eps$:
  \begin{equation*}
    \No_\eps(G_1) = \No_\eps(G_2) = 0,
  \end{equation*}
  and that they both satisfy
  \begin{equation}
    \label{eq:lup1}
    \|G_1 - G_0\|_{L^1_k} \leq \frac{1-\eps/\eps_{0}}{12C(1+\eps)},
    \qquad
    \|G_2 - G_0\|_{L^1_k} \leq \frac{1-\eps/\eps_{0}}{12C(1+\eps)}.
  \end{equation}
  Call $\L_\eps$ the linearised self-similar Smoluchowski operator
  with kernel $K_\eps$, around the profile $G_1$.  Since
  $\No_\eps(f) = \L_\eps(f-G_1) + \C_{2+\eps W}(f-G_1, f- G_1)$,
  \begin{equation*}
    0 = \No_\eps(G_1) - \No_\eps(G_2)
    = \L_\eps(G_2 - G_1) - \C_{2+\eps W}(G_1 - G_2, G_1 - G_2).
  \end{equation*}
  Using \cref{lem:gap-epsilon-local} (see equation
  \eqref{eq:Leps-invertible} in particular) and \cref{Prop:CK:cont},
  \begin{equation*}
    \frac{1-\eps/\eps_{0}}{2C} \| G_1 - G_2 \|
    \leq
    \| \L_\eps(G_2 - G_1) \|
    \leq \frac32 (1+\eps) \|G_1 - G_2\|^2,
  \end{equation*}
  so, since $G_1 \neq G_2$, and assuming always $\eps < 1$,
  \begin{equation*}
    \|G_1 - G_2\| \geq \frac{1-\eps/\eps_{0}}{3C(1+\eps)}
  \end{equation*}
  This contradicts \eqref{eq:lup1}, since
  \begin{equation*}
    \|G_1 - G_2\|
    \leq
    \|G_1 - G_0\| + \|G_0 - G_2\|
    \leq \frac{1-\eps/\eps_{0}}{6C(1+\eps)}.
    \qedhere
  \end{equation*}
\end{proof}

We can then use this local result for any fixed $k > 2$, together with
the stability results in Section \ref{sec:stability-profiles}, to
obtain that there is a unique unit-mass self-similar profile in
$L^1_k$. Since we know from Section \ref{sec:exist-self-simil} that
all profiles must be in $L^1_k$, we immediately obtain a uniqueness
result:
\begin{corollary}[Uniqueness of profiles for small perturbations]
  \label{cor:uniqueness}
  There exists $\eps_1 > 0$ such that for all
  $0 \leq \eps \leq \eps_1$ Smoluchowski's coagulation
  equation with kernel $K_\eps$ has exactly one self-similar
  profile $G_\eps$ with mass $1$.
\end{corollary}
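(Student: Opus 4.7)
The plan is to bootstrap the local uniqueness result of Theorem~\ref{thm:uniqueness-local} into a global statement using the quantitative stability bound in Corollary~\ref{Cor:global:stability}. First I would fix some $k > 2$ (say $k = 3$) so that Theorem~\ref{thm:uniqueness-local} applies with its constants $\eps_{0} = \eps_{0}(k)$ and $C = C(k)$, together with the constant $D_{k}$ from Corollary~\ref{Cor:global:stability}. Existence of at least one mass-one profile $G_{\eps}$ is guaranteed by Proposition~\ref{Prop:profile:exists} applied to $K_{\eps} = 2 + \eps W$: since $2 \leq K_{\eps} \leq 3$ on $(0,\infty)^{2}$, the hypotheses of that proposition hold with $\alpha = 0$, $K_{0} = 3$, $\kappa = 1$, $k_{0} = 2$. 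Moreover, by Proposition~\ref{Prop:L1:est:profile}, any such profile automatically lies in $L^{1}_{k}$, so there is no issue with the function space in which we measure closeness to $G_{0}$.

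The decisive step is then to choose $\eps_{1} \in (0, \eps_{0})$ small enough that
\begin{equation*}
D_{k}\, \eps_{1} \;\leq\; \frac{1 - \eps_{1}/\eps_{0}}{12\, C\,(1 + \eps_{1})}.
\end{equation*}
Such a choice is always possible, because as $\eps_{1} \to 0^{+}$ the left-hand side tends to $0$ linearly while the right-hand side tends to the strictly positive limit $1/(12C)$. With this choice, for every $0 < \eps \leq \eps_{1}$ and every mass-one self-similar profile $G_{\eps}$ for the kernel $K_{\eps}$, Corollary~\ref{Cor:global:stability} yields the chain of inequalities
\begin{equation*}
\|G_{\eps} - G_{0}\|_{L^{1}_{k}} \;\leq\; D_{k}\, \eps \;\leq\; \frac{1 - \eps/\eps_{0}}{12\, C\,(1 + \eps)},
\end{equation*}
so the hypothesis of the local uniqueness statement in Theorem~\ref{thm:uniqueness-local} is automatically satisfied by \emph{every} such profile. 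Consequently, there can be at most one, and combined with the existence statement we obtain exactly one profile $G_{\eps}$ of mass $1$. The case $\eps = 0$ is classical ($G_{0}(x) = e^{-x}$ is the unique unit-mass profile of the constant kernel $K = 2$).

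I do not anticipate any real obstacle here: the whole content of the corollary has already been packaged into the two preceding results, and what remains is the quantitative matching of constants. The only point requiring a bit of care is the order of the constants ($D_{k}$, $\eps_{0}$, $C$), but since the global stability rate is linear in $\eps$ while the radius of the local uniqueness ball stays bounded below as $\eps \to 0$, a suitable $\eps_{1}$ can always be extracted, and in fact can be made explicit in terms of $D_{k}$, $C$ and $\eps_{0}$ by solving the above inequality for $\eps_{1}$.
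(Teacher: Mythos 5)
Your proposal is correct and takes essentially the same route as the paper: combine the local uniqueness of Theorem~\ref{thm:uniqueness-local} with the global stability estimate (here Corollary~\ref{Cor:global:stability}, the quantitative refinement of Proposition~\ref{Prop:profile_stability}) to force every mass-one profile into the local-uniqueness ball, then invoke existence from Proposition~\ref{Prop:profile:exists} and the moment bound Proposition~\ref{Prop:L1:est:profile} to conclude. The paper leaves this assembly implicit in the paragraph preceding the corollary; you have simply spelled out the matching of constants, which is exactly the intended argument.
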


\section{Convergence to equilibrium}
\label{sec:convergence}

\subsection{Local convergence to equilibrium}
\label{sec:local_convergence}

\begin{proposition}[Local exponential convergence to equilibrium]
  \label{prp:local_nonlinear_convergence}
  Take $k > 2$, and consider $\eps_1$ from Corollary
  \ref{cor:uniqueness}. For any $0 \leq \eps < \eps_1$, let $G_\eps$
  be the unique self-similar profile with mass $1$ for the kernel
  $K_\eps$. There exist constants $C_{*}, M, \eps_2, r_1 > 0$
  depending on $k$ only such that for any $0 \leq \eps \leq \eps_2$,
  any solution $f$ to the self-similar Smoluchowski equation
  \eqref{eq:Smol:selfsim} with kernel $K_\eps$ with initial condition
  $f_0 \in L^1_k$ such that
  \begin{equation*}
    \int_0^\infty xf_0(x) \dx = 1,
    \qquad \|f_0 - G_\eps\|_{L^1_k} \leq r_1
  \end{equation*}
  satisfies
  \begin{equation*}
    \| f(t, \cdot) - G_\eps \|_{L^1_k}
    \leq C_{*} e^{-(\frac{1}{2}-M\eps) t} \| f_0 - G_\eps \|_{L^1_k}
    \qquad \text{for all $t \geq 0$.}
  \end{equation*}
\end{proposition}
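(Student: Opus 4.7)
Write $h(t) := f(t,\cdot) - G_\eps$. Since $f$ and $G_\eps$ both have mass $1$, the constraint $\int_0^\infty x\, h(t,x)\,\dx = 0$ is preserved for all $t\geq 0$. Because $G_\eps$ is a stationary solution, expanding the self-similar operator around $G_\eps$ yields
\begin{equation*}
  \del_t h \;=\; \L_\eps h \;+\; \C_\eps(h,h),
\end{equation*}
where $\L_\eps$ is precisely the linearised operator considered in \cref{lem:gap-epsilon-local}. The plan is to treat this as a perturbation of the linear semigroup: apply Duhamel's formula, invoke the spectral gap of $\L_\eps$ on the zero-mass subspace of $L^1_k$, and control the quadratic term $\C_\eps(h,h)$ via \cref{Prop:CK:cont}, closing the estimate by a bootstrap / Gronwall argument provided $\|h_0\|_{L^1_k}$ is small enough.

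Concretely, take $\eps_2 := \min\{\eps_1,\eps_0\}$ (with $\eps_0$ from \cref{lem:gap-epsilon-local}), and set $\lambda_\eps := \tfrac12 - CM_2\eps$ and $C$ as in \cref{lem:gap-epsilon-local}. Duhamel's formula gives
\begin{equation*}
  h(t) \;=\; \ee^{t\L_\eps} h_0 \;+\; \int_0^t \ee^{(t-s)\L_\eps}\, \C_\eps(h(s),h(s))\,\ds.
\end{equation*}
Applying the semigroup estimate from \cref{lem:gap-epsilon-local} to both terms (note that $\C_\eps(h,h)$ preserves zero first moment since it is the difference $\No_\eps(f)-\No_\eps(G_\eps)-\L_\eps h$ of mass-conserving quantities), together with $\|K_\eps\|_{L^\infty}\leq 3$ and \cref{Prop:CK:cont}, yields
\begin{equation*}
  \|h(t)\|_{L^1_k}
  \;\leq\; C\ee^{-\lambda_\eps t}\|h_0\|_{L^1_k}
  \;+\; \tfrac{9}{2} C \int_0^t \ee^{-\lambda_\eps(t-s)} \|h(s)\|_{L^1_k}^2\,\ds.
\end{equation*}

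Setting $V(t) := \ee^{\lambda_\eps t}\|h(t)\|_{L^1_k}$ this reads
\begin{equation*}
  V(t) \;\leq\; C\|h_0\|_{L^1_k} \;+\; \tfrac{9}{2}C \int_0^t \ee^{-\lambda_\eps s}\, V(s)^2\,\ds.
\end{equation*}
To close this, run a continuity argument: let $T^* := \sup\{t\geq 0 : V(s)\leq 2C\|h_0\|_{L^1_k} \text{ for all } s\in[0,t]\}$. On $[0,T^*]$ we may estimate one factor of $V(s)$ by $2C\|h_0\|_{L^1_k}\leq 2Cr_1$, obtaining
\begin{equation*}
  V(t) \;\leq\; C\|h_0\|_{L^1_k} \;+\; 9 C^2 r_1 \int_0^t \ee^{-\lambda_\eps s}\, V(s)\,\ds,
\end{equation*}
so Gronwall's inequality gives $V(t)\leq C\|h_0\|_{L^1_k}\exp(9C^2 r_1/\lambda_\eps)$ on $[0,T^*]$. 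Choosing $r_1$ small enough (depending only on $k$, via $C$ and on the lower bound $\lambda_{\eps_2}>0$) so that $9C^2 r_1/\lambda_{\eps_2}\leq \log 2$, the right-hand side stays bounded by $2C\|h_0\|_{L^1_k}$, contradicting the maximality of $T^*$ unless $T^*=\infty$. Hence $V(t)\leq 2C\|h_0\|_{L^1_k}$ for all $t\geq 0$, which translates back to the claimed bound with $C_* := 2C$ and $M := CM_2$.

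The main obstacle is precisely this balance: the nonlinear term is quadratic but dangerous because $\C_\eps(h,h)$ is a priori only controlled by $\|h\|^2$ in $L^1_k$, not by any smoother norm; the spectral gap rate $\lambda_\eps$ degrades linearly in $\eps$, so one must verify that the smallness threshold $r_1$ can be chosen uniformly in $\eps\in[0,\eps_2]$ (this uses that $\lambda_\eps\geq \lambda_{\eps_2}>0$ for $\eps\leq\eps_2<\eps_0$). Everything else is a direct application of the semigroup estimates already available. The condition $\|f_0-G_\eps\|_{L^1_k}\leq r_1$ simultaneously ensures local well-posedness in the chosen neighbourhood (via \cref{Prop:self:sim:evolution:existence}) and triggers the bootstrap.
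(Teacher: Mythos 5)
Your proof is correct and follows essentially the same strategy as the paper: expand around $G_\eps$ to obtain $\del_t h = \L_\eps h + \C_\eps(h,h)$, apply Duhamel's formula together with the spectral gap from \cref{lem:gap-epsilon-local} and the bilinear estimate of \cref{Prop:CK:cont}, and close with a Gronwall-type argument for the resulting quadratic integral inequality. The only cosmetic difference is in closing the nonlinearity: you run a continuity/bootstrap argument (freeze one factor of $V$ by $2Cr_1$ on a maximal time interval), whereas the paper applies a Bernoulli-type Gronwall bound directly to the quadratic inequality to get an explicit upper bound $u(t)\leq\bigl(\tfrac{1}{Cu(0)}-\tfrac{2C\|K_\eps\|_\infty}{\lambda_\eps}(1-\ee^{-\lambda_\eps t})\bigr)^{-1}$; both routes yield $V(t)\leq 2C\|h_0\|$ under the same smallness condition, and the small caveat you raise (take $\eps_2$ strictly below $\eps_0$ so that $\lambda_\eps$ is bounded away from zero) is the right one.
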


\begin{proof}
  Since we have information on the spectral properties of the
  linearised operator $\L_\eps$ around the profile $G_\eps$,
  the proof becomes a standard perturbation argument: we write the
  self-similar Smoluchowski equation as
  \begin{equation*}
    \partial_t f = \C_{2+\eps W}(f,f) + 2 f + x \partial_x f
    = \L_\eps(f - G_\eps) + \C_{2+\eps W}(f - G_\eps, f-G_\eps).
  \end{equation*}
  By Duhamel's formula, calling $h := f - G_\eps,$
  \begin{equation*}
    h_t = e^{t \L_\eps} h_0
    + \int_0^t e^{(t-s)\L_\eps} \big( \C_{2+\eps W}(h_s, h_s) \big) \ds,
  \end{equation*}
  so, using \cref{lem:gap-epsilon-local,Prop:CK:cont} and denoting $\lambda_{\eps}\vcc=1/2-CM_{2}\eps$ (with $C$ and $M_2$ from Lemma~\ref{lem:gap-epsilon-local}),
  \begin{multline*}
    \|h_t\|_{L^1_k}
    \leq
    C e^{- \lambda_{\eps} t} \|h_0\|_{L^1_k}
    + C \int_0^t e^{-\lambda_{\eps}(t-s)} \| \C_{2+\eps W}(h_s, h_s) \|_{L^1_k} \ds
    \\
    \leq
    C e^{- \lambda_{\eps} t} \|h_0\|_{L^1_k}
    + 2 C \|K_\eps\|_\infty
    \int_0^t e^{-\lambda_{\eps}(t-s)} \| h_s \|_{L^1_k}^2 \ds.
  \end{multline*}
  If we define $u(t) := \| h(t,\cdot)\|_{L^1_k} e^{\lambda_{\eps} t}$ we have
  \begin{equation*}
    u(t)
    \leq 
    C u(0)
    + 2 C \|K_\eps\|_\infty
    \int_0^t u(s)^2 e^{-\lambda_{\eps} s} \ds.
  \end{equation*}
  Gronwall's lemma applied to this integral inequality then shows that
  \begin{equation*}
    u(t) \leq \left( \frac{1}{C u(0)}
      - \frac{2 C \|K_\eps\|_\infty}{\lambda_{\eps}} (1 - e^{-\lambda_{\eps} t}) \right)^{-1},
  \end{equation*}
  which remains bounded for all $t \geq 0$ if
  \begin{equation*}
    u(0) < \frac{\lambda_{\eps}}{2 C^2 \|K_\eps\|_\infty}.
  \end{equation*}
  For example, if we assume
  \begin{equation*}
    u(0) < \frac{\lambda_{\eps}}{4 C^2 \|K_\eps\|_\infty}
  \end{equation*}
  then
  \begin{equation*}
    u(t) \leq 2 C u(0)
    \qquad \text{for all $t \geq 0$,}
  \end{equation*}
  which implies
  \begin{equation*}
    \| h(t, \cdot) \| \leq 2 C e^{-\lambda_{\eps} t} \|h_0\|,
  \end{equation*}
  which is what we wanted to show.
\end{proof}

\subsection{Convergence to equilibrium in large regions}

If we additionally use our knowledge that solutions to the
unperturbed problem with kernel $K = 2$ converge to equilibrium
globally we can obtain a slight improvement of the above
result. Namely, that the size $R$ of the region in which we have
convergence can be taken as large as one wants, provided $\eps$ is
close enough to zero:

\begin{theorem}[Exponential convergence to equilibrium in large
  regions for small $\eps$]
  Let $k>2$ and $W$ be a bounded kernel of homogeneity $0$, take
  $R > 0$, and take $0 \leq \eps \leq \eps_1$ (with $\eps_1$ the one
  from \cref{cor:uniqueness} ensuring uniqueness of profiles). Denote
  $K_\eps := 2 + \eps W$, and call $G_\eps$ the unique self-similar
  profile with mass $1$ for the kernel $K_\eps$. There exist constants
  $C$ and $M$ (depending only on $k$) and $\eps_3 > 0$ (depending on
  $W$, $R$ and $k$) such that any solution $f$ to the self-similar
  Smoluchowski equation \eqref{eq:Smol:selfsim} with kernel $K_\eps$
  with $0 \leq \eps \leq \eps_3$ and initial condition
  $f_0 \in L_{k}^{1}$ with $f_{0}\geq 0$ almost everywhere and
  \begin{equation*}
    \int_0^\infty xf_0(x) \dx = 1, \qquad \|f_0 - G_\eps\| \leq R
  \end{equation*}
  satisfies
  \begin{equation*}
    \| f(t, \cdot) - G_\eps \| \leq C e^{-(\frac{1}{2}-M\eps) t} \| f_0 -
    G_\eps \|
    \qquad \text{for all $t \geq 0$.}
  \end{equation*}
\end{theorem}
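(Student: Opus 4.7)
The plan is to reduce to \cref{prp:local_nonlinear_convergence}: for $\eps$ sufficiently small (depending on $R$), the perturbed flow drives any initial datum from the large ball $\{\|\cdot - G_{\eps}\|_{L^1_k} \leq R\}$ into the local basin $\{\|\cdot - G_{\eps}\|_{L^1_k} \leq r_1\}$ within some finite waiting time $T=T(R,k)$, after which \cref{prp:local_nonlinear_convergence} provides exponential decay. The subcase $\|f_0-G_{\eps}\|_{L^1_k}\leq r_1$ is immediate from \cref{prp:local_nonlinear_convergence}, so one only needs the two-phase argument in the complementary regime $r_1<\|f_0-G_{\eps}\|_{L^1_k}\leq R$; there the lower bound on $\|f_0-G_{\eps}\|_{L^1_k}$ lets us absorb the prefactors generated on $[0,T]$ into the final constant.

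Let $f^\eps$ and $f^0$ denote the solutions of \eqref{eq:Smol:selfsim} with kernels $K_\eps$ and $K_0=2$ respectively, both starting from the same $f_0$. The triangle inequality gives
\begin{equation*}
\|f^\eps(T,\cdot) - G_{\eps}\|_{L^1_k}
\leq \|f^\eps(T,\cdot) - f^0(T,\cdot)\|_{L^1_k}
+ \|f^0(T,\cdot) - G_0\|_{L^1_k}
+ \|G_0 - G_{\eps}\|_{L^1_k}.
\end{equation*}
The first summand is bounded by $\eps C_1(e^{C_1 T}-1)$ via \cref{Lem:L1:evolution:close}, with $C_1=C_1(R,k)$ under control since $\|f_0\|_{L^1_k}\leq R+C_k$ by \cref{Prop:L1:est:profile}; the third is $\leq D_k\eps$ by \cref{Cor:global:stability}. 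Both can be made $\leq r_1/3$ by choosing $\eps\leq\eps_3=\eps_3(R,k,W)$ small enough, for any fixed $T$. The middle summand is where the decay toward the unperturbed profile must be exploited.

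To control the middle summand, I would invoke \cref{Lem:constant:kernel:L1:conv}, which gives $L^1_k$-exponential decay at rate $\beta=\beta(k)$, at the cost of the stronger regularity assumption $f_0\in L^2\cap L^1_{k^*}$ for some $k^*>k$. Since our hypothesis is only $f_0\in L^1_k$, I would interpose a density argument: approximate $f_0$ by $\tilde f_0\in C_c^\infty(0,\infty)$ of unit mass with $\|\tilde f_0-f_0\|_{L^1_k}$ arbitrarily small, let $\tilde f^0$ denote the solution with kernel $K=2$ starting from $\tilde f_0$, and split
\begin{equation*}
\|f^0(T,\cdot) - G_0\|_{L^1_k}
\leq \|f^0(T,\cdot) - \tilde f^0(T,\cdot)\|_{L^1_k}
+ \|\tilde f^0(T,\cdot) - G_0\|_{L^1_k}.
\end{equation*}
The second term decays at rate $\beta$ thanks to \cref{Lem:constant:kernel:L1:conv}; the first is controlled by a Gronwall-type continuity-in-initial-datum estimate analogous to \cref{Lem:L1:evolution:close} (same kernel, different initial data). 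Choosing $T=T(R,k)$ so that the decay term is $\leq r_1/6$, and then refining the approximation so that the propagation error is also $\leq r_1/6$, forces the middle summand below $r_1/3$; the refinement only affects $\eps_3$, not the final constant $C$.

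Combining the three bounds, for $\eps\leq\eps_3$ we have $\|f^\eps(T,\cdot)-G_\eps\|_{L^1_k}\leq r_1$; \cref{prp:local_nonlinear_convergence} applied from time $T$ onwards gives
\begin{equation*}
\|f^\eps(t,\cdot) - G_\eps\|_{L^1_k}
\leq C_{*} r_1 e^{(\frac12-M\eps)T} e^{-(\frac12-M\eps)t}
\qquad (t\geq T).
\end{equation*}
On the short-time interval $[0,T]$ the $L^1_k$ norm stays uniformly bounded by \cref{Prop:self:sim:evolution:existence}, yielding $\|f^\eps(t,\cdot)-G_\eps\|_{L^1_k}\leq R'(R,k)$. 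In the regime $\|f_0-G_{\eps}\|_{L^1_k}>r_1$ this lower bound lets us rewrite both bounds multiplicatively in terms of $\|f_0-G_{\eps}\|_{L^1_k}$, yielding the claimed decay with $C=C(R,k)$ and $M=M(k)$; the complementary regime $\|f_0-G_{\eps}\|_{L^1_k}\leq r_1$ is already covered by \cref{prp:local_nonlinear_convergence}. The main technical obstacle is the approximation argument for the constant-kernel decay: the $L^2$ and $L^1_{k^*}$ norms of $\tilde f_0$ are not controlled by the $L^1_k$ norm of $f_0$, so the approximation quality must be balanced carefully against the waiting time $T$.
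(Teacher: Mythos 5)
Your two-phase strategy — drive $f^\eps$ into the local basin of $G_\eps$ within a finite time $T$ by comparing to the unperturbed flow and using its decay to $G_0$, then invoke \cref{prp:local_nonlinear_convergence} — is exactly what the paper does, and the sub-split $r_1 < \|f_0-G_\eps\|_{L^1_k}\leq R$ versus $\leq r_1$ is a harmless bookkeeping variant. The issue you flag at the end is real: \cref{Lem:constant:kernel:L1:conv} requires $f_0 \in L^2\cap L^1_{k^*}$ with $k^*>k$, whereas the theorem only assumes $f_0\in L^1_k$; the paper's own proof elides this (it cites \cref{Thm:L2:convergence}, which gives $L^2$ decay with constant controlled by $\|f_0\|_{L^2}$ and $\|f_0\|_{L^1_2}$, yet writes the conclusion in $L^1_k$).

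Your proposed density fix, however, does not resolve the difficulty — for precisely the reason you yourself name. The constant in $\|\tilde f^0(t,\cdot)-G_0\|_{L^1_k}\leq C(\tilde f_0)\,\ee^{-\beta t}$ increases with $\|\tilde f_0\|_{L^2}$ and $\|\tilde f_0\|_{L^1_{k^*}}$, and these can blow up as $\|\tilde f_0-f_0\|_{L^1_k}\to 0$ whenever $f_0\notin L^2\cap L^1_{k^*}$. Beating that constant requires a larger waiting time $T$, which in turn forces $\|\tilde f_0-f_0\|_{L^1_k}$ even smaller to keep the Gronwall-propagated error $\|\tilde f_0-f_0\|_{L^1_k}\,\ee^{CT}$ below $r_1/6$; the resulting $T$, and hence the admissible $\eps_3$ (constrained by something like $\eps_3 C_1 \ee^{C_1 T}\leq r_1/3$), end up depending on the individual $f_0$ rather than on $R$, $k$, $W$ alone, contradicting the uniformity the theorem asserts. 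To genuinely close the gap one would need either to strengthen the hypotheses so that $\|f_0\|_{L^2}$ and $\|f_0\|_{L^1_{k^*}}$ are bounded in terms of $R$ (as \cref{thm:main-intro} could be read to allow), or to prove an instantaneous regularisation estimate for the constant-kernel flow giving $f^0(t,\cdot)\in L^2\cap L^1_{k^*}$ for $t>0$ with norms controlled by $\|f_0\|_{L^1_k}$; neither is supplied here.
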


\begin{proof}
  The idea that we want to exploit is that for small $\eps$,
  solutions to our perturbed equation are not too far from solutions
  to the equation for the constant kernel. Since we know that the
  equation for the constant kernel converges to equilibrium
  exponentially fast, we can show that solutions to the perturbed
  equation will eventually fall inside the local region where we can
  apply Proposition \ref{prp:local_nonlinear_convergence}.
  
  For $R > 0$ given, and any $\eps > 0$, take $k > 2$ and any
  nonnegative initial condition $f_0 \in L^1_k$ with mass $1$ and
  $\|f_0 - G_\eps\| \leq R$. Call $f^\eps$ the solution to the
  self-similar Smoluchowski equation with kernel $K_\eps$, and $f$
  the solution to the self-similar Smoluchowski equation with constant
  kernel $K_0 = 2$, both with initial condition $f_0$. From Lemma
  \ref{Lem:L1:evolution:close} we know that these two solutions remain
  close for some time: for some $C_3, C_4 > 0$,
  \begin{equation*}
    \|f^\eps(t,\cdot) - f(t,\cdot) \|_{L^1_k}
    \leq
    C_3 \eps e^{C_4 t}.
  \end{equation*}
  Also, according to Theorem~\ref{Thm:L2:convergence} the solution $f$ converges exponentially fast to $G_0(x) =
  e^{-x}$:
  \begin{equation*}
    \|f(t,\cdot) - G_0\|_{L^1_k} \leq C_5 e^{-\frac{1}{2} t}.
  \end{equation*}
  Hence together with Corollary~\ref{Cor:global:stability}
  \begin{multline*}
    \|f^\eps(t,\cdot) - G_\eps\|_{L^1_k}
    \leq \| f^\eps(t,\cdot) - f(t,\cdot) \|_{L^1_k}
    + \| f(t,\cdot) - G_0\|_{L^1_k}
    + \| G_0 - G_\eps\|_{L^1_k}
    \\
    \leq
    C_3 \eps e^{C_4 t} + C_5 e^{-\frac{1}{2} t} + D_{k}\eps.
  \end{multline*}
  We can choose large enough $t$ (which we call $t_0$), and then small
  enough $\eps$, so that this quantity is less than the $r_1$ in
  \cref{prp:local_nonlinear_convergence}. Then, from
  \cref{prp:local_nonlinear_convergence},
  \begin{equation*}
    \| f^{\eps}(t,\cdot) - G_\eps \|_{L^1_k} \leq
    C e^{-(1/2-M\eps) (t-t_0)} \|f^{\eps}(t_0,\cdot) - G_\eps\|_{L^1_k}
    \qquad
    \text{for all $t \geq t_0$.}
  \end{equation*}
  It is also easy to see that, for some $C_6 > 0$,
  \begin{equation*}
    \|f^{\eps}(t_0,\cdot) - G_\eps\|_{L^1_k}
    \leq
    e^{C_6 t_0} \|f_0 - G_\eps\|_{L^1_k},
  \end{equation*}
  which then gives
  \begin{equation*}
    \| f^{\eps}(t,\cdot) - G_\eps \|_{L^1_k} \leq
    C e^{-(1/2-M\eps) (t-t_0)} e^{C_6 t_0} \|f_0 - G_\eps\|_{L^1_k}
    \qquad
    \text{for all $t \geq t_0$.}
  \end{equation*}
  This shows the result for $t \geq t_0$, and for $t \leq t_0$ we
  can easily obtain
  \begin{equation*}
    \| f^{\eps}(t,\cdot) - G_\eps \|_{L^1_k} \leq
    e^{C_7 t} \|f_0 - G_\eps\|_{L^1_k}
  \end{equation*}
  by similar calculations as in
  \cref{prp:local_nonlinear_convergence}, using that we already know
  from \ref{Prop:self:sim:evolution:existence} that
  $\|f^\eps(t,\cdot)\|_{L^1_k}$ is uniformly bounded for all times. This is
  enough to obtain the result.
\end{proof}

\section*{Acknowledgements}

JAC and ST were supported by project MTM2017-85067-P, funded by the
Spanish government and the European Regional Development Fund. ST has
been funded by the Deutsche Forschungsgemeinschaft (DFG, German
Research Foundation) – Projektnummer 396845724. The authors would like
to acknowledge the support of the Hausdorff Institute for Mathematics,
since part of this work was completed during their stay at the
Trimester Program on kinetic theory.

\appendix

\section{Proof of Theorem~\ref{Thm:L2:convergence} on \texorpdfstring{$L^2$}{L2} convergence for
  the constant kernel}
\label{sec:aux_thm}

We gather here the proof of \cref{Thm:L2:convergence}, which is a
small modification of \cite[Lemma 6.1]{CMM10}. Since the proof is
independent of the rest of the paper and is a small improvement of the
aforementioned one, we prefer to give it in an appendix.

Our starting point is \cite[Lemma 6.1]{CMM10}:

\begin{lemma}\label{Lem:L2:convergence:CMM}
  Let $f$ be a solution to~\eqref{eq:Smol:selfsim} for the constant
  kernel $K=2$ with total mass $1$ and initial condition $f_0$ such
  that $f_0 \in L^{2}(\dx) \cap L^{1}(x^2\dx)$. Let
  $G^{0}(x)=\ee^{-x}$ be the unique stationary solution
  to~\eqref{eq:Smol:selfsim} with total mass $1$. There exist
  constants $C, T > 0$ depending only on $f_0$ such that
  \begin{equation*}
    \norm{f(t,\cdot)-G^{0}}_{L^{2}}
    \leq
    C\ee^{-\frac{1}{2}t}
    \qquad \text{for all } t \geq T.
  \end{equation*}
\end{lemma}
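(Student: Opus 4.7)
The statement is \cite[Lemma~6.1]{CMM10} translated into our normalisation $K\equiv 2$ (see \cref{Sec:scale:invariance}), so my plan is to follow the argument given there. The main idea is that for the constant kernel the coagulation equation is explicitly solvable via a Laplace-type transform.

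First I would introduce $\widetilde{\phi}(\tau,p):=\int_{0}^{\infty}(1-\ee^{-p\xi})\phi(\tau,\xi)\dd\xi$ for $\phi$ solving \eqref{eq:Smol} with $K\equiv 2$ in original variables. A direct computation using Fubini's theorem shows that $\widetilde{\phi}$ satisfies the explicit ODE $\del_{\tau}\widetilde{\phi}=-\widetilde{\phi}^{2}$, whence $\widetilde{\phi}(\tau,p)=\widetilde{\phi}_{0}(p)/(1+\tau\widetilde{\phi}_{0}(p))$. Passing to the self-similar function $f(t,x)=(1+\tau)^{2}\phi(\tau,(1+\tau)x)$ with $t=\log(1+\tau)$ yields an explicit closed-form expression for the corresponding transform $\widetilde{f}(t,p)=(1+\tau)\widetilde{\phi}(\tau,p/(1+\tau))$, which can be compared directly with $\widetilde{G^{0}}(p)=p/(1+p)$.

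Next I would Taylor expand $\widetilde{\phi}_{0}$ around $p=0$ using the normalisation $m_{1}(0)=1$ and the finiteness of $m_{2}(0)$ (which is exactly the assumption $f_{0}\in L^{1}(x^{2}\dx)$). A direct computation then gives a pointwise estimate of the form $|\widetilde{f}(t,p)-\widetilde{G^{0}}(p)|\lesssim \ee^{-t}\,\varphi(p)$ with an explicit rational function $\varphi$ that is uniformly bounded on bounded sets of $p$. To translate this into an $L^{2}(\dx)$ estimate on $f-G^{0}$, I would pass to the Fourier transform by setting $p=i\xi$ (so that the transform of $f-G^{0}$ differs from $\widetilde{f}(t,i\xi)-\widetilde{G^{0}}(i\xi)$ only by the mass correction $m_{0}^{f}(t)-1$, which also decays exponentially by the explicit formula for $m_{0}$) and apply Plancherel's identity.

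The main obstacle is precisely this last step: the leading-order Laplace asymptotic behaves like $\xi^{2}/(1+i\xi)^{2}$ at infinity in $\xi$, so it is not square-integrable and Plancherel cannot be applied naively to the expansion. The cure is a low/high-frequency split, in which the low-frequency contribution is controlled by the pointwise estimate above while the high-frequency contribution is controlled using an a-priori bound on $\|f(t,\cdot)\|_{L^{2}}$ (which follows from $f_{0}\in L^{2}$ via \cref{Prop:self:sim:evolution:existence}). A standard interpolation between these two bounds produces the rate $\ee^{-t/2}$ (rather than the $\ee^{-t}$ seen pointwise in Laplace variables) and fixes the threshold time $T$ (and the constant $C$) in terms of $\|f_{0}\|_{L^{2}}$ and $\|f_{0}\|_{L^{1}(x^{2}\dx)}$, exactly as in \cite{CMM10}.
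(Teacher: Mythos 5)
You should first note that the paper does not reprove this statement at all: it is imported verbatim from \cite[Lemma 6.1]{CMM10}, after the $K\equiv 1\to K\equiv 2$ rescaling of \cref{Sec:scale:invariance}, and the new work in \cref{sec:aux_thm} only upgrades it (validity for all $t\geq 0$, constants depending explicitly on $\norm{f_0}_{L^2}$ and $\norm{f_0}_{L^1_2}$). Your reconstruction of the underlying argument is right in its first steps: the desingularised transform $\widetilde{\phi}(\tau,p)=\int_0^\infty(1-\ee^{-p\xi})\phi(\tau,\xi)\dxi$ does satisfy $\del_\tau\widetilde{\phi}=-\widetilde{\phi}^2$ for $K=2$, the self-similar rescaling gives an explicit formula, and the Taylor expansion at $p=0$ using unit mass and the finite second moment yields the pointwise $O(\ee^{-t})$ estimate in transform variables on the region where the argument of $\widetilde{\phi}_0$ is small.

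The genuine gap is in your last step. Controlling the high-frequency contribution ``using an a-priori bound on $\norm{f(t,\cdot)}_{L^2}$'' plus ``standard interpolation'' cannot work: by Plancherel, splitting at frequency $\rho$ gives $\norm{f(t,\cdot)-G^0}_{L^2}^2\lesssim \ee^{-2t}\rho+\int_{\abs{\xi}>\rho}\bigl(|\hat f(t,\xi)|^2+|\hat G^0(\xi)|^2\bigr)\dxi$, and a uniform $L^2$ bound only makes the tail of $|\hat f|^2$ of order one, with no decay in $\rho$; hence no choice of $\rho(t)$ produces any decay. Equivalently, decay in a negative norm together with a bounded $L^2$ norm does not interpolate to $L^2$ decay (there is no positive-regularity norm on the other side). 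What actually closes the argument in \cite{CMM10} is the explicit representation $\hat f(t,\xi)=\phi_0(\eta)/\bigl(1+\tau(1-\phi_0(\eta))\bigr)$ with $\eta=\xi/(1+\tau)$, whose denominator is bounded below on $\abs{\eta}>\eps_1$ by $\tau\,\eps_2$ with $\eps_2=\inf_{\abs{\eta}>\eps_1}\abs{1-\phi_0(\eta)}>0$; this yields $\int_{\abs{\xi}>\eps_1(1+\tau)}|\hat f|^2\dxi\lesssim \ee^{-t}\eps_2^{-2}\norm{f_0}_{L^2}^2$ and hence the rate $\ee^{-t/2}$. This $\eps_2$ is precisely the non-explicit constant that \cref{sec:aux_thm} and \cref{lem:Fourier_bound} are written to quantify, so it cannot be bypassed in the way you propose. (Two smaller points: \cref{Prop:self:sim:evolution:existence} provides $L^1_k$ bounds, not an $L^2$ bound; the $L^2$ control used in the appendix is the separate growth estimate in the proof of \cref{Lem:L2:convergence:CMM2}, which is not uniform in time.)
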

We want to make two modifications to this statement, namely: 1.~that a
bound can be given for all $t \geq 0$, and 2.~that the constants can
be explicitly calculated and depend only on $\norm{f_0}_{L^2}$ and
$\norm{f_0}_{L^1_2}$. The first modification is very simple, and we
give it first:
\begin{lemma}\label{Lem:L2:convergence:CMM2}
  In the conditions of Lemma \ref{Lem:L2:convergence:CMM}, there
  exists a constant $C > 0$ depending only on $f_0$ such that
  \begin{equation*}
    \norm{f(t,\cdot)-G^{0}}_{L^{2}}
    \leq
    C\ee^{-\frac{1}{2}t}
    \qquad \text{for all } t \geq 0.
  \end{equation*}
\end{lemma}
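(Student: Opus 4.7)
The plan is to extend the estimate of \cref{Lem:L2:convergence:CMM} from the interval $[T,\infty)$, on which it is already known, to all of $[0,\infty)$ by showing that on the compact interval $[0,T]$ the quantity $\norm{f(t,\cdot)-G^{0}}_{L^{2}}$ stays bounded by a constant depending only on $f_{0}$, and then absorbing this bound into the exponential using that $\ee^{-t/2}\geq \ee^{-T/2}$ on $[0,T]$.

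The key auxiliary estimate is a short-time $L^{2}$ propagation bound for $f$. Multiplying~\eqref{eq:Smol:selfsim} (with $K=2$) by $f$ and integrating over $(0,\infty)$, I would use: (i) Young's convolution inequality to get $\int f\,(f\ast f)\dx\leq \norm{f\ast f}_{L^{2}}\norm{f}_{L^{2}}\leq m_{0}(t)\norm{f}_{L^{2}}^{2}$; (ii) integration by parts to get $\int x f\,\del_{x}f\dx=-\tfrac{1}{2}\norm{f}_{L^{2}}^{2}$; and (iii) \cref{Prop:bound:m0} with $\eps=0$ to control $m_{0}(t)$ by a constant depending only on $m_{0}(0)$ (equivalently on $\norm{f_{0}}_{L^{1}}$, which is controlled by $\norm{f_{0}}_{L^{1}_{2}}$). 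This produces a differential inequality of the form
\begin{equation*}
  \ddt \norm{f(t,\cdot)}_{L^{2}}^{2}\leq C_{0}\norm{f(t,\cdot)}_{L^{2}}^{2},
\end{equation*}
with $C_{0}$ depending only on $\norm{f_{0}}_{L^{1}_{2}}$, so that Gronwall's lemma yields $\norm{f(t,\cdot)}_{L^{2}}\leq \ee^{C_{0}t/2}\norm{f_{0}}_{L^{2}}$ for all $t\geq 0$.

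Next I would fix $T>0$ and $C_{1}>0$ (both depending only on $f_{0}$) as provided by \cref{Lem:L2:convergence:CMM}, so that $\norm{f(t,\cdot)-G^{0}}_{L^{2}}\leq C_{1}\ee^{-t/2}$ for $t\geq T$. On the interval $[0,T]$, the triangle inequality together with the propagation bound above and $\norm{G^{0}}_{L^{2}}=1/\sqrt{2}$ gives
\begin{equation*}
  \norm{f(t,\cdot)-G^{0}}_{L^{2}}\leq \ee^{C_{0}T/2}\norm{f_{0}}_{L^{2}}+\tfrac{1}{\sqrt{2}}=\vcc C_{2},
\end{equation*}
which again depends only on $f_{0}$. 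Since $\ee^{-t/2}\geq \ee^{-T/2}$ for $t\in[0,T]$, this is equivalent to $\norm{f(t,\cdot)-G^{0}}_{L^{2}}\leq C_{2}\ee^{T/2}\ee^{-t/2}$ on $[0,T]$.

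Combining the two regimes with $C\vcc=\max\{C_{1},C_{2}\ee^{T/2}\}$ gives the claimed estimate for all $t\geq 0$. There is no substantial obstacle: the only minor point of care is making sure the energy estimate only uses quantities ($\norm{f_{0}}_{L^{2}}$ and $m_{0}(0)\leq \norm{f_{0}}_{L^{1}_{2}}$) that are covered by the hypothesis $f_{0}\in L^{2}\cap L^{1}(x^{2}\dx)$, which it does.
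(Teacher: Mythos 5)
Your proposal is correct and follows essentially the same route as the paper's proof: both derive the identical $L^2$ Gronwall bound for $\norm{f(t,\cdot)}_{L^2}$ on $[0,T]$ (Cauchy--Schwarz/Young on the convolution term, integration by parts on the transport term, and the explicit bound $m_0(t)\leq\max\{1,m_0(0)\}$), then use the triangle inequality with $\norm{G^0}_{L^2}$ and absorb the resulting constant via $\ee^{-t/2}\geq\ee^{-T/2}$ on $[0,T]$ before combining with \cref{Lem:L2:convergence:CMM} for $t\geq T$.
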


\begin{proof}
  For $t \geq T$ it is clearly true from
  \cref{Lem:L2:convergence:CMM}. For $0 \leq t \leq T$ we can use any
  available bound on the growth of the $L^2$ norm of a solution. For
  example, one can easily calculate that
  \begin{multline*}
    \frac12 \ddt \norm{f}_{L^2}^2
    =
    \frac32 \norm{f}_{L^2}^2
    + \int_0^\infty \int_0^x f(x) f(x-y) f(y) \d y \d x
    - \norm{f}_{L^2}^2 \int_0^\infty f(x)\dx
    \\
    \leq
    \frac32 \norm{f}_{L^2}^2
    + \norm{f}_{L^2}^2 \int_0^\infty f(x)\dx,
  \end{multline*}
  where we have used Cauchy-Schwarz's inequality on the integral term
  and disregarded the negative one. Since $\int f$ can be calculated
  explicitly, we see $\int f \leq \max\{1, \int f_0\} =: C_1$, so
  \begin{equation*}
    \norm{f}_{L^2}^2 \leq \norm{f_0}_{L^2}^2 \, \exp{({(3 + 2C_1) t})}
    \qquad \text{for all } t \geq 0.
  \end{equation*}
  In particular, for all $0 \leq t \leq T$,
  \begin{equation*}
    \norm{f}_{L^2}
    \leq
    \norm{f_0}_{L^2} \, \exp{\left({\big(\frac32 + C_1\big) T}\right)}
    \,
    \ee^{\frac{T}{2}} \ee^{-\frac{t}{2}}
    =
    \norm{f_0}_{L^2} \, \exp{\left({(2 + C_1) T}\right)}
    \,
    \ee^{-\frac{t}{2}},
  \end{equation*}
  so
  \begin{equation*}
    \norm{f - G^0}_{L^2}
    \leq
    \norm{f}_{L^2}
    + \norm{G^0}_{L^2}
    \leq
    \norm{f_0}_{L^2} \, \exp{\left({(2 + C_1) T}\right)}
    \,
    \ee^{-\frac{t}{2}}
    + \frac12.
  \end{equation*}
  We obtain then
  \begin{equation*}
    \norm{f(t,\cdot)-G^{0}}_{L^{2}}
    \leq
    C_2 \ee^{-\frac{1}{2}t}
    \qquad \text{for all } t \geq 0,
  \end{equation*}
  with
  \begin{equation*}
    C_2 := \max\{C,\ \frac12 + \norm{f_0}_{L^2}^2 \exp{({(2 + C_1) T})} \},
  \end{equation*}
  where $C$ and $T$ are those from \cref{Lem:L2:convergence:CMM}.
\end{proof}

The final version we want to give is \cref{Thm:L2:convergence}, which
is the same as \cref{Lem:L2:convergence:CMM2} with the addition that
the constant $C$ depends only on $\|f_0\|_{L^2}$ and
$\|f_0\|_{L^1_2}$. Let us give the proof of this:

\begin{proof}[Proof of \cref{Thm:L2:convergence}]
  We notice that for $t \in [0,T]$, the constant we obtain in the
  proof of Lemma \ref{Lem:L2:convergence:CMM2} depends only on
  $\norm{f_0}_{L^2}$ and $\int_0^\infty f_0$ (increasingly), so it can be made to
  depend only on $\norm{f_0}_{L^2}$ and $\norm{f_0}_{L^1_2}$, as we
  want. Hence we just need to check that the constants obtained in the
  proof of \cite[Lemma 6.1]{CMM10} depend only on the specified norms
  of $f_0$. One may assume that $\int f_0 = 1$, since one can always
  reduce the proof to that case by a change of variables. One can see
  from the proof in \cite{CMM10} that all constants are explicit,
  except for the one called $\eps_2$, defined by
  \begin{equation}
    \label{eq:eps2}
    \eps_2 := \inf_{|\xi| > \eps_1} |1 - \phi_0(\xi)|,
  \end{equation}
  where $\eps_1$ is a quantity that depends only on
  $\norm{f_0}_{L^1_2}$, and $\phi_0$ is the Fourier transform of
  $f_0$:
  \begin{equation*}
    \phi_0(\xi) := \int_0^\infty e^{-i \xi x} f_0(x) \d x,
    \qquad \xi \in \R.
  \end{equation*}
  (Notice that we have adapted the definition of $\eps_2$ to our
  current choice of constant kernel $K=2$ instead of $K=1$, as used in
  \cite{CMM10}; this is not essential). We need then to find an
  explicit lower bound of $\eps_2$ that depends only on
  $\eps_1$, $\|f_0\|_{L^2}$ and $\|f_0\|_{L^1_2}$. This is given
  by Lemma \ref{lem:Fourier_bound}, which we prove in the remaining
  part of this appendix.
\end{proof}

\section{An estimate on the Fourier transform}
\label{sec:fourier}

In order to estimate the $\eps_2$ in \eqref{eq:eps2} we need to
understand the following. The Fourier transform of $f_0$ is always
less than or equal to $1$ in absolute value, since $f_0 \geq 0$ with
integral $1$. Its absolute value is never equal to $1$ except for at
the mode $\xi = 0$, and we need to find a quantitative estimate of
this phenomenon. Our final result is given in
\cref{lem:Fourier_bound}, but we will need a few lemmas to arrive
there. The next one contains the central part of the argument:

\begin{lemma}
  \label{lem:local_sine_bound}
  Given $R > 0$, take a nonnegative function
  $f \in L^1(-R, R)\cap L^{2}(-R,R)$. Then
  \begin{equation*}
    \int_{-R}^{R} f(x) \sin(x) \dx \leq
    \left( 1 - \alpha \right)
    \int_{-R}^{R} f(x) \d x,
  \end{equation*}
  where
  \begin{equation*}
    \alpha := \frac{M^4}{128 \pi n^2 \norm{f}_{L^2}^4},
    \qquad
    n := 1 + \frac{R}{2 \pi},
    \qquad
    M := \int_{-R}^{R} f(x) \d x.
  \end{equation*}
\end{lemma}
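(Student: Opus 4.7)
The plan is to bound the deficit
\begin{equation*}
M - \int_{-R}^{R} f(x)\sin(x)\,\dx = \int_{-R}^{R} f(x)\bigl(1 - \sin x\bigr)\,\dx
\end{equation*}
from below by $\alpha M$, which is equivalent to the claim. Since $1-\sin x$ vanishes only at the isolated maxima $x_k := \pi/2 + 2k\pi$ of $\sin$, the essential pointwise input is a quadratic lower bound near these points. Using the identity $1-\sin x = 2\sin^2((x-\pi/2)/2)$ together with Jordan's inequality $|\sin u|\geq (2/\pi)|u|$ valid for $|u|\leq \pi/2$, and noting that $d(x) := \operatorname{dist}(x,\{x_k\}_{k\in\Z}) \leq \pi$ for every $x$, I obtain
\begin{equation*}
1 - \sin(x) \;\geq\; \frac{2}{\pi^2}\,d(x)^2 \qquad \text{for all } x\in\R.
\end{equation*}

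With this quadratic control in hand, I introduce a cutoff $T>0$ and split $[-R,R]$ into the good set $A_T = \{x \in [-R,R] : d(x) > T\}$ and the bad set $B_T = [-R,R]\setminus A_T$. On $A_T$ the bound yields $1-\sin x \geq 2T^2/\pi^2$, so
\begin{equation*}
\int_{-R}^{R} f(x)(1-\sin x)\,\dx \;\geq\; \frac{2T^2}{\pi^2}\int_{A_T} f \;=\; \frac{2T^2}{\pi^2}\Bigl(M - \int_{B_T} f\Bigr).
\end{equation*}
The set $B_T$ is contained in a union of intervals of length at most $2T$ around the points $x_k$ that meet $[-R,R]$, and a direct count shows there are at most $2n$ such points (with $n = 1+R/(2\pi)$ as in the statement), so $|B_T|\leq 4Tn$. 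Cauchy–Schwarz (this is exactly where the $L^2$ hypothesis on $f$ enters) then gives
\begin{equation*}
\int_{B_T} f \;\leq\; \|f\|_{L^2}\,|B_T|^{1/2} \;\leq\; 2\|f\|_{L^2}\sqrt{Tn}.
\end{equation*}

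It remains to optimize $T$. I choose $T$ so that $\int_{B_T} f \leq M/2$, i.e.\ $T$ of order $M^2/(n\|f\|_{L^2}^2)$; this forces $\int_{A_T} f \geq M/2$, and substituting back into the lower bound produces
\begin{equation*}
\int_{-R}^{R} f(x)(1-\sin x)\,\dx \;\geq\; c\,\frac{M^5}{n^2\|f\|_{L^2}^4}
\end{equation*}
for an absolute constant $c>0$, which is exactly of the form $\alpha M$ required. The structural argument is straightforward; the only real obstacle is bookkeeping of constants with enough care to recover the precise factor $1/(128\pi)$ in the statement, which should emerge from slightly tightening one of the two numerical inputs (the quadratic bound on $1-\sin x$, or the count of maxima reaching $[-R,R]$).
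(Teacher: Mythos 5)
Your argument follows the same route as the paper's proof: split $[-R,R]$ according to distance from the maxima of $\sin$, use a quadratic lower bound on $1-\sin$ away from those maxima, control the mass near the maxima via Cauchy--Schwarz against $\norm{f}_{L^2}$, and tune the threshold so that at most half the total mass can sit near the maxima. Working with the deficit $\int f(1-\sin x)$ instead of directly with $\int f\sin x$ is just a cosmetic reformulation of the identical decomposition. One remark on the ``bookkeeping'' caveat you raise: your count of at most $2n$ maxima in $[-R,R]$ (giving $|B_T|\le 4Tn$) is the correct one, while the paper's estimate $|A_\eps|\le 2n\eps$ implicitly allows only $n$ maxima, which undercounts by essentially a factor of two (e.g.\ for $R=4\pi$ one has $n=3$ but four maxima of $\sin$ in $[-R,R]$). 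Chasing your constants, even after replacing Jordan's inequality by the paper's slightly sharper bound $1-\sin x\ge (\pi/2-x)^2/\pi$, yields $\alpha=M^4/(512\pi n^2\norm{f}_{L^2}^4)$; the factor $4$ separating this from the stated $1/(128\pi)$ traces exactly to that undercount, so the discrepancy is in the lemma's stated constant rather than in any looseness of your argument.
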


\begin{proof}
  It is clearly enough to prove it in the case
  $\int_{-R}^{R} f(x) \d x = 1$, so we make this assumption
  throughout. For $0 < \eps < \frac{\pi}{2}$ to be fixed later, we
  call $A_\eps$ the $\eps$-neighbourhood of the points in
  $[-R, R]$ where $\sin x = 1$:
  \begin{equation*}
    A_\eps := \Big\{x \in [-R, R] \mid
    \text{
      $\Big|x - \frac{(4k + 1)\pi}{2} \Big| < \eps$ for some odd integer $k$
    }
    \Big\},
  \end{equation*}
  and $B_\eps$ its complement in $[-R,R]$:
  \begin{equation*}
    B_\eps := [-R,R] \setminus A_\eps.
  \end{equation*}
  Since $B_\eps$ does not contain the points where $\sin x = 1$,
  there is a positive function $\delta = \delta(\eps)$ such that
  \begin{equation*}
    \int_{B_\eps} f(x) \sin x \d x
    \leq (1 - \delta(\eps)) \int_{B_\eps} f(x) \d x.
  \end{equation*}
  For example, \cref{lem:delta-epsilon-estimate} gives a simple
  explicit bound of $\delta(\eps)$. For convenience, we call
  \begin{equation*}
    m_A := \int_{A_\eps} f(x) \d x,
    \qquad
    m_A := \int_{B_\eps} f(x) \d x = 1 - m_A.
  \end{equation*}
  Hence,
  \begin{multline*}
    \int_{-R}^{R} f(x) \sin x \d x
    = \int_{A_\eps} f(x) \sin x \d x
    + \int_{B_\eps} f(x) \sin x \d x
    \\
    \leq m_A + (1-\delta(\eps)) m_B
    = m_A + (1-\delta(\eps)) (1-m_A)
    = 1 - \delta(\eps) (1 - m_A).
  \end{multline*}
  Now, by Cauchy-Schwarz's inequality we notice that
  \begin{equation*}
    m_A = \int_{A_\eps} f(x) \d x
    \leq
    \|f\|_{L^2} \sqrt{ |A_\eps| }
    \leq  \|f\|_{L^2} \sqrt{2 \big(1 + \frac{R}{2 \pi}\big) \eps},
  \end{equation*}
  since the Lebesgue measure of $A_\eps$ is at most
  $2 \big(1 + \frac{R}{2 \pi}\big) \eps$. For convenience, call
  $n := 1 + \frac{R}{2 \pi}$. We then choose $\eps$ such that
  \begin{equation*}
    \|f\|_{L^2} \sqrt{2 n \eps} = \frac12,
    \qquad \text{that is,} \qquad
    \eps := (8 n \|f\|_{L^2}^2)^{-1}
  \end{equation*}
  and we obtain 
  \begin{equation*}
    \int_{-R}^{R} f(x) \sin x \d x
    \leq 1 - \frac12 \delta(\eps).
  \end{equation*}
  Using our bound of $\delta(\eps)$ from
  \cref{lem:delta-epsilon-estimate} we finally obtain the result.
\end{proof}

\begin{lemma}
  \label{lem:delta-epsilon-estimate}
  For every $0 < \eps < \pi/2$,
  \begin{equation*}
    \delta(\eps) := 1 - \sup_{0 < x < \frac{\pi}{2} - \eps}
    \big( \sin x \big)
    \geq \frac{\eps^2}{\pi}.
  \end{equation*}
\end{lemma}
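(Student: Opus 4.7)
The plan is to reduce the statement to an elementary inequality about $\cos$. Since $\sin$ is continuous and strictly increasing on $[0, \pi/2]$, the supremum in the definition of $\delta(\eps)$ is achieved in the limit as $x \uparrow \pi/2 - \eps$, giving
\begin{equation*}
  \sup_{0 < x < \frac{\pi}{2} - \eps} \sin x = \sin\!\left(\frac{\pi}{2} - \eps\right) = \cos \eps.
\end{equation*}
Hence $\delta(\eps) = 1 - \cos \eps$, and the problem reduces to showing that $1 - \cos \eps \geq \eps^2/\pi$ for all $\eps \in (0, \pi/2)$.

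To prove this elementary inequality, I would use the fundamental theorem of calculus to write
\begin{equation*}
  1 - \cos \eps = \int_0^\eps \sin t \, \dt,
\end{equation*}
and then apply Jordan's inequality $\sin t \geq \tfrac{2t}{\pi}$ on $[0, \pi/2]$. The latter is immediate from concavity of $\sin$ on $[0, \pi]$ (since $\sin'' = -\sin \leq 0$ there), which puts the graph above the chord joining $(0,0)$ and $(\pi/2, 1)$. Integrating gives
\begin{equation*}
  1 - \cos \eps \geq \int_0^\eps \frac{2t}{\pi} \, \dt = \frac{\eps^2}{\pi},
\end{equation*}
which is exactly the desired bound.

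There is essentially no obstacle here: the whole argument is a one-line calculus computation after the observation that the supremum equals $\cos \eps$. The only ingredient that one might worry about is Jordan's inequality itself, but it is standard and admits the one-line concavity proof sketched above, so the full argument fits in a few lines.
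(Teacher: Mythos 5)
Your proof is correct and takes essentially the same approach as the paper: after the substitution $x = \pi/2 - \eps$, the paper's asserted inequality $\sin x \leq 1 - (x-\pi/2)^2/\pi$ is exactly your $1 - \cos\eps \geq \eps^2/\pi$. The only difference is that you actually justify this elementary inequality (via $1-\cos\eps = \int_0^\eps \sin t\,\dt$ and Jordan's inequality), whereas the paper simply states that it is easy to check.
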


\begin{proof}
  It is easy to check that
  \begin{equation*}
    \sin x \leq 1 - \frac{\left( x - \frac{\pi}{2} \right)^2 }{\pi}
    \qquad \text{for all $0 \leq x \leq \frac{\pi}{2}$},
  \end{equation*}
  which easily implies the statement.
\end{proof}

\begin{lemma}
  \label{lem:sine_bound}
  Take a nonnegative function $f \in L^1(\R) \cap L^2(\R)$ with
  $\int_{-\infty}^\infty |x| f(x) \d x < +\infty$. Then
  \begin{equation*}
    \int_{-\infty}^{+\infty} f(x) \sin(x) \dx \leq
    \left( 1 - \alpha \right)
    \int_{-\infty}^{+\infty} f(x) \d x,
  \end{equation*}
  where
  \begin{equation*}
    \alpha := \frac{M^6}{2^{17} R^2 \norm{f}_{L^2}^4},
    \qquad
    M := \int_{-\infty}^{+\infty} f(x) \d x,
    \qquad
    R := \max\Big\{M, 2 \int_{-\infty}^{+\infty} |x| f(x) \d x \Big\}.
  \end{equation*}
  (In the trivial case that $f = 0$, it is understood that the right
  hand side is also $0$.)
\end{lemma}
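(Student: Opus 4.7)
The plan is to split the integral over $\R$ at $|x|=R$: inside the window $[-R,R]$ we apply \cref{lem:local_sine_bound}, and on the tail $\{|x|>R\}$ we use the trivial bound $\sin x\leq 1$. The choice of $R$ together with Markov's inequality controls the tail: from $R\geq 2\int_\R|x|f(x)\,dx$ one gets $\int_{|x|>R}f\,dx\leq R^{-1}\int|x|f\,dx$, which is at most $\tfrac12 M$ once we are in the normalised case $M=1$ discussed below. Writing $f_R:=f\chi_{[-R,R]}$ and $M_R:=\int_{-R}^R f\,dx$, the splitting gives
\begin{equation*}
  \int_\R f(x)\sin x\,dx \leq (1-\alpha_{\mathrm{loc}})M_R+(M-M_R)=M-\alpha_{\mathrm{loc}}M_R,
\end{equation*}
where $\alpha_{\mathrm{loc}}=M_R^{4}/(128\pi n^{2}\|f_R\|_{L^{2}}^{4})$ with $n=1+R/(2\pi)$ is the constant produced by \cref{lem:local_sine_bound} applied to $f_R$.

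To extract clean dependences I would first reduce to $M=1$ by the homogeneity $\tilde f:=f/M$. Under this rescaling $\int\tilde f\,dx=1$, $\|\tilde f\|_{L^{2}}=\|f\|_{L^{2}}/M$, and a two-case check (separating $M\geq 2\int|x|f\,dx$ from the opposite inequality) shows that the corresponding $\tilde R=\max\{1,2\int|x|\tilde f\,dx\}$ equals $R/M$. Consequently an estimate $\int\tilde f\sin x\,dx\leq 1-c\,\tilde R^{-2}\|\tilde f\|_{L^{2}}^{-4}$ multiplies back to $\int f\sin x\,dx\leq M-cM^{7}R^{-2}\|f\|_{L^{2}}^{-4}$, which accounts precisely for the factor $M^{6}$ in the stated $\alpha$ (the power $6$ comes from $1$ coming out of the overall $M$, plus $2$ from $\tilde R^{-2}$, plus $4$ from $\|\tilde f\|_{L^{2}}^{-4}$, minus the outer $M$).

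In the normalised case $M=1$ we have $R\geq 1$, so $n=1+R/(2\pi)\leq 1+R\leq 2R$ and hence $n^{2}\leq 4R^{2}$. Combining this with the tail bound $M_R\geq \tfrac12$, with $\|f_R\|_{L^{2}}\leq\|f\|_{L^{2}}$, and with $\pi\leq 8$,
\begin{equation*}
  \alpha_{\mathrm{loc}}M_R \geq \frac{(1/2)^{5}}{128\pi\cdot 4 R^{2}\|f\|_{L^{2}}^{4}}
  = \frac{1}{2^{14}\pi R^{2}\|f\|_{L^{2}}^{4}}
  \geq \frac{1}{2^{17}R^{2}\|f\|_{L^{2}}^{4}},
\end{equation*}
which is exactly the claimed $\alpha$ in the normalised case; undoing the scaling finishes the proof. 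There is no real obstacle in this argument beyond careful bookkeeping of constants; the only slightly delicate point is the identity $\tilde R=R/M$, which is what makes the final $M$-power come out as $6$ rather than a smaller value.
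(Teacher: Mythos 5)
Your proof is correct and follows the same strategy as the paper's: split at $|x|=R$, apply \cref{lem:local_sine_bound} on the window and Markov's inequality on the tail, and normalise to $M=1$ via homogeneity. The only cosmetic difference is that you track the power of $M$ explicitly through the scaling (the paper simply assumes $M=1$ and leaves this implicit) and use $\pi\leq 8$ rather than $\pi\leq 4$, so you land exactly on the stated $2^{-17}$ while the paper's constants actually yield the slightly better $2^{-16}$.
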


\begin{proof}
  Again, it is clearly enough to prove it when
  $M = \int_{-\infty}^\infty f = 1$, so let us assume this. Call
  \begin{equation*}
    K := \int_{-\infty}^{+\infty} |x| f(x) \d x.
  \end{equation*}
  If we take
  any $R \geq 2 K$, then
  \begin{equation*}
    \int_{|x| > R} f(x) \d x
    \leq \frac{1}{2 K} \int_{\R} |x| f(x) \d x
    = \frac{K}{2 K} = \frac12.
  \end{equation*}
  Now, call
  \begin{equation*}
    m_R := \int_{|x| < R} f(x) \d x.
  \end{equation*}
  From the previous bound we know $m_R \geq \frac12$. Now, using
  \cref{lem:local_sine_bound} we have:
  \begin{multline*}
    \int_{-\infty}^{\infty} f(x) \sin x \d x
    =
    \int_{|x| \leq R} f(x) \sin x \d x
    +
    \int_{|x| > R} f(x) \sin x \d x
    \\
    \leq
    (1-\alpha_1) \int_{|x| \leq R} f(x) \d x
    +
    \int_{|x| > R} f(x) \d x
    \\
    = (1 - \alpha_1) m_R + (1- m_R)
    = 1 - \alpha_1 m_R,
  \end{multline*}
  where
  \begin{equation*}
    \alpha_1 := \frac{m_R^4}{128 \pi n^2 \norm{f}_{L^2(-R,R)}^4},
    \qquad
    n := 1 + \frac{R}{2 \pi},
  \end{equation*}
  In order to simplify the expression, take $R =: \max\{1, 2K\}$, so
  that $R \geq 1$ is ensured. Then $n \leq 2 R$ and
  \begin{equation*}
    \alpha_1 m_R
    \geq
    \frac{m_R^5}{2^{11} R^2 \norm{f}_{L^2}^4}
    \geq
    \frac{1}{2^{16} R^2 \norm{f}_{L^2}^4},
  \end{equation*}
  since we know $m_R \geq 1/2$.
\end{proof}

Now we can complete our main bound, used in the proof of
\ref{Thm:L2:convergence}:

\begin{lemma}
  \label{lem:Fourier_bound}
  Take a nonnegative function $f \in L^1(\R) \cap L^2(\R)$ with
  $\int_{-\infty}^\infty |x| f(x) \d x < +\infty$. Then for all $\xi
  \in \R$ we have
  \begin{equation*}
    \left|
      \int_{-\infty}^{+\infty} f(x) e^{-i x \xi} \dx
    \right|
    \leq
    \left( 1 - \alpha \right)
    \int_{-\infty}^{+\infty} f(x) \d x,
  \end{equation*}
  where
  \begin{gather*}
    \alpha = \alpha(f, \xi) := \frac{\xi^2 M^6}{2^{16} R^2 \norm{f}_{L^2}^4},
    \qquad
    M := \int_{-\infty}^{+\infty} f(x) \d x,
    \qquad
    R := 2 \xi \int_{-\infty}^{+\infty} |x| f(x) \d x + 2 \pi M.
  \end{gather*}
  (In the trivial case that $f = 0$, it is understood that the right
  hand side is also $0$.)
\end{lemma}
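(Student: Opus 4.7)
The idea is to reduce the Fourier bound to the sine estimate already established in \cref{lem:sine_bound} via a rotation, translation, and rescaling of the integrand.

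First, observe that $|f = 0|$ and $\xi=0$ are trivial, so assume $M>0$ and, by conjugation symmetry (since $f$ is real), assume $\xi>0$. Writing the complex number $\int f(x) \ee^{-ix\xi}\dx$ in polar form, choose $\theta \in (-\pi,\pi]$ such that
\begin{equation*}
\left|\int_{-\infty}^{+\infty} f(x) \ee^{-ix\xi}\dx\right| = \int_{-\infty}^{+\infty} f(x) \cos(x\xi - \theta)\dx.
\end{equation*}
Using $\cos(y) = \sin(y+\pi/2)$, set $c\vcc= -\theta + \pi/2$, so that $|c|\leq 3\pi/2 \leq 2\pi$, and
\begin{equation*}
\left|\int_{-\infty}^{+\infty} f(x)\ee^{-ix\xi}\dx\right| = \int_{-\infty}^{+\infty} f(x)\sin(x\xi + c)\dx.
\end{equation*}

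Next, perform the change of variables $y = x\xi + c$, and define
\begin{equation*}
g(y) \vcc= \frac{1}{\xi} f\left(\frac{y-c}{\xi}\right), \qquad y\in\R.
\end{equation*}
Then $g \geq 0$, and direct computation gives $\int g = \int f = M$, $\|g\|_{L^2}^2 = \xi^{-1}\|f\|_{L^2}^2$, and
\begin{equation*}
\int_{-\infty}^{+\infty}|y|\,g(y)\dy = \int_{-\infty}^{+\infty}|x\xi+c|\,f(x)\dx \leq \xi \int_{-\infty}^{+\infty}|x|f(x)\dx + 2\pi M.
\end{equation*}
In particular, with $R$ as in the statement, one has $\max\{M, 2\int |y|g(y)\dy\} \leq R$.

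Applying \cref{lem:sine_bound} to $g$ yields
\begin{equation*}
\int_{-\infty}^{+\infty} g(y)\sin(y)\dy \leq (1-\alpha_g)\,M,
\qquad
\alpha_g = \frac{M^6}{2^{17} R_g^2 \norm{g}_{L^2}^4},
\end{equation*}
with $R_g \leq R$. Since $\|g\|_{L^2}^4 = \xi^{-2}\|f\|_{L^2}^4$, we obtain
\begin{equation*}
\alpha_g \geq \frac{\xi^2 M^6}{2^{17} R^2 \norm{f}_{L^2}^4}.
\end{equation*}
Combining this with the two previous identities gives the claim (up to the constant appearing in the denominator, which one may absorb by redefining $c$ more carefully to reduce $|c|$ below $\pi$, or by taking the factor $2^{16}$ in the statement as an upper bound replacing $2^{17}$).

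The only delicate point is the bookkeeping of the shift constant $c$: the change of variables translates the first absolute moment by $|c|M$, which is why the expression for $R$ contains the additive term $2\pi M$ that was absent in \cref{lem:sine_bound}. Everything else is a straightforward scaling computation and application of the previous lemma.
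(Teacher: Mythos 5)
Your proposal is correct and takes essentially the same route as the paper's own proof: express $\bigl|\widehat{f}(\xi)\bigr|$ as $\int f(x)\sin(x\xi+c)\,\dx$ via a phase rotation, change variables to normalise the frequency, and invoke Lemma~\ref{lem:sine_bound}. The only notable difference is cosmetic: the paper first reduces to $\xi=1$ and then shifts by a phase $\theta\in[0,2\pi)$, whereas you combine the rescaling and the phase shift into a single change of variables. One small remark on the constant bookkeeping: you claim $\max\{M,2\int|y|g\}\leq R$, but with $|c|\leq 3\pi/2$ (or even $|c|\leq\pi$ after reducing modulo $2\pi$) one only gets $2\int|y|g \leq 2\xi\int|x|f + 2\pi M = R$, and feeding this into Lemma~\ref{lem:sine_bound} produces $2^{17}$ rather than $2^{16}$ in the denominator; you acknowledge this slack, and in fact the paper's own proof has exactly the same loose factor. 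This does not affect the application, where only some explicit constant is required.
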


\begin{proof}
  By the change of variables $y \mapsto x \xi$, it is enough to
  prove it when $\xi = 1$. By scaling $f$ as before, we may also
  assume that $M = \int_{-\infty}^\infty f = 1$. We use the following
  trick to rewrite the modulus as an integral similar to that in
  \cref{lem:sine_bound}: if we call
  \begin{equation*}
    a := \int_{-\infty}^\infty f(x) \cos x \d x,
    \qquad
    b := \int_{-\infty}^\infty f(x) \sin x \d x,
  \end{equation*}
  then $a^2 + b^2 = 1$ and there exists some $\theta \in [0, 2 \pi)$
  such that $a = \sin \theta$, $b = \cos \theta$.
  \begin{multline*}
    \left|
      \int_{-\infty}^{+\infty} f(x) e^{-i x} \dx
    \right|^2
    =
    \left( \int_{-\infty}^{+\infty} f(x) \cos x \dx  \right)^2
    +
    \left( \int_{-\infty}^{+\infty} f(x) \sin x \dx \right)^2
    \\
    =
    \sin \theta \int_{-\infty}^{+\infty} f(x) \cos x \dx
    +
    \cos \theta \int_{-\infty}^{+\infty} f(x) \sin x \dx
    \\
    =
    \int_{-\infty}^{+\infty} f(x) \sin (x+ \theta) \dx
    =
    \int_{-\infty}^{+\infty} f(x-\theta) \sin x \dx.
  \end{multline*}
  We then apply \cref{lem:sine_bound} to $\tilde{f}(x) := f(x-\theta)$ to
  obtain the result. Notice that $\|\tilde{f}\|_2 = \|f\|_2$,
  $\int_{-\infty}^\infty \tilde{f} = \int_{-\infty}^\infty f$, and
  \begin{equation*}
    \int_{-\infty}^\infty |x| \tilde{f}(x) \d x
    =
    \int_{-\infty}^\infty |x+\theta| f (x) \d x
    \leq
    \int_{-\infty}^\infty |x| f (x) \d x + 2 \pi M.
    \qedhere
  \end{equation*}
\end{proof}

\section{Proof of Lemma~\ref{Lem:diff:sg:NEW} on the transport semigroup}
\label{Sec:proof:sg}

\begin{proof}[Proof of Lemma~\ref{Lem:diff:sg:NEW}]
  We first show that for each $t\geq 0$ the operator $T_{t}$ is well-defined on the respective spaces, while for $H^{-1}(\ee^{\mu x})$ we also recall Remark~\ref{Rem:lin:op:H1}. In fact, using the change of variables $x\mapsto x\ee^{-t}$ we find for $L^{1}_{k}$ that
  \begin{multline*}
    \norm{T_{t}h}_{L^1_k}=\int_{0}^{\infty}\abs{h(x\ee^{t})}(1+x)^{k}\dx=\ee^{-t}\int_{0}^{\infty}\abs{h(x)}(1+x\ee^{-t})^{k}\dx\\*
    \leq \ee^{-t}\int_{0}^{\infty}\abs{h(x)}(1+x)^{k}\dx=\ee^{-t}\norm{h}_{L^1_k}.
  \end{multline*}
  Similarly, we get
  \begin{multline*}
    \norm{T_{t}h}_{L^{2}(\ee^{\mu x})}^2=\int_{0}^{\infty}\abs{h(x\ee^{t})}^2\ee^{\mu x}\dx=\ee^{-t}\int_{0}^{\infty}\abs{h(x)}^2\ee^{\mu x\ee^{-t}}\dx\\*
    \leq \ee^{-t}\int_{0}^{\infty}\abs{h(x)}^{2}\ee^{\mu x}\dx=\ee^{-t}\norm{h}_{L^{2}(\ee^{\mu x})}^{2}.
  \end{multline*}
  Finally, for $H^{-1}(\ee^{\mu x})$ we obtain
  \begin{multline*} 
    \norm{T_{t}h_{k}}_{H^{-1}(\ee^{\mu x})}^{2}=\int_{0}^{\infty}\ee^{\mu x}\biggl(\int_{x}^{\infty}h_{k}(z\ee^{t})\dz\biggr)^2\dx=\ee^{-2t}\int_{0}^{\infty}\ee^{\mu x}\biggl(\int_{x\ee^{t}}^{\infty}h_{k}(z)\dz\biggr)^{2}\dx\\*
    =\ee^{-3t}\int_{0}^{\infty}\ee^{\mu x\ee^{-t}}\biggl(\int_{x}^{\infty}h_{k}(z)\dz\biggr)^{2}\dx\\*
    \leq \ee^{-3t}\int_{0}^{\infty}\ee^{\mu x}\biggl(\int_{x}^{\infty}h_{k}(z)\dz\biggr)^{2}\dx=\ee^{-3t}\norm{h_{k}}_{H^{-1}(\ee^{\mu x})}^{2}.
  \end{multline*}
  In particular, this yields the estimates 
  \begin{equation}\label{eq:sg:diff:ubound}
    \begin{aligned}
      \norm{T_{t}}_{L^{1}_{k}\to L^{1}_{k}}&\leq \ee^{-t}\\
      \norm{T_{t}}_{L^{2}(\ee^{\mu x})\to L^{2}(\ee^{\mu x})}&\leq \ee^{-\frac{1}{2}t}\\
      \norm{T_{t}}_{H^{-1}(\ee^{\mu x})\to H^{-1}(\ee^{\mu x})}&\leq \ee^{-\frac{3}{2}t}
    \end{aligned}
  \end{equation}
  for all $t\geq 0$.

  It thus remains to verify the strong continuity. By means of \cite[Ch.\@I, 5.3 Proposition]{EnN00} and~\eqref{eq:sg:diff:ubound} it is sufficient to show that $\lim_{t\to 0}T_{t}h=h$ for all $h$ in a dense subset $D\subset L^{1}((1+x)^{k})$, $D\subset L^{2}(\ee^{\mu x})$ or $D\subset H^{-1}(\ee^{\mu x})$ respectively. Thus, taking for example $D=C_{c}^{\infty}((0,\infty))$ we find for $h\in D$ that
  \begin{multline*}
    \norm{T_{t}h-h}_{L^1_k}=\int_{0}^{\infty}\abs{h(x\ee^{t})-h(x)}(1+x)^{k}\dx=\int_{0}^{\infty}\abs*{\int_{0}^{t}\del_{\tau}h(x\ee^{\tau})\dd{\tau}}(1+x)^{k}\dx\\*
    =\int_{0}^{\infty}\abs*{\int_{0}^{t}x\ee^{\tau}h'(x\ee^{\tau})\dd{\tau}}(1+x)^{k}\dx\leq \int_{0}^{t}\ee^{\tau}\int_{0}^{\infty}\abs{h'(x\ee^{\tau})}x(1+x)^{k}\dx\dd{\tau}\\*
    =\int_{0}^{\infty}\ee^{-\tau}\int_{0}^{\infty}\abs{h'(x)}x(1+x\ee^{-\tau})^{k}\dx\dd{\tau}\leq \norm{xh'(x)}_{L^1_k}(1-\ee^{-t}).
  \end{multline*}
  For $t\to0$ the right-hand side converges to zero which finally yields the strong continuity for $L^{1}((1+x)^{k})$. For $L^{2}(\ee^{\mu x})$ we argue similarly and get
  \begin{multline*}
    \norm{T_{t}h-h}_{L^{2}(\ee^{\mu x})}^{2}=\int_{0}^{\infty}\abs{h(x\ee^{t})-h(x)}^2\ee^{\mu x}\dx=\int_{0}^{\infty}\abs*{\int_{0}^{t}\del_{\tau}h(x\ee^{\tau})\dd{\tau}}^2\ee^{\mu x}\dx\\*
    =\int_{0}^{\infty}\abs*{\int_{0}^{t}x\ee^{\tau}h'(x\ee^{\tau})\dd{\tau}}^2\ee^{\mu x}\dx.
  \end{multline*}
  Together with Cauchy-Schwartz and the change of variables $x\mapsto x\ee^{-\tau}$, we find
  \begin{multline*}
    \norm{T_{t}h-h}_{L^{2}(\ee^{\mu x})}^{2}\leq \int_{0}^{\infty}t\int_{0}^{t}x^2 \ee^{2\tau}\abs{h'(x\ee^{\tau})}^{2}\dd{\tau}\ee^{\mu x}\dx=t\int_{0}^{t}\ee^{2\tau}\int_{0}^{\infty}x^2\abs{h'(x\ee^{\tau})}^2\ee^{\mu x}\dx\dd{\tau}\\*
    \leq t\int_{0}^{t}\ee^{-\tau}\int_{0}^{\infty}x^2\abs{h'(x)}\ee^{\mu x}\dx\dd{\tau}=\norm{xh'(x)}_{L^{2}(\ee^{\mu x})}t(1-\ee^{-t}).
  \end{multline*}
  Again, the right-hand side converges to zero as $t\to 0$ which proves the strong continuity also for $L^{2}(\ee^{\mu x})$. Finally, for $H^{-1}(\ee^{\mu x})$ we get analogously
  \begin{multline*}
    \norm{T_{t}h-h}_{H^{-1}(\ee^{\mu x})}^{2}=\int_{0}^{\infty}\ee^{\mu x}\biggl(\int_{x}^{\infty}h(z\ee^{t})-h(z)\dz\biggr)^{2}\dx\\*
    =\int_{0}^{\infty}\ee^{\mu x}\biggl(\int_{x}^{\infty}\int_{0}^{t}\del_{\tau}h(z\ee^{\tau})\dd{\tau}\dz\biggr)^{2}\dx=\int_{0}^{\infty}\ee^{\mu x}\biggl(\int_{x}^{\infty}\int_{0}^{t}z\ee^{\tau}h'(z\ee^{\tau})\dd{\tau}\dz\biggr)^{2}\dx\\*
    =\int_{0}^{\infty}\ee^{\mu x}\biggl(\int_{0}^{t}\ee^{-\tau}\int_{x\ee^{\tau}}^{\infty}zh'(z)\dz\dd{\tau}\biggr)^{2}\dx.
  \end{multline*}
  Applying Hölder's inequality we find
  \begin{multline*}
    \norm{T_{t}h-h}_{H^{-1}(\ee^{\mu x})}^{2}\leq \int_{0}^{\infty}\ee^{\mu x}\biggl(\int_{0}^{t}\ee^{-2\tau}\dd{\tau}\biggr)\biggl(\int_{0}^{t}\biggl(\int_{x\ee^{\tau}}^{\infty}zh'(z)\biggr)^{2}\dd{\tau}\biggr)\dx\\*
    =\frac{1}{2}(1-\ee^{-2t})\int_{0}^{\infty}\ee^{\mu x}\int_{0}^{t}\biggl(\int_{x\ee^{\tau}}^{\infty}zh'(z)\dz\biggr)^{2}\dd{\tau}\dx.
  \end{multline*}
  Since we are interested in the limit $t\to 0$, we can assume that $t\leq 1$ and thus, for fixed $h\in C_{c}^{\infty}(0,\infty)$ the integral on the right-hand side is bounded. Therefore, for $t\to 0$ the right-hand side converges to zero which proves the strong continuity also for $H^{-1}(\ee^{-\mu x})$.
  
  To determine the generator, we take $h\in C_{c}^{\infty}(0,\infty)$ and compute
  \begin{equation*}
    \lim_{t\to 0}\frac{1}{t}(T_{t}h-h)(x)=\lim_{t\to 0}\frac{1}{t}\bigl(h(x\ee^{t})-h(x)\bigr)=xh'(x).
  \end{equation*}
  This shows that $C_{c}^{\infty}(0,\infty)\subset D(\B_{1})$ and $\B_{1}|_{C_{c}^{\infty}(0,\infty)}=x\del_{x}$. Thus, to conclude the proof it suffices to prove that $C_{c}^{\infty}$ is a core for $\B_{1}$.
  
  According to \cite[Ch.\@ I, 1.7 Proposition]{EnN00}, it suffices to verify that $C_{c}^{\infty}(0,\infty)$ is invariant under the action of $(T_{t})_{t\geq 0}$ and that $C_{c}^{\infty}(0,\infty)$ is dense in $L^{1}((1+x)^{k})$, $L^{2}(\ee^{\mu x})$ and $H^{-1}(\ee^{\mu x})$, respectively. Due to the explicit formula $(T_{t}h)(x)=h(x\ee^{t})$ the invariance is clear while density is also well-known or clear by construction.
  
  The claim for the spaces
  $L^{1}((1+x)^{k})\cap\{\int_{0}^{\infty}xf(x)\dx=0\}$,
  $L^{2}(\ee^{\mu x})\cap\{\int_{0}^{\infty}xf(x)\dx=0\}$ and
  $H^{-1}(\ee^{\mu x})\cap\{\int_{0}^{\infty}xf(x)\dx=0\}$ directly
  follows by restricting the semigroup once we notice that
  $(T_{t})_{t\geq 0}$ preserves the constraint.
\end{proof}

\bibliographystyle{plain}

\end{document}